\newcommand{\rvline}{\hspace*{-\arraycolsep}\vline\hspace*{-\arraycolsep}}
\theoremstyle{plain}
\newtheorem{thm}{Theorem}[section]
\newtheorem{cor}[thm]{Corollary}
\newtheorem{lem}[thm]{Lemma}
\newtheorem{prop}[thm]{Proposition}
\theoremstyle{definition}
\newtheorem{defn}[thm]{Definition}
\theoremstyle{remark}
\setlist[enumerate,1]{leftmargin=2em}
\def\H{\mathfrak H}
\def\F{\mathbb F}
\def\Z{\mathbb Z}
\def\U{U_q(\mathfrak{sl}_2)}
\def\e{\varepsilon}
\title[Finite-dimensional modules of $\triangle$ and $\H_q$]{Finite-dimensional modules of the universal Askey--Wilson algebra and DAHA of type $(C_1^\vee,C_1)$}
\author{Hau-Wen Huang}
\address{
Hau-Wen Huang\\
Department of Mathematics\\
National Central University\\
Chung-Li 32001 Taiwan
}
\email{hauwenh@math.ncu.edu.tw}
\begin{document}
\begin{abstract}
Assume that $\F$ is an algebraically closed field and let $q$ denote a nonzero scalar in $\F$ that is not a root of unity. The universal Askey--Wilson algebra $\triangle_q$ is a unital associative $\F$-algebra defined by generators and relations. The generators are $A,B, C$ and the relations state that each of 
\begin{gather*}
A+\frac{q BC-q^{-1} CB}{q^2-q^{-2}},
\qquad 
B+\frac{q CA-q^{-1} AC}{q^2-q^{-2}},
\qquad 
C+\frac{q AB-q^{-1} BA}{q^2-q^{-2}}
\end{gather*}
is central in $\triangle_q$. The universal DAHA (double affine Hecke algebra) $\mathfrak H_q$ of type $(C_1^\vee,C_1)$ is a unital associative $\F$-algebra generated by $\{t_i^{\pm 1}\}_{i=0}^3$ and the relations state that 
\begin{gather*}
t_it_i^{-1}=t_i^{-1} t_i=1
\quad 
\hbox{for all $i=0,1,2,3$};
\\
\hbox{$t_i+t_i^{-1}$ is central} 
\quad 
\hbox{for all $i=0,1,2,3$};
\\
t_0t_1t_2t_3=q^{-1}.
\end{gather*}
Each $\H_q$-module is a $\triangle_q$-module by pulling back via the injection $\triangle_q\to \H_q$ given by 
\begin{eqnarray*}
A &\mapsto & t_1 t_0+(t_1 t_0)^{-1},
\\
B &\mapsto & t_3 t_0+(t_3 t_0)^{-1},
\\
C &\mapsto & t_2 t_0+(t_2 t_0)^{-1}.
\end{eqnarray*}
We classify the lattices of $\triangle_q$-submodules of finite-dimensional irreducible $\H_q$-modules. As a corollary, for any finite-dimensional irreducible $\H_q$-module $V$, the $\triangle_q$-module $V$ is completely reducible if and only if $t_0$ is diagonalizable on $V$.
\end{abstract}

\maketitle

{\footnotesize{\bf Keywords:} Askey--Wilson algebras, lattices, representation theory.}

{\footnotesize{\bf MSC2020:} 16G30, 33D45, 33D80, 81R10, 81R12.}

\allowdisplaybreaks

\section{Introduction}\label{s:introduction}

Throughout this paper, we adopt the following conventions. Assume that $\F$ is an algebraically closed field and fix a nonzero scalar $q\in \F$ that is not a root of unity.

In \cite{hidden_sym}, Zhedanov proposed the Askey--Wilson algebras to link the Askey--Wilson polynomials and the representation theory. In \cite{uaw2011}, Terwilliger introduced a central extension of the Askey--Wilson algebras, called the  universal Askey--Wilson algebra and denoted by $\triangle_q$. The algebra $\triangle_q$ is a unital associative $\F$-algebra defined by generators and relations. The generators are $A,B,C$. The relations assert that each of  
$$
A+\frac{q BC-q^{-1} CB}{q^2-q^{-2}},
\qquad 
B+\frac{q CA-q^{-1} AC}{q^2-q^{-2}},
\qquad 
C+\frac{q AB-q^{-1} BA}{q^2-q^{-2}}
$$ 
is central in $\triangle_q$. The algebra $\triangle_q$ has connections to the Racah--Wigner coefficients \cite{Huang:CG,Huang:RW}, 
the quantum algebra $\U$ \cite{uaw&equit2011}, 
 the $q$-Higgs algebra \cite{Higg&AW2019}, 
the $q$-Onsager algebra \cite{ter2018,uaw2011}, the $Q$-polynomial distance-regular graphs of $q$-Racah type \cite{qRacahDRG} and so on.

The Askey--Wilson polynomials can be obtained from the DAHA (double affine Hecke algebra) of type $(C_1^\vee,C_1)$. See for example \cite{koo07,koo08,Noumi04,Cherednik:2016,DAHA&OP:2019}. 
Inspired by the result, Terwilliger  \cite{DAHA2013} related the algebra $\triangle_q$ to the universal DAHA $\H_q$ of type $(C_1^\vee,C_1)$ in the following way: By definition the algebra $\H_q$ is a unital associative $\F$-algebra with generators $\{t_i^{\pm 1}\}_{i=0}^3$ and the relations state that 
\begin{gather*}
t_it_i^{-1}=t_i^{-1} t_i=1
\quad 
\hbox{for all $i=0,1,2,3$};
\\
\hbox{$t_i+t_i^{-1}$ is central} 
\quad 
\hbox{for all $i=0,1,2,3$};
\\
t_0t_1t_2t_3=q^{-1}.
\end{gather*}
The algebra $\H_q$ is a central extension of the DAHA of type $(C_1^\vee,C_1)$ \cite[\S 6.4]{DAHA&OP_book}. 
In \cite[Theorem 4.1]{DAHA2013} it was given an $\F$-algebra homomorphism $\zeta:\triangle_q\to \H_q$ that sends 
\begin{eqnarray*}
A &\mapsto & t_1 t_0+(t_1 t_0)^{-1},
\\
B &\mapsto & t_3 t_0+(t_3 t_0)^{-1},
\\
C &\mapsto & t_2 t_0+(t_2 t_0)^{-1}.
\end{eqnarray*}
Thus each $\H_q$-module is a $\triangle_q$-module by pulling back via $\zeta$. When $\F$ is the complex number field, the finite-dimensional irreducible modules over DAHA of type $(C_1^\vee,C_1)$ were first described in \cite{Oblomkov2009}.
In \cite{Huang:DAHAmodule,Huang:2015} the present author classified the finite-dimensional irreducible $\H_q$-modules and $\triangle_q$-modules, respectively. In this paper 
we are concerned with the lattices of $\triangle_q$-submodules of finite-dimensional irreducible $\H_q$-modules. The paper is structured as follows. In \S \ref{s:zeta} we recall some results concerning the homomorphism $\zeta:\triangle_q\to \H_q$. In \S\ref{s:AW&Hmodule}  we recall some facts concerning the finite-dimensional irreducible $\triangle_q$-modules and $\H_q$-modules. In \S\ref{s:lattice} we classify the lattices of $\triangle_q$-submodules of finite-dimensional irreducible $\H_q$-modules. In \S\ref{s:summary} we integrate the results of \S\ref{s:lattice} into a brief summary.

We mention some related works. 
Let $V$ denote a finite-dimensional irreducible $\H_q$-module. In \cite{daha&LP} Nomura and Terwilliger gave the sufficient conditions for $A$ and $B$ acting as the so-called Leonard pairs \cite{lp2001} on the eigenspaces of $t_0$ in $V$. In \cite{Huang:DAHA&LT} it is given the necessary and sufficient conditions for $A,B,C$ acting as Leonard triples \cite{cur2007,Huang:2012} on all composition factors of the $\triangle_q$-module $V$. 
The algebras $\triangle_q$ and $\H_q$ are the $q$-analogues of the universal Racah algebra $\Re$ and the universal additive DAHA $\H$ of type $(C_1^\vee,C_1)$, respectively \cite{Huang:BImodule,BI&NW2016,Huang:R<BI,SH:2019-1,Groenevelt2007}.
In \cite{Huang:R<BImodules} it is given the classification of the lattices of $\Re$-submodules of finite-dimensional irreducible $\H$-modules, provided that $\F$ is of characteristic zero. 
In  \cite{JaeHo2013} J.-H. Lee studied a relationship between the $\H_q$-modules and the $Q$-polynomial distance-regular graphs of $q$-Racah type.

\section{The universal Askey--Wilson algebra and DAHA of type $(C_1^\vee,C_1)$}\label{s:zeta}

In this section we more formally introduce the universal Askey--Wilson algebra $\triangle_q$ and the universal DAHA $\H_q$ of type $(C_1^\vee,C_1)$. Additionally we describe the homomorphism of $\triangle_q$ into $\H_q$ in further detail.

\begin{defn}
[Definition 2.1, \cite{uaw2011}]
\label{defn:UAW}
The  {\it universal Askey--Wilson algebra} $\triangle_q$ is a unital associative $\F$-algebra defined by generators and relations in the following way. The generators are $A, B, C$ and the relations state that each of
\begin{gather*}
A+\frac{q BC-q^{-1} CB}{q^2-q^{-2}},
\qquad 
B+\frac{q CA-q^{-1} AC}{q^2-q^{-2}},
\qquad 
C+\frac{q AB-q^{-1} BA}{q^2-q^{-2}}
\end{gather*}
commutes with $A,B,C$.  
\end{defn}

Define the three elements $\alpha,\beta,\gamma$ of $\triangle_q$ as follows:
\begin{align*}
\frac{\alpha}{q+q^{-1}}&=
A+\frac{q BC-q^{-1} CB}{q^2-q^{-2}},
\\
\frac{\beta}{q+q^{-1}}&=
B+\frac{q CA-q^{-1} AC}{q^2-q^{-2}},
\\
\frac{\gamma}{q+q^{-1}}&=
C+\frac{q AB-q^{-1} BA}{q^2-q^{-2}}.
\end{align*}
Note that $\alpha,\beta,\gamma$ are central in $\triangle_q$. Additionally, the remarkable element 
$$
\Omega=q ABC + q^2A^2 + q^{-2}B^2 + q^2C^2 
-qA\alpha - q^{-1}B\beta - qC\gamma
$$
is central in $\triangle_q$. We call $\Omega$ the {\it Casimir element} of $\triangle_q$ \cite{uaw2011,hidden_sym}.

\begin{lem}\label{lem:gen_UAW}
The algebra $\triangle_q$ is generated by $A,B,\gamma$.
\end{lem}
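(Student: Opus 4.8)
The plan is to show that $C$ lies in the subalgebra $\triangle'$ generated by $A$, $B$, and $\gamma$, which by definition contains $A$, $B$, and $\gamma$ and is closed under the algebra operations. Once $C\in\triangle'$ we have $\triangle'\supseteq\{A,B,C\}$, hence $\triangle'=\triangle_q$ since $A,B,C$ generate $\triangle_q$.

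The key observation is the defining relation for $\gamma$: by definition of $\gamma$ we have
\[
\frac{\gamma}{q+q^{-1}}=C+\frac{qAB-q^{-1}BA}{q^2-q^{-2}},
\]
and solving for $C$ gives
\[
C=\frac{\gamma}{q+q^{-1}}-\frac{qAB-q^{-1}BA}{q^2-q^{-2}}.
\]
Here I am using that $q$ is not a root of unity, so $q+q^{-1}$ and $q^2-q^{-2}$ are nonzero and invertible in $\F$ (equivalently, we may multiply the relation through by these scalars). The right-hand side is manifestly an element of the subalgebra generated by $A$, $B$, $\gamma$, since it is a scalar combination of $\gamma$, $AB$, and $BA$. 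Therefore $C\in\triangle'$.

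There is essentially no obstacle here: the result is immediate from the defining relation once one records that the relevant scalars are invertible, which is part of the standing hypothesis on $q$. The only point requiring a word of care is to confirm that $\triangle_q$ is generated by $A$, $B$, $C$ as an $\F$-algebra — but this is exactly Definition~\ref{defn:UAW}, where $\triangle_q$ is presented with generators $A,B,C$. Combining this with $C\in\triangle'$ completes the proof.
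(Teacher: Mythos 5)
Your proposal is correct and follows essentially the same route as the paper: solve the defining relation for $\gamma$ to express $C$ as a scalar combination of $\gamma$, $AB$, $BA$ (using that $q+q^{-1}$ and $q^2-q^{-2}$ are nonzero since $q$ is not a root of unity), and conclude from the fact that $A,B,C$ generate $\triangle_q$.
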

\begin{proof}
By Definition \ref{defn:UAW} the algebra $\triangle_q$ is generated by $A,B,C$.
By the setting of $\gamma$ we have
$$
C=\frac{\gamma}{q+q^{-1}}-\frac{qAB-q^{-1}BA}{q^2-q^{-2}}.
$$
Hence the lemma follows.
\end{proof}

The DAHA was introduced by Cherednik in connection with  Macdonald eigenvalue problems \cite{Cherednik:1995,Cherednik:1992}.  The DAHA of type $(C_1^\vee,C_1)$ is the most general DAHA of rank $1$. In  \cite{DAHA2013} a central extension of this algebra was proposed as follows:

\begin{defn}
[Definition 3.1, \cite{DAHA2013}] 
\label{defn:H}
The {\it universal DAHA $\H_q$ of type $(C_1^\vee,C_1)$} is a unital associative $\F$-algebra defined by generators and relations. The generators are $\{t_i^{\pm 1}\}_{i=0}^3$ and the relations state that  
\begin{gather}
t_it_i^{-1}=t_i^{-1} t_i=1
\quad 
\hbox{for all $i=0,1,2,3$};
\nonumber
\\
\hbox{$t_i+t_i^{-1}$ is central} 
\quad 
\hbox{for all $i=0,1,2,3$};
\label{ti+tiinv}
\\
t_0t_1t_2t_3=q^{-1}.
\label{t0123}
\end{gather}
\end{defn}

Define the elements $\{c_i\}_{i=0}^3$ and $X,Y$ of $\H_q$ as follows:
\begin{align}
c_i&=t_i+t_i^{-1}
\qquad 
\hbox{for all $i=0,1,2,3$;}
\label{ci}
\\
X&=t_3 t_0,
\label{X}
\\
Y&= t_0 t_1.
\label{Y}
\end{align}
Note that $c_i$ is central in $\H_q$.

\begin{thm}
[\S 4, \cite{DAHA2013}]
\label{thm:hom}
There exists a unique $\F$-algebra homomorphism $\zeta:\triangle_q\to \H_q$ that sends 
\begin{eqnarray*}
A &\mapsto & t_1 t_0+(t_1 t_0)^{-1},
\\
B &\mapsto & t_3 t_0+(t_3 t_0)^{-1},
\\
C &\mapsto & t_2 t_0+(t_2 t_0)^{-1},
\\
\alpha &\mapsto & 
c_3 c_2
+c_1
(q t_0^{-1}+q^{-1}t_0),
\\
\beta &\mapsto & 
c_2 c_1
+c_3
(q t_0^{-1}+q^{-1}t_0),
\\
\gamma &\mapsto & 
c_1 c_3
+c_2
(q t_0^{-1}+q^{-1}t_0).
\end{eqnarray*}
Moreover the image of $\Omega$ under $\zeta$ is equal to 
$$
(q+q^{-1})^2-(q^{-1}t_0 + q t_0^{-1})^2-c_1^2-c_2^2-c_3^2
-c_1 c_2 c_3(q^{-1}t_0 + q t_0^{-1}).
$$
\end{thm}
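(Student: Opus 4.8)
The plan is to verify the theorem by direct computation inside $\H_q$, handling in turn the uniqueness of $\zeta$, its existence together with the images of $\alpha,\beta,\gamma$, and the image of the Casimir element $\Omega$. Uniqueness is immediate: by Definition \ref{defn:UAW} the algebra $\triangle_q$ is generated by $A,B,C$, so any $\F$-algebra homomorphism out of $\triangle_q$ is determined by the images of $A,B,C$, and these are prescribed.

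For existence, by the presentation in Definition \ref{defn:UAW} it suffices to show that the elements
\[
a=t_1t_0+(t_1t_0)^{-1},\qquad b=t_3t_0+(t_3t_0)^{-1},\qquad c=t_2t_0+(t_2t_0)^{-1}
\]
of $\H_q$ satisfy the defining relations of $\triangle_q$; that is, that each of
\[
a+\frac{qbc-q^{-1}cb}{q^2-q^{-2}},\qquad b+\frac{qca-q^{-1}ac}{q^2-q^{-2}},\qquad c+\frac{qab-q^{-1}ba}{q^2-q^{-2}}
\]
commutes with each of $a,b,c$. I would evaluate these three elements explicitly in $\H_q$. The only ingredients needed are the relations $t_it_i^{-1}=1$; the centrality of $c_i=t_i+t_i^{-1}$, which lets one substitute $t_i^{-1}=c_i-t_i$ and move the $c_i$ freely; and the relation $t_0t_1t_2t_3=q^{-1}$ together with its cyclic shifts $t_1t_2t_3t_0=t_2t_3t_0t_1=t_3t_0t_1t_2=q^{-1}$ (valid since a scalar is central), which is what makes the bulk of the terms collapse. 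Expanding $qbc-q^{-1}cb$ and adding $(q^2-q^{-2})a$ should produce $(q+q^{-1})$ times the claimed image of $\alpha$, namely $c_3c_2+c_1(q^{-1}t_0+qt_0^{-1})$, and cyclically for $\beta$ and $\gamma$; this simultaneously pins down the images of $\alpha,\beta,\gamma$. It then remains to check that $c_3c_2+c_1(q^{-1}t_0+qt_0^{-1})$ and its two cyclic partners commute with each of $a,b,c$; since $c_1,c_2,c_3$ are central this is a second, similar round of computation resting again on $t_0t_1t_2t_3=q^{-1}$.

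For the Casimir element I would substitute the images of $A,B,C,\alpha,\beta,\gamma$ into
\[
\Omega=qABC+q^2A^2+q^{-2}B^2+q^2C^2-qA\alpha-q^{-1}B\beta-qC\gamma
\]
and simplify by the same rules; the quadratic pieces $A^2,B^2,C^2$ and the products $A\alpha,B\beta,C\gamma$ are short, while the cubic term $\zeta(A)\zeta(B)\zeta(C)$ is the laborious one. One then checks that all contributions other than the asserted polynomial in $q^{-1}t_0+qt_0^{-1}$ and $c_1,c_2,c_3$ cancel by means of $t_0t_1t_2t_3=q^{-1}$. There is no conceptual difficulty anywhere: the whole statement is a computation in $\H_q$, and the only real obstacle — and essentially the only place to slip — is the bookkeeping in the noncommutative expansions, above all the expansion of $\zeta(A)\zeta(B)\zeta(C)$ and the step of recognizing that every unwanted monomial is eliminated using $t_0t_1t_2t_3=q^{-1}$ and the centrality of the $c_i$. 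Organizing the computation so that these cancellations are transparent — for instance by rewriting each monomial into a fixed normal form in $t_0,t_1,t_2,t_3$ and the central $c_i$ before collecting — is the part worth doing with care.
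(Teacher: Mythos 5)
Your overall strategy is sound, but note first that the paper you are being compared against does not prove this statement at all: it is imported verbatim from Terwilliger \cite{DAHA2013} (Theorem 4.1 there), and the direct verification you outline is essentially the argument of that source. Two remarks on the details. First, your centrality check in the "second round" can be short-circuited: since $c_1,c_2,c_3$ are central and $qt_0^{-1}+q^{-1}t_0=qc_0-(q-q^{-1})t_0$, the centrality of the claimed images of $\alpha,\beta,\gamma$ reduces entirely to showing that $t_0$ commutes with $a,b,c$, which is exactly Lemma \ref{lem:t0_centralizer} of the present paper and takes three lines; making that reduction explicit would save you the second computation. Second, there is a small constant error: from $\frac{\alpha}{q+q^{-1}}=A+\frac{qBC-q^{-1}CB}{q^2-q^{-2}}$ one gets $(q^2-q^{-2})a+qbc-q^{-1}cb=(q-q^{-1})\zeta(\alpha)$, not $(q+q^{-1})\zeta(\alpha)$ as you wrote; harmless for the plan, but it would derail the bookkeeping if carried into the actual expansion. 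Beyond that, the proposal is only a plan: the identities for $\zeta(\alpha)$, $\zeta(\beta)$, $\zeta(\gamma)$ and for $\zeta(\Omega)$ are precisely the content of the theorem, and none of the noncommutative expansions that establish them is actually carried out, so as written this is a correct roadmap (matching the cited source's method) rather than a complete proof.
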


From now on each $\H_q$-module is viewed as a $\triangle_q$-module by pulling back via $\zeta$.

\section{Finite-dimensional irreducible $\triangle_q$-modules and $\H_q$-modules}\label{s:AW&Hmodule}

In \S\ref{s:AWmodule} we rephrase some results on the finite-dimensional irreducible $\triangle_q$-modules from \cite{Huang:2015}. In \S\ref{s:Hmodule_even} and \S\ref{s:Hmodule_odd} we recall from \cite{Huang:DAHAmodule} some results on the even-dimensional and odd-dimensional irreducible $\H_q$-modules, respectively.

\subsection{Finite-dimensional irreducible $\triangle_q$-modules}\label{s:AWmodule}

\begin{prop}
[\S 4.1, \cite{Huang:2015}]
\label{prop:UAWd}
For any nonzero scalars $a,b,c\in \F$ and any integer $d\geq 0$, there exists a $(d+1)$-dimensional $\triangle_q$-module $V_d(a,b,c)$  satisfying the following conditions {\rm (i)}, {\rm (ii)}: 
\begin{enumerate}
\item There exists an $\F$-basis for $V_d(a,b,c)$ with respect to which the matrices representing $A$ and $B$ are 
$$
\begin{pmatrix}
\theta_0 & & &  &{\bf 0}
\\ 
1 &\theta_1 
\\
&1 &\theta_2
 \\
& &\ddots &\ddots
 \\
{\bf 0} & & &1 &\theta_d
\end{pmatrix},
\qquad 
\begin{pmatrix}
\theta_0^* &\varphi_1 &  & &{\bf 0}
\\ 
 &\theta_1^* &\varphi_2
\\
 &  &\theta_2^* &\ddots
 \\
 & & &\ddots &\varphi_d
 \\
{\bf 0}  & & & &\theta_d^*
\end{pmatrix},
$$
respectively, where 
\begin{align*}
\theta_i 
&=
a q^{2i-d}+a^{-1} q^{d-2i}
\qquad 
\hbox{for $i=0,1,\ldots,d$},
\\
\theta^*_i 
&=
b q^{2i-d}+b^{-1} q^{d-2i}
\qquad 
\hbox{for $i=0,1,\ldots,d$},
\\
\varphi_i 
&=
a^{-1} b^{-1} q^{d+1}
(q^i-q^{-i})
(q^{i-d-1}-q^{d-i+1})
\\
&\qquad \times \;
(q^{-i}-a b c q^{i-d-1})
(q^{-i}-a b c^{-1} q^{i-d-1})
\qquad 
\hbox{for $i=1,2,\ldots,d$}.
\end{align*}

\item The elements $\alpha,\beta,\gamma$ act on $V_d (a,b,c)$ as scalar multiplication by
\begin{align*}
(b+b^{-1}) (c+c^{-1})+(a+a^{-1}) (q^{d+1}+q^{-d-1}),
\\
(c+c^{-1}) (a+a^{-1})+(b+b^{-1}) (q^{d+1}+q^{-d-1}),
\\
(a+a^{-1}) (b+b^{-1})+(c+c^{-1}) (q^{d+1}+q^{-d-1}),
\end{align*}
respectively.
\end{enumerate} 
\end{prop}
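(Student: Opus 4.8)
The plan is to realize $V_d(a,b,c)$ concretely and then verify the defining relations of $\triangle_q$ directly. First I would take $V$ to be a $(d+1)$-dimensional $\F$-vector space with basis $v_0,v_1,\dots,v_d$, let $A$ and $B$ act on $V$ by the two bidiagonal matrices displayed in {\rm (i)}, write $\alpha_0,\beta_0,\gamma_0$ for the three scalars listed in {\rm (ii)}, and then \emph{define} the action of $C$ on $V$ to be
$$
C \;=\; \frac{\gamma_0}{q+q^{-1}}\,\mathrm{id}_V \;-\; \frac{qAB-q^{-1}BA}{q^2-q^{-2}},
$$
which is exactly the value forced by the relation $\frac{\gamma}{q+q^{-1}}=C+\frac{qAB-q^{-1}BA}{q^2-q^{-2}}$ if $\gamma$ is to act as $\gamma_0$. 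Since $A$ is lower bidiagonal and $B$ is upper bidiagonal, $AB$ and $BA$ are tridiagonal, so $C$ is tridiagonal; I would record its three diagonals as closed-form expressions in $q$, the $\theta_i$, the $\theta_i^*$ and the $\varphi_i$.

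Next I would reduce everything to a scalar identity. By Definition \ref{defn:UAW}, to make $V$ a $\triangle_q$-module it suffices to show that each of $A+\frac{qBC-q^{-1}CB}{q^2-q^{-2}}$, $B+\frac{qCA-q^{-1}AC}{q^2-q^{-2}}$ and $C+\frac{qAB-q^{-1}BA}{q^2-q^{-2}}$ commutes with $A$, $B$ and $C$ on $V$. The third of these equals $\frac{\gamma_0}{q+q^{-1}}\,\mathrm{id}_V$ by the very choice of $C$, so it is central and it realizes the $\gamma$ part of {\rm (ii)}. Hence the entire content is the claim that the first two operators act on $V$ as scalar multiplication by $\frac{\alpha_0}{q+q^{-1}}$ and $\frac{\beta_0}{q+q^{-1}}$ respectively: granting this, both are scalar, hence central, so all defining relations of $\triangle_q$ hold, $V$ becomes a $\triangle_q$-module, and {\rm (ii)} follows. (By Lemma \ref{lem:gen_UAW} one could instead work with the generating set $A$, $B$, $\gamma$, but this does not change the work.)

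The bulk of the argument, and the step I expect to be the main obstacle, is this scalar identity. Multiplying the bidiagonal and tridiagonal matrices together, the operator $A+\frac{qBC-q^{-1}CB}{q^2-q^{-2}}$ sends $v_j$ to a combination of $v_{j+1}$, $v_j$, $v_{j-1}$ and $v_{j-2}$ only, and asserting that it equals $\frac{\alpha_0}{q+q^{-1}}\,\mathrm{id}_V$ breaks into three families of identities: {\rm (a)} the $v_{j+1}$- and $v_{j-2}$-components vanish for all $j$; {\rm (b)} the $v_{j-1}$-component vanishes for all $j$; {\rm (c)} the $v_j$-component is independent of $j$ with value $\frac{\alpha_0}{q+q^{-1}}$. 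After substituting $\theta_i=aq^{2i-d}+a^{-1}q^{d-2i}$, $\theta_i^*=bq^{2i-d}+b^{-1}q^{d-2i}$ and the product formula for $\varphi_i$, each of {\rm (a)}, {\rm (b)} and {\rm (c)} becomes a rational identity in $q,a,b,c$; for example {\rm (a)} for the $v_{j+1}$-component reduces to $(q\theta_j^*-q^{-1}\theta_{j+1}^*)(q\theta_{j+1}^*-q^{-1}\theta_j^*)=(q^2-q^{-2})^2$, which telescopes at once, and the vanishing of the $v_{j-2}$-component reduces to the three-term relation $(q^2+q^{-2})\theta_{j-1}=\theta_j+\theta_{j-2}$. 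The analysis of $B+\frac{qCA-q^{-1}AC}{q^2-q^{-2}}$ is entirely parallel, with $a$ and $b$ (equivalently $\theta$ and $\theta^*$) interchanged and the left/right order of $C$ and $A$ swapped. What makes all of this go through is the engineered shape of $\varphi_i$: it vanishes at $i=0$ and at $i=d+1$ and factors as $a^{-1}b^{-1}q^{d+1}(q^i-q^{-i})(q^{i-d-1}-q^{d-i+1})(q^{-i}-abcq^{i-d-1})(q^{-i}-abc^{-1}q^{i-d-1})$, so that the off-diagonal contributions cancel and the diagonal ones collapse to a single scalar. There is no conceptual difficulty once the bookkeeping is organized; alternatively one may recognize $(A,B)$ as a Leonard pair of $q$-Racah type in split form and appeal to the theory of Leonard pairs and the Askey--Wilson relations, but the direct verification above is self-contained.
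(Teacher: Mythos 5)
Your strategy is sound, and it is genuinely different from how the statement is actually established: this paper gives no proof at all but imports Proposition \ref{prop:UAWd} from \cite{Huang:2015}, where $V_d(a,b,c)$ is not built by hand but obtained by pulling back a $(d+1)$-dimensional irreducible $U_q(\mathfrak{sl}_2)$-module through a homomorphism $\triangle_q\to\U$ coming from the equitable presentation \cite{uaw&equit2011}, the matrices in (i) and the scalars in (ii) then being read off in a suitable basis. Your route instead defines $C$ on $V$ by the formula forced by the $\gamma$-relation and Lemma \ref{lem:gen_UAW}, and reduces everything to showing that $A+\frac{qBC-q^{-1}CB}{q^2-q^{-2}}$ and $B+\frac{qCA-q^{-1}AC}{q^2-q^{-2}}$ act as the scalars $\frac{\alpha_0}{q+q^{-1}}$ and $\frac{\beta_0}{q+q^{-1}}$; that reduction is logically correct (scalar operators are central, so by Definition \ref{defn:UAW} all defining relations hold and (ii) follows), and the two component identities you do display are right: the $v_{j+1}$-component indeed reduces to $(q\theta_j^*-q^{-1}\theta_{j+1}^*)(q\theta_{j+1}^*-q^{-1}\theta_j^*)=(q^2-q^{-2})^2$, which holds since $q\theta_j^*-q^{-1}\theta_{j+1}^*=b^{-1}q^{d-2j-1}(q^2-q^{-2})$ and $q\theta_{j+1}^*-q^{-1}\theta_j^*=bq^{2j-d+1}(q^2-q^{-2})$, and the $v_{j-2}$-component is proportional to $(q^2+q^{-2})\theta_{j-1}-\theta_j-\theta_{j-2}=0$. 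The caveat is that these two checks never use $\varphi_i$ or the scalars $\alpha_0,\beta_0,\gamma_0$; the actual content of the proposition sits in your items (b) and (c) (the $v_{j-1}$- and $v_j$-components, which involve $\varphi_j$, $\varphi_{j+1}$ and $\gamma_0$ through the tridiagonal entries of $C$) and in the entirely analogous verification for the $\beta$-operator, and these you only assert to collapse "by the engineered shape of $\varphi_i$." Those identities do hold and are finite, routine computations, so your outline would go through, but as written it is an organized plan whose decisive calculations are not yet performed. What each approach buys: the pullback through $\U$ used in \cite{Huang:2015} gets the module structure essentially for free and places $V_d(a,b,c)$ inside known representation theory, while your direct verification is elementary and self-contained and makes visible exactly how the value of $\gamma$ in (ii) determines $C$ and how the factorized form of $\varphi_i$ drives the cancellations.
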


Note that the $\triangle_q$-module $V_d(a,b,c)$ from Proposition \ref{prop:UAWd} is unique up to isomorphism by Lemma \ref{lem:gen_UAW}.

\begin{thm}
[Theorem 4.4, \cite{Huang:2015}]
\label{thm:irr_UAW}
For any nonzero scalars $a,b,c\in \F$ and any integer $d\geq 0$, the $\triangle_q$-module $V_d(a,b,c)$ is irreducible if and only if $$
abc, 
a^{-1}bc, 
ab^{-1}c, 
abc^{-1}
\not\in 
\{q^{2i-d-1}\,|\, i=1,2,\ldots,d\}.
$$
\end{thm}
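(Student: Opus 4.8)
The plan is to analyze when the module $V_d(a,b,c)$ of Proposition \ref{prop:UAWd} fails to be irreducible by exploiting the tridiagonal shape of the matrices representing $A$ and $B$. The key observation is that, in the given basis $v_0,v_1,\ldots,v_d$, the action of $A$ is lower triangular with all subdiagonal entries equal to $1$, so $A$ is a single Jordan block (hence cyclic), and the action of $B$ is upper triangular with superdiagonal entries $\varphi_1,\ldots,\varphi_d$. A standard fact about such a Leonard-type setup is that $v_0$ is an eigenvector of $B$ (with eigenvalue $\theta_0^*$), and more generally the span of $v_0,v_1,\ldots,v_i$ is $B$-invariant while the span of $v_i,v_{i+1},\ldots,v_d$ is $A$-invariant. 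By Lemma \ref{lem:gen_UAW} the algebra $\triangle_q$ is generated by $A$, $B$, $\gamma$, and $\gamma$ acts as a scalar on $V_d(a,b,c)$ by Proposition \ref{prop:UAWd}(ii); hence a subspace is a $\triangle_q$-submodule if and only if it is invariant under both $A$ and $B$.

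First I would establish the ``only if'' direction by constructing an explicit proper nonzero submodule whenever one of the four products $abc$, $a^{-1}bc$, $ab^{-1}c$, $abc^{-1}$ equals $q^{2i-d-1}$ for some $i\in\{1,\ldots,d\}$. By inspection of the formula
\[
\varphi_i = a^{-1}b^{-1}q^{d+1}(q^i-q^{-i})(q^{i-d-1}-q^{d-i+1})(q^{-i}-abc\,q^{i-d-1})(q^{-i}-abc^{-1}q^{i-d-1}),
\]
the first two factors $(q^i-q^{-i})(q^{i-d-1}-q^{d-i+1})$ are nonzero for $1\le i\le d$ since $q$ is not a root of unity, so $\varphi_i=0$ precisely when $abc\,q^{i-d-1}=q^{-i}$ or $abc^{-1}q^{i-d-1}=q^{-i}$, i.e. when $abc=q^{2i-d-1}$ or $abc^{-1}=q^{2i-d-1}$; and by the symmetry of the hypothesis list I would note that $\varphi_i$ vanishes iff one of the four products equals $q^{2i-d-1}$ — here one should double-check that the roles of $a^{-1}bc$ and $ab^{-1}c$ are genuinely captured, which may require rewriting $\varphi_i$ or invoking an automorphism of $\triangle_q$ permuting $A,B,C$; this bookkeeping is where I expect the only real friction. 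Once $\varphi_i=0$ for some $i$, the subspace $W=\mathrm{span}\{v_0,\ldots,v_{i-1}\}$ is $B$-invariant (the superdiagonal entry leaving $W$ is $\varphi_i=0$) and $A$-invariant it is not — so instead I would take $W=\mathrm{span}\{v_0,\ldots,v_{i-1}\}$ for $B$ and note the complementary span $\mathrm{span}\{v_i,\ldots,v_d\}$ is $A$-invariant but not $B$-invariant; the correct proper submodule is the $A$-invariant subspace generated appropriately, and one checks directly that $\mathrm{span}\{v_i,v_{i+1},\ldots,v_d\}$ is invariant under both $A$ and $B$ when $\varphi_i=0$, giving a proper nonzero $\triangle_q$-submodule.

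For the ``if'' direction, assume all four products avoid $\{q^{2i-d-1}:1\le i\le d\}$, so $\varphi_i\ne0$ for all $i=1,\ldots,d$. Let $W$ be a nonzero $\triangle_q$-submodule; I would show $W=V_d(a,b,c)$. Since $A$ acts as a single Jordan block, its only invariant subspaces are $\mathrm{span}\{v_i,\ldots,v_d\}$ for $i=0,1,\ldots,d$ (using that the $v_j$ are generalized eigenvectors for the eigenvalue structure of $A$; if the $\theta_i$ are not distinct one argues via the Jordan form directly). Thus $W=\mathrm{span}\{v_i,\ldots,v_d\}$ for some $i$. Applying $B$ to $v_i\in W$ gives a vector with nonzero $v_{i-1}$-component (coefficient $\varphi_i\ne0$) unless $i=0$; since $B$-invariance of $W$ forces that component to lie in $W$, we conclude $i=0$, i.e. $W=V_d(a,b,c)$. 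This proves irreducibility. The main obstacle throughout is purely notational: verifying that the factorization of $\varphi_i$ exactly matches the symmetric four-product condition in the statement, and handling the degenerate case where the scalars $\theta_i$ (or $\theta_i^*$) coincide so that ``eigenvector'' must be replaced by ``generalized eigenvector'' in the Jordan-block argument.
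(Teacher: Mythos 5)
This theorem is quoted from \cite{Huang:2015} and not reproved in the present paper, so your proposal must stand on its own; it does not. The decisive flaw is the one you yourself flag as ``bookkeeping friction'': it is not bookkeeping. The factorization of $\varphi_i$ shows that $\varphi_i=0$ for some $i\in\{1,\dots,d\}$ exactly when $abc$ or $abc^{-1}$ lies in $\{q^{2i-d-1}\}$; the products $a^{-1}bc$ and $ab^{-1}c$ simply do not occur in $\varphi_i$, and no rewriting will make them appear. Consequently your ``if'' direction proves a false statement: from ``all $\varphi_i\neq 0$'' you conclude irreducibility, but the theorem asserts reducibility whenever $a^{-1}bc$ or $ab^{-1}c$ lies in the forbidden set even though all $\varphi_i\neq 0$ there. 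A concrete counterexample to your argument is $d=1$ with $c=ab^{-1}$ and $a,b$ generic: then $\varphi_1=-a^{-1}b^{-1}(q-q^{-1})^2(1-abc)(1-abc^{-1})\neq 0$, yet a direct computation shows $\varphi_1=(\theta_0-\theta_1)(\theta_1^*-\theta_0^*)$ reduces to $c+c^{-1}=ab^{-1}+a^{-1}b$, so $A$ and $B$ share a common eigenvector (namely $v_0+\tfrac{1}{\theta_0-\theta_1}v_1$) spanning a one-dimensional $\triangle_q$-submodule. The missing ingredient is the \emph{second} split sequence: besides the basis in which $A$ is lower bidiagonal and $B$ upper bidiagonal with parameters $\varphi_i$, there is a second such basis with parameters $\phi_i$, and it is the vanishing of some $\phi_i$ that detects $a^{-1}bc,\,ab^{-1}c\in\{q^{2i-d-1}\}$; irreducibility requires all $\varphi_i$ \emph{and} all $\phi_i$ to be nonzero. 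Your ``only if'' direction is correspondingly incomplete, since it only produces submodules in two of the four cases.

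A secondary but also genuine error: you assert that the only $A$-invariant subspaces are the tails $\mathrm{span}\{v_i,\dots,v_d\}$ ``since $A$ acts as a single Jordan block.'' A lower bidiagonal matrix with subdiagonal entries $1$ and \emph{distinct} diagonal entries $\theta_i$ is diagonalizable, not a single Jordan block, and has $2^{d+1}$ invariant subspaces (all sums of eigenlines); the degenerate case you worry about ($\theta_i$ coinciding) is actually the favorable one for your argument. The standard repair is to start instead from an arbitrary nonzero vector of a submodule $W$, use the bidiagonal shape of $A$ to push it to $v_d\in W$ (a Vandermonde-type computation, valid when the $\theta_i$ are distinct), and then use $\varphi_i\neq 0$ to walk back with $B$ --- but even with this repair the argument only yields the two-product criterion, not the four-product one, for the reason above.
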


Define 
$$
[n]_q=\frac{q^n-q^{-n}}{q-q^{-1}}
\qquad 
\hbox{for all integers $n\geq 0$}.
$$

\begin{thm}
[Corollary 4.10, \cite{Huang:2015}]
\label{thm:onto_UAW}
Let $d\geq 0$ denote an integer. Suppose that $V$ is a $(d+1)$-dimensional irreducible $\triangle_q$-module. Let ${\rm tr}A,{\rm tr}B,{\rm tr}C$ denote the traces of $A,B,C$ on $V$, respectively. 
For any nonzero scalars $a,b,c\in \F$ the following are equivalent:
\begin{enumerate}
\item The $\triangle_q$-module $V_d(a,b,c)$ is isomorphic to $V$.

\item $a,b,c$ are the roots of the quadratic polynomials
\begin{align*}
&[d + 1]_q x^2 - {\rm tr}A x + [d + 1]_q, 
\\
&[d + 1]_q x^2 - {\rm tr}B x + [d + 1]_q, 
\\
&[d + 1]_q x^2 - {\rm tr}C x + [d + 1]_q, 
\end{align*}
respectively.
\end{enumerate}
\end{thm}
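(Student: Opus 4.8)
The plan is to reduce Theorem~\ref{thm:onto_UAW} to two inputs: explicit formulas for the traces of $A,B,C$ on $V_d(a,b,c)$, and the classification of finite-dimensional irreducible $\triangle_q$-modules established in \cite{Huang:2015}. Concretely, I would first show that on $V_d(a,b,c)$ one has
\[
{\rm tr}A=(a+a^{-1})[d+1]_q,\qquad {\rm tr}B=(b+b^{-1})[d+1]_q,\qquad {\rm tr}C=(c+c^{-1})[d+1]_q.
\]
The first two are immediate from Proposition~\ref{prop:UAWd}(i): the matrices representing $A$ and $B$ are triangular with diagonal entries $\theta_0,\dots,\theta_d$ and $\theta_0^*,\dots,\theta_d^*$, and since $q$ is not a root of unity one has $\sum_{i=0}^d q^{2i-d}=q^{-d}+q^{-d+2}+\cdots+q^d=[d+1]_q$. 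For ${\rm tr}C$ I would use Lemma~\ref{lem:gen_UAW} to write $C=\frac{\gamma}{q+q^{-1}}-\frac{qAB-q^{-1}BA}{q^2-q^{-2}}$; taking traces, using $\frac{q-q^{-1}}{q^2-q^{-2}}=\frac{1}{q+q^{-1}}$ and the scalar action of $\gamma$ from Proposition~\ref{prop:UAWd}(ii), this reduces the claim to evaluating ${\rm tr}(AB)=\sum_{i=0}^d\theta_i\theta_i^*+\sum_{i=1}^d\varphi_i$ directly from the displayed matrices and simplifying the resulting $q$-sums. This simplification is the main self-contained computation of the argument.

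Granting the trace formulas, the implication (i)$\Rightarrow$(ii) is short. Since the trace is an isomorphism invariant, $V_d(a,b,c)\cong V$ gives ${\rm tr}A=(a+a^{-1})[d+1]_q$ on $V$. Because $a\ne0$ and $[d+1]_q\ne0$ (again since $q$ is not a root of unity), multiplying the identity $a+a^{-1}={\rm tr}A/[d+1]_q$ by $a[d+1]_q$ shows $[d+1]_qa^2-{\rm tr}A\,a+[d+1]_q=0$, i.e.\ $a$ is a root of $[d+1]_qx^2-{\rm tr}A\,x+[d+1]_q$; the same reasoning with ${\rm tr}B$ and ${\rm tr}C$ handles $b$ and $c$.

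For (ii)$\Rightarrow$(i) I would invoke the classification from \cite{Huang:2015}: the $(d+1)$-dimensional irreducible $\triangle_q$-module $V$ is isomorphic to $V_d(a',b',c')$ for some nonzero $a',b',c'\in\F$. Applying (i)$\Rightarrow$(ii) to that isomorphism shows $a',b',c'$ are roots of the same three quadratics as $a,b,c$. Each of these quadratics has equal leading and constant coefficients $[d+1]_q$, so its two roots multiply to $1$ and hence are mutually inverse; therefore $a'\in\{a,a^{-1}\}$, $b'\in\{b,b^{-1}\}$, $c'\in\{c,c^{-1}\}$. To finish I would use the further fact, also part of the classification in \cite{Huang:2015}, that up to isomorphism $V_d(a,b,c)$ depends only on the unordered pairs $\{a,a^{-1}\},\{b,b^{-1}\},\{c,c^{-1}\}$ (equivalently, one checks directly that $V_d(a,b,c)\cong V_d(a^{-1},b,c)$ by a suitable change of basis, and symmetrically in $b$ and $c$); this yields $V_d(a,b,c)\cong V_d(a',b',c')\cong V$.

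I expect the single genuine obstacle in a self-contained proof to be the evaluation of $\sum_{i=1}^d\varphi_i$ — equivalently of ${\rm tr}(AB)$ — and its collapse to $(c+c^{-1})[d+1]_q$; everything else is either formal (invariance of traces, Vieta's relations) or imported from the module classification of \cite{Huang:2015}.
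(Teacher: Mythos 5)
First, a point of comparison: this paper does not prove Theorem~\ref{thm:onto_UAW} at all — it is imported verbatim from \cite{Huang:2015} (Corollary 4.10), so there is no in-paper argument to measure your proposal against. Judged on its own, your outline follows the natural route (trace formulas on $V_d(a,b,c)$ plus the classification of irreducible modules), and most of it is sound: the formulas ${\rm tr}A=(a+a^{-1})[d+1]_q$, ${\rm tr}B=(b+b^{-1})[d+1]_q$ are immediate from Proposition~\ref{prop:UAWd}(i), the reduction of ${\rm tr}C$ via $C=\frac{\gamma}{q+q^{-1}}-\frac{qAB-q^{-1}BA}{q^2-q^{-2}}$ and ${\rm tr}(AB)=\sum_i\theta_i\theta_i^*+\sum_i\varphi_i$ is correct, and the Vieta argument for (i)$\Rightarrow$(ii) is fine. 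But the identity ${\rm tr}C=(c+c^{-1})[d+1]_q$ is the actual content of that half of the proof and is only asserted; it does collapse as you claim (one can check it for $d=0,1$ directly), but the general $q$-sum simplification still has to be carried out or cited, so as written this is a deferred step, not a proof.

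The more serious soft spot is in (ii)$\Rightarrow$(i). The fact you lean on — that up to isomorphism $V_d(a,b,c)$ depends only on the unordered pairs $\{a^{\pm1}\},\{b^{\pm1}\},\{c^{\pm1}\}$, ``equivalently, one checks directly that $V_d(a,b,c)\cong V_d(a^{-1},b,c)$ by a suitable change of basis, and symmetrically in $b$ and $c$'' — is false without an irreducibility hypothesis, so no change-of-basis argument can prove it unconditionally. For example, take $d=1$ and $c=(ab)^{-1}$ with $a,b$ generic: then $\varphi_1=0$ in $V_1(a,b,c)$ and its unique one-dimensional $\triangle_q$-submodule has $A$-eigenvalue $aq+a^{-1}q^{-1}$, whereas $V_1(a,b^{-1},c)$ has $\varphi_1\neq 0$ and its unique one-dimensional submodule has $A$-eigenvalue $aq^{-1}+a^{-1}q$; these two (reducible) modules are not isomorphic although their parameters differ only by $b\mapsto b^{-1}$. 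The repair is standard but must be said: the criterion of Theorem~\ref{thm:irr_UAW} is invariant under inverting any of $a,b,c$ (the excluded set $\{q^{2i-d-1}\}$ is closed under inversion), so once $V\cong V_d(a',b',c')$ is irreducible and $a\in\{a'^{\pm1}\}$, $b\in\{b'^{\pm1}\}$, $c\in\{c'^{\pm1}\}$, the module $V_d(a,b,c)$ is irreducible as well, and then you may invoke the isomorphism criterion for \emph{irreducible} modules from \cite{Huang:2015} (the isomorphism class is determined by $a+a^{-1},b+b^{-1},c+c^{-1}$) to get $V_d(a,b,c)\cong V_d(a',b',c')\cong V$. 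With that adjustment, and with the ${\rm tr}C$ computation actually performed, your argument is complete and is presumably the same in spirit as the original proof in \cite{Huang:2015}.
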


Let $\Z$ denote the additive group of integers.
Recall that $\Z/2\Z$ is the additive group of integers modulo $2$. Observe that there exists a unique $\Z/2\Z$-action on $\triangle_q$ such that each element of $\Z/2\Z$ acts on $\triangle_q$ as an $\F$-algebra automorphism in the following way:

\begin{table}[H]
\centering
\extrarowheight=3pt
\begin{tabular}{c|ccc|ccc}
$\e\in \Z/2\Z$  
&$A$ &$B$ &$C$ 
&$\alpha$ &$\beta$ &$\gamma$
\\

\midrule[1pt]

$0 \pmod{2}$ 
&$A$ &$B$ &$C$ 
&$\alpha$ &$\beta$ &$\gamma$
\\
$1 \pmod{2}$
&$B$ &$A$ &$C+\frac{AB-BA}{q-q^{-1}}$ 
&$\beta$ &$\alpha$ &$\gamma$ 
\end{tabular}
\caption{The $\Z/2\Z$-action on $\triangle_q$}\label{Z/2Z-action}
\end{table}

Let $V$ denote a $\triangle_q$-module. For any $\F$-algebra automorphism $\e$ of $\triangle_q$, the notation 
$$
V^\e
$$ stands for the $\triangle_q$-module obtained by twisting the $\triangle_q$-module $V$ via $\e$.

\begin{lem}\label{lem:Z/2Z}
Let $a,b,c$ denote nonzero scalars in $\F$ and let $d\geq 0$ be an integer. If the $\triangle_q$-module $V_d(a,b,c)$ is irreducible, then the $\triangle_q$-module $V_d(a,b,c)^{1\bmod{2}}$ is isomorphic to $V_d(b,a,c)$.
\end{lem}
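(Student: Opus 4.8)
The plan is to exhibit an explicit $\triangle_q$-isomorphism $V_d(a,b,c)^{1\bmod 2}\to V_d(b,a,c)$ by comparing the actions of the generators $A,B,\gamma$, which by Lemma \ref{lem:gen_UAW} suffice to determine a $\triangle_q$-module structure. On $V_d(a,b,c)^{1\bmod 2}$ the element $A$ acts as the old $B$ and $B$ acts as the old $A$; so in the basis of Proposition \ref{prop:UAWd}(i) for $V_d(a,b,c)$, the twisted $A$ is upper triangular with diagonal entries $\theta_i^*=bq^{2i-d}+b^{-1}q^{d-2i}$ and superdiagonal entries $\varphi_i$, while the twisted $B$ is lower triangular with diagonal entries $\theta_i=aq^{2i-d}+a^{-1}q^{d-2i}$ and subdiagonal entries $1$. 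Meanwhile $V_d(b,a,c)$ by definition has a basis in which $A$ is lower triangular with diagonal $bq^{2i-d}+b^{-1}q^{d-2i}$ and subdiagonal $1$, and $B$ is upper triangular with diagonal $aq^{2i-d}+a^{-1}q^{d-2i}$ and superdiagonal $\varphi_i'$, where $\varphi_i'$ is obtained from $\varphi_i$ by swapping $a\leftrightarrow b$ (note $abc$ and $abc^{-1}$ are symmetric in $a,b$, so $\varphi_i'=a^{-1}b^{-1}q^{d+1}(q^i-q^{-i})(q^{i-d-1}-q^{d-i+1})(q^{-i}-abcq^{i-d-1})(q^{-i}-abc^{-1}q^{i-d-1})$, i.e. the same product but with the prefactor $a^{-1}b^{-1}$ instead of $b^{-1}a^{-1}$—identical).

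Concretely, I would first reverse the order of the basis, i.e. use the change of basis sending the $i$-th vector to the $(d-i)$-th; this interchanges "upper triangular" with "lower triangular" and "diagonal entry indexed by $i$" with "diagonal entry indexed by $d-i$". Under $i\mapsto d-i$ one has $\theta_i^*\mapsto \theta_{d-i}^*=bq^{d-2i}+b^{-1}q^{2i-d}$, which is again of the form $bq^{2j-d}+b^{-1}q^{d-2j}$ with $b$ replaced by $b^{-1}$; similarly for $\theta_i$. So after reversing, the twisted $A$ looks like the $A$ of $V_d(b^{-1},\cdot,\cdot)$ and the twisted $B$ like the $B$ of $V_d(\cdot,a^{-1},\cdot)$. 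To repair the inversions $a\mapsto a^{-1}$, $b\mapsto b^{-1}$ I would compose with a diagonal change of basis $v_i\mapsto \lambda_i v_i$; rescaling does not affect diagonal entries of $A$ or $B$ but multiplies the subdiagonal of the twisted $B$ (currently all $1$'s) by $\lambda_{i-1}/\lambda_i$ and the superdiagonal of the twisted $A$ by $\lambda_i/\lambda_{i-1}$, so one can arrange the subdiagonal entries of $B$ to be $1$ while the superdiagonal of $A$ picks up the reciprocal factor. A direct check—using that $\varphi_i$ is invariant under $i\mapsto d-i+1$ together with simultaneous inversion $a\mapsto a^{-1},b\mapsto b^{-1}$ (each factor $(q^j-q^{-j})$ is antisymmetric in $j\mapsto -j$, and $abc,abc^{-1}$ are invariant under joint inversion only up to the explicit prefactor, which the diagonal rescaling absorbs)—then shows the superdiagonal of the twisted $A$ becomes exactly $\varphi_i'$. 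Thus in the new basis, twisted $A$ and twisted $B$ are represented by precisely the matrices of $A$ and $B$ on $V_d(b,a,c)$.

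Finally I would check the central element $\gamma$: on $V_d(a,b,c)^{1\bmod 2}$ the twist sends $\gamma$ to $\gamma$ (Table \ref{Z/2Z-action}), which acts as the scalar $(a+a^{-1})(b+b^{-1})+(c+c^{-1})(q^{d+1}+q^{-d-1})$ by Proposition \ref{prop:UAWd}(ii); this scalar is symmetric in $a,b$, hence equals the scalar by which $\gamma$ acts on $V_d(b,a,c)$. Since $\triangle_q$ is generated by $A,B,\gamma$ and both modules are $(d+1)$-dimensional with matching generator actions in the chosen bases, the identity map in these bases is the desired $\triangle_q$-isomorphism; irreducibility of $V_d(a,b,c)$ is used only to invoke Theorem \ref{thm:irr_UAW} and note the isomorphism class is well-defined, and in fact one sees $V_d(b,a,c)$ is irreducible too since the condition of Theorem \ref{thm:irr_UAW} is symmetric in $a,b$. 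The main obstacle is purely bookkeeping: correctly tracking the two successive changes of basis (order reversal and diagonal rescaling) and verifying that the resulting off-diagonal entries match $\varphi_i$ with $a,b$ swapped; the symmetry of the relevant scalars in $a$ and $b$ makes the conceptual content routine, but the index manipulation $i\leftrightarrow d-i+1$ in $\varphi_i$ must be done with care.
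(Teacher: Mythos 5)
Your strategy (match the actions of $A,B,\gamma$ in explicit bases, using Lemma \ref{lem:gen_UAW}) is fine in principle, but the matrix bookkeeping does not land where you claim it does. After reversing the basis order, the diagonal entry of the twisted $A$ at index $i$ is $\theta^*_{d-i}=bq^{d-2i}+b^{-1}q^{2i-d}$, and a diagonal rescaling --- as you yourself note --- never changes diagonal entries, so no choice of $\lambda_i$ can ``repair the inversions.'' What the reversal-plus-rescaling actually produces (using that all $\varphi_i\neq 0$, which is where irreducibility enters, not merely for well-definedness) are exactly the matrices of $A$ and $B$ on $V_d(b^{-1},a^{-1},c)$, not on $V_d(b,a,c)$: your own symmetry statement confirms this, since the invariance of $\varphi_i$ you verify is under $i\mapsto d+1-i$ \emph{combined with} $a\mapsto a^{-1}$, $b\mapsto b^{-1}$. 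For generic $b$ the sequences $bq^{d-2i}+b^{-1}q^{2i-d}$ and $bq^{2i-d}+b^{-1}q^{d-2i}$ differ, so the final sentence ``precisely the matrices of $A$ and $B$ on $V_d(b,a,c)$'' is false as stated. To close the argument you would still need the extra fact that $V_d(b^{-1},a^{-1},c)\cong V_d(b,a,c)$ for irreducible modules; this is true, but it requires an additional input such as Theorem \ref{thm:onto_UAW} (the trace quadratics $[d+1]_q x^2-{\rm tr}\,A\,x+[d+1]_q$, etc., have root pairs $\{b,b^{-1}\}$, $\{a,a^{-1}\}$, $\{c,c^{-1}\}$), or yet another explicit base change.

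Once you invoke Theorem \ref{thm:onto_UAW} anyway, the whole computation becomes unnecessary; this is precisely the paper's route. On the twisted module the traces of $A,B$ equal the traces of $B,A$ on $V_d(a,b,c)$, and the trace of $C$ is unchanged because the extra term $\frac{AB-BA}{q-q^{-1}}$ in Table \ref{Z/2Z-action} is a commutator and hence traceless; since twisting by an automorphism preserves irreducibility, Theorem \ref{thm:onto_UAW} immediately identifies $V_d(a,b,c)^{1\bmod 2}$ with $V_d(b,a,c)$. I recommend either adopting that two-line argument or, if you want to keep the explicit isomorphism, stating honestly that it gives $V_d(b^{-1},a^{-1},c)$ and then supplying the missing identification $V_d(b^{-1},a^{-1},c)\cong V_d(b,a,c)$.
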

\begin{proof}
By Table \ref{Z/2Z-action} the traces of $A,B,C$ on $V_d(a,b,c)^{1\bmod{2}}$ are equal to the traces of $B,A,C$ on $V_d(a,b,c)$, respectively. By Theorem \ref{thm:onto_UAW} the lemma follows.
\end{proof}

\subsection{Even-dimensional irreducible $\H_q$-modules}\label{s:Hmodule_even}

\begin{prop}
[Proposition 2.3, \cite{Huang:DAHAmodule}]
\label{prop:E}
Let $d\geq 1$ denote an odd integer. Assume that $k_0,k_1,k_2,k_3$ are nonzero scalars in $\F$ with 
$$
k_0^2=q^{-d-1}.
$$ 
Then there exists a $(d+1)$-dimensional $\H_q$-module $E(k_0,k_1,k_2,k_3)$ satisfying the following conditions:
\begin{enumerate}
\item There exists an $\F$-basis $\{v_i\}_{i=0}^d$ for $E(k_0,k_1,k_2,k_3)$ such that 
\begin{align*}
t_0 v_i 
&=
\left\{
\begin{array}{ll}
\textstyle
k_0^{-1} q^{-i} (1-q^i) (1-k_0^2 q^i) 
v_{i-1}
+
(
k_0+k_0^{-1}-k_0^{-1}q^{-i}
) 
v_i
\qquad
&\hbox{for $i=2,4,\ldots,d-1$},
\\
k_0^{-1} q^{-i-1}
(v_i-v_{i+1})
\qquad
&\hbox{for $i=1,3,\ldots,d-2$},
\end{array}
\right.
\\
t_0 v_0&=k_0 v_0,
\qquad
t_0 v_d=k_0v_d,
\\
t_1 v_i
&=
\left\{
\begin{array}{ll}
-k_1(1-q^i)(1-k_0^2 q^i)v_{i-1}
+k_1 v_i
+k_1^{-1} v_{i+1}
\qquad
&\hbox{for $i=2,4,\ldots,d-1$},
\\
k_1^{-1} v_i
\qquad
&\hbox{for $i=1,3,\ldots,d$},
\end{array}
\right.
\\
t_1 v_0 &=k_1 v_0+k_1^{-1} v_1,
\\
t_2 v_i 
&=
\left\{
\begin{array}{ll}
k_0^{-1} k_1^{-1} k_3^{-1} q^{-i-1}
(v_i-v_{i+1})
\qquad
&\hbox{for $i=0,2,\ldots,d-1$},
\\
\textstyle
\frac{(k_0 k_1 k_3 q^i-k_2)
(k_0 k_1 k_3 q^i- k_2^{-1})}
{k_0 k_1 k_3 q^i  }
v_{i-1}
+
(k_2+k_2^{-1}-
k_0^{-1} k_1^{-1} k_3^{-1} q^{-i}
) v_i
\qquad
&\hbox{for $i=1,3,\ldots,d$},
\end{array}
\right.
\\
t_3 v_i 
&=
\left\{
\begin{array}{ll}
k_3 v_i
\qquad
&\hbox{for $i=0,2,\ldots,d-1$},
\\
-k_3^{-1}(k_0k_1k_3q^i-k_2)
(k_0k_1k_3 q^i-k_2^{-1})
v_{i-1}
+k_3^{-1} v_i
+k_3 v_{i+1}
\qquad
&\hbox{for $i=1,3,\ldots,d-2$},
\end{array}
\right.
\\
t_3 v_d &=
-k_3^{-1}(k_0k_1k_3q^d-k_2)
(k_0k_1k_3 q^d-k_2^{-1})
v_{d-1}
+k_3^{-1} v_d.
\end{align*}

\item The elements $c_0, c_1,c_2,c_3$ act on $E(k_0,k_1,k_2,k_3)$ as scalar multiplication by 
$$
k_0+k_0^{-1},
\quad 
k_1+k_1^{-1},
\quad 
k_2+k_2^{-1},
\quad 
k_3+k_3^{-1},
$$
respectively.
\end{enumerate}
\end{prop}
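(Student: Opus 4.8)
\emph{Proof proposal.} By Definition~\ref{defn:H}, giving an $\H_q$-module amounts to giving a vector space equipped with operators $t_0,t_1,t_2,t_3$ and operators serving as their inverses, subject to the displayed relations. So the plan is: let $V$ be the $(d+1)$-dimensional $\F$-vector space with basis $\{v_i\}_{i=0}^d$, declare $t_0,t_1,t_2,t_3$ to act on $V$ by the formulas in part~(i), and verify the relations of Definition~\ref{defn:H}. Writing $c_i=k_i+k_i^{-1}$, it suffices to prove \textup{(A)} each $t_i$ satisfies $t_i^2-c_i t_i+1=0$ on $V$, and \textup{(B)} $t_0 t_1 t_2 t_3=q^{-1}$ on $V$. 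Indeed, \textup{(A)} makes $t_i$ invertible with $t_i^{-1}=c_i-t_i$, whence $t_i t_i^{-1}=t_i^{-1} t_i=1$ and $t_i+t_i^{-1}=c_i$ is scalar, in particular central; together with \textup{(B)} this yields all relations of Definition~\ref{defn:H}, and the scalar action of $c_0,c_1,c_2,c_3$ is precisely part~(ii) of the statement.

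For \textup{(A)} the point is that every one of the four formulas has period two in the index $i$, decomposing $V$ into small $t_i$-invariant pieces on which the verification is immediate. The operators $t_0$ and $t_2$ are block diagonal with respect to the pairings $V=\F v_0\oplus(\F v_1\oplus\F v_2)\oplus\cdots\oplus(\F v_{d-2}\oplus\F v_{d-1})\oplus\F v_d$ and $V=(\F v_0\oplus\F v_1)\oplus\cdots\oplus(\F v_{d-1}\oplus\F v_d)$ respectively; on $\F v_0$ and $\F v_d$ each acts as a scalar root of $x^2-c_i x+1$, and on each two-dimensional block one reads off directly that the representing matrix has trace $c_i$ and determinant $1$, so $t_i^2-c_i t_i+1=0$ there (for $t_2$ the determinant collapses to $1$ after using the identity $z+z^{-1}-c_2=z^{-1}(z-k_2)(z-k_2^{-1})$ with $z$ the appropriate power of $q$ times $k_0k_1k_3$). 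For $t_1$ one checks that on the parity class where $t_1$ acts as the scalar $k_1^{-1}$ the relation is trivial, while on the other class $(t_1-k_1)v_i$ lies in the span of two $k_1^{-1}$-eigenvectors, so $(t_1-k_1^{-1})$ annihilates it; the operator $t_3$ is handled the same way with the roles of $k_3,k_3^{-1}$ and of the two parities interchanged. Notably, \textup{(A)} does not use the hypothesis $k_0^2=q^{-d-1}$.

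For \textup{(B)} one evaluates $t_0 t_1 t_2 t_3$ on each $v_i$ and checks the result is $q^{-1}v_i$. The banded/block structure of the four operators forces the off-diagonal contributions to telescope parity class by parity class, so the computation comes down to bookkeeping of the surviving diagonal coefficients; the hypothesis $k_0^2=q^{-d-1}$ is used exactly at the top end, since applying $t_0 t_1 t_2 t_3$ to $v_d$ produces the scalar $k_0^2 q^d$, which equals $q^{-1}$ precisely when $k_0^2=q^{-d-1}$. A somewhat more economical variant is to establish the single identity $t_2=q^{-1}t_1^{-1}t_0^{-1}t_3^{-1}=q^{-1}(c_1-t_1)(c_0-t_0)(c_3-t_3)$, using the inverses supplied by \textup{(A)}, which then gives \textup{(B)} at once. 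I expect the main obstacle to be organizational rather than conceptual: \textup{(B)} is a relation among a product of four non-diagonal operators, and verifying that every cross term cancels---paying particular attention to the boundary vectors $v_0$ and $v_d$, where the formulas degenerate---is where the bulk of the work lies, even though each individual cancellation is routine.
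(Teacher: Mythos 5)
Your plan is correct, but note that this statement is one the paper does not prove at all: Proposition \ref{prop:E} is quoted verbatim from \cite{Huang:DAHAmodule} (Proposition 2.3 there), so there is no internal argument to compare against, and a self-contained verification like yours is exactly what is called for. Your reduction is sound: once each $t_i$ satisfies $t_i^2-c_it_i+1=0$ on $V$ with $c_i=k_i+k_i^{-1}$ scalar, setting $t_i^{-1}=c_i-t_i$ makes all relations of Definition \ref{defn:H} except $t_0t_1t_2t_3=q^{-1}$ automatic (centrality of $t_i+t_i^{-1}$ holds because it acts as a scalar), and part (ii) follows. Your structural observations check out: $t_0$ is block diagonal for the pairing $\F v_0\oplus(\F v_1\oplus\F v_2)\oplus\cdots\oplus(\F v_{d-2}\oplus\F v_{d-1})\oplus\F v_d$ and $t_2$ for $(\F v_0\oplus\F v_1)\oplus\cdots\oplus(\F v_{d-1}\oplus\F v_d)$, and on each $2\times 2$ block the trace is $c_i$ and the determinant is $1$ (for $t_2$ via $(z-k_2)(z-k_2^{-1})=z^2-c_2z+1$ with $z=k_0k_1k_3q^{i+1}$); the factorization trick $(t_1-k_1^{-1})(t_1-k_1)v_i=0$ works because $(t_1-k_1)$ sends even-indexed $v_i$ into the span of odd-indexed vectors where $t_1$ acts by $k_1^{-1}$, and dually for $t_3$; and I confirm your key claim about where the hypothesis enters: $t_2t_3v_d=k_0k_1q^dv_d$, hence $t_0t_1t_2t_3v_d=k_0^2q^dv_d$, which equals $q^{-1}v_d$ exactly when $k_0^2=q^{-d-1}$, while e.g.\ $t_0t_1t_2t_3v_0=q^{-1}v_0$ and $t_0t_1t_2t_3v_1=q^{-1}v_1$ hold with no constraint (and since $d$ does not appear in the interior formulas, the constraint can only be needed at the boundary). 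Two small points: your phrase ``on $\F v_0$ and $\F v_d$ each acts as a scalar root'' applies only to $t_0$ --- $t_2$ has no singleton blocks, since it pairs $v_{d-1}$ with $v_d$ --- and the interior telescoping for (B) is asserted rather than written out; it is routine and consistent with the cases I checked, but in a final write-up you should either display the generic even- and odd-index computations or carry out your alternative $t_2=q^{-1}(c_1-t_1)(c_0-t_0)(c_3-t_3)$ explicitly.
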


Recall the elements $X$ and $Y$ of $\H_q$ from (\ref{X}) and (\ref{Y}).

\begin{lem}
\label{lem:XYinE} 
With reference to Proposition \ref{prop:E}, the following {\rm (i)} and {\rm (ii)} hold:
\begin{enumerate}
\item The action of $X$ on $E(k_0,k_1,k_2,k_3)$ is as follows:
\begin{align*}
(1-k_0 k_3 q^{2\lceil \frac{i}{2}\rceil}X^{(-1)^{i-1}}) v_i
=
\left\{
\begin{array}{ll}
0
\qquad 
&\hbox{if $i=0$},
\\
\varrho_i
v_{i-1}
\qquad 
&\hbox{if $i=1,2,\ldots,d$},
\end{array}
\right.
\end{align*}
where 
$$
\varrho_i
=\left\{
\begin{array}{ll}
(k_0 k_1 k_3 q^i-k_2)(k_0 k_1 k_3 q^i-k_2^{-1})
\qquad 
&\hbox{for $i=1,3,\ldots,d$},
\\
(1-q^i) (1-k_0^2 q^i)
\qquad 
&\hbox{for $i=2,4,\ldots,d-1$}.
\end{array}
\right.
$$

\item The action of $Y$ on $E(k_0,k_1,k_2,k_3)$ is as follows:
\begin{align*}
(1-k_0 k_1 q^{2\lceil \frac{i}{2}\rceil} Y^{(-1)^{i-1}}) v_i
=
\left\{
\begin{array}{ll}
0
\qquad 
&\hbox{if $i=d$},
\\
v_{i+1}
\qquad 
&\hbox{if $i=0,1,\ldots,d-1$}.
\end{array}
\right.
\end{align*}
\end{enumerate}
\end{lem}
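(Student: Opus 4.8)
The plan is to prove Lemma~\ref{lem:XYinE} by direct computation from the explicit action of the $t_i$'s given in Proposition~\ref{prop:E}, using the definitions $X = t_3 t_0$ and $Y = t_0 t_1$ from \eqref{X}--\eqref{Y}. Since both parts assert that a certain operator annihilates $v_i$ (for the extremal index) or sends $v_i$ to a scalar times $v_{i-1}$ or $v_{i+1}$, the strategy is to compute $X v_i$ and $Y v_i$ in the basis $\{v_i\}_{i=0}^d$ and then rearrange.

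First I would handle part~(ii), which is simpler. I would compute $Y v_i = t_0 t_1 v_i$ by first applying $t_1$ and then $t_0$, splitting into the cases $i$ even and $i$ odd because the formulas for $t_0$ and $t_1$ in Proposition~\ref{prop:E}(i) are case-dependent on parity. For $i$ even, $t_1 v_i = -k_1(1-q^i)(1-k_0^2q^i)v_{i-1} + k_1 v_i + k_1^{-1}v_{i+1}$ (with the boundary case $i=0$ giving $k_1 v_0 + k_1^{-1}v_1$), and then one applies $t_0$ termwise using the odd-index and even-index rules; for $i$ odd, $t_1 v_i = k_1^{-1}v_i$ and then $t_0 v_i = k_0^{-1}q^{-i-1}(v_i - v_{i+1})$. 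In each case I expect the coefficient of $v_i$ to organize into the factor $(1-k_0 k_1 q^{2\lceil i/2\rceil}Y^{\pm 1})$ acting on $v_i$ — more precisely, one checks that $Y v_i$ equals $(k_0 k_1 q^{2\lceil i/2\rceil})^{\mp 1} v_i$ plus $v_{i+1}$, which is exactly the claimed identity after moving the $v_i$ term across; the sign $(-1)^{i-1}$ in the exponent of $Y$ tracks whether $v_i$ is a $Y$-eigenvector candidate with eigenvalue $k_0 k_1 q^{2\lceil i/2\rceil}$ or its inverse. At $i=d$ (recall $d$ is odd, so $v_d$ lies in the odd case) the $v_{d+1}$ term is absent, giving the annihilation statement. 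Part~(i) for $X = t_3 t_0$ runs the same way: apply $t_0$ then $t_3$, split by parity, and use $k_0^2 = q^{-d-1}$ together with the definition of $\varrho_i$ to see that the off-diagonal coefficient collapses to $\varrho_i$ times $v_{i-1}$; here the extremal case is $i=0$, where $t_3 t_0 v_0 = t_3(k_0 v_0) = k_0 k_3 v_0$, so $(1 - k_0 k_3 X^{-1})v_0 = 0$ as claimed (the ceiling $\lceil 0/2\rceil = 0$ makes the $q$-power trivial).

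The main obstacle is purely bookkeeping: keeping the parity cases, the boundary indices $i=0$ and $i=d$, and the two branches of $\lceil i/2\rceil$ straight, and verifying that the messy products of $k_j$'s and powers of $q$ in the $t_2,t_3$ (resp.\ $t_0,t_1$) formulas telescope into the compact factored forms $\varrho_i$ and the eigenvalue $k_0 k_1 q^{2\lceil i/2\rceil}$. A useful sanity check along the way is that the central elements $c_i = t_i + t_i^{-1}$ act as the scalars in Proposition~\ref{prop:E}(ii), which pins down $t_i^{-1}$ on each basis vector and lets one confirm that $X^{-1}$ and $Y^{-1}$ make sense as operators (so the expressions $X^{(-1)^{i-1}}$ and $Y^{(-1)^{i-1}}$ are well-defined on the relevant $v_i$). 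No deep input is needed beyond Proposition~\ref{prop:E} and the constraint $k_0^2 = q^{-d-1}$; the lemma is essentially a repackaging of the module structure into a form convenient for the submodule-lattice analysis of \S\ref{s:lattice}.
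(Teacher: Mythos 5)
Your proposal is correct and matches the paper's proof, which is exactly the one-line instruction to evaluate the actions of $X=t_3t_0$ and $Y=t_0t_1$ on the basis $\{v_i\}_{i=0}^d$ using Proposition \ref{prop:E}; your parity/boundary case analysis, the use of $k_0^2=q^{-d-1}$ at the extremal indices, and the recovery of $t_i^{-1}$ from $c_i=t_i+t_i^{-1}$ are precisely the bookkeeping that computation requires.
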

\begin{proof}
Evaluate the actions of $X,Y$ on $E(k_0,k_1,k_2,k_3)$ by Proposition \ref{prop:E}.
\end{proof}

\begin{thm}
[Theorem 5.8, \cite{Huang:DAHAmodule}]
\label{thm:irr_E}
For any odd integer $d\geq 1$ and any nonzero $k_0,k_1,k_2,k_3\in \F$ with $k_0^2=q^{-d-1}$, the $\H_q$-module $E(k_0,k_1,k_2,k_3)$ is irreducible if and only if 
$$
k_0k_1k_2k_3,
k_0k_1^{-1}k_2k_3, 
k_0k_1k_2^{-1}k_3, 
k_0k_1k_2 k_3^{-1}
\not\in
\{q^{-i}\,|\, i=1,3,\ldots,d\}.
$$ 
\end{thm}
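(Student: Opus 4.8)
The plan is to prove Theorem~\ref{thm:irr_E} by analyzing when the $\H_q$-module $E(k_0,k_1,k_2,k_3)$ fails to be irreducible, using the explicit action of $X$ and $Y$ recorded in Lemma~\ref{lem:XYinE}. First I would observe that $\{v_i\}_{i=0}^d$ is an eigenbasis-flag adapted to $X$ and $Y$: reading Lemma~\ref{lem:XYinE}(i), each $v_i$ with $i\geq 1$ is sent by $(1-k_0k_3q^{2\lceil i/2\rceil}X^{(-1)^{i-1}})$ to a scalar multiple $\varrho_i v_{i-1}$, so $X$ is ``lower triangular up to the alternating inversion'' on this basis, and symmetrically by Lemma~\ref{lem:XYinE}(ii) the operator $(1-k_0k_1q^{2\lceil i/2\rceil}Y^{(-1)^{i-1}})$ sends $v_i$ to $v_{i+1}$ for $i<d$. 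Since $t_0,t_1,t_2,t_3$ generate $\H_q$ and $X=t_3t_0$, $Y=t_0t_1$ lie in the subalgebra they generate, the key point is that a submodule must be invariant under both $X$ and $Y$.

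Next I would make this precise. Consider a nonzero $\H_q$-submodule $W\subseteq E(k_0,k_1,k_2,k_3)$. Using the triangular structure of $X$ (or equivalently the products $\prod(1-k_0k_3q^{\cdots}X^{\pm 1})$ applied iteratively), I would show that $W$ contains some $v_j$ with $j$ minimal among basis vectors appearing with nonzero coefficient in elements of $W$; then repeatedly applying the ``raising'' relation from Lemma~\ref{lem:XYinE}(ii), which moves $v_i$ to $v_{i+1}$ with coefficient exactly $1$, forces $v_{j+1},\dots,v_d\in W$. Then applying the ``lowering'' relation from Lemma~\ref{lem:XYinE}(i) pulls $v_{j-1},\dots,v_0$ into $W$ \emph{provided} none of the scalars $\varrho_i$ with $i\leq j$ vanishes. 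Hence $W=E(k_0,k_1,k_2,k_3)$ unless some $\varrho_i=0$; conversely if $\varrho_i=0$ for some $i$, the relations in Lemma~\ref{lem:XYinE} show that $\mathrm{span}\{v_i,v_{i+1},\dots,v_d\}$ is a proper nonzero submodule (the lowering operator kills $v_i$ down to nothing, and all other generators preserve this span, which one checks from the explicit formulas in Proposition~\ref{prop:E}). Therefore $E(k_0,k_1,k_2,k_3)$ is irreducible if and only if $\varrho_i\neq 0$ for all $i=1,2,\dots,d$.

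Finally I would translate the condition $\varrho_i\neq 0$ into the stated form. From the definition of $\varrho_i$ in Lemma~\ref{lem:XYinE}(i): for even $i=2,4,\dots,d-1$ one has $\varrho_i=(1-q^i)(1-k_0^2q^i)$, and since $q$ is not a root of unity $1-q^i\neq 0$, while $1-k_0^2q^i=1-q^{i-d-1}$ (using $k_0^2=q^{-d-1}$) is nonzero because $i-d-1<0$ and again $q$ is not a root of unity; so the even-$i$ factors are automatically nonzero and impose no constraint. For odd $i=1,3,\dots,d$, $\varrho_i=(k_0k_1k_3q^i-k_2)(k_0k_1k_3q^i-k_2^{-1})$, which vanishes iff $k_0k_1k_2k_3=q^{-i}$ or $k_0k_1k_2^{-1}k_3=q^{-i}$ for some odd $i\in\{1,\dots,d\}$. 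By the $\Z/2\Z$-type symmetry of the construction (or by directly repeating the triangular argument with the roles of $X$ and $Y$, hence $t_1$ and $t_3$, interchanged) one obtains the companion conditions involving $k_0k_1^{-1}k_2k_3$ and $k_0k_1k_2k_3^{-1}$; putting the four families together yields exactly the displayed criterion. The main obstacle I anticipate is the careful bookkeeping in the triangularity argument --- making sure the iterated application of $(1-k_0k_3q^{2\lceil i/2\rceil}X^{(-1)^{i-1}})$ genuinely isolates individual basis vectors rather than leaving a tangle of them, and verifying that the candidate proper submodule $\mathrm{span}\{v_i,\dots,v_d\}$ really is closed under all of $t_0,t_1,t_2,t_3$ (not merely under $X$ and $Y$), which requires going back to the raw formulas in Proposition~\ref{prop:E} and checking that no generator sends a high-index $v_j$ to a lower-index one across the index $i$ when $\varrho_i=0$.
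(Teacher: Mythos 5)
Your central biconditional --- ``$E(k_0,k_1,k_2,k_3)$ is irreducible if and only if $\varrho_i\neq 0$ for all $i$'' --- cannot be correct, and this is where the argument breaks. (Note also that the present paper does not prove this statement at all; it quotes it from Theorem 5.8 of the cited earlier paper, so the comparison here is purely about the soundness of your argument.) The scalars $\varrho_i$ only involve the products $k_0k_1k_2k_3$ and $k_0k_1k_2^{-1}k_3$ (your observation that the even-$i$ factors are automatically nonzero is fine), so your biconditional would render the remaining two conditions of Theorem \ref{thm:irr_E}, on $k_0k_1^{-1}k_2k_3$ and $k_0k_1k_2k_3^{-1}$, superfluous --- contradicting the very theorem being proved. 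Concretely, take parameters with $k_0k_1^{-1}k_2k_3=q^{-1}$ and all four $\varrho$-relevant products generic: the module is then reducible, yet by your own raising/lowering computation no proper submodule $W$ can contain any basis vector $v_j$ (the $Y$-relation with coefficient $1$ would force $v_j,\dots,v_d\in W$, and then lowering with all $\varrho_i\neq 0$ would force $W=E(k_0,k_1,k_2,k_3)$). This shows that the unproven step at the heart of your sufficiency direction --- ``a nonzero submodule contains some $v_j$ of minimal index'' --- is false in general: submodules need not be spanned by a subset of $\{v_i\}_{i=0}^d$, and the quasi-triangularity of $X$ is not enough to isolate basis vectors (for instance $B=X+X^{-1}$ has $\theta^*_{2m-1}=\theta^*_{2m}$, so its eigenvalues occur in pairs, and the diagonal values of $X$ along the flag do not involve $k_1,k_2$ at all).

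The necessity direction is also only half done. Your verification that $\mathrm{span}\{v_i,\dots,v_d\}$ is invariant under all of $t_0,t_1,t_2,t_3$ when $\varrho_i=0$ is correct and yields reducibility when $k_0k_1k_2k_3$ or $k_0k_1k_2^{-1}k_3$ lies in $\{q^{-i}\,|\,i=1,3,\dots,d\}$; but the companion cases $k_0k_1^{-1}k_2k_3$ and $k_0k_1k_2k_3^{-1}$ are dismissed by an unspecified ``$\Z/2\Z$-type symmetry''. No automorphism available in this setting replaces $k_1$ by $k_1^{-1}$ (or $k_3$ by $k_3^{-1}$) while preserving the module $E(k_0,k_1,k_2,k_3)$, so these cases require exhibiting a genuinely different proper submodule --- equivalently, redoing the computation in a basis adapted to the other direction, where the raising coefficients are no longer normalized to $1$ and the hidden scalars involving $k_1^{-1}$ and $k_3^{-1}$ become visible. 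The actual content of the quoted theorem is precisely this interplay between the $X$-side and $Y$-side obstructions; an argument that sees only the single flag $\{v_i\}$ and the scalars $\varrho_i$ can neither produce all four conditions nor prove sufficiency when they hold.
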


Recall that $\Z/4\Z$ is the additive group of integers modulo $4$. 
Observe that there exists a unique $\Z/4\Z$-action on $\H_q$ such that each element of $\Z/4\Z$ acts on $\H_q$ as an $\F$-algebra automorphism in the following way:

\begin{table}[H]
\centering
\extrarowheight=3pt
\begin{tabular}{c|rrrr}
$\e\in \Z/4\Z$  &$t_0$ &$t_1$ &$t_2$ &$t_3$ 
\\

\midrule[1pt]

${0\pmod 4}$ &$t_0$  &$t_1$ &$t_2$ &$t_3$ 
\\
${1\pmod 4}$ &$t_1$ &$t_2$ &$t_3$ &$t_0$ 
\\
${2\pmod 4}$ &$t_2$ &$t_3$ &$t_0$ &$t_1$
\\
${3\pmod 4}$ &$t_3$ &$t_0$ &$t_1$ &$t_2$
\end{tabular}
\caption{The $\Z/4\Z$-action on $\H_q$}\label{Z/4Z-action}
\end{table}

Let $V$ denote an $\H_q$-module. For any $\F$-algebra automorphism $\e$ of $\H_q$ the notation 
$$
V^\e
$$ 
stands for the $\H_q$-module obtained by twisting the $\H_q$-module $V$ via $\e$.

\begin{thm}
[Theorem 6.1, \cite{Huang:DAHAmodule}]
\label{thm:onto_E}
Let $d\geq 1$ denote an odd integer. If $V$ is a $(d+1)$-dimensional irreducible $\H_q$-module, then there exist an $\e\in \Z/4\Z$ and nonzero $k_0,k_1,k_2,k_3\in \F$ with $k_0^2=q^{-d-1}$ such that the $\H_q$-module $E(k_0,k_1,k_2,k_3)^\e$ is isomorphic to $V$.
\end{thm}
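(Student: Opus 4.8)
The plan is to reconstruct $V$ from the scalars by which the central elements act, in the spirit of Theorem~\ref{thm:onto_UAW}, but now producing an explicit basis of the form appearing in Proposition~\ref{prop:E}. First I would invoke Schur's lemma: since $\F$ is algebraically closed and $V$ is finite-dimensional and irreducible, each central element $c_i=t_i+t_i^{-1}$ acts as a scalar on $V$. Because $\F$ is algebraically closed I may write this scalar as $k_i+k_i^{-1}$ for some nonzero $k_i\in\F$, the two roots being $k_i$ and $k_i^{-1}$. From $t_i+t_i^{-1}=c_i$ one obtains $t_i^2-c_it_i+1=0$ on $V$, so the eigenvalues of each $t_i$ lie in $\{k_i,k_i^{-1}\}$ and each $t_i$ has Jordan blocks of size at most two.

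The next and most delicate step is to locate, after a suitable $\Z/4\Z$-twist, a distinguished vector playing the role of $v_0$. Inspecting Proposition~\ref{prop:E}, the vector $v_0$ is a common eigenvector of $t_0$ and $t_3$, with $t_3$-eigenvalue $k_3$ and $t_0$-eigenvalue $k_0$ satisfying $k_0^2=q^{-d-1}$; equivalently, by Lemma~\ref{lem:XYinE}(i), it is an eigenvector of $X=t_3t_0$ with eigenvalue $k_0k_3$. I would therefore triangularize the operator $X$ on $V$, read off its spectrum, and single out such an eigenvector $v_0$. Cyclically permuting the generators via Table~\ref{Z/4Z-action} lets me arrange that the distinguished pair is $(t_0,t_3)$ and that the relevant eigenvalue of $t_0$ is the one whose square will turn out to be $q^{-d-1}$.

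With $v_0$ in hand I would build the basis recursively by the raising operator $Y=t_0t_1$, setting
$$
v_{i+1}=\bigl(1-k_0k_1q^{2\lceil i/2\rceil}Y^{(-1)^{i-1}}\bigr)v_i
\qquad(i\geq 0),
$$
exactly as in Lemma~\ref{lem:XYinE}(ii). Using only the defining relations of $\H_q$ together with the scalar actions of the $c_i$, I would verify by induction that the span of $v_0,v_1,\ldots$ is stable under $t_0,t_1,t_2,t_3$, with the actions reproducing the formulas of Proposition~\ref{prop:E}; irreducibility of $V$ then forces these vectors to span $V$. Since $\dim V=d+1$ is finite, the chain must terminate, i.e. there is a least index at which the raising recursion produces a linear relation; matching this closure condition against the coefficients of Proposition~\ref{prop:E} is what pins down $k_0^2=q^{-d-1}$ and forces the terminal index to equal $d$. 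Reinstating the twist, this exhibits $V\cong E(k_0,k_1,k_2,k_3)^{\e}$ for the chosen $\e\in\Z/4\Z$.

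I expect the central obstacle to be the second step: controlling the spectrum of $X=t_3t_0$ finely enough to extract the correct common eigenvector and to see that a single $\Z/4\Z$-twist suffices to put the eigenvalue of $t_0$ into the required square-root form $k_0^2=q^{-d-1}$. Non-diagonalizability of $t_0$, permitted by the quadratic relation, means I cannot simply diagonalize $t_0$; instead I must work with the triangular actions of $X$ and $Y$ from Lemma~\ref{lem:XYinE} and exploit the rigidity forced by finite-dimensionality together with irreducibility.
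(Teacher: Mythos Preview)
The paper does not supply its own proof of this statement: Theorem~\ref{thm:onto_E} is quoted verbatim from \cite[Theorem 6.1]{Huang:DAHAmodule} and used as a black box, so there is no argument here against which to compare your proposal.

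That said, a few remarks on your sketch. The overall strategy---extract the scalars $k_i+k_i^{-1}$ by Schur, locate a seed vector $v_0$, and then generate a basis by the $Y$-recursion of Lemma~\ref{lem:XYinE}(ii)---is the natural one and is broadly in line with how such classification results are proved. The genuine difficulty, which you correctly flag, is the second step. Two points there deserve more care. First, an eigenvector of $X=t_3t_0$ need not be a common eigenvector of $t_3$ and $t_0$ separately; you must argue, using the quadratic relations $t_i^2-c_it_i+1=0$ and the commutation structure, that such a simultaneous eigenvector exists (possibly after the $\Z/4\Z$-twist). Second, the identification $k_0^2=q^{-d-1}$ is not something you can impose at the outset; it emerges from the finite-dimensionality constraint on the $Y$-chain, and the role of the twist $\e$ is precisely to arrange that it is the generator currently labelled $t_0$ whose eigenvalue squares to $q^{-d-1}$ rather than one of the others. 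Your sketch gestures at both of these but does not explain how the closure condition on the chain forces the correct index and the correct square-root relation; that is where the real work in \cite{Huang:DAHAmodule} lies.
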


\subsection{Odd-dimensional irreducible $\H_q$-modules}\label{s:Hmodule_odd}

\begin{prop}
[Proposition 2.6, \cite{Huang:DAHAmodule}]
\label{prop:O}
Let $d\geq 0$ denote an even integer. Assume that $k_0,k_1,k_2,k_3$ are nonzero scalars in $\F$ with 
$$
k_0 k_1 k_2 k_3=q^{-d-1}.
$$
Then there exists a $(d+1)$-dimensional $\H_q$-module
 $O(k_0,k_1,k_2,k_3)$ satisfying the following conditions:
\begin{enumerate} 
\item There exists an $\F$-basis $\{v_i\}_{i=0}^d$ for $O(k_0,k_1,k_2,k_3)$  such that 
\begin{align*}
t_0 v_i 
&=
\left\{
\begin{array}{ll}
\textstyle
k_0^{-1} q^{-i} (1-q^i) (1-k_0^2 q^i) 
v_{i-1}
+
(
k_0+k_0^{-1}-k_0^{-1}q^{-i}
) 
v_i
\qquad
&\hbox{for $i=2,4,\ldots,d$},
\\
\textstyle
k_0^{-1} q^{-i-1}
(v_i-v_{i+1})
\qquad
&\hbox{for $i=1,3,\ldots,d-1$},
\end{array}
\right.
\\
t_0 v_0&= k_0 v_0,
\\
t_1 v_i
&=
\left\{
\begin{array}{ll}
-k_1(1-q^i)(1-k_0^2 q^i)v_{i-1}
+k_1 v_i
+k_1^{-1} v_{i+1}
\qquad
&\hbox{for $i=2,4,\ldots,d-2$},
\\
k_1^{-1} v_i
\qquad
&\hbox{for $i=1,3,\ldots,d-1$},
\end{array}
\right.
\\
t_1 v_0&= k_1 v_0 +k_1^{-1} v_1,
\qquad 
t_1 v_d=-k_1(1-q^d)(1-k_0^2 q^d)v_{d-1}
+k_1 v_d,
\\
t_2 v_i 
&=
\left\{
\begin{array}{ll}
k_2 q^{d-i}
(v_i-v_{i+1})
\qquad
&\hbox{for $i=0,2,\ldots,d-2$},
\\
\textstyle
-k_2(1-k_2^{-2}q^{i-d-1})
(1-q^{d-i+1})
v_{i-1}
+
(k_2+k_2^{-1}-
k_2 q^{d-i+1}) v_i
\qquad
&\hbox{for $i=1,3,\ldots,d-1$},
\end{array}
\right.
\\
t_2 v_d &= k_2 v_d,
\\
t_3 v_i 
&=
\left\{
\begin{array}{ll}
k_3 v_i
\qquad
&\hbox{for $i=0,2,\ldots,d$},
\\
\textstyle
-k_3^{-1}
(1-k_2^{-2} q^{i-d-1})
(1-q^{i-d-1})
v_{i-1}
+k_3^{-1} v_i
+k_3 v_{i+1}
\qquad
&\hbox{for $i=1,3,\ldots,d-1$}.
\end{array}
\right.
\end{align*}
\item The elements $c_0, c_1,c_2,c_3$ act on $O(k_0,k_1,k_2,k_3)$ as scalar multiplication by 
$$
k_0+k_0^{-1},
\quad 
k_1+k_1^{-1},
\quad 
k_2+k_2^{-1},
\quad 
k_3+k_3^{-1},
$$
respectively.
\end{enumerate}
\end{prop}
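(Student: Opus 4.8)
The plan is to construct linear operators $t_0,t_1,t_2,t_3$ on the $(d+1)$-dimensional $\F$-vector space $V$ with basis $\{v_i\}_{i=0}^{d}$ by the displayed formulas, and then to check that they satisfy the defining relations of $\H_q$ from Definition \ref{defn:H}. Among those relations, $t_it_i^{-1}=t_i^{-1}t_i=1$ merely asks that $t_i$ be invertible (with $t_i^{-1}$ acting as the inverse operator); the relation that $t_i+t_i^{-1}$ be central will be forced once we know $t_i+t_i^{-1}$ acts as a scalar operator; and the relation $t_0t_1t_2t_3=q^{-1}$ must be verified by direct computation. The organizing identity is the quadratic relation
$$
t_i^2-(k_i+k_i^{-1})\,t_i+1=0 \qquad (i=0,1,2,3)
$$
as operators on $V$. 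This single identity does three jobs at once: it shows $t_i$ is invertible with $t_i^{-1}=(k_i+k_i^{-1})\cdot 1-t_i$, so that the action of $t_i^{-1}$ is well defined and the relations involving $t_i^{-1}$ hold; it shows $t_i+t_i^{-1}=(k_i+k_i^{-1})\cdot 1$ is a scalar operator, hence central, which also establishes part~(ii) of the proposition since $c_i=t_i+t_i^{-1}$ by~\eqref{ci}; and it reduces the remaining verification to the single relation $t_0t_1t_2t_3=q^{-1}$.

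To prove the quadratic identity I would use the block structure of each $t_i$ on the basis $\{v_i\}$. The operator $t_0$ fixes the line $\F v_0$ with eigenvalue $k_0$ and preserves each two-dimensional subspace $\F v_{2j-1}+\F v_{2j}$ for $1\le j\le d/2$; on each such block one reads off from the displayed formulas that the trace is $k_0+k_0^{-1}$ and the determinant is $1$, so the characteristic polynomial is $x^2-(k_0+k_0^{-1})x+1=(x-k_0)(x-k_0^{-1})$, and the Cayley--Hamilton theorem gives the claim. For $t_1,t_2,t_3$ the simplest route is to verify $(t_i-k_i\cdot 1)(t_i-k_i^{-1}\cdot 1)\,v_j=0$ directly for each basis vector $v_j$: since $t_i v_j$ is a combination of at most $v_{j-1},v_j,v_{j+1}$, this is a short bounded computation, and the degenerate cases (notably $d=0$) are checked by hand.

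The main obstacle is the verification of $t_0t_1t_2t_3=q^{-1}$. I would evaluate the operator $t_0t_1t_2t_3$ on each basis vector $v_i$, splitting into the cases $i$ even and $i$ odd, and compute in turn $t_3 v_i$, then $t_2t_3 v_i$, then $t_1t_2t_3 v_i$, then $t_0t_1t_2t_3 v_i$, tracking the coefficient of every $v_j$ that appears. One must see that every coefficient except that of $v_i$ itself cancels and that the coefficient of $v_i$ equals $q^{-1}$; this is exactly where the normalization $k_0k_1k_2k_3=q^{-d-1}$ enters essentially, together with the special boundary formulas for $t_0v_0$, $t_1v_0$, $t_1v_d$, $t_2v_d$, and for $t_3v_i$ on even-indexed vectors. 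The computation is bookkeeping-heavy but routine; it can be streamlined by instead checking the equivalent identity $t_1t_2t_3=q^{-1}t_0^{-1}=q^{-1}\bigl((k_0+k_0^{-1})\cdot 1-t_0\bigr)$, which has only three factors on the left-hand side and reuses the quadratic relation for $t_0$.

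Finally I would remark that the formulas defining $O(k_0,k_1,k_2,k_3)$ are adapted, to the universal algebra $\H_q$, from the finite-dimensional representations of the DAHA of type $(C_1^\vee,C_1)$ described over $\C$ in \cite{Oblomkov2009}, and that the even-dimensional family $E(k_0,k_1,k_2,k_3)$ of Proposition \ref{prop:E} arises from a parallel construction; in principle $O(k_0,k_1,k_2,k_3)$ can also be exhibited as a subquotient of the polynomial representation of $\H_q$, but the direct verification above is shorter and keeps the argument self-contained.
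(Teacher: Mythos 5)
The paper itself gives no proof of this proposition: it is quoted from \cite{Huang:DAHAmodule}, where it is established by essentially the direct verification you outline, so your proposal matches the source's approach and there is nothing in the present paper to compare against. Your plan is sound: the quadratic identity $(t_i-k_i)(t_i-k_i^{-1})=0$ (which does hold block-by-block for these formulas, e.g.\ the $2\times 2$ block of $t_0$ on $\F v_{2j-1}+\F v_{2j}$ has trace $k_0+k_0^{-1}$ and determinant $1$) simultaneously yields invertibility, the centrality of $t_i+t_i^{-1}$, and part (ii), leaving only the check $t_0t_1t_2t_3=q^{-1}$, which does go through --- for instance on $v_d$ the coefficient of $v_{d-1}$ cancels and the coefficient of $v_d$ is $k_0k_1k_2k_3\,q^{d}=q^{-1}$. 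The only point to make explicit, which you already flag, is the degenerate reading of the boundary formulas when $d=0$ (there the listed expression for $t_1v_0$ must be reconciled with the $t_1v_d$ formula, whose factor $1-q^{d}$ vanishes).
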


\begin{lem}
\label{lem:XYinO}
With reference to Proposition \ref{prop:O}, the following {\rm (i)} and {\rm (ii)} hold:
\begin{enumerate}
\item The action of $X$ on $O(k_0,k_1,k_2,k_3)$ is as follows:
\begin{align*}
(1-k_0 k_3 q^{2\lceil \frac{i}{2}\rceil}X^{(-1)^{i-1}}) v_i
=
\left\{
\begin{array}{ll}
0
\qquad 
&\hbox{if $i=0$},
\\
\varrho_i
v_{i-1}
\qquad 
&\hbox{if $i=1,2,\ldots,d$},
\end{array}
\right.
\end{align*}
where 
$$
\varrho_i
=\left\{
\begin{array}{ll}
(q^{i-d-1}-1)(k_2^{-2} q^{i-d-1}-1)
\qquad 
&\hbox{for $i=1,3,\ldots,d-1$},
\\
(1-q^i) (1-k_0^2 q^i)
\qquad 
&\hbox{for $i=2,4,\ldots,d$}.
\end{array}
\right.
$$

\item The action of $Y$ on $O(k_0,k_1,k_2,k_3)$ is as follows:
\begin{align*}
(1-k_0 k_1 q^{2\lceil \frac{i}{2}\rceil} Y^{(-1)^{i-1}}) v_i
=
\left\{
\begin{array}{ll}
0
\qquad 
&\hbox{if $i=d$},
\\
v_{i+1}
\qquad 
&\hbox{if $i=0,1,\ldots,d-1$}.
\end{array}
\right.
\end{align*}
\end{enumerate}
\end{lem}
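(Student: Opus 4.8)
The plan is to establish both parts by evaluating the actions of $X$ and $Y$ directly from Proposition \ref{prop:O}, exactly as in the proof of Lemma \ref{lem:XYinE} for $E(k_0,k_1,k_2,k_3)$. The one structural remark that keeps the computation short is that, by Proposition \ref{prop:O}(ii), each central element $c_j=t_j+t_j^{-1}$ acts on $O(k_0,k_1,k_2,k_3)$ as the scalar $k_j+k_j^{-1}$, so on this module
\[
t_j^{-1}=(k_j+k_j^{-1})\,\mathrm{id}-t_j
\qquad (j=0,1,2,3).
\]
Combined with $X=t_3t_0$ and $Y=t_0t_1$ from (\ref{X}) and (\ref{Y}), every assertion reduces to a two-step substitution using the formulas of Proposition \ref{prop:O}(i). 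The case $d=0$ is trivial, since then the module is one-dimensional and $X,Y$ act by the scalars $k_0k_3,k_0k_1$, so one may assume $d\geq2$.

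For part (i) I would split on the parity of $i$, because $X^{(-1)^{i-1}}$ equals $X$ for odd $i$ and $X^{-1}$ for even $i$. For odd $i$ with $1\leq i\leq d-1$, compute $Xv_i=t_3(t_0v_i)$: here $t_0v_i=k_0^{-1}q^{-i-1}(v_i-v_{i+1})$, and applying $t_3$ (the three-term rule on $v_i$, the scalar $k_3$ on the even-index vector $v_{i+1}$) the $v_{i+1}$-contributions cancel, leaving $Xv_i=(k_0k_3q^{i+1})^{-1}(v_i-\varrho_iv_{i-1})$; the two displayed forms of $\varrho_i$ are matched via the elementary identity $(1-u)(1-w)=(u-1)(w-1)$. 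For even $i$ with $2\leq i\leq d$, compute instead $X^{-1}v_i=t_0^{-1}(t_3^{-1}v_i)$: since $t_3v_i=k_3v_i$ one has $t_3^{-1}v_i=k_3^{-1}v_i$, and then $t_0^{-1}v_i=(k_0+k_0^{-1})v_i-t_0v_i=k_0^{-1}q^{-i}\bigl(v_i-(1-q^i)(1-k_0^2q^i)v_{i-1}\bigr)$, whence $X^{-1}v_i=(k_0k_3q^i)^{-1}(v_i-\varrho_iv_{i-1})$. The remaining index $i=0$ is immediate: $t_0v_0=k_0v_0$ and $t_3v_0=k_3v_0$ give $Xv_0=k_0k_3v_0$. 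Rearranging these three computations yields the stated identity.

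Part (ii) is handled in the same fashion with $Y=t_0t_1$. For odd $i$ with $1\leq i\leq d-1$ one has $t_1v_i=k_1^{-1}v_i$ and $t_0v_i=k_0^{-1}q^{-i-1}(v_i-v_{i+1})$, so $Yv_i=(k_0k_1q^{i+1})^{-1}(v_i-v_{i+1})$. For even $i$ with $0\leq i\leq d-2$ compute $Y^{-1}v_i=t_1^{-1}(t_0^{-1}v_i)$ using $t_1^{-1}=(k_1+k_1^{-1})\,\mathrm{id}-t_1$; the $v_{i-1}$-terms produced cancel and one obtains $Y^{-1}v_i=(k_0k_1q^i)^{-1}(v_i-v_{i+1})$ (the subcase $i=0$ uses the boundary formulas $t_0v_0=k_0v_0$ and $t_1v_0=k_1v_0+k_1^{-1}v_1$). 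Finally $i=d$ uses the boundary formula $t_1v_d=-k_1(1-q^d)(1-k_0^2q^d)v_{d-1}+k_1v_d$ together with $t_0v_d$; the $v_{d-1}$-terms cancel and $Yv_d=k_0k_1q^dv_d$, which is the asserted $(1-k_0k_1q^{d}Y^{-1})v_d=0$. I do not anticipate any real obstacle here — the content is $q$-arithmetic bookkeeping — and the only point requiring care is matching each boundary index $i\in\{0,1,d-1,d\}$ to the correct case of Proposition \ref{prop:O}(i).
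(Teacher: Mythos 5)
Your proposal is correct and follows exactly the paper's route: the paper's proof is the same direct evaluation of $X=t_3t_0$ and $Y=t_0t_1$ on the basis $\{v_i\}_{i=0}^d$ via Proposition \ref{prop:O}, which you merely spell out case by case (parity of $i$ plus the boundary indices). Your parity split, the use of $t_j^{-1}=(k_j+k_j^{-1})\,\mathrm{id}-t_j$, and all the cancellations check out, so there is nothing to add.
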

\begin{proof}
Evaluate the actions of $X,Y$ on $O(k_0,k_1,k_2,k_3)$ by Proposition \ref{prop:O}.
\end{proof}

\begin{thm}
[Theorem 7.7, \cite{Huang:DAHAmodule}]
\label{thm:irr_O}
For any even integer $d\geq 0$ and any nonzero $k_0,k_1,k_2,k_3\in \F$ with $k_0 k_1 k_2 k_3=q^{-d-1}$, 
the $\H_q$-module $O(k_0,k_1,k_2,k_3)$ is irreducible if and only if  
$$
k_0^2,k_1^2,k_2^2,k_3^2\not\in
\{q^{-i}\,|\, i=2,4,\ldots,d\}.
$$
\end{thm}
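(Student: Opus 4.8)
The plan is to compute the action of the generator $X=t_3t_0$ (equivalently $t_2$, since $k_2$ sits on the diagonal and is tied to $X$) on $O(k_0,k_1,k_2,k_3)$ in two adapted flags and locate exactly when a proper nonzero $\H_q$-submodule can exist. First I would observe, using Lemma~\ref{lem:XYinO}, that the even-indexed vectors $v_0,v_2,\ldots,v_d$ span the $k_0$-eigenspace of $t_0$ and carry one ``$X$-weight'' behaviour, while the odd-indexed vectors carry the reciprocal weight; the off-diagonal data consists of the scalars $\varrho_i$ together with the uniform lowering scalar $1$ coming from $Y$. So the module has a natural ``bipartite ladder'' structure: $Y$ moves up the ladder $v_i\mapsto v_{i+1}$ with nonzero coefficient, while $X$ moves down $v_i\mapsto v_{i-1}$ with coefficient a nonzero multiple of $\varrho_i$ (plus a diagonal term). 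The point is that a nonzero submodule $W$ is $t_0$-invariant, hence decomposes along the eigenspaces of $t_0$, and one can run the standard ladder argument: take a vector in $W$, push it to the top via $Y$, then come back down via $X$ and see which $v_i$ you are forced to hit.

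Next I would make the ``only if'' direction precise. Suppose some $\varrho_i=0$. If $\varrho_i=(1-q^i)(1-k_0^2q^i)=0$ for an even $i$ then, since $q$ is not a root of unity, $1-q^i\neq0$, so $k_0^2=q^{-i}$; similarly a vanishing $\varrho_i$ for odd $i$ forces $k_2^{-2}=q^{d+1-i}$, i.e.\ $k_2^2=q^{i-d-1}$. Conversely, if $k_0^2\in\{q^{-2},q^{-4},\ldots,q^{-d}\}$ say $k_0^2=q^{-2m}$, then $\varrho_{2m}=0$, and I claim the span of $\{v_{2m-1},v_{2m-2},\ldots,v_0\}$ (or $\{v_{2m},\ldots,v_d\}$, depending on which way the ladder breaks) is a proper nonzero $\H_q$-submodule: it is visibly $t_0$- and $t_3$-invariant from the matrix forms in Proposition~\ref{prop:O}, it is $Y$-invariant up to that one broken rung, and $X$-invariance follows because the lowering coefficient at the critical index is $\varrho_{2m}=0$. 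One does the analogous construction when $k_2^2\in\{q^{-1},q^{-3},\ldots\}\cap\{q^{i-d-1}\}$, using $t_2,t_3$ in place of $t_0,t_1$; here the cyclic $\Z/4\Z$-symmetry of $\H_q$ (Table~\ref{Z/4Z-action}) together with Lemma~\ref{lem:XYinO} lets me reduce the four cases $k_j^2=q^{-(\text{even})}$ to essentially one computation. So reducibility holds whenever one of the four forbidden conditions fails, giving the contrapositive of ``if''.

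For the ``if'' direction I would argue that when none of the $\varrho_i$ vanishes, every nonzero submodule $W$ is all of $O(k_0,k_1,k_2,k_3)$. Pick $0\neq w\in W$ and write $w=\sum c_i v_i$; let $j$ be maximal with $c_j\neq0$. Since all $Y$-raising coefficients are $1$ (nonzero), applying a suitable polynomial in $Y$ — or rather using that $Y$ acts as a single Jordan-type ladder on the span modulo lower terms — I get $v_d\in W$ after projecting onto the top of the flag (the diagonal terms of $Y$ only shift by known central scalars and can be cleared using that $c_1,\ldots$ act as scalars). Then, applying $X$ repeatedly and using that each $\varrho_i\neq0$, I descend: $X v_d$ has a nonzero $v_{d-1}$-component, and inductively $v_i\in W$ for all $i$, so $W=O(k_0,k_1,k_2,k_3)$. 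This shows irreducibility.

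The main obstacle I expect is the bookkeeping in the ladder argument: the actions of $X$ and $Y$ on each $v_i$ carry diagonal terms as well as the off-diagonal $\varrho_i$-terms, so ``push to the top, come back down'' is not literally a monomial action and one must be careful that the diagonal contributions (which involve $k_0,k_2,q^{d-i}$, etc.) do not conspire to cancel the $v_{i-1}$-component one is trying to produce. The clean way around this is to work with the eigenspace decomposition of $t_0$ on $W$ first — which is forced since $c_0$ is central and $t_0$ is triangular with at most two distinct eigenvalues $k_0,k_0^{-1}$ appearing — and then to use the $X$- and $Y$-relations from Lemma~\ref{lem:XYinO} in the form $(1-k_0k_1q^{\ast}Y^{\pm1})v_i=v_{i+1}$, which isolates $v_{i+1}$ exactly and sidesteps the diagonal-term issue entirely. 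The only genuine case analysis is then whether the broken rung (when some $\varrho_i=0$) lies at an even or odd index, which determines whether the submodule is a ``bottom piece'' or a ``top piece'' of the flag, and that is precisely the parity distinction already visible in the statement of Theorem~\ref{thm:irr_O}.
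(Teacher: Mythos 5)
There is a genuine gap, and it is structural rather than a matter of bookkeeping. Your entire argument is driven by the vanishing or non-vanishing of the scalars $\varrho_i$ from Lemma \ref{lem:XYinO}, and these detect only two of the four conditions in the statement: $\varrho_i=0$ for an even $i$ is equivalent to $k_0^2=q^{-i}$, and $\varrho_i=0$ for an odd $i$ is equivalent to $k_2^2=q^{i-d-1}$, so ``some $\varrho_i$ vanishes'' is exactly ``$k_0^2$ or $k_2^2$ lies in $\{q^{-i}\,|\,i=2,4,\ldots,d\}$''. The conditions on $k_1^2$ and $k_3^2$ are invisible to the ladder. This makes your ``if'' direction false as written: take $d\geq 2$ and choose the parameters with $k_1^2=q^{-2}$ but $k_0^2,k_2^2$ outside the forbidden set (consistent with $k_0k_1k_2k_3=q^{-d-1}$); then every $\varrho_i\neq 0$, your argument would conclude irreducibility, yet Theorem \ref{thm:irr_O} asserts the module is reducible. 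Consequently the step ``push to the top via $Y$ to get $v_d\in W$'' cannot be repaired in the generality you use it: the identity $(1-k_0k_1q^{2\lceil i/2\rceil}Y^{(-1)^{i-1}})v_i=v_{i+1}$ is a \emph{different} operator for each $i$, so it gives no way to map an arbitrary nonzero $w\in W$ onto $v_d$; indeed in the scenario above the proper submodule contains no basis vector $v_i$ at all, so any argument that forces some $v_i$ into $W$ from mere non-vanishing of the $\varrho_i$ must be wrong. (A smaller slip of the same flavour: the even-indexed $v_i$ do not span the $k_0$-eigenspace of $t_0$; by Lemma \ref{lem3:t0_O} that eigenspace is spanned by $v_0$ and $v_{i-1}+q^{-i}(v_i-v_{i-1})$.)

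The ``only if'' direction has the complementary problem. Your explicit submodules (the top pieces $\mathrm{span}\{v_i,\ldots,v_d\}$ at a broken rung) do work, but only for the $k_0^2$ and $k_2^2$ cases. For $k_1^2$ or $k_3^2$ in the forbidden set you appeal to the $\Z/4\Z$-twist of Table \ref{Z/4Z-action}, but that reduction is not available for free: one would need to know that the twisted module $O(k_0,k_1,k_2,k_3)^{1\bmod 4}$ is again isomorphic to a module of the form $O$ with suitably permuted parameters, and that is not established here (it amounts to an explicit change of basis diagonalizing a different $t_j$, analogous to the computations this paper carries out in \S\ref{s:lattice_E1}--\S\ref{s:lattice_E3} for the even-dimensional family, and it is part of the classification machinery of \cite{Huang:DAHAmodule}). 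Note also that the present paper does not prove Theorem \ref{thm:irr_O} at all; it quotes it from \cite[Theorem~7.7]{Huang:DAHAmodule}. A correct proof along your lines would have to supplement the $X,Y$-ladder analysis with the analogous analysis in bases adapted to $t_1$ and $t_3$ (or an honest twisting isomorphism), both to produce the missing submodules and to rule them out in the ``if'' direction.
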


\begin{thm}
[Theorem 8.1, \cite{Huang:DAHAmodule}]
\label{thm:onto_O}
Let $d\geq 0$ denote an even integer. If $V$ is a $(d+1)$-dimensional irreducible $\H_q$-module, then there exist nonzero $k_0,k_1,k_2,k_3\in \F$ with $k_0 k_1 k_2 k_3=q^{-d-1}$ such that the $\H_q$-module $O(k_0,k_1,k_2,k_3)$ is isomorphic to $V$.
\end{thm}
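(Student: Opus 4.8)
The plan is to reduce everything to an explicit basis. Since $V$ is finite-dimensional and irreducible over the algebraically closed field $\F$, Schur's lemma forces each central element $c_i=t_i+t_i^{-1}$ to act on $V$ as a scalar; choose $k_i\in\F^{\times}$ with $c_i=k_i+k_i^{-1}$, so that $(t_i-k_i)(t_i-k_i^{-1})=0$ on $V$ for $i=0,1,2,3$. Each $k_i$ is determined only up to the exchange $k_i\leftrightarrow k_i^{-1}$, and I will pin these choices down at the end so as to obtain the normalization $k_0k_1k_2k_3=q^{-d-1}$. It then suffices to produce an $\F$-basis of $V$ on which $t_0,t_1,t_2,t_3$ act by exactly the formulas of Proposition \ref{prop:O} for these parameters: the identity map is then an $\H_q$-module isomorphism $V\xrightarrow{\sim}O(k_0,k_1,k_2,k_3)$, the existence of the latter module being Proposition \ref{prop:O} itself.

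First I would locate a distinguished vector. The element $X=t_3t_0$ is invertible, hence has an eigenvector on $V$; using the quadratic relations for $t_0$ and $t_3$ together with $t_0t_1t_2t_3=q^{-1}$, I would show that the spectrum of $X$ on $V$ is the two-sided $q^{\pm2}$-pattern visible in Lemma \ref{lem:XYinO}(i) --- in particular, after relabelling $k_0\leftrightarrow k_0^{-1}$ and $k_3\leftrightarrow k_3^{-1}$ if needed, that $k_0k_3$ is an $X$-eigenvalue and that a corresponding eigenvector $v_0$ may be taken inside $\ker(t_0-k_0)\cap\ker(t_3-k_3)$, so $t_0v_0=k_0v_0$ and $t_3v_0=k_3v_0$. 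Then, mimicking Lemma \ref{lem:XYinO}(ii), I would use the raising operator $Y=t_0t_1$ to define $v_1,v_2,\dots$ recursively --- $v_{i+1}$ being the scalar combination of $v_i$ and $Y^{\pm1}v_i$ prescribed there --- and prove by induction on $i$, using only the defining relations of $\H_q$ and the scalars $k_i$, that the span $\langle v_0,\dots,v_N\rangle$ is invariant under every $t_j$ with the action given by the Proposition \ref{prop:O} formulas, for as long as $v_N\neq0$. Because $q$ is not a root of unity, the scalar factors controlling whether the recursion continues (the $\varrho_i$-type coefficients and the $q$-shifted weights) are nonzero up to a unique index, so the chain closes off after exactly $\dim V=d+1$ steps, the $v_i$ are linearly independent, and $\langle v_0,\dots,v_d\rangle$ is an $\H_q$-submodule, hence all of $V$ by irreducibility. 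Finally, evaluating $t_0t_1t_2t_3=q^{-1}$ on the top vector $v_d$ --- which the construction makes a $Y$-eigenvector with eigenvalue $k_0k_1q^{d}$ and a common $t_2,t_3$-eigenvector with eigenvalues $k_2,k_3$ --- and using $t_2t_3=q^{-1}(t_0t_1)^{-1}=q^{-1}Y^{-1}$ gives $k_2k_3=q^{-d-1}(k_0k_1)^{-1}$, i.e.\ $k_0k_1k_2k_3=q^{-d-1}$, which also fixes the remaining choices of $k_i$ versus $k_i^{-1}$ consistently.

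The hard part is the step that produces the distinguished vector $v_0$ and guarantees that the $Y$-chain above it closes up with precisely the structure constants of Proposition \ref{prop:O}: one must extract, from the bare relations of $\H_q$ and Schur's lemma alone, the rigidity that $\ker(t_0-k_0)\cap\ker(t_3-k_3)\neq0$ for a suitable labelling and that the recursively defined $v_i$ are independent with the stated actions. The natural route is to first determine the eigenvalue pattern of $X=t_3t_0$ (the quadratic relations, together with $t_0t_1t_2t_3=q^{-1}$, confine it to the $q^{\pm2}$-string of Lemma \ref{lem:XYinO}(i), whose terms are pairwise distinct in the generic case since $q$ is not a root of unity), then read off an extremal weight vector, and then push the induction through, checking that no intermediate coefficient vanishes and no two weights coincide --- this is where essentially all of the computational content sits, and the only place the non-root-of-unity hypothesis is used. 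The even-dimensional analogue of this argument is Theorem \ref{thm:onto_E}, and the irreducibility criterion complementing the present statement is Theorem \ref{thm:irr_O}.
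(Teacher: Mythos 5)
First, a point of comparison: the paper does not prove this statement at all --- Theorem \ref{thm:onto_O} is imported verbatim from \cite{Huang:DAHAmodule} (Theorem 8.1 there), so your argument has to stand on its own as a re-proof of that classification, and as written it does not. The opening move (Schur's lemma, $c_i=k_i+k_i^{-1}$, $(t_i-k_i)(t_i-k_i^{-1})=0$) is fine, but everything after that defers precisely the content of the theorem. You justify the $q^{\pm 2}$-string shape of the spectrum of $X=t_3t_0$ by pointing to Lemma \ref{lem:XYinO}(i); that lemma records the action of $X$ on the already-constructed module $O(k_0,k_1,k_2,k_3)$, so it is a consequence of the theorem, not an available input for an abstract irreducible $V$. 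For abstract $V$ the string structure, the multiplicity-one property of the (generalized) $X$- and $Y$-eigenspaces, and the existence of an extremal vector in $\ker(t_0-k_0)\cap\ker(t_3-k_3)$ for a suitable labelling are exactly what must be established; since $t_0$ and $t_3$ do not commute, a joint eigenvector is not formal, and the degenerate case $k_0^2=1$, where $t_0$ need not even be diagonalizable on $V$ (compare Lemma \ref{lem2:t0_O}(ii)), is not addressed. No mechanism is offered for any of this beyond ``the quadratic relations together with $t_0t_1t_2t_3=q^{-1}$ confine it,'' which is where the real work lies.

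The inductive step is also mis-stated and partly circular. In $O(k_0,k_1,k_2,k_3)$ itself the partial spans $\langle v_0,\ldots,v_N\rangle$ with $N<d$ are not invariant under all $t_j$ (e.g.\ $t_1v_N$ involves $v_{N+1}$ for even $N$, and $t_0v_N$ involves $v_{N+1}$ for odd $N$), so ``the span is invariant under every $t_j$ for as long as $v_N\neq 0$'' cannot be the induction hypothesis; only the full chain is invariant, and for that you must know beforehand that the recursion genuinely terminates with $v_{d+1}=0$ (not merely that the $v_i$ become linearly dependent) after exactly $d+1$ steps --- again the multiplicity-one/string-length statement you have not proved. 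Moreover, the formulas of Proposition \ref{prop:O} that you propose to verify already encode the normalization $k_0k_1k_2k_3=q^{-d-1}$ (the $t_2$-coefficients $k_2q^{d-i}$ there are the coefficients $k_0^{-1}k_1^{-1}k_3^{-1}q^{-i-1}$ of Proposition \ref{prop:E} rewritten via that constraint), so you cannot both check those formulas inductively and only afterwards deduce the constraint by evaluating $t_0t_1t_2t_3=q^{-1}$ on $v_d$; the constraint has to emerge during the construction (say, from matching the scalar of $c_2$), and the final evaluation on $v_d$ is only a consistency check. In short, the skeleton is the right shape, but the distinguished vector, the string/multiplicity analysis, the termination of the $Y$-chain, and the pinning of the $k_i$ up to inversion constitute the proof, and they are all missing.
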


\section{The lattices of $\triangle_q$-submodules of finite-dimensional $\H_q$-modules}\label{s:lattice}

In \S\ref{s:t0} we investigate the role of $t_0$ in the $\triangle_q$-submodules of an $\H_q$-module. 
In \S\ref{s:lattice_E}--\ref{s:lattice_O} we inspect the $\triangle_q$-submodules of the irreducible $\H_q$-modules following the results of Theorems \ref{thm:onto_E} and \ref{thm:onto_O}.

\subsection{The eigenspaces of $t_0$ and $\triangle_q$-modules}\label{s:t0}

By \cite[Theorem 4.5]{DAHA2013}  the $\F$-algebra homomorphism $\zeta:\triangle_q\to \H_q$ given in Theorem \ref{thm:hom} is injective. Thus the universal Askey--Wilson algebra $\triangle_q$ can be considered as a subalgebra of $\H_q$.

Let $\mathcal A$ denote an algebra.
Recall that the commutator $[x,y]$ of two elements $x,y\in \mathcal A$ is defined by $[x,y]=xy-yx$. Given a subset $S$ of $\mathcal A$, the centralizer of $S$ in $\mathcal A$ is the set of all elements $x\in \mathcal A$ satisfying $[x,y]=0$ for all $y\in S$.

\begin{lem}\label{lem:t0_centralizer}
The element $t_0$ is in the centralizer of $\triangle_q$ in $\H_q$. 
\end{lem}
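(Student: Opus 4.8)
The plan is to show that $t_0$ commutes with each of the generators of $\triangle_q$. By Lemma \ref{lem:gen_UAW}, it suffices to check that $t_0$ commutes with the images $\zeta(A)$, $\zeta(B)$ and $\zeta(\gamma)$, since $\zeta$ is injective and $\triangle_q$ is generated by $A,B,\gamma$. Using the formulas from Theorem \ref{thm:hom}, we have $\zeta(A)=t_1t_0+(t_1t_0)^{-1}$, $\zeta(B)=t_3t_0+(t_3t_0)^{-1}$, and $\zeta(\gamma)=c_1c_3+c_2(qt_0^{-1}+q^{-1}t_0)$.

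First I would handle $\zeta(\gamma)$: since $c_1,c_2,c_3$ are central in $\H_q$ by (\ref{ci}), the element $\zeta(\gamma)$ is a central-coefficient polynomial in $t_0$ and $t_0^{-1}$, hence commutes with $t_0$ trivially. Next, for $\zeta(A)=t_1t_0+(t_1t_0)^{-1}$, I would compute $[t_0,t_1t_0+(t_1t_0)^{-1}]$. The key trick is to use the relation that $c_1=t_1+t_1^{-1}$ is central: write $t_1t_0+t_0^{-1}t_1^{-1}$ and conjugate by $t_0$, getting $t_0(t_1t_0+t_0^{-1}t_1^{-1})t_0^{-1}=t_0t_1+t_0^{-1}t_1^{-1}$. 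So the commutator condition $[t_0,\zeta(A)]=0$ is equivalent to $t_0t_1+t_0^{-1}t_1^{-1}=t_1t_0+t_0^{-1}t_1^{-1}$, i.e. to $t_0t_1=t_1t_0$ — which is false in general. So the naive computation does not work, and the real point is subtler: one must use centrality of $c_1$ more cleverly. Observe that $t_1t_0+(t_1t_0)^{-1}=t_1t_0+t_0^{-1}t_1^{-1}$, and since $c_1$ is central, $t_1^{-1}=c_1-t_1$, so $t_0^{-1}t_1^{-1}=t_0^{-1}c_1-t_0^{-1}t_1=c_1t_0^{-1}-t_0^{-1}t_1$. Thus $\zeta(A)=t_1t_0-t_0^{-1}t_1+c_1t_0^{-1}$. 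Then $[t_0,\zeta(A)]=[t_0,t_1t_0]-[t_0,t_0^{-1}t_1]+c_1[t_0,t_0^{-1}]=t_0t_1t_0-t_1t_0^2-t_1+t_0t_0^{-1}t_1\cdot$... this is getting messy; the cleaner route is to note $[t_0, t_1 t_0 + t_0^{-1}t_1^{-1}] = [t_0,t_1]t_0 + t_0^{-1}[t_0^{-1},t_1^{-1}]\cdot$ hmm.

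The clean approach, and the one I would actually carry out: compute $t_0(t_1t_0)t_0^{-1} = t_0t_1 = (c_1 - t_1^{-1})t_0\cdot$... Actually the decisive identity is that $t_0$ commutes with $t_1t_0 + (t_1t_0)^{-1}$ because $t_0 \cdot (t_1 t_0) \cdot t_0^{-1} + t_0\cdot(t_1t_0)^{-1}\cdot t_0^{-1}$ should be rearranged using $c_1$-centrality to recover $t_1t_0+(t_1t_0)^{-1}$. Concretely, let $Y=t_0t_1$ as in (\ref{Y}), so $\zeta(A)=Y+Y^{-1}$ after noting $t_1t_0$ is conjugate to... no, $t_1t_0 \ne t_0t_1$ in general. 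The correct observation is: $t_0(t_1t_0) = t_0t_1t_0$ and $(t_1t_0)t_0 = t_1t_0^2$; using $t_0t_1t_0 = t_0(c_1-t_1^{-1})t_0 = c_1t_0^2 - t_0t_1^{-1}t_0$, while $t_1t_0^2 = (c_1-t_1^{-1})t_0^2 = c_1t_0^2 - t_1^{-1}t_0^2$, so $[t_0, t_1t_0] = t_1^{-1}t_0^2 - t_0t_1^{-1}t_0 = (t_1^{-1}t_0 - t_0t_1^{-1})t_0 = [t_1^{-1},t_0]t_0 = -[t_0,t_1^{-1}]t_0$. Similarly $[t_0,(t_1t_0)^{-1}] = [t_0, t_0^{-1}t_1^{-1}] = t_0t_0^{-1}t_1^{-1} - t_0^{-1}t_1^{-1}t_0 = t_1^{-1} - t_0^{-1}t_1^{-1}t_0 = t_0^{-1}(t_0t_1^{-1} - t_1^{-1}t_0) = t_0^{-1}[t_0,t_1^{-1}]$. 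Adding, $[t_0,\zeta(A)] = t_0^{-1}[t_0,t_1^{-1}] - [t_0,t_1^{-1}]t_0$; this is not obviously zero, so I must instead invoke the stronger structure. The honest main obstacle is precisely this bracket computation, and the resolution is to expand $[t_0,t_1^{-1}]$ using $t_1^{-1} = c_1 - t_1$: $[t_0,t_1^{-1}] = [t_0, c_1 - t_1] = -[t_0,t_1]$, and then to show $t_0^{-1}[t_0,t_1] \cdot(-1) \cdot(-1)+ [t_0,t_1]t_0 \cdot(-1)\cdot(-1)$... so $[t_0,\zeta(A)] = -t_0^{-1}[t_0,t_1] + [t_0,t_1]t_0$. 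For this to vanish one needs $[t_0,t_1]t_0 = t_0^{-1}[t_0,t_1]$, equivalently $t_0[t_0,t_1]t_0 = [t_0,t_1]$, i.e. $t_0(t_0t_1-t_1t_0)t_0 = t_0t_1-t_1t_0$. Expanding the left side: $t_0^2t_1t_0 - t_0t_1t_0^2$; using $t_0t_1t_0 = c_1t_0^2 - t_0t_1^{-1}t_0$ doesn't immediately close. I expect the cleanest final proof substitutes $t_1 = c_1 - t_1^{-1}$ throughout and uses that $c_1$ is central and $t_0t_0^{-1}=1$; the key step will be recognizing that $\zeta(A) = t_1t_0+(t_1t_0)^{-1}$ can be rewritten so that after one conjugation by $t_0$ the central element $c_1$ absorbs all the discrepancy. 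The main obstacle is organizing this algebra cleanly; once the substitution $t_i^{-1} = c_i - t_i$ is in hand, everything reduces to the centrality of $c_1, c_2, c_3$ (and that $\gamma$'s image involves only $c_i$'s and $t_0^{\pm1}$), so I would present the proof as: $[t_0,\zeta(\gamma)]=0$ since $\zeta(\gamma)\in\F[c_1,c_2,c_3,t_0^{\pm1}]$; and $[t_0,\zeta(A)]=0$, $[t_0,\zeta(B)]=0$ by the same computation (with $c_1$, resp.\ $c_3$) — after substituting $(t_1t_0)^{-1}=t_0^{-1}t_1^{-1}=t_0^{-1}(c_1-t_1)$ and simplifying — then conclude by Lemma \ref{lem:gen_UAW}.
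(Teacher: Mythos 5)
Your overall strategy is sound and close to the paper's: reduce to showing $t_0$ commutes with the images of a generating set, dispose of the third generator because its image lies in $\F[c_1,c_2,c_3,t_0^{\pm 1}]$ (you use $\gamma$ via Lemma \ref{lem:gen_UAW}; the paper uses $C$ and a ``similar argument''), and then do a bracket computation for $\zeta(A)$ and $\zeta(B)$. But the bracket computation --- which is the entire content of the lemma --- is never completed. You correctly reduce $[t_0,\zeta(A)]=0$ to the identity $t_0\,[t_0,t_1^{-1}]\,t_0=[t_0,t_1^{-1}]$ (equivalently $t_0[t_0,t_1]t_0=[t_0,t_1]$), then report that it ``doesn't immediately close'' and end with a hope that substituting $t_1=c_1-t_1^{-1}$ and using centrality of $c_1,c_2,c_3$ will finish the job. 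It will not: the missing ingredient is the centrality of $c_0$, which you never invoke. The relation $t_0+t_0^{-1}=c_0$ gives $t_0^2=c_0t_0-1$, and with it your own reduction closes in one line:
\begin{gather*}
t_0[t_0,t_1]t_0=t_0^2t_1t_0-t_0t_1t_0^2=(c_0t_0-1)t_1t_0-t_0t_1(c_0t_0-1)=t_0t_1-t_1t_0=[t_0,t_1].
\end{gather*}
This is exactly the step the paper's proof takes (it opens with ``Using $t_0^2=c_0t_0-1$''). Centrality of $c_1$ alone cannot suffice: in the algebra generated by $t_0^{\pm1},t_1^{\pm1}$ with only $t_1+t_1^{-1}$ central, $t_0$ does not commute with $t_1t_0+(t_1t_0)^{-1}$.

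So the gap is concrete: you identified the right target identity but did not supply the one relation ($c_0$ central, i.e.\ $t_0^2=c_0t_0-1$) that proves it, and your closing summary attributes the eventual success to the wrong relations. With the displayed line above inserted at the point where you say the computation ``doesn't immediately close,'' your argument becomes a correct proof, essentially equivalent to the paper's (which organizes the same two relations as $[t_0,t_1t_0]=t_1-t_0^{-1}t_1t_0$ from $c_0$-centrality and $[t_0,(t_1t_0)^{-1}]=t_0^{-1}t_1t_0-t_1$ from $c_1$-centrality, and adds them).
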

\begin{proof}
It suffices to show that $t_0$ commutes with each of $A,B,C$ by Definition \ref{defn:UAW}. Using $t_0^2=c_0 t_0-1$  yields that  
\begin{gather}\label{[t0A]1}
[t_0,t_1t_0]=t_1-t_0^{-1} t_1 t_0.
\end{gather}
Using $t_1^{-1}=c_1-t_1$ yields that 
\begin{gather}\label{[t0A]2}
[t_0,(t_1t_0)^{-1}]=t_0^{-1} t_1 t_0-t_1.
\end{gather}
Recall that $A=t_1t_0+(t_1t_0)^{-1}$ from Theorem \ref{thm:hom}. Adding both sides of (\ref{[t0A]1}) and (\ref{[t0A]2}) yields that 
$[t_0,A]=0$. By similar arguments each of $[t_0,B]$ and $[t_0,C]$ is zero. The lemma follows.
\end{proof}

\begin{lem}\label{lem:X+Xinv}
The following equations hold in $\H_q$:
\begin{enumerate}
\item $t_0t_3+(t_0t_3)^{-1}=t_3t_0+(t_3t_0)^{-1}=X+X^{-1}$.

\item $t_1t_0+(t_1t_0)^{-1}=t_0t_1+(t_0t_1)^{-1}=Y+Y^{-1}$.

\item $t_2t_1+(t_2t_1)^{-1}=t_1t_2+(t_1t_2)^{-1}=q X+q^{-1} X^{-1}$.

\item $t_3t_2+(t_3t_2)^{-1}=t_2t_3+(t_2t_3)^{-1}=q Y+q^{-1} Y^{-1}$.
\end{enumerate}
\end{lem}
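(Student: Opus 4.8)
The plan is to reduce all four equations to two elementary facts: an order-reversal identity for products of the $t_i$, and the braid-type relation (\ref{t0123}).

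First I would record the following observation, valid in any unital algebra. Let $u,v$ be invertible elements such that $u+u^{-1}$ and $v+v^{-1}$ are both central. Writing $u=(u+u^{-1})-u^{-1}$ and using centrality of $u+u^{-1}$ gives $uv-vu=vu^{-1}-u^{-1}v$; then writing $v=(v+v^{-1})-v^{-1}$ and using centrality of $v+v^{-1}$ rewrites the right-hand side as $u^{-1}v^{-1}-v^{-1}u^{-1}=(vu)^{-1}-(uv)^{-1}$. Hence
\[
uv+(uv)^{-1}=vu+(vu)^{-1}.
\]
By the defining relations (\ref{ti+tiinv}) each $c_i=t_i+t_i^{-1}$ is central, so this applies with $u,v$ any two of the generators $t_0,t_1,t_2,t_3$. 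In particular this yields the ``first equality'' in each of parts (i)--(iv): $t_0t_3+(t_0t_3)^{-1}=t_3t_0+(t_3t_0)^{-1}$, $t_1t_0+(t_1t_0)^{-1}=t_0t_1+(t_0t_1)^{-1}$, and similarly for the pairs $(t_2,t_1)$ and $(t_3,t_2)$.

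It then remains to identify the common value in each case with the stated expression in $X$ or $Y$. Parts (i) and (ii) are immediate from the definitions $X=t_3t_0$ and $Y=t_0t_1$ in (\ref{X}) and (\ref{Y}). For (iii) I would use (\ref{t0123}): multiplying $t_0t_1t_2t_3=q^{-1}$ on the left by $t_0^{-1}$ and on the right by $t_3^{-1}$ gives $t_1t_2=q^{-1}t_0^{-1}t_3^{-1}=q^{-1}(t_3t_0)^{-1}=q^{-1}X^{-1}$, hence $t_1t_2+(t_1t_2)^{-1}=q^{-1}X^{-1}+qX$. For (iv), multiplying $t_0t_1t_2t_3=q^{-1}$ on the left by $(t_0t_1)^{-1}$ gives $t_2t_3=q^{-1}(t_0t_1)^{-1}=q^{-1}Y^{-1}$, hence $t_2t_3+(t_2t_3)^{-1}=q^{-1}Y^{-1}+qY$. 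Combined with the first-equality step, this proves all four parts.

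I do not expect a genuine obstacle here; the only point requiring care is the noncommutative bookkeeping — correctly inverting a product as $(t_it_j)^{-1}=t_j^{-1}t_i^{-1}$, and solving the cyclic relation (\ref{t0123}) by multiplying on the appropriate side — since in $\H_q$ none of the relevant pairs $t_i,t_j$ commute.
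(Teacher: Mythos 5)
Your proposal is correct; every step checks out, including the general identity $uv+(uv)^{-1}=vu+(vu)^{-1}$ for invertible $u,v$ with $u+u^{-1}$, $v+v^{-1}$ central, and the correct one-sided manipulations of (\ref{t0123}) giving $t_1t_2=q^{-1}X^{-1}$ and $t_2t_3=q^{-1}Y^{-1}$. However, your route differs from the paper's. The paper gets the first equality in (i) from Lemma \ref{lem:t0_centralizer} (namely $[t_0,B]=0$, read as invariance of $t_3t_0+(t_3t_0)^{-1}$ under conjugation by $t_0$) and then obtains (ii)--(iv) by repeatedly applying the $\Z/4\Z$-automorphism $1\bmod 4$ from Table \ref{Z/4Z-action}, using (\ref{t0123}) only once to identify $t_1t_2$ with $q^{-1}X^{-1}$. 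You instead prove, in any unital algebra, the swap identity for a pair of invertibles whose ``traces'' $u+u^{-1}$, $v+v^{-1}$ are central, and apply it uniformly to each pair $(t_i,t_j)$; this makes the argument self-contained (no appeal to the centralizer lemma or to the twisting automorphism) and isolates the precise algebraic mechanism behind all four first equalities. What the paper's approach buys is brevity given the tools already in place, and it foregrounds the $\Z/4\Z$-symmetry that is exploited throughout the later sections on the twisted modules $E(k_0,k_1,k_2,k_3)^{\e}$; what yours buys is a reusable, more general lemma and a single uniform computation in place of a case-by-case reduction.
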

\begin{proof}
Recall the $\Z/4\Z$-action on $\H_q$ from Table \ref{Z/4Z-action}. Set $\e=1\pmod{4}$. 

(i): Using Lemma \ref{lem:t0_centralizer} yields the first equality. By (\ref{X}) the second equality holds.

(ii): It follows by applying $\e$ to Lemma \ref{lem:X+Xinv}(i).

(iii): The first equality follows by applying $\e$ to Lemma \ref{lem:X+Xinv}(ii). By (\ref{t0123}) the element $t_1t_2=q^{-1} X^{-1}$. Hence the second equality holds. 

(iv): It follows by applying $\e$ to Lemma \ref{lem:X+Xinv}(iii).
\end{proof}

Given any $\H_q$-module $V$ and any $\theta\in \F$ we let 
$$
V(\theta)=\{v\in V\,|\, t_0v=\theta v\}.
$$

\begin{prop}\label{prop:t0eigenspace=AWmodule}
Let $V$ denote an $\H_q$-module. Then $V(\theta)$ is a $\triangle_q$-submodule of $V$ for any $\theta\in \F$.
\end{prop}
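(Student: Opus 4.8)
The plan is to show that the subspace $V(\theta)$ is invariant under each of the generators $A,B,C$ of $\triangle_q$, which by Definition \ref{defn:UAW} (together with Lemma \ref{lem:gen_UAW}) is enough. The key observation is Lemma \ref{lem:t0_centralizer}: since $t_0$ commutes with $A$, for any $v\in V(\theta)$ we have $t_0(Av)=A(t_0v)=\theta\, Av$, so $Av\in V(\theta)$. The identical argument applies to $B$ and to $C$ using the fact that each of them commutes with $t_0$. Hence $V(\theta)$ is closed under the action of $A,B,C$, and therefore under all of $\triangle_q$.

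To make this airtight I would phrase it as follows. First, recall from Lemma \ref{lem:t0_centralizer} that $t_0$ lies in the centralizer of $\triangle_q$ in $\H_q$; in particular $[t_0,u]=0$ for every $u\in\triangle_q$. Now fix $\theta\in\F$ and let $v\in V(\theta)$, so $t_0v=\theta v$. For any $u\in\triangle_q$ we compute
\[
t_0(uv)=(t_0u)v=(ut_0)v=u(t_0v)=u(\theta v)=\theta(uv),
\]
so $uv\in V(\theta)$. Thus $V(\theta)$ is stable under the $\triangle_q$-action, i.e.\ it is a $\triangle_q$-submodule of $V$.

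There is essentially no obstacle here: the proposition is a direct corollary of Lemma \ref{lem:t0_centralizer}, and the only thing one needs is the elementary fact that the $\lambda$-eigenspace of an operator is preserved by any operator commuting with it. If one wanted to avoid invoking the full centralizer statement and work only with the generators, one could instead verify invariance generator-by-generator, but since Lemma \ref{lem:t0_centralizer} is already established this is unnecessary. I would keep the proof to two or three lines, citing Lemma \ref{lem:t0_centralizer} for the commuting property.
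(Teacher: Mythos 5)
Your proof is correct and coincides with the paper's own argument: both deduce the invariance of $V(\theta)$ directly from Lemma \ref{lem:t0_centralizer}, using the elementary fact that the eigenspaces of $t_0$ are preserved by any operator commuting with $t_0$. No issues.
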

\begin{proof}
It follows from Lemma \ref{lem:t0_centralizer} that $V(\theta)$ is $x$-invariant for all $x\in \triangle_q$ for any $\theta\in \F$. 
\end{proof}

\begin{lem}\label{lem:irr_AWmodule_in_t0eigenspace}
If $V$ is a finite-dimensional irreducible $\H_q$-module, then at least one of $\{c_i\}_{i=0}^3$ acts on $V$ as multiplication by a nonzero scalar.
\end{lem}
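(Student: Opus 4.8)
The plan is to argue by contradiction: suppose that $c_0,c_1,c_2,c_3$ all act as $0$ on the finite-dimensional irreducible $\H_q$-module $V$. Equivalently, by \eqref{ci}, each $t_i$ satisfies $t_i^2=-1$ on $V$, so every $t_i$ acts invertibly with $t_i^{-1}=-t_i$. The first thing I would do is record the consequences of this for the elements $X=t_3t_0$ and $Y=t_0t_1$ of \eqref{X}, \eqref{Y}: from Lemma \ref{lem:X+Xinv} we get $X+X^{-1}=t_3t_0+(t_3t_0)^{-1}$ and, since $t_0^{-1}=-t_0$ and $t_3^{-1}=-t_3$, one computes $(t_3t_0)^{-1}=t_0^{-1}t_3^{-1}=t_0t_3$; so $X+X^{-1}=t_3t_0+t_0t_3$. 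I expect the key relation to come from combining the four parts of Lemma \ref{lem:X+Xinv} with $c_i=0$, together with the defining relation $t_0t_1t_2t_3=q^{-1}$ from \eqref{t0123}.

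Next I would extract numerical information. Using $c_0=0$ we have $t_0^2=-1$; similarly for the others, and $t_0t_1t_2t_3=q^{-1}$ implies (taking determinants, or just squaring cleverly) a constraint. Concretely, from $t_0t_1t_2t_3=q^{-1}$ and $t_i^{-1}=-t_i$ we get $t_3t_2t_1t_0=(t_0t_1t_2t_3)^{-1}=q$, while also $t_3t_2t_1t_0=(-t_3^{-1})(-t_2^{-1})(-t_1^{-1})(-t_0^{-1})\cdot$ is just $t_3t_2t_1t_0$, which is consistent but not yet contradictory. The sharper route: on a finite-dimensional module $t_0$ has an eigenvalue $\theta$, and $t_0^2=-1$ forces $\theta^2=-1$. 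By Lemma \ref{lem:t0_centralizer}, $t_0$ centralizes $\triangle_q=\zeta(\triangle_q)$; so each eigenspace $V(\theta)$ is a $\triangle_q$-submodule by Proposition \ref{prop:t0eigenspace=AWmodule}. Moreover $t_3$ and $t_1$ each either preserve or swap the two eigenspaces $V(\theta),V(-\theta)$ (since $t_3t_0t_3^{-1}=-t_0$ when $c_3=0$, because $t_3t_0t_3^{-1}+t_3t_0^{-1}t_3^{-1}=c_0=0$ forces $t_3t_0t_3^{-1}=-t_3t_0^{-1}t_3^{-1}=t_3t_0t_3^{-1}$... ). Here I would use $t_3(t_0+t_0^{-1})t_3^{-1}=c_0=0$, hence $t_3t_0t_3^{-1}=-t_3t_0^{-1}t_3^{-1}=-(t_3t_0t_3^{-1})^{-1}$, i.e.\ $t_3t_0t_3^{-1}$ is an element whose inverse is its negative — automatic from $c_0=0$ — so this particular line needs the relation $t_3t_0+t_0t_3=X+X^{-1}$ more carefully.

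The cleanest contradiction, and the one I would aim for, uses Theorems \ref{thm:onto_E} and \ref{thm:onto_O}: every finite-dimensional irreducible $\H_q$-module is isomorphic to some $E(k_0,k_1,k_2,k_3)^\e$ or $O(k_0,k_1,k_2,k_3)$, on which $c_i$ acts as $k_i+k_i^{-1}$. For the even-dimensional case, Proposition \ref{prop:E} requires $k_0^2=q^{-d-1}$ with $d\geq 1$ odd; if $c_0=k_0+k_0^{-1}=0$ then $k_0^2=-1$, so $q^{-d-1}=-1$, forcing $q^{2(d+1)}=1$, contradicting that $q$ is not a root of unity. For the odd-dimensional case, Theorem \ref{thm:onto_O} gives $V\cong O(k_0,k_1,k_2,k_3)$ with $k_0k_1k_2k_3=q^{-d-1}$; if all $c_i=0$ then $k_i^2=-1$ for each $i$, so $(k_0k_1k_2k_3)^2=1$, i.e.\ $q^{-2(d+1)}=1$, again contradicting that $q$ is not a root of unity. (For the twisted module $E(\cdots)^\e$ one notes that the $\Z/4\Z$-action of Table \ref{Z/4Z-action} permutes $\{c_0,c_1,c_2,c_3\}$, so $c_0,\dots,c_3$ all vanish on $E(\cdots)^\e$ iff they all vanish on $E(\cdots)$, reducing to the untwisted case.) The main obstacle is purely bookkeeping: one must make sure the twist $\e$ is handled, and that the constraints $k_0^2=q^{-d-1}$ (even case) and $k_0k_1k_2k_3=q^{-d-1}$ (odd case) are invoked with the correct $d\geq 0$ or $d\geq 1$, so that $q$ being a non-root-of-unity genuinely closes the argument; no hard computation is involved.
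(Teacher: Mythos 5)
Your argument is correct and is essentially the paper's own proof: both rest on the classification Theorems \ref{thm:onto_E} and \ref{thm:onto_O}, the constraints $k_0^2=q^{-d-1}$ (even-dimensional case, with the $\Z/4\Z$-twist merely permuting the $c_i$) and $k_0k_1k_2k_3=q^{-d-1}$ (odd-dimensional case), and the hypothesis that $q$ is not a root of unity; you simply run it as a contradiction (all $c_i=0$ forces $q^{\pm 2(d+1)}=1$) whereas the paper argues directly that $k_0^2\neq -1$, respectively some $k_i^2\neq -1$. The exploratory first two paragraphs about $X$, $Y$ and conjugation by $t_3$ are not needed and can be dropped, but they do not affect the validity of the final argument.
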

\begin{proof}
Let $d\geq 0$ denote an integer with $\dim V=d+1$. We divide the argument into the two cases: (i) $d$ is odd; (ii) $d$ is even.

(i): By Theorem \ref{thm:onto_E} there are an $\e\in \Z/4\Z$ and nonzero $k_0,k_1,k_2,k_3\in \F$ with $k_0^2=q^{-d-1}$ such that $V$ is isomorphic to $E(k_0,k_1,k_2,k_3)^\e$.  Since $q$ is not a root of unity, the value $k_0^2$ is not equal to $-1$. By Proposition \ref{prop:E}(ii) the element $c_0$ acts on $E(k_0,k_1,k_2,k_3)$ as scalar multiplication by $k_0+k_0^{-1}\not=0$. Therefore the lemma holds for this case.

(ii): By Theorem \ref{thm:onto_O} there are nonzero $k_0,k_1,k_2,k_3\in \F$ with $k_0 k_1 k_2 k_3=q^{-d-1}$ such that $V$ is isomorphic to $O(k_0,k_1,k_2,k_3)$.  Since $q$ is not a root of unity, the value $k_0 k_1 k_2 k_3$ is not equal to $\pm 1$. 
Hence there exists an $i\in \{0,1,2,3\}$ such that $k_i^2\not= -1$.
By Proposition \ref{prop:O}(ii) the element $c_i$ acts on $O(k_0,k_1,k_2,k_3)$ as scalar multiplication by $k_i+k_i^{-1}\not=0$. Therefore the lemma holds for this case.
\end{proof}

\begin{prop}\label{prop:irr_AWmodule_in_t0eigenspace}
Let $V$ denote a finite-dimensional irreducible $\H_q$-module. For any irreducible $\triangle_q$-submodule $W$ of $V$, there exists a nonzero scalar $\theta\in \F$ such that $W\subseteq V(\theta)$.
\end{prop}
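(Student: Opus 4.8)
The plan is to show that an irreducible $\triangle_q$-submodule $W$ of $V$ must lie in a single eigenspace $V(\theta)$ of $t_0$, and then to rule out $\theta=0$. First I would use Lemma~\ref{lem:t0_centralizer}: since $t_0$ centralizes $\triangle_q$ in $\H_q$, the element $t_0$ acts on the $\triangle_q$-module $W$ as a $\triangle_q$-module endomorphism. Because $V$ is finite-dimensional and $\F$ is algebraically closed, $t_0$ has an eigenvalue $\theta\in\F$ on $W$; the corresponding eigenspace $W\cap V(\theta)$ is a nonzero $\triangle_q$-submodule of $W$ (it is $\triangle_q$-invariant by Proposition~\ref{prop:t0eigenspace=AWmodule}, or directly by the centralizer property). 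Irreducibility of $W$ forces $W=W\cap V(\theta)$, i.e. $W\subseteq V(\theta)$.

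Next I would argue $\theta\neq 0$. Suppose toward a contradiction that $t_0$ acts as $0$ on $W$. But $t_0$ is invertible in $\H_q$ with $t_0^{-1}=c_0-t_0$; more to the point, $t_0+t_0^{-1}=c_0$ acts as a scalar on $V$ (hence on $W$) by the centrality relation~\eqref{ti+tiinv}. If $t_0=0$ on $W$ then $t_0^{-1}$ would have to be finite on $W$, which is impossible since $t_0 t_0^{-1}=1$; concretely, applying $t_0^{-1}$ to any nonzero $w\in W$ and then $t_0$ gives $w=t_0(t_0^{-1}w)=0$, a contradiction. So $t_0$ is invertible on all of $V$, and in particular $\theta\neq 0$ for any eigenvalue of $t_0$ on $V$. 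Thus $W\subseteq V(\theta)$ with $\theta\neq 0$.

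Actually, the cleanest route avoids even invoking invertibility abstractly: from $t_0+t_0^{-1}$ central, say acting as the scalar $c_0$ on $V$, one gets $t_0^2-c_0 t_0+1=0$ on $V$, so every eigenvalue $\theta$ of $t_0$ satisfies $\theta^2-c_0\theta+1=0$, whence $\theta\neq 0$ (the product of the two roots is $1$). This is the argument I would present, as it is self-contained. The role of Lemma~\ref{lem:irr_AWmodule_in_t0eigenspace} in this subsection is presumably for the completely-reducible corollary rather than for this proposition, so I would not need it here; but if one wanted, the nonvanishing of $\theta$ could alternatively be deduced by noting that $\theta=0$ would make $A,B,C$ act in a degenerate way inconsistent with some $c_i\neq 0$.

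I do not anticipate a serious obstacle: the whole statement is essentially Schur's lemma applied to the commuting operator $t_0$, combined with the quadratic relation satisfied by $t_0$ on any $\H_q$-module. The one point requiring a line of care is justifying that $t_0$ restricts to an operator on $W$ (it does, since $W$ is $\triangle_q$-invariant and $t_0$ commutes with $\triangle_q$, so $t_0 W$ is again $\triangle_q$-invariant and contained in $V$; but we actually need $t_0 W\subseteq W$, which follows because $t_0$ commuting with $\triangle_q$ means $t_0 W$ is a $\triangle_q$-submodule, and then one uses that $W$ is irreducible together with $W\cap t_0W\neq 0$ — or simply that $t_0|_W$ being a $\triangle_q$-endomorphism of the irreducible $W$ is forced to exist because $t_0$ has an eigenvector in $V$ lying in $W$ after passing to $W\cap V(\theta)$). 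I would phrase this via: pick any eigenvalue $\theta$ of $t_0$ on $V$; then $W\cap V(\theta)$ is a $\triangle_q$-submodule of $W$ by Proposition~\ref{prop:t0eigenspace=AWmodule}, nonzero for at least one choice of $\theta$ among the eigenvalues of $t_0$ on $W$, hence equals $W$.
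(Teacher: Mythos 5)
There is a genuine gap at the very first step. Lemma~\ref{lem:t0_centralizer} only says that $t_0$ commutes with the image of $\triangle_q$, i.e.\ $t_0$ is a $\triangle_q$-module endomorphism of $V$; it does \emph{not} give $t_0W\subseteq W$, so you cannot speak of ``an eigenvalue of $t_0$ on $W$'' nor apply Schur's lemma to $t_0|_W$. All that commutation yields is that $t_0W$ is another $\triangle_q$-submodule, isomorphic to $W$ because $t_0$ is invertible; a priori $t_0W$ could meet $W$ trivially (commutation alone never excludes a commuting operator permuting, or mixing diagonally, two isomorphic irreducible $\triangle_q$-constituents of $V$). Each of your attempted repairs is circular: the assertion $W\cap t_0W\neq 0$ is made without proof; ``$t_0$ has an eigenvector in $V$ lying in $W$'' and ``nonzero for at least one choice of $\theta$ among the eigenvalues of $t_0$ on $W$'' both presuppose exactly what must be shown, namely that $W$ meets some $V(\theta)$ nontrivially. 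Your argument that $\theta\neq 0$ (via $t_0^2-c_0t_0+1=0$, or invertibility of $t_0$) is fine, but that is the easy half.

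What closes the gap in the paper is not commutation but membership: using Theorem~\ref{thm:hom} and Schur's lemma on the irreducible $\H_q$-module $V$, one shows that the operator $t_0$ on $V$ is an $\F$-linear combination of $1$ and a \emph{central element of $\triangle_q$}. Concretely, if some $c_i$ with $1\le i\le 3$ acts as a nonzero scalar, then from $\zeta(\alpha)=c_3c_2+c_1(qt_0^{-1}+q^{-1}t_0)$ and $qt_0^{-1}+q^{-1}t_0=qc_0-(q-q^{-1})t_0$ one solves for $t_0$ in terms of $1$ and $\alpha$; if $c_1=c_2=c_3=0$, one instead solves for $t_0$ from $\zeta(\Omega)$, and here Lemma~\ref{lem:irr_AWmodule_in_t0eigenspace} --- which you set aside as irrelevant --- is needed precisely to guarantee $c_0\neq 0$. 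Since $\alpha$ and $\Omega$ lie in (the center of) $\triangle_q$, they do preserve $W$ and act on it as scalars by Schur's lemma, hence so does $t_0$, and invertibility gives $\theta\neq 0$. So your strategy (Schur plus the quadratic relation for $t_0$) is right in spirit, but the key input that makes Schur applicable to $t_0$ on $W$ is the expression of $t_0$ through central elements of $\triangle_q$, and that step is missing from your write-up.
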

\begin{proof}
Suppose that $W$ is an irreducible $\triangle_q$-submodule of $V$.
By Schur's lemma we may divide the argument into the two cases: (i) At least one of $\{c_i\}_{i=1}^3$ acts on $V$ as multiplication by a nonzero scalar. (ii) Each of $\{c_i\}_{i=1}^3$ vanishes on $V$.

(i): First we assume that $c_1$ acts on $V$ as multiplication by a nonzero scalar. 
Observe that 
$$
q^{-1} t_0+q t_0^{-1}=q c_0-(q-q^{-1})t_0.
$$ 
Since $q^2\not=1$ it follows from Theorem \ref{thm:hom} that $t_0$ is an $\F$-linear combination of $1$ and $\alpha$ 
as endomorphisms of $V$. By Schur's lemma the element $\alpha$ acts on $W$ as scalar multiplication. Hence $t_0$ acts on $W$ as multiplication by a scalar $\theta\in \F$. Hence $W\subseteq V(\theta)$. Since $t_0$ is invertible in $\H_q$ the scalar $\theta$ is nonzero.

When $c_2$ or $c_3$ acts on $V$ as multiplication by a nonzero scalar, the proposition is true by a similar argument.

(ii): Observe that 
$$
(q^{-1} t_0+q t_0^{-1})^2=-(q-q^{-1})^2+q^2 c_0^2-c_0(q^2-q^{-2}) t_0.
$$ 
By Lemma \ref{lem:irr_AWmodule_in_t0eigenspace} the element $c_0$ acts on $V$ as multiplication by a nonzero scalar. Combined with $q^4\not=1$ this yields that $t_0$ is an $\F$-linear combination of $1$ and $\Omega$ 
as endomorphisms of $V$ by Theorem \ref{thm:hom}. By Schur's lemma the element $\Omega$ acts on $W$ as scalar multiplication. Hence $t_0$ acts on $W$ as multiplication by a scalar $\theta\in \F$.  Hence $W\subseteq V(\theta)$. Since $t_0$ is invertible in $\H_q$ the scalar $\theta$ is nonzero.  
\end{proof}

\subsection{The lattice of $\triangle_q$-submodules of $E(k_0,k_1,k_2,k_3)$}\label{s:lattice_E}

Throughout \S\ref{s:lattice_E}--\S\ref{s:lattice_E3} we use the following conventions: Let $d\geq 1$ denote an odd integer and assume that $k_0,k_1,k_2,k_3$ are nonzero scalars in $\F$ with 
\begin{gather}\label{k02=-d-1}
k_0^2=q^{-d-1}.
\end{gather}
Let $\{v_i\}_{i=0}^d$ denote the $\F$-basis for $E(k_0,k_1,k_2,k_3)$ from Proposition \ref{prop:E}(i). Set 
$$
\varrho_i
=\left\{
\begin{array}{ll}
(k_0 k_1 k_3 q^i-k_2)(k_0 k_1 k_3 q^i-k_2^{-1})
\qquad 
&\hbox{for $i=1,3,\ldots,d$},
\\
(1-q^i) (1-k_0^2 q^i)
\qquad 
&\hbox{for $i=2,4,\ldots,d-1$}.
\end{array}
\right.
$$

In this subsection, we study the $\triangle_q$-submodules of the $\H_q$-module $E(k_0,k_1,k_2,k_3)$. Recall that $A=t_1 t_0+(t_1t_0)^{-1}$ and $B=t_3t_0+(t_3t_0)^{-1}$ from Theorem \ref{thm:hom}.

\begin{lem}\label{lem:AB_E}
The actions of $A$ and $B$ on the $\H_q$-module $E(k_0,k_1,k_2,k_3)$ are as follows:
\begin{align*}
A v_i &=
\left\{
\begin{array}{ll}
\displaystyle
\theta_i v_i 
-
k_0^{-1} k_1^{-1} q^{-i-1}(q-q^{-1}) v_{i+1}
-
k_0^{-1} k_1^{-1} q^{-i-2} v_{i+2}
\qquad 
&\hbox{for $i=0,2,\ldots,d-3$},
\\
\displaystyle
\theta_i v_i -k_0^{-1} k_1^{-1} q^{-i-1} v_{i+2}
\qquad 
&\hbox{for $i=1,3,\ldots,d-2$},
\end{array}
\right.
\\
A v_{d-1}&=\theta_{d-1} v_{d-1} -k_0^{-1} k_1^{-1} q^{-d} (q-q^{-1}) v_d,
\qquad 
A v_d = \theta_d v_d,
\\
B v_i &=
\left\{
\begin{array}{ll}
\displaystyle
\theta_i^* v_i -k_0^{-1} k_3^{-1} q^{-i} \varrho_i \varrho_{i-1} v_{i-2}
\qquad 
&\hbox{for $i=2,4,\ldots,d-1$},
\\
\displaystyle
\theta_i^* v_i
+
k_0^{-1} k_3^{-1} q^{-i} (q-q^{-1}) \varrho_i  v_{i-1}
-
k_0^{-1} k_3^{-1} q^{1-i} \varrho_i \varrho_{i-1} v_{i-2}
\qquad 
&\hbox{for $i=3,5,\ldots,d$},
\end{array}
\right.
\\
B v_0 &=
\theta^*_0 v_0,
\qquad 
B v_1 =
\theta^*_1 v_1+
k_0^{-1} k_3^{-1} q^{-1} (q-q^{-1}) \varrho_1  v_0,
\end{align*}
where 
\begin{align*}
\theta_i
&=
k_0 k_1 q^{2\lceil \frac{i}{2}\rceil}
+k_0^{-1} k_1^{-1}  q^{-2\lceil \frac{i}{2}\rceil}
\qquad 
\hbox{for $i=0,1,\ldots,d$},
\\
\theta_i^*
&=
k_0 k_3 q^{2\lceil \frac{i}{2}\rceil}
+k_0^{-1} k_3^{-1}  q^{-2\lceil \frac{i}{2}\rceil}
\qquad 
\hbox{for $i=0,1,\ldots,d$}.
\end{align*}
\end{lem}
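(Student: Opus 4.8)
The goal is to compute the matrices of $A = t_1t_0 + (t_1t_0)^{-1}$ and $B = t_3t_0 + (t_3t_0)^{-1}$ on $E(k_0,k_1,k_2,k_3)$ in the basis $\{v_i\}_{i=0}^d$. By Lemma~\ref{lem:X+Xinv}(ii) we have $A = Y + Y^{-1}$, and by Lemma~\ref{lem:X+Xinv}(i) we have $B = X + X^{-1}$, so the computation reduces entirely to the descriptions of the $X$- and $Y$-actions furnished by Lemma~\ref{lem:XYinE}. Thus I would not touch the explicit $t_i$-formulas of Proposition~\ref{prop:E} directly; instead I would work from Lemma~\ref{lem:XYinE}, which is the natural bookkeeping device here.

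\textbf{Computing $B = X + X^{-1}$.} From Lemma~\ref{lem:XYinE}(i), the operator $1 - k_0k_3 q^{2\lceil i/2\rceil} X^{(-1)^{i-1}}$ sends $v_i$ to $\varrho_i v_{i-1}$ (and kills $v_0$). The exponent $(-1)^{i-1}$ is $+1$ for $i$ odd and $-1$ for $i$ even, so I would split into parities. For $i$ even, $(1 - k_0k_3q^{2\lceil i/2\rceil} X^{-1})v_i = \varrho_i v_{i-1}$, i.e. $X^{-1} v_i = \theta_i^{*-1}$-type relation—more precisely this expresses $X^{-1}v_i$ in terms of $v_i$ and $v_{i-1}$ (using $\theta_i^* = k_0k_3q^{2\lceil i/2\rceil} + (k_0k_3)^{-1}q^{-2\lceil i/2\rceil}$). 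For $i$ odd the same relation gives $Xv_i$ in terms of $v_i$ and $v_{i-1}$. To get both $X$ and $X^{-1}$ on a fixed $v_i$ I would invert one step: on $v_i$ with $i$ even I already have $X^{-1}v_i$; to get $Xv_i$ I apply the odd-index relation at $i-1$ to solve for $X^{-1}v_{i-1}$ in terms of lower vectors, then invert. Concretely, iterating the relation downward expresses $X^{\pm1}v_i$ as a (short, two- or three-term) combination of $v_i, v_{i-1}, v_{i-2}$, and adding $X$ and $X^{-1}$ produces the stated three-term expressions for $Bv_i$, with the $\varrho_i\varrho_{i-1}$ products appearing exactly because two successive applications of the lowering relation are involved. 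The boundary cases $Bv_0$, $Bv_1$ come from $v_0$ being annihilated.

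\textbf{Computing $A = Y + Y^{-1}$.} This is the mirror-image computation using Lemma~\ref{lem:XYinE}(ii): $1 - k_0k_1 q^{2\lceil i/2\rceil} Y^{(-1)^{i-1}}$ raises $v_i$ to $v_{i+1}$ (with coefficient $1$, and kills $v_d$). Again splitting on the parity of $i$ gives $Yv_i$ or $Y^{-1}v_i$ in terms of $v_i, v_{i+1}$; one further application at index $i+1$ lets me solve for the other of $Y^{\pm1}v_i$ in terms of $v_i, v_{i+1}, v_{i+2}$. Adding gives the three-term (for $i$ even) or two-term (for $i$ odd) formulas for $Av_i$, with $\theta_i = k_0k_1q^{2\lceil i/2\rceil} + (k_0k_1)^{-1}q^{-2\lceil i/2\rceil}$ as the diagonal entry. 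Because the raising coefficients are all $1$ (no $\varrho$'s on the $Y$-side), no products of $\varrho$'s appear in $A$, matching the stated formula. The boundary cases $Av_{d-1}$, $Av_d$ come from $v_d$ being killed.

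\textbf{Main obstacle.} The only real work is the careful parity bookkeeping: the factor $q^{2\lceil i/2\rceil}$ takes the same value at $i$ and at $i-1$ when $i$ is even but jumps when $i$ is odd, so combining the relation at index $i$ with the one at index $i-1$ (resp. $i+1$) requires tracking which $\lceil \cdot/2\rceil$ appears where, and correctly matching the resulting coefficients $k_0^{-1}k_1^{-1}q^{-i-1}(q-q^{-1})$, $k_0^{-1}k_1^{-1}q^{-i-2}$, etc. This is routine algebra but error-prone; I expect it to be the bulk of the proof, while the conceptual content is entirely contained in Lemmas~\ref{lem:X+Xinv} and~\ref{lem:XYinE}. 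Hence the proof is essentially: ``Combine Lemma~\ref{lem:X+Xinv} with Lemma~\ref{lem:XYinE} and simplify.''
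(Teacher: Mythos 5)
Your proposal matches the paper's own proof: the paper likewise writes $A=Y+Y^{-1}$ and $B=X+X^{-1}$ via Lemma \ref{lem:X+Xinv} and then evaluates these actions on the basis $\{v_i\}_{i=0}^d$ using Lemma \ref{lem:XYinE}, treating the remaining parity bookkeeping as a straightforward verification. Your outline of how the $\varrho_i\varrho_{i-1}$ products and the boundary cases arise is consistent with that computation, so the approach is essentially identical.
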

\begin{proof}
By Lemma \ref{lem:X+Xinv} we have $A=Y+Y^{-1}$ and $B=X+X^{-1}$. Using Lemma \ref{lem:XYinE} it is straightforward to verify the lemma.
\end{proof}

\begin{lem}\label{lem1:t0_E}
The matrix representing $t_0$ with respect to the $\F$-basis 
\begin{gather*}
v_0,
\quad 
v_d,
\quad 
v_i+(q^i-1)v_{i-1}
\quad \hbox{for $i=2,4,\ldots,d-1$},
\quad 
q^{i+1} v_i 
\quad \hbox{for $i=1,3,\ldots,d-2$}
\end{gather*}
for $E(k_0,k_1,k_2,k_3)$ is 
\begin{gather*}
\begin{pmatrix}
k_0 I_2  &\rvline &{\bf 0}  &\rvline &{\bf 0} 
\\
\hline
{\bf 0}  &\rvline
&k_0 I_{\frac{d-1}{2}}
 &\rvline & -k_0^{-1} I_{\frac{d-1}{2}}
\\
\hline
{\bf 0} &  \rvline 
&{\bf 0} &  \rvline
&k_0^{-1} I_{\frac{d-1}{2}} 
\end{pmatrix}.
\end{gather*}
\end{lem}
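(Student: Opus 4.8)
The plan is to compute the action of $t_0$ on the proposed basis directly from the formulas in Proposition~\ref{prop:E}(i) and read off the block form. First I would recall from Proposition~\ref{prop:E}(i) that $t_0v_0=k_0v_0$ and $t_0v_d=k_0v_d$, so the first diagonal block $k_0I_2$ is immediate. Next, for the even indices $i=2,4,\ldots,d-1$ I would apply $t_0$ to the vector $v_i+(q^i-1)v_{i-1}$: here $t_0v_i=k_0^{-1}q^{-i}(1-q^i)(1-k_0^2q^i)v_{i-1}+(k_0+k_0^{-1}-k_0^{-1}q^{-i})v_i$ while $v_{i-1}$ has odd index $i-1$, so $t_0v_{i-1}=k_0^{-1}q^{-i}(v_{i-1}-v_i)$. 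The key point is that the off-diagonal contributions to $v_{i-1}$ and the extra multiples of $v_i$ should cancel so that $t_0\bigl(v_i+(q^i-1)v_{i-1}\bigr)$ becomes a combination of $v_i+(q^i-1)v_{i-1}$ (coefficient $k_0$) and $q^{i+1}v_{i-1}$ (coefficient $-k_0^{-1}$ up to the scaling chosen for the odd basis vectors). This is the one genuine computation; I would carry it out carefully to confirm the coefficients $k_0$ on the diagonal and $-k_0^{-1}$ on the superdiagonal block.

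Then for the odd indices $i=1,3,\ldots,d-2$ I would apply $t_0$ to $q^{i+1}v_i$: since $t_0v_i=k_0^{-1}q^{-i-1}(v_i-v_{i+1})$, we get $t_0(q^{i+1}v_i)=k_0^{-1}(v_i-v_{i+1})=k_0^{-1}q^{-i-1}\cdot q^{i+1}v_i-k_0^{-1}v_{i+1}$; but $v_{i+1}$ is not directly a basis vector, so I would rewrite $v_{i+1}=\bigl(v_{i+1}+(q^{i+1}-1)v_i\bigr)-(q^{i+1}-1)v_i$ to express the result in the chosen basis. Tracking the two resulting terms gives the contribution to the even-index basis vector $v_{i+1}+(q^{i+1}-1)v_i$ and back to $q^{i+1}v_i$, which is exactly what produces the lower-right $2\times 2$-type pattern with diagonal $k_0^{-1}I_{(d-1)/2}$ and zero in the lower-left block. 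I would order the basis so that the even-index vectors $v_i+(q^i-1)v_{i-1}$ come before the scaled odd-index vectors $q^{i+1}v_i$, matching the stated block decomposition $2+\tfrac{d-1}{2}+\tfrac{d-1}{2}=d+1$.

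I expect the main obstacle to be bookkeeping: matching indices correctly (the even basis vector indexed by $i$ pairs with the odd basis vector indexed by $i-1$), and verifying that all the ``wrong'' off-diagonal terms cancel so that $t_0$ really is block upper triangular with only the one nonzero superdiagonal block between the two $\tfrac{d-1}{2}$-dimensional pieces. A convenient way to organize this is to note that by Lemma~\ref{lem:XYinE}(ii) the vector $v_{i+1}$ arises from applying $1-k_0k_1q^{2\lceil i/2\rceil}Y^{(-1)^{i-1}}$ to $v_i$, but here it is cleaner to just use the explicit $t_0$-action from Proposition~\ref{prop:E}(i). Once the two computations above are done and the basis is listed in the stated order, the matrix assembles into the displayed block form, completing the proof.

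\begin{proof}
By Proposition~\ref{prop:E}(i) we have $t_0v_0=k_0v_0$ and $t_0v_d=k_0v_d$, which gives the block $k_0I_2$.

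For $i=2,4,\ldots,d-1$, using Proposition~\ref{prop:E}(i) and $k_0^2=q^{-d-1}$ one computes
\begin{align*}
t_0\bigl(v_i+(q^i-1)v_{i-1}\bigr)
&=k_0^{-1}q^{-i}(1-q^i)(1-k_0^2q^i)v_{i-1}+\bigl(k_0+k_0^{-1}-k_0^{-1}q^{-i}\bigr)v_i
\\
&\qquad +(q^i-1)k_0^{-1}q^{-i}(v_{i-1}-v_i)
\\
&=k_0\bigl(v_i+(q^i-1)v_{i-1}\bigr)-k_0^{-1}\cdot q^iv_{i-1},
\end{align*}
after collecting the coefficients of $v_i$ and $v_{i-1}$. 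Since $q^iv_{i-1}=q^{(i-1)+1}v_{i-1}$, the image lies in the span of $v_i+(q^i-1)v_{i-1}$ (with coefficient $k_0$) and $q^{(i-1)+1}v_{i-1}$ (with coefficient $-k_0^{-1}$).

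For $i=1,3,\ldots,d-2$, Proposition~\ref{prop:E}(i) gives $t_0v_i=k_0^{-1}q^{-i-1}(v_i-v_{i+1})$, hence
\begin{align*}
t_0\bigl(q^{i+1}v_i\bigr)
&=k_0^{-1}(v_i-v_{i+1})
\\
&=k_0^{-1}q^{-i-1}\cdot q^{i+1}v_i-k_0^{-1}\Bigl(\bigl(v_{i+1}+(q^{i+1}-1)v_i\bigr)-(q^{i+1}-1)v_i\Bigr)
\\
&=k_0^{-1}\cdot q^{i+1}v_i-k_0^{-1}\bigl(v_{i+1}+(q^{i+1}-1)v_i\bigr).
\end{align*}
Thus the image lies in the span of $q^{i+1}v_i$ (with coefficient $k_0^{-1}$) and $v_{i+1}+(q^{i+1}-1)v_i$ (with coefficient $-k_0^{-1}$).

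Listing the basis in the order
$$
v_0,\ v_d,\quad v_i+(q^i-1)v_{i-1}\ (i=2,4,\ldots,d-1),\quad q^{i+1}v_i\ (i=1,3,\ldots,d-2),
$$
the three computations above show that the matrix representing $t_0$ has the stated block form, where the superdiagonal block $-k_0^{-1}I_{\frac{d-1}{2}}$ records the coefficient $-k_0^{-1}$ of $q^{(i-1)+1}v_{i-1}$ in $t_0\bigl(v_i+(q^i-1)v_{i-1}\bigr)$ for $i=2,4,\ldots,d-1$.
\end{proof}
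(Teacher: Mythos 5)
Your overall strategy is exactly the paper's (the paper simply declares the verification routine from Proposition \ref{prop:E}(i)), and your second computation is correct, but your first display contains an algebra error that makes the proof, as written, establish a different matrix than the one in the lemma. Collect the coefficient of $v_{i-1}$ in $t_0\bigl(v_i+(q^i-1)v_{i-1}\bigr)$ for even $i$: it is
$$
k_0^{-1}q^{-i}(1-q^i)(1-k_0^2q^i)+(q^i-1)k_0^{-1}q^{-i}
=k_0^{-1}q^{-i}(1-q^i)\bigl[(1-k_0^2q^i)-1\bigr]
=-k_0(1-q^i)=k_0(q^i-1),
$$
while the coefficient of $v_i$ is $k_0$, so in fact
$$
t_0\bigl(v_i+(q^i-1)v_{i-1}\bigr)=k_0\bigl(v_i+(q^i-1)v_{i-1}\bigr)
$$
with \emph{no} leftover term $-k_0^{-1}q^iv_{i-1}$; no use of $k_0^2=q^{-d-1}$ is needed, and this is forced anyway by Lemma \ref{lem3:t0_E}, which exhibits these vectors as a basis of the $k_0$-eigenspace. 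Your spurious term would place nonzero entries in the lower-middle block (third block row, second block column), which the lemma asserts is $\mathbf{0}$.

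Consequently your concluding sentence also misattributes the off-diagonal block: with the usual convention that the $j$-th column of the matrix holds the coordinates of the image of the $j$-th basis vector, the superdiagonal block $-k_0^{-1}I_{\frac{d-1}{2}}$ comes from your \emph{second} computation, namely the coefficient $-k_0^{-1}$ of $v_{i+1}+(q^{i+1}-1)v_i$ in $t_0(q^{i+1}v_i)$ for odd $i$, paired with the diagonal entry $k_0^{-1}$ on $q^{i+1}v_i$; the columns coming from the vectors $v_i+(q^i-1)v_{i-1}$ contribute only the diagonal block $k_0I_{\frac{d-1}{2}}$. If you repair the first computation as above and reassign the blocks accordingly, the argument does prove the lemma and coincides with the paper's intended routine verification.
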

\begin{proof}
It is routine to verify the lemma by using Proposition \ref{prop:E}(i).
\end{proof}

\begin{lem}\label{lem2:t0_E}
\begin{enumerate}
\item If $d=1$ then $t_0$ is diagonalizable on $E(k_0,k_1,k_2,k_3)$ with exactly one eigenvalue $k_0$. 

\item If $d\geq 3$ then $t_0$ is diagonalizable on $E(k_0,k_1,k_2,k_3)$ with exactly two eigenvalues $k_0^{\pm 1}$. 
\end{enumerate}
\end{lem}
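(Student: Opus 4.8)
The plan is to read off the structure of $t_0$ directly from the matrix displayed in Lemma~\ref{lem1:t0_E}, using in addition only the defining relation $t_0+t_0^{-1}=c_0$. Dealing first with part (i): when $d=1$ the two blocks of size $\frac{d-1}{2}=0$ occurring in the matrix of Lemma~\ref{lem1:t0_E} are empty, so that matrix reduces to $k_0 I_2$. Hence $t_0$ acts on $E(k_0,k_1,k_2,k_3)$ as scalar multiplication by $k_0$; in particular it is diagonalizable with the single eigenvalue $k_0$.

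For part (ii), suppose $d\geq 3$. Then the matrix of Lemma~\ref{lem1:t0_E} is block upper triangular with diagonal blocks $k_0 I_2$, $k_0 I_{(d-1)/2}$ and $k_0^{-1}I_{(d-1)/2}$, and since the first and third blocks are nonempty the eigenvalues of $t_0$ on $E(k_0,k_1,k_2,k_3)$ are exactly $k_0$ and $k_0^{-1}$. These are distinct: by (\ref{k02=-d-1}) we have $k_0^2=q^{-d-1}$, and because $q$ is not a root of unity $k_0^2\neq1$, so $k_0\neq k_0^{-1}$. It then remains to check diagonalizability, for which I would argue as follows. By Proposition~\ref{prop:E}(ii) the element $c_0$ acts on $E(k_0,k_1,k_2,k_3)$ as multiplication by $k_0+k_0^{-1}$; combining this with $c_0=t_0+t_0^{-1}$ and multiplying through by $t_0$ gives $t_0^2-(k_0+k_0^{-1})t_0+1=0$ on the module, that is, $(t_0-k_0)(t_0-k_0^{-1})=0$. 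Since $k_0\neq k_0^{-1}$ this is a polynomial in $t_0$ with distinct roots that annihilates $E(k_0,k_1,k_2,k_3)$, so $t_0$ is diagonalizable on $E(k_0,k_1,k_2,k_3)$, with eigenvalues $k_0^{\pm1}$.

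There is no genuine obstacle here; the only points requiring a little attention are the degeneration of the matrix in Lemma~\ref{lem1:t0_E} at $d=1$ (where the size-$\frac{d-1}{2}$ blocks disappear) and the inequality $k_0\neq k_0^{-1}$, which is exactly what both forces the two eigenvalues to be genuinely distinct and, via the splitting of the annihilating polynomial $(x-k_0)(x-k_0^{-1})$ into distinct linear factors, yields diagonalizability. One could instead diagonalize the recurring $2\times 2$ block $\bigl(\begin{smallmatrix} k_0 & -k_0^{-1}\\ 0 & k_0^{-1}\end{smallmatrix}\bigr)$ by hand, but the relation $t_0+t_0^{-1}=c_0$ makes this computation unnecessary.
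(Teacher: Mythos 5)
Your proof is correct, and for part (ii) it takes a genuinely different route to diagonalizability than the paper does. The paper establishes both parts directly from the explicit matrix of Lemma~\ref{lem1:t0_E}: part (i) is read off immediately, and for part (ii) it applies the rank--nullity theorem to that matrix, in effect computing that the $k_0$- and $k_0^{-1}$-eigenspaces have dimensions $\frac{d+3}{2}$ and $\frac{d-1}{2}$, which sum to $d+1$. You instead derive the annihilating identity $(t_0-k_0)(t_0-k_0^{-1})=0$ from the central relation $t_0+t_0^{-1}=c_0$ together with Proposition~\ref{prop:E}(ii), and conclude diagonalizability from the distinctness of the roots (guaranteed by (\ref{k02=-d-1}) and $q$ not being a root of unity), using the matrix of Lemma~\ref{lem1:t0_E} only to see that both eigenvalues actually occur when $d\geq 3$. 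Your argument is cleaner on the diagonalizability step and more robust: it applies verbatim to any $\H_q$-module on which $c_0$ acts as $k_0+k_0^{-1}$ with $k_0^2\neq 1$, independent of any explicit matrix form. What the paper's computation buys in exchange is the exact eigenspace dimensions, which it needs anyway in Lemma~\ref{lem3:t0_E}; your approach would still have to fall back on Lemma~\ref{lem1:t0_E} (or a trace computation) for that. Both proofs are complete and correct.
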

\begin{proof}
The statement (i) is immediate from Lemma \ref{lem1:t0_E}. By (\ref{k02=-d-1}) and since $q^{d+1}\not=1$ the values $k_0$ and $k_0^{-1}$ are distinct. Applying the rank-nullity theorem to Lemma \ref{lem1:t0_E} the statement (ii) follows.
\end{proof}

\begin{lem}\label{lem3:t0_E}
$E(k_0,k_1,k_2,k_3)(k_0)$ is of dimension $\frac{d+3}{2}$ with the $\F$-basis 
\begin{gather*}
v_0,
\quad 
v_d,
\quad 
v_i+(q^i-1) v_{i-1}
\quad \hbox{for $i=2,4,\ldots,d-1$}.
\end{gather*}
\end{lem}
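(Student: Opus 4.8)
The plan is to read off the eigenvectors of $t_0$ for the eigenvalue $k_0$ directly from the matrix computed in Lemma~\ref{lem1:t0_E}, using the fact that $k_0 \neq k_0^{-1}$. Working in the $\F$-basis
\[
v_0,\quad v_d,\quad v_i+(q^i-1)v_{i-1}\ \ (i=2,4,\ldots,d-1),\quad q^{i+1}v_i\ \ (i=1,3,\ldots,d-2),
\]
Lemma~\ref{lem1:t0_E} presents $t_0$ in block form; the first two blocks (spanned by $v_0,v_d$ and by the vectors $v_i+(q^i-1)v_{i-1}$) are already scalar $k_0 I$, while the last block acts as $k_0^{-1}I$ and contributes a $-k_0^{-1}$ coupling into the middle block. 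So the $k_0$-eigenspace is spanned by $v_0$, $v_d$, and $v_i+(q^i-1)v_{i-1}$ for $i=2,4,\ldots,d-1$, together with possibly some combinations involving the $q^{i+1}v_i$ vectors.

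First I would check that no vector from the third block (or any combination thereof) lies in the $k_0$-eigenspace: since $q^{d+1}\neq 1$ forces $k_0 \neq k_0^{-1}$, the restriction of $t_0 - k_0$ to the span of $\{q^{i+1}v_i : i \text{ odd}\}$ plus its image in the middle block is injective on that subspace, so the kernel of $t_0 - k_0$ meets that part of the decomposition trivially. Concretely, if $w = \sum (\text{middle}) + \sum_{i \text{ odd}} c_i q^{i+1} v_i$ satisfies $t_0 w = k_0 w$, comparing the third-block components gives $k_0^{-1}c_i = k_0 c_i$ for each odd $i$, hence $c_i = 0$; then $w$ lies in the span of the first two blocks and is automatically a $k_0$-eigenvector. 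This shows the $k_0$-eigenspace is exactly the span of the $2 + \frac{d-1}{2} = \frac{d+3}{2}$ listed vectors, and these are linearly independent because they form part of a basis.

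Alternatively, and perhaps more cleanly, I would just invoke Lemma~\ref{lem2:t0_E}: $t_0$ is diagonalizable with eigenvalues $k_0$ and $k_0^{-1}$, the $k_0^{-1}$-eigenspace is (from the block structure) the image of $t_0 - k_0$ restricted appropriately, and a rank count on the matrix in Lemma~\ref{lem1:t0_E} shows $\dim E(k_0,k_1,k_2,k_3)(k_0) = (d+1) - \frac{d-1}{2} = \frac{d+3}{2}$. The listed vectors all visibly lie in $E(k_0,k_1,k_2,k_3)(k_0)$ (they are the basis vectors spanning the two top diagonal blocks, on which $t_0$ acts as $k_0$), and there are $\frac{d+3}{2}$ of them and they are linearly independent, so they form a basis.

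I do not expect any real obstacle here; the statement is essentially a transcription of Lemma~\ref{lem1:t0_E} once one observes that the first two diagonal blocks of size $2$ and $\frac{d-1}{2}$ account for the $k_0$-eigenspace while the third block of size $\frac{d-1}{2}$ accounts for the $k_0^{-1}$-eigenspace. The only point requiring a word of justification is that the coupling term $-k_0^{-1}I$ from the third block into the middle block does not enlarge the $k_0$-eigenspace, which follows from $k_0 \neq k_0^{-1}$; this is exactly the content of the rank-nullity computation already carried out in Lemma~\ref{lem2:t0_E}(ii).
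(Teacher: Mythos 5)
Your proof is correct and follows essentially the same route as the paper, which simply declares the lemma ``immediate from Lemma~\ref{lem1:t0_E}'': you read the $k_0$-eigenspace off the block matrix of $t_0$, note that the listed $2+\frac{d-1}{2}=\frac{d+3}{2}$ vectors lie in it and are independent, and confirm via the coupling term (or the rank--nullity count already implicit in Lemma~\ref{lem2:t0_E}) that nothing from the $k_0^{-1}$-block contributes. No gaps; you have merely written out the details the paper leaves to the reader.
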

\begin{proof}
Immediate from Lemma \ref{lem1:t0_E}.
\end{proof}

In light of Proposition \ref{prop:t0eigenspace=AWmodule}, $E(k_0,k_1,k_2,k_3)(k_0)$ is a $\triangle_q$-submodule of $E(k_0,k_1,k_2,k_3)$. We are now going to study the $\triangle_q$-module $E(k_0,k_1,k_2,k_3)(k_0)$ and the quotient $\triangle_q$-module of $E(k_0,k_1,k_2,k_3)$ modulo $E(k_0,k_1,k_2,k_3)(k_0)$.

\begin{lem}\label{lem:AB_E(k0)}
Let 
$$
\mu_i=(-1)^{\frac{i}{2}}k_0^{-\frac{i}{2}}  
k_1^{-\frac{i}{2}} 
q^{-\frac{i(i+2)}{4}}
\qquad 
\hbox{for $i=2,4,\ldots,d+1$}.
$$  
Then the matrices representing $A$ and $B$ with respect to the $\F$-basis 
\begin{gather}\label{basis:E(k0)}
v_0,
\quad 
\mu_i
(v_i+(q^i-1) v_{i-1})
\quad \hbox{for $i=2,4,\ldots,d-1$},
\quad 
\mu_{d+1}
(q^{d+1}-1) v_d
\end{gather}
for the $\triangle_q$-module $E(k_0,k_1,k_2,k_3)(k_0)$ are 
$$
\begin{pmatrix}
\theta_0 & & &  &{\bf 0}
\\ 
1 &\theta_1 
\\
&1 &\theta_2
 \\
& &\ddots &\ddots
 \\
{\bf 0} & & &1 &\theta_\frac{d+1}{2}
\end{pmatrix},
\qquad 
\begin{pmatrix}
\theta_0^* &\varphi_1 &  & &{\bf 0}
\\ 
 &\theta_1^* &\varphi_2
\\
 &  &\theta_2^* &\ddots
 \\
 & & &\ddots &\varphi_{\frac{d+1}{2}}
 \\
{\bf 0}  & & & &\theta_\frac{d+1}{2}^*
\end{pmatrix},
$$
respectively, where 
\begin{align*}
\theta_i &=
k_0 k_1 q^{2i}
+k_0^{-1} k_1^{-1} q^{-2i}
\qquad 
\hbox{for $i=0,1,\ldots, \textstyle\frac{d+1}{2}$},
\\
\theta_i^* &=
k_0 k_3 q^{2i}
+k_0^{-1} k_3^{-1} q^{-2i}
\qquad 
\hbox{for $i=0,1,\ldots, \textstyle\frac{d+1}{2}$},
\\
\varphi_i &=
k_1^{-1} k_3^{-1} q^{\frac{d+3}{2}}
(q^i-q^{-i})(q^{i-\frac{d+3}{2}}-q^{\frac{d+3}{2}-i})
\\
&\qquad \times \;
(q^{-i}-k_0 k_1 k_2 k_3 q^{i-1})
(q^{-i}-k_0 k_1 k_2^{-1} k_3 q^{i-1})
\qquad 
\hbox{for $i=1,2,\ldots, \textstyle\frac{d+1}{2}$}.
\end{align*}
The elements $\alpha,\beta,\gamma$ act on the $\triangle_q$-module $E(k_0,k_1,k_2,k_3)(k_0)$ as scalar multiplication by 
\begin{gather}
(k_3+k_3^{-1})(k_2+k_2^{-1})+(k_1+k_1^{-1})(q k_0^{-1}+q^{-1} k_0),
\label{alpha:E(k0)}
\\
(k_2+k_2^{-1})(k_1+k_1^{-1})+(k_3+k_3^{-1})(q k_0^{-1}+q^{-1} k_0),
\label{beta:E(k0)}
\\
(k_1+k_1^{-1})(k_3+k_3^{-1})+(k_2+k_2^{-1})(q k_0^{-1}+q^{-1} k_0),
\label{gam:E(k0)}
\end{gather}
respectively.
\end{lem}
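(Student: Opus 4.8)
The plan is to reduce everything to the explicit formulas of Lemma \ref{lem:AB_E} for the actions of $A=Y+Y^{-1}$ and $B=X+X^{-1}$ on the basis $\{v_i\}_{i=0}^d$, combined with the description of $E(k_0,k_1,k_2,k_3)(k_0)$ from Lemma \ref{lem3:t0_E}. First I would note that the vectors in (\ref{basis:E(k0)}) are nonzero scalar multiples of the basis of $E(k_0,k_1,k_2,k_3)(k_0)$ exhibited in Lemma \ref{lem3:t0_E}: the $\mu_i$ are nonzero because $k_0,k_1$ are nonzero, and $\mu_{d+1}(q^{d+1}-1)$ is nonzero because $q$ is not a root of unity. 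Hence (\ref{basis:E(k0)}) is an $\F$-basis for the $\triangle_q$-submodule $E(k_0,k_1,k_2,k_3)(k_0)$. I will write this basis as $w_0=v_0$, $w_j=\mu_{2j}\bigl(v_{2j}+(q^{2j}-1)v_{2j-1}\bigr)$ for $1\le j\le\frac{d-1}{2}$, and $w_{(d+1)/2}=\mu_{d+1}(q^{d+1}-1)v_d$.

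The key elementary observation is that $\lceil\frac{2j}{2}\rceil=\lceil\frac{2j-1}{2}\rceil=j$, so in the notation of Lemma \ref{lem:AB_E} one has $\theta_{2j}=\theta_{2j-1}=k_0k_1q^{2j}+k_0^{-1}k_1^{-1}q^{-2j}$ and $\theta^*_{2j}=\theta^*_{2j-1}=k_0k_3q^{2j}+k_0^{-1}k_3^{-1}q^{-2j}$, which are exactly the scalars called $\theta_j$ and $\theta^*_j$ in the statement. Applying $A$ to $w_j$ via Lemma \ref{lem:AB_E}, the ``diagonal'' contribution is therefore precisely $\theta_j w_j$, and the surviving terms involve only $v_{2j+1}$ and $v_{2j+2}$; I would check that their coefficients stand in the ratio $(q^{2j+2}-1):1$, so that they assemble into a scalar multiple of $v_{2j+2}+(q^{2j+2}-1)v_{2j+1}$, and then verify that the recursion $\mu_{2j+2}=-k_0^{-1}k_1^{-1}q^{-2j-2}\mu_{2j}$, which the closed form of $\mu_i$ satisfies, makes that multiple equal to $w_{j+1}$. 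This gives the claimed lower bidiagonal shape of $A$ with subdiagonal entries $1$ and diagonal $\theta_j$; the end vectors $w_0=v_0$ and $w_{(d+1)/2}=\mu_{d+1}(q^{d+1}-1)v_d$ (where $Av_d=\theta_d v_d$) are handled by direct substitution, and $d=1$ falls out as a small degenerate case.

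For $B$ the computation is parallel but heavier. Applying $B$ to $w_j$ via Lemma \ref{lem:AB_E} produces $\theta^*_j w_j$ plus a combination of $v_{2j-2}$ and $v_{2j-3}$. Here I would invoke the hypothesis (\ref{k02=-d-1}), i.e. $k_0^2=q^{-d-1}$, to rewrite $\varrho_i=(1-q^i)(1-q^{i-d-1})$ for even $i$; after the resulting cancellation the coefficients of $v_{2j-2}$ and $v_{2j-3}$ come out in the ratio $1:(q^{2j-2}-1)$, hence form a scalar multiple of $w_{j-1}$. Collecting that scalar and simplifying with the factorization $k_0k_1k_3q^{2n-1}-k_2^{\pm1}=-k_2^{\pm1}q^{n}\bigl(q^{-n}-k_0k_1k_2^{\mp1}k_3q^{n-1}\bigr)$ together with $k_0^{-2}=q^{d+1}$ yields exactly the product formula for $\varphi_j$ in the statement. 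The end vectors $w_0$, $w_1$ (into which the $v_0$-term of $Bv_1$ feeds) and $w_{(d+1)/2}$ again require brief separate checks. Finally, since $\alpha,\beta,\gamma$ are central in $\triangle_q$ and $t_0$ acts as $k_0$ on $E(k_0,k_1,k_2,k_3)(k_0)$, substituting $qt_0^{-1}+q^{-1}t_0=qk_0^{-1}+q^{-1}k_0$ and $c_i=k_i+k_i^{-1}$ into the images $\zeta(\alpha),\zeta(\beta),\zeta(\gamma)$ of Theorem \ref{thm:hom} gives the scalars (\ref{alpha:E(k0)})--(\ref{gam:E(k0)}).

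I expect the main obstacle to be the bookkeeping on the $B$-side: tracking the three-term spread of $Bv_i$ across the twisted basis, correctly using $k_0^2=q^{-d-1}$ to collapse the products $\varrho_i\varrho_{i-1}$, and matching the surviving scalar with the compact factored expression for $\varphi_j$ (which requires the factorization identity above and some care with powers of $q$). The computation for $A$ and the identification of the $\alpha,\beta,\gamma$ scalars are comparatively routine.
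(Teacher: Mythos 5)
Your proposal is correct and follows essentially the same route as the paper: take the basis of $E(k_0,k_1,k_2,k_3)(k_0)$ from Lemma \ref{lem3:t0_E} (rescaled by the $\mu_i$), compute $A=Y+Y^{-1}$ and $B=X+X^{-1}$ on it via Lemma \ref{lem:AB_E} (equivalently Lemma \ref{lem:XYinE}), and read off the $\alpha,\beta,\gamma$ scalars from Theorem \ref{thm:hom} together with Proposition \ref{prop:E}(ii) and $t_0=k_0$ on the eigenspace. The recursion $\mu_{2j+2}=-k_0^{-1}k_1^{-1}q^{-2j-2}\mu_{2j}$, the coefficient-ratio checks, and the use of $k_0^2=q^{-d-1}$ with the stated factorization all verify correctly, so your outline matches the paper's (terser) ``direct calculation'' proof.
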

\begin{proof}
By Lemma \ref{lem3:t0_E} the vectors (\ref{basis:E(k0)}) are an $\F$-basis for $E(k_0,k_1,k_2,k_3)(k_0)$. Applying Lemma \ref{lem:AB_E} a direct calculation yields the matrices representing 
$A$ and $B$ with respect to (\ref{basis:E(k0)}). By Theorem \ref{thm:hom} the elements $\alpha,\beta,\gamma$ act on $E(k_0,k_1,k_2,k_3)(k_0)$ as scalar multiplication by (\ref{alpha:E(k0)})--(\ref{gam:E(k0)}), respectively. 
\end{proof}

\begin{prop}\label{prop:E(k0)}
The $\triangle_q$-module $E(k_0,k_1,k_2,k_3)(k_0)$ is isomorphic to 
$$
V_{\frac{d+1}{2}}\left(
k_0 k_1 q^{\frac{d+1}{2}},
k_0 k_3 q^{\frac{d+1}{2}},
k_0 k_2 q^{\frac{d+1}{2}}
\right).
$$
Moreover  if the $\H_q$-module $E(k_0,k_1,k_2,k_3)$ is irreducible then the $\triangle_q$-module $E(k_0,k_1,k_2,k_3)(k_0)$ is irreducible.
\end{prop}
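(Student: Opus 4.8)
The plan is to verify the isomorphism by matching the data of Proposition~\ref{prop:UAWd} and then invoking the uniqueness statement recorded after that proposition. By Lemma~\ref{lem:AB_E(k0)} we already have explicit matrices for $A$ and $B$ on $E(k_0,k_1,k_2,k_3)(k_0)$ with respect to the basis (\ref{basis:E(k0)}). So the first step is to set
$$
a=k_0k_1q^{\frac{d+1}{2}},\qquad b=k_0k_3q^{\frac{d+1}{2}},\qquad c=k_0k_2q^{\frac{d+1}{2}},\qquad \delta=\tfrac{d+1}{2},
$$
and check that with these values the scalars $\theta_i,\theta_i^*,\varphi_i$ of Lemma~\ref{lem:AB_E(k0)} coincide with the scalars $\theta_i,\theta_i^*,\varphi_i$ of Proposition~\ref{prop:UAWd}(i) for the module $V_\delta(a,b,c)$. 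For $\theta_i$ this is the identity $k_0k_1q^{2i}+k_0^{-1}k_1^{-1}q^{-2i}=aq^{2i-\delta}+a^{-1}q^{\delta-2i}$, which is immediate from $aq^{-\delta}=k_0k_1$; similarly for $\theta_i^*$ with $bq^{-\delta}=k_0k_3$. For $\varphi_i$ one substitutes $a^{-1}b^{-1}q^{\delta+1}=k_1^{-1}k_3^{-1}q^{1-\delta}$ and $abc\,q^{-\delta-1}=k_0k_1k_2k_3\,q^{\delta-1}$, $abc^{-1}q^{-\delta-1}=k_0k_1k_2^{-1}k_3\,q^{\delta-1}$, and likewise $q^{i-\delta-1}=q^{i-\frac{d+3}{2}}$, after which the two expressions for $\varphi_i$ are seen to agree term by term. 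This is a routine but slightly lengthy substitution; I would present it compactly.

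Next I would check that the central elements act correctly, i.e.\ that $\alpha,\beta,\gamma$ act on $V_\delta(a,b,c)$ by the same scalars (\ref{alpha:E(k0)})--(\ref{gam:E(k0)}) that Lemma~\ref{lem:AB_E(k0)} records for $E(k_0,k_1,k_2,k_3)(k_0)$. By Proposition~\ref{prop:UAWd}(ii), $\alpha$ acts on $V_\delta(a,b,c)$ as $(b+b^{-1})(c+c^{-1})+(a+a^{-1})(q^{\delta+1}+q^{-\delta-1})$; now $a+a^{-1}=k_0k_1q^{\delta}+k_0^{-1}k_1^{-1}q^{-\delta}$, and since $q^{\delta+1}=q^{\frac{d+3}{2}}$ while the mixed products $k_0^2q^{2\delta}=k_0^2q^{d+1}=1$ by (\ref{k02=-d-1}), one computes $(a+a^{-1})(q^{\delta+1}+q^{-\delta-1})=(k_1+k_1^{-1})(qk_0^{-1}+q^{-1}k_0)$, and $(b+b^{-1})(c+c^{-1})=(k_3+k_3^{-1})(k_2+k_2^{-1})$. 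This matches (\ref{alpha:E(k0)}), and $\beta,\gamma$ are handled identically. With the $A$-, $B$-, and $\alpha,\beta,\gamma$-data all matching, the remark following Proposition~\ref{prop:UAWd} (the module $V_\delta(a,b,c)$ is determined up to isomorphism by Lemma~\ref{lem:gen_UAW}) gives the first assertion, since $\triangle_q$ is generated by $A,B,\gamma$ and all of these act identically on the two modules in the chosen bases.

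For the second assertion, suppose the $\H_q$-module $E(k_0,k_1,k_2,k_3)$ is irreducible. By Theorem~\ref{thm:irr_E} this means
$$
k_0k_1k_2k_3,\ k_0k_1^{-1}k_2k_3,\ k_0k_1k_2^{-1}k_3,\ k_0k_1k_2k_3^{-1}\notin\{q^{-i}\mid i=1,3,\ldots,d\}.
$$
Using $a=k_0k_1q^{\delta}$, $b=k_0k_3q^{\delta}$, $c=k_0k_2q^{\delta}$ and $k_0^2q^{d+1}=1$, I would translate these four conditions into statements about $abc,a^{-1}bc,ab^{-1}c,abc^{-1}$. For instance $abc=k_0^3k_1k_2k_3q^{3\delta}=k_0k_1k_2k_3\,q^{2\delta}\cdot k_0^2\cdot q^{\delta}\cdot q^{-2\delta}$; cleaning this up via $k_0^2=q^{-d-1}=q^{-2\delta}$ gives $abc=k_0k_1k_2k_3\,q^{-1}$ (up to a careful bookkeeping of the exponent), and similarly the other three equal $q^{-1}$ times $k_0k_1k_2^{-1}k_3$, $k_0k_1^{-1}k_2k_3$, $k_0k_1k_2k_3^{-1}$ in some order. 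Hence the irreducibility hypothesis forces
$$
abc,\ a^{-1}bc,\ ab^{-1}c,\ abc^{-1}\notin\{q^{-i-1}\mid i=1,3,\ldots,d\}.
$$
The criterion of Theorem~\ref{thm:irr_UAW} applied to $V_\delta(a,b,c)$ with $\delta=\frac{d+1}{2}$ requires these four quantities to avoid $\{q^{2i-\delta-1}\mid i=1,2,\ldots,\delta\}$; since as $i$ runs over $1,\ldots,\delta$ the exponent $2i-\delta-1$ runs over $1-\delta,3-\delta,\ldots,\delta-1$, i.e.\ over $-d,\,-d+2,\ldots$, these are exactly the values $-i-1$ for $i=1,3,\ldots,d$. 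So the two exclusion sets coincide, the hypothesis guarantees they are avoided, and Theorem~\ref{thm:irr_UAW} yields irreducibility of $V_\delta(a,b,c)\cong E(k_0,k_1,k_2,k_3)(k_0)$.

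The main obstacle I anticipate is purely bookkeeping: getting the exponents of $q$ exactly right in the translation between the $k_i$-parametrization and the $(a,b,c,\delta)$-parametrization, both in the $\varphi_i$ comparison and in matching the two irreducibility exclusion sets. There is no conceptual difficulty once the dictionary $a=k_0k_1q^{\frac{d+1}{2}}$, $b=k_0k_3q^{\frac{d+1}{2}}$, $c=k_0k_2q^{\frac{d+1}{2}}$ together with the relation $k_0^2q^{d+1}=1$ is in hand; everything reduces to Propositions~\ref{prop:UAWd}, Theorems~\ref{thm:irr_UAW} and \ref{thm:irr_E}, and the uniqueness remark after Proposition~\ref{prop:UAWd}.
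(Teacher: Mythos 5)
Your proposal follows essentially the same route as the paper's proof: set $(a,b,c)=(k_0k_1q^{\frac{d+1}{2}},k_0k_3q^{\frac{d+1}{2}},k_0k_2q^{\frac{d+1}{2}})$ and $d'=\frac{d+1}{2}$, match the data of Lemma \ref{lem:AB_E(k0)} against Proposition \ref{prop:UAWd} (the paper records only the $\beta$-scalar check and then invokes the uniqueness note based on Lemma \ref{lem:gen_UAW}, exactly as you do), and then translate Theorem \ref{thm:irr_E} into the hypothesis of Theorem \ref{thm:irr_UAW}. So the structure is right and the conclusion is correct; the only issues are in your displayed exponent bookkeeping, which you yourself flagged as the risk. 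Concretely, using (\ref{k02=-d-1}) one has $a^{-1}b^{-1}q^{d'+1}=k_1^{-1}k_3^{-1}q^{\frac{d+3}{2}}$ (not $k_1^{-1}k_3^{-1}q^{1-d'}$) and $abc=k_0k_1k_2k_3\,q^{d'}$ (not $k_0k_1k_2k_3\,q^{-1}$); likewise the exclusion set $\{q^{2i-d'-1}\mid i=1,\ldots,d'\}$ of Theorem \ref{thm:irr_UAW} has exponents $\frac{1-d}{2},\frac{5-d}{2},\ldots,\frac{d-1}{2}$, which is not the set $\{q^{-i-1}\mid i=1,3,\ldots,d\}$ you wrote. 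These two slips differ by the same factor $q^{-d'-1}$, so they cancel and your asserted equivalence of the two exclusion conditions is in fact the correct one: $abc=k_0k_1k_2k_3q^{d'}=q^{2i-d'-1}$ if and only if $k_0k_1k_2k_3=q^{2(i-d')-1}\in\{q^{-1},q^{-3},\ldots,q^{-d}\}$, which is precisely what Theorem \ref{thm:irr_E} rules out, and similarly for $a^{-1}bc$, $ab^{-1}c$, $abc^{-1}$. With those exponents corrected, your argument coincides with the paper's proof.
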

\begin{proof}
Set $(a,b,c)=(k_0 k_1 q^{\frac{d+1}{2}},
k_0 k_3 q^{\frac{d+1}{2}},
k_0 k_2 q^{\frac{d+1}{2}})$ and $d'=\frac{d+1}{2}$. Under the assumption (\ref{k02=-d-1}) the scalar (\ref{beta:E(k0)}) is equal to
$$
(c+c^{-1})(a+a^{-1})+(b+b^{-1})(q^{d'+1}+q^{-d'-1}).
$$
Comparing Proposition \ref{prop:UAWd} with Lemma \ref{lem:AB_E(k0)} we see that the $\triangle_q$-module $E(k_0,k_1,k_2,k_3)(k_0)$ is isomorphic to $V_{d'}(a,b,c)$. 

Suppose that the $\H_q$-module $E(k_0,k_1,k_2,k_3)$ is irreducible. Using Theorem \ref{thm:irr_E} yields that 
$$
abc,a^{-1}bc,ab^{-1}c,abc^{-1}\not\in\{q^{2i-d'-1}\,|\,i=1,2,\ldots,d'\}.
$$
Hence the $\triangle_q$-module $V_{d'}(a,b,c)$ is irreducible by Theorem \ref{thm:irr_UAW}. The proposition follows.
\end{proof}

\begin{lem}\label{lem:AB_E/E(k0)}
Suppose that $d\geq 3$. Let  
$$
\mu_i=(-1)^{\frac{i-1}{2}} k_0^{-\frac{i-1}{2}}  k_1^{-\frac{i-1}{2}}  q^{-\frac{(i-1)(i+1)}{2}}
\qquad  
\hbox{for $i=1,3,\ldots,d-2$}. 
$$
Then the matrices representing $A$ and $B$ with respect to the $\F$-basis 
\begin{gather}\label{basis:E/E(k0)}
\mu_i v_i+E(k_0,k_1,k_2,k_3)(k_0)
\qquad \hbox{for $i=1,3,\ldots,d-2$}
\end{gather}
for the $\triangle_q$-module $E(k_0,k_1,k_2,k_3)/E(k_0,k_1,k_2,k_3)(k_0)$ are 
$$
\begin{pmatrix}
\theta_0 & & &  &{\bf 0}
\\ 
1 &\theta_1 
\\
&1 &\theta_2
 \\
& &\ddots &\ddots
 \\
{\bf 0} & & &1 &\theta_\frac{d-3}{2}
\end{pmatrix},
\qquad 
\begin{pmatrix}
\theta_0^* &\varphi_1 &  & &{\bf 0}
\\ 
 &\theta_1^* &\varphi_2
\\
 &  &\theta_2^* &\ddots
 \\
 & & &\ddots &\varphi_{\frac{d-3}{2}}
 \\
{\bf 0}  & & & &\theta_\frac{d-3}{2}^*
\end{pmatrix},
$$
respectively, where  
\begin{align*}
\theta_i &= k_0 k_1 q^{2i+2}+ k_0^{-1} k_1^{-1} q^{-2i-2}
\qquad 
\hbox{for $i=0,1,\ldots,\textstyle\frac{d-3}{2}$},
\\
\theta_i^* &= k_0 k_3 q^{2i+2}+ k_0^{-1} k_3^{-1} q^{-2i-2}
\qquad 
\hbox{for $i=0,1,\ldots,\textstyle\frac{d-3}{2}$},
\\
\varphi_i &=
k_1^{-1} k_3^{-1} q^{\frac{d-1}{2}}
(q^i-q^{-i})
(q^{i-\frac{d-1}{2}}-q^{\frac{d-1}{2}-i})
\\
&\qquad \times \;
(q^{-i}-k_0 k_1 k_2 k_3 q^{i+1})
(q^{-i}-k_0 k_1 k_2^{-1} k_3 q^{i+1})
\qquad 
\hbox{for $i=1,2,\ldots,\textstyle\frac{d-3}{2}$}.
\end{align*}
The elements $\alpha,\beta,\gamma$ act on the $\triangle_q$-module $E(k_0,k_1,k_2,k_3)/E(k_0,k_1,k_2,k_3)(k_0)$ as scalar multiplication by 
\begin{gather}
(k_3+k_3^{-1})(k_2+k_2^{-1})+(k_1+k_1^{-1})(q k_0+q^{-1}k_0^{-1}),
\label{alpha:E/E(k0)}
\\
(k_2+k_2^{-1})(k_1+k_1^{-1})+(k_3+k_3^{-1})(q k_0+q^{-1}k_0^{-1}),
\label{beta:E/E(k0)}
\\
(k_1+k_1^{-1})(k_3+k_3^{-1})+(k_2+k_2^{-1})(q k_0+q^{-1}k_0^{-1}),
\label{gam:E/E(k0)}
\end{gather}
respectively.
\end{lem}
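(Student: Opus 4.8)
The plan is to mirror the proof of Lemma~\ref{lem:AB_E(k0)}, working this time in the quotient. First I would record the structure of the quotient module: by Lemma~\ref{lem3:t0_E} together with Lemma~\ref{lem2:t0_E}, the space $E(k_0,k_1,k_2,k_3)/E(k_0,k_1,k_2,k_3)(k_0)$ has dimension $\frac{d-1}{2}$, and modulo $E(k_0,k_1,k_2,k_3)(k_0)$ one has $v_0\equiv 0$, $v_d\equiv 0$, and $v_i\equiv (1-q^i)v_{i-1}$ for $i=2,4,\ldots,d-1$. Hence the images of $v_1,v_3,\ldots,v_{d-2}$ form an $\F$-basis for the quotient; this is the place where the hypothesis $d\geq 3$ enters (for $d=1$ the quotient is zero).

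Next I would push the formulas of Lemma~\ref{lem:AB_E} through the quotient map. For odd $i$ the vector $Av_i=\theta_i v_i-k_0^{-1}k_1^{-1}q^{-i-1}v_{i+2}$ already lies in the span of the odd-index basis vectors — the last term disappearing when $i=d-2$ since $v_d\equiv 0$ — so the matrix of $A$ on the quotient is lower bidiagonal. For $B$ one starts from the odd-$i$ case of Lemma~\ref{lem:AB_E} and replaces the even-index vector $v_{i-1}$ occurring there by $(1-q^{i-1})v_{i-2}$; after combining the two resulting multiples of $v_{i-2}$ and simplifying the factors $\varrho_i,\varrho_{i-1}$ via $k_0^2=q^{-d-1}$, the matrix of $B$ comes out upper bidiagonal. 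I would then rescale the basis by the scalars $\mu_i$, which are forced by the requirement that every subdiagonal entry of $A$ equal $1$; this puts both matrices into the tridiagonal shape of Proposition~\ref{prop:UAWd}, and reading off the remaining entries — simplifying once more with $k_0^2=q^{-d-1}$ — yields the stated $\theta_i$, $\theta_i^*$, $\varphi_i$. The only genuine work here is verifying that the two contributions to $Bv_i$ along $v_{i-2}$ combine, after the $\mu_i$-rescaling, into exactly the displayed product $\varphi_i$; I would organize it as in Lemma~\ref{lem:AB_E(k0)}, first treating $A$ to pin down the $\mu_i$ recursively and then substituting into the expression for $B$.

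Finally, since $\alpha,\beta,\gamma$ are central they act on the quotient as scalars, so it suffices to evaluate those scalars. As $t_0$ is diagonalizable on $E(k_0,k_1,k_2,k_3)$ with eigenvalues $k_0^{\pm 1}$ by Lemma~\ref{lem2:t0_E} and $E(k_0,k_1,k_2,k_3)(k_0)$ is exactly its $k_0$-eigenspace, the element $t_0$ acts on the quotient as multiplication by $k_0^{-1}$. Combining this with Proposition~\ref{prop:E}(ii), which gives the action of each $c_i$ as $k_i+k_i^{-1}$, the images of $\alpha,\beta,\gamma$ under $\zeta$ recorded in Theorem~\ref{thm:hom} immediately produce (\ref{alpha:E/E(k0)})--(\ref{gam:E/E(k0)}); note that $qt_0^{-1}+q^{-1}t_0$ now evaluates to $qk_0+q^{-1}k_0^{-1}$, which is precisely what distinguishes these scalars from their analogues on $E(k_0,k_1,k_2,k_3)(k_0)$.
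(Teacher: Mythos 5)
Your proposal is correct and follows essentially the same route as the paper: reduce the formulas of Lemma \ref{lem:AB_E} modulo the basis of $E(k_0,k_1,k_2,k_3)(k_0)$ from Lemma \ref{lem3:t0_E}, rescale by the $\mu_i$, and obtain the scalars for $\alpha,\beta,\gamma$ from Theorem \ref{thm:hom} and Proposition \ref{prop:E}(ii) once $t_0$ is known to act as $k_0^{-1}$ on the quotient (you get this from the diagonalizability statement of Lemma \ref{lem2:t0_E}, the paper from the matrix in Lemma \ref{lem1:t0_E} --- equivalent observations). One small caveat: the opening appeal to centrality of $\alpha,\beta,\gamma$ is not by itself a valid reason for scalar action on a module not yet known to be irreducible, but this is harmless since your subsequent evaluation of $\zeta(\alpha),\zeta(\beta),\zeta(\gamma)$ through the scalar actions of $c_1,c_2,c_3$ and $t_0$ establishes the scalar action directly, exactly as in the paper.
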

\begin{proof}
By Lemma \ref{lem3:t0_E} the cosets (\ref{basis:E/E(k0)}) are an $\F$-basis for $E(k_0,k_1,k_2,k_3)/E(k_0,k_1,k_2,k_3)(k_0)$. 
Applying Lemmas \ref{lem:AB_E} and \ref{lem3:t0_E} a direct calculation yields the matrices representing 
$A$ and $B$ with respect to (\ref{basis:E/E(k0)}). By Lemma \ref{lem1:t0_E} we have 
\begin{gather*}
(t_0-k_0^{-1})v_i \in E(k_0,k_1,k_2,k_3)(k_0)
\qquad 
\hbox{for $i=1,3,\ldots,d-2$}.
\end{gather*}
Combined with Theorem \ref{thm:hom} and Proposition \ref{prop:E}(ii) this yields that $\alpha,\beta,\gamma$ act on the quotient of $E(k_0,k_1,k_2,k_3)$ modulo $E(k_0,k_1,k_2,k_3)(k_0)$ as scalar multiplication by (\ref{alpha:E/E(k0)})--(\ref{gam:E/E(k0)}), respectively. 
\end{proof}

\begin{prop}\label{prop:E/E(k0)}
Suppose that $d\geq 3$. Then the quotient $\triangle_q$-module of $E(k_0,k_1,k_2,k_3)$ modulo $E(k_0,k_1,k_2,k_3)(k_0)$ is  isomorphic to 
$$
V_{\frac{d-3}{2}}\left(
k_0 k_1 q^{\frac{d+1}{2}},
k_0 k_3 q^{\frac{d+1}{2}},
k_0 k_2 q^{\frac{d+1}{2}}
\right).
$$
Moreover the $\triangle_q$-module $E(k_0,k_1,k_2,k_3)/E(k_0,k_1,k_2,k_3)(k_0)$ is irreducible if the $\H_q$-module $E(k_0,k_1,k_2,k_3)$ is irreducible.
\end{prop}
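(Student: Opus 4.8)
The plan is to follow the proof of Proposition \ref{prop:E(k0)} almost verbatim. Set
$$
a=k_0 k_1 q^{\frac{d+1}{2}},
\qquad
b=k_0 k_3 q^{\frac{d+1}{2}},
\qquad
c=k_0 k_2 q^{\frac{d+1}{2}},
\qquad
d'=\frac{d-3}{2};
$$
since $d\geq 3$ we have $d'\geq 0$, so $V_{d'}(a,b,c)$ is a well-defined $\triangle_q$-module of dimension $\frac{d-1}{2}$, which by Lemma \ref{lem3:t0_E} is precisely $\dim E(k_0,k_1,k_2,k_3)-\dim E(k_0,k_1,k_2,k_3)(k_0)$. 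By the uniqueness of $V_{d'}(a,b,c)$ up to isomorphism (Lemma \ref{lem:gen_UAW}), it suffices to check that the quotient $\triangle_q$-module satisfies conditions (i) and (ii) of Proposition \ref{prop:UAWd} for these $a,b,c$ and this $d'$; in fact, since $\triangle_q$ is generated by $A,B,\gamma$, matching the matrices of $A,B$ and the scalar of $\gamma$ already suffices.

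For condition (i) I would start from the $\F$-basis (\ref{basis:E/E(k0)}) and the matrices of $A,B$ in Lemma \ref{lem:AB_E/E(k0)}. Using (\ref{k02=-d-1}) to write $k_0 q^{\frac{d+1}{2}}$ as a square root of unity, one verifies the exponent identities $a q^{2i-d'}+a^{-1}q^{d'-2i}=k_0k_1q^{2i+2}+k_0^{-1}k_1^{-1}q^{-2i-2}$ and $b q^{2i-d'}+b^{-1}q^{d'-2i}=k_0k_3q^{2i+2}+k_0^{-1}k_3^{-1}q^{-2i-2}$, so the diagonal entries agree with $\theta_i,\theta_i^*$; a longer but entirely mechanical substitution, again using $k_0^2=q^{-d-1}$, identifies the superdiagonal entries of $B$ with the $\varphi_i$ of Proposition \ref{prop:UAWd}. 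For condition (ii) I would compare (\ref{alpha:E/E(k0)})--(\ref{gam:E/E(k0)}) with the scalars in Proposition \ref{prop:UAWd}(ii); as in the proof of Proposition \ref{prop:E(k0)}, under (\ref{k02=-d-1}) the scalar (\ref{beta:E/E(k0)}) equals $(c+c^{-1})(a+a^{-1})+(b+b^{-1})(q^{d'+1}+q^{-d'-1})$, and the same reduction handles (\ref{alpha:E/E(k0)}) and (\ref{gam:E/E(k0)}). This establishes the isomorphism.

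For the last assertion, suppose the $\H_q$-module $E(k_0,k_1,k_2,k_3)$ is irreducible. Then Theorem \ref{thm:irr_E} says none of $k_0k_1k_2k_3$, $k_0k_1^{-1}k_2k_3$, $k_0k_1k_2^{-1}k_3$, $k_0k_1k_2k_3^{-1}$ lies in $\{q^{-i}\,|\,i=1,3,\ldots,d\}$. Substituting the formulas for $a,b,c$ and using $k_0^2=q^{-d-1}$, these four quantities become $abc,a^{-1}bc,ab^{-1}c,abc^{-1}$ up to a common scalar correction, while the set $\{q^{2i-d'-1}\,|\,i=1,2,\ldots,d'\}$ corresponds to a subset of $\{q^{-i}\,|\,i=1,3,\ldots,d\}$; hence $abc,a^{-1}bc,ab^{-1}c,abc^{-1}\notin\{q^{2i-d'-1}\,|\,i=1,2,\ldots,d'\}$, and $V_{d'}(a,b,c)$ is irreducible by Theorem \ref{thm:irr_UAW}. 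The only genuine labor is the exponent bookkeeping --- matching the $\varphi_i$ and checking that the (smaller) exclusion set for $d'=\frac{d-3}{2}$ is covered by the exclusion conditions coming from Theorem \ref{thm:irr_E} --- so I do not anticipate a serious obstacle beyond careful transcription of the proof of Proposition \ref{prop:E(k0)}.
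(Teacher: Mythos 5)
Your proposal is correct and follows essentially the same route as the paper: identify the quotient with $V_{d'}(a,b,c)$ for $d'=\frac{d-3}{2}$ by comparing Lemma \ref{lem:AB_E/E(k0)} with Proposition \ref{prop:UAWd} (using $k_0^2=q^{-d-1}$ and the uniqueness of $V_{d'}(a,b,c)$ up to isomorphism), then deduce irreducibility from Theorem \ref{thm:irr_E} and Theorem \ref{thm:irr_UAW}, noting as you do that the relevant exclusion set for $d'$ is contained (after the common factor $q^{\frac{d+1}{2}}$) in the set excluded by Theorem \ref{thm:irr_E}. The only cosmetic differences are that the paper highlights the scalar of $\beta$ rather than $\gamma$ and records the exclusion over the slightly larger index set $i=0,1,\ldots,d'+1$; neither affects correctness.
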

\begin{proof}
Set $(a,b,c)=(k_0 k_1 q^{\frac{d+1}{2}},
k_0 k_3 q^{\frac{d+1}{2}},
k_0 k_2 q^{\frac{d+1}{2}})$ and $d'=\frac{d-3}{2}$. Under the assumption (\ref{k02=-d-1}) the scalar (\ref{beta:E/E(k0)}) is equal to 
$$
(c+c^{-1})(a+a^{-1})+(b+b^{-1})(q^{d'+1}+q^{-d'-1}).
$$
Comparing Proposition \ref{prop:UAWd} with Lemma \ref{lem:AB_E/E(k0)} yields that the quotient $\triangle_q$-module of $E(k_0,k_1,k_2,k_3)$ modulo $E(k_0,k_1,k_2,k_3)(k_0)$ is isomorphic to $V_{d'}(a,b,c)$. 

Suppose that the $\H_q$-module $E(k_0,k_1,k_2,k_3)$ is irreducible. Using Theorem \ref{thm:irr_E} yields that 
$$
abc,a^{-1}bc,ab^{-1}c,abc^{-1}\not\in\{q^{2i-d'-1}\,|\,i=0,1,\ldots,d'+1\}.
$$
Hence the $\triangle_q$-module $V_{d'}(a,b,c)$ is irreducible by Theorem \ref{thm:irr_UAW}. The proposition follows.
\end{proof}

\begin{thm}\label{thm:E0}
Assume that the $\H_q$-module $E(k_0,k_1,k_2,k_3)$ is irreducible. Then the following hold:
\begin{enumerate}
\item If $d=1$ then the $\triangle_q$-module $E(k_0,k_1,k_2,k_3)$ is irreducible.

\item If $d\geq 3$ then
\begin{table}[H]
\begin{tikzpicture}[node distance=1.2cm]
 \node (0)                  {$\{0\}$};
 \node (1)  [above of=0]   {};
 \node (2)  [right of=1]   {};
 \node (3)  [left of=1]   {};
 \node (E1)  [right of=2]  {$E(k_0,k_1,k_2,k_3)(k_0)$};
 \node (E2)  [left of=3]   {$E(k_0,k_1,k_2,k_3)(k_0^{-1})$};
 \node (V) [above of=1]  {$E(k_0,k_1,k_2,k_3)$};
 \draw (0)   -- (E1);
 \draw (0)   -- (E2);
 \draw (E1)   -- (V);
 \draw (E2)  -- (V);
\end{tikzpicture}
\end{table}
\noindent is the lattice of $\triangle_q$-submodules of $E(k_0,k_1,k_2,k_3)$.

\end{enumerate}
\end{thm}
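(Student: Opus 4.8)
The plan is to read the submodule lattice off the $t_0$-eigenspace decomposition of $E:=E(k_0,k_1,k_2,k_3)$, using that Propositions \ref{prop:E(k0)} and \ref{prop:E/E(k0)} already exhibit the relevant $t_0$-eigenspaces and quotients as \emph{irreducible} $\triangle_q$-modules.

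For part (i), suppose $d=1$. By Lemma \ref{lem2:t0_E}(i) the operator $t_0$ is diagonalizable on $E$ with the single eigenvalue $k_0$, so $E(k_0)=E$. Since $E$ is an irreducible $\H_q$-module by hypothesis, Proposition \ref{prop:E(k0)} says the $\triangle_q$-module $E(k_0)=E$ is irreducible, which is the assertion.

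For part (ii), suppose $d\geq 3$. By Lemma \ref{lem2:t0_E}(ii) the operator $t_0$ is diagonalizable on $E$ with exactly the two eigenvalues $k_0$ and $k_0^{-1}$, and $k_0\neq k_0^{-1}$ because $q$ is not a root of unity. Hence $E=E(k_0)\oplus E(k_0^{-1})$ as vector spaces, and by Proposition \ref{prop:t0eigenspace=AWmodule} both summands are $\triangle_q$-submodules, so this is a direct sum of $\triangle_q$-modules; in particular $E/E(k_0)\cong E(k_0^{-1})$ and $E/E(k_0^{-1})\cong E(k_0)$ as $\triangle_q$-modules. By Proposition \ref{prop:E(k0)} the $\triangle_q$-module $E(k_0)$ is irreducible, and by Proposition \ref{prop:E/E(k0)} the $\triangle_q$-module $E/E(k_0)\cong E(k_0^{-1})$ is irreducible. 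Thus each of $E(k_0),E(k_0^{-1})$ is simultaneously a minimal nonzero and a maximal proper $\triangle_q$-submodule of $E$; they are distinct (distinct $t_0$-eigenspaces, each nonzero since $d\geq 3$) and neither contains the other, and $\{0\}\subsetneq E(k_0^{\pm1})\subsetneq E$. To see these are the only $\triangle_q$-submodules, let $W$ be a nonzero $\triangle_q$-submodule of $E$. Being finite-dimensional, $W$ contains an irreducible $\triangle_q$-submodule $W'$; by Proposition \ref{prop:irr_AWmodule_in_t0eigenspace} we have $W'\subseteq E(\theta)$ for some nonzero $\theta\in\F$, and since $k_0^{\pm1}$ are the only eigenvalues of $t_0$ on $E$, $W'\subseteq E(k_0)$ or $W'\subseteq E(k_0^{-1})$; irreducibility of $E(k_0^{\pm1})$ then forces $W'=E(k_0)$ or $W'=E(k_0^{-1})$, so $W$ contains $E(k_0)$ or $E(k_0^{-1})$. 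If $E(k_0)\subseteq W$, then $W/E(k_0)$ is a $\triangle_q$-submodule of the irreducible module $E/E(k_0)$, hence $W=E(k_0)$ or $W=E$; the case $E(k_0^{-1})\subseteq W$ is symmetric. Therefore the lattice of $\triangle_q$-submodules of $E$ is $\{\{0\},\ E(k_0),\ E(k_0^{-1}),\ E\}$, the diamond displayed in the statement.

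The substantive work — the explicit $t_0$-eigenspace decomposition and the identification of $E(k_0)$ and $E/E(k_0)$ with concrete $V_{d'}(\cdot,\cdot,\cdot)$-modules — is already carried out in Lemmas \ref{lem2:t0_E}, \ref{lem3:t0_E} and Propositions \ref{prop:E(k0)}, \ref{prop:E/E(k0)}. What remains is the lattice bookkeeping for a direct sum of two irreducible $\triangle_q$-modules, and I do not expect a real obstacle there. The only delicate point is that Proposition \ref{prop:irr_AWmodule_in_t0eigenspace} must confine every irreducible $\triangle_q$-submodule into one of the two $t_0$-eigenspaces; this is exactly where the hypothesis that $t_0$ has exactly the two eigenvalues $k_0^{\pm1}$ (Lemma \ref{lem2:t0_E}(ii)) is used.
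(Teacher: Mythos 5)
Your proof is correct and follows essentially the same route as the paper: part (i) via Lemma \ref{lem2:t0_E}(i) and Proposition \ref{prop:E(k0)}, and part (ii) via the $t_0$-eigenspace decomposition, the irreducibility results of Propositions \ref{prop:E(k0)} and \ref{prop:E/E(k0)}, and Proposition \ref{prop:irr_AWmodule_in_t0eigenspace} to confine irreducible submodules to the eigenspaces. The only cosmetic difference is that you carry out the final lattice bookkeeping by hand where the paper phrases it as the uniqueness of the two composition series via Jordan--H\"older; your explicit version is, if anything, slightly more self-contained.
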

\begin{proof}
(i): Suppose that $d=1$. Then $E(k_0,k_1,k_2,k_3)=E(k_0,k_1,k_2,k_3)(k_0)$ by Lemma \ref{lem2:t0_E}(i).  It 
follows from Proposition \ref{prop:E(k0)} that the $\triangle_q$-module $E(k_0,k_1,k_2,k_3)$ is irreducible. The statement (i) follows.

(ii): Suppose that $d\geq 3$. By Propositions \ref{prop:E(k0)} and \ref{prop:E/E(k0)} the sequence
\begin{gather}\label{E:series1}
\{0\} \subset E(k_0,k_1,k_2,k_3)(k_0) \subset E(k_0,k_1,k_2,k_3)
\end{gather}
is a composition series for the $\triangle_q$-module $E(k_0,k_1,k_2,k_3)$. By Proposition \ref{prop:t0eigenspace=AWmodule} and Lemma \ref{lem2:t0_E}(ii), $E(k_0,k_1,k_2,k_3)(k_0^{-1})$ is a nonzero $\triangle_q$-submodule of $E(k_0,k_1,k_2,k_3)$. Hence 
\begin{gather}\label{E:series2}
\{0\} \subset E(k_0,k_1,k_2,k_3)(k_0^{-1}) \subset E(k_0,k_1,k_2,k_3)
\end{gather}
is a composition series for the $\triangle_q$-module $E(k_0,k_1,k_2,k_3)$ by Jordan--H\"{o}lder theorem. By Proposition \ref{prop:irr_AWmodule_in_t0eigenspace} there is no other irreducible $\triangle_q$-submodule of $E(k_0,k_1,k_2,k_3)$. Therefore (\ref{E:series1}) and (\ref{E:series2}) are the unique two composition series for the $\triangle_q$-module $E(k_0,k_1,k_2,k_3)$. The statement (ii) follows.
\end{proof}

\subsection{The lattice of $\triangle_q$-submodules of $E(k_0,k_1,k_2,k_3)^{1 \bmod{4}}$}\label{s:lattice_E1}

Recall from Table \ref{Z/4Z-action} that $1\bmod{4}$ sends $t_0,t_1,t_2,t_3$ to $t_1,t_2,t_3,t_0$, respectively. 
In this subsection, we study the $\triangle_q$-submodules of the $\H_q$-module $E(k_0,k_1,k_2,k_3)^{1 \bmod{4}}$.

\begin{lem}\label{lem:AB_E1}
The actions of $A$ and $B$ on the $\H_q$-module $E(k_0,k_1,k_2,k_3)^{1 \bmod{4}}$ are as follows:
\begin{align*}
A v_i &=
\left\{
\begin{array}{ll}
\displaystyle
\theta_i^* v_i 
+
k_0^{-1} k_3^{-1} q^{-i} \varrho_i (q-q^{-1}) v_{i-1}
-
k_0^{-1} k_3^{-1} q^{1-i}\varrho_i \varrho_{i-1} v_{i-2}
\qquad 
&\hbox{for $i=2,4,\ldots,d-1$},
\\
\displaystyle
\theta_i^* v_i 
-
k_0^{-1} k_3^{-1} q^{-i}\varrho_i \varrho_{i-1} v_{i-2}
\qquad 
&\hbox{for $i=3,5,\ldots,d$},
\end{array}
\right.
\\
A v_0&=\theta_0^* v_0,
\qquad 
A v_1 = \theta_1^* v_1,
\\
B v_i &=
\left\{
\begin{array}{ll}
\displaystyle
\theta_i v_i 
-
k_0^{-1} k_1^{-1} q^{-i-1}(q-q^{-1}) v_{i+1}
-
k_0^{-1} k_1^{-1} q^{-i-2} v_{i+2}
\qquad 
&\hbox{for $i=0,2,\ldots,d-3$},
\\
\displaystyle
\theta_i v_i -k_0^{-1} k_1^{-1} q^{-i-1} v_{i+2}
\qquad 
&\hbox{for $i=1,3,\ldots,d-2$},
\end{array}
\right.
\\
B v_{d-1}&=\theta_{d-1} v_{d-1} -k_0^{-1} k_1^{-1} q^{-d} (q-q^{-1}) v_d,
\qquad 
B v_d = \theta_d v_d,
\end{align*}
where 
\begin{align*}
\theta_i
&=
k_0 k_1 q^{2\lceil \frac{i}{2}\rceil}
+k_0^{-1} k_1^{-1}  q^{-2\lceil \frac{i}{2}\rceil}
\qquad 
\hbox{for $i=0,1,\ldots,d$},
\\
\theta_i^*
&=
k_0 k_3 q^{2\lfloor \frac{i}{2}\rfloor +1}
+
k_0^{-1} k_3^{-1} q^{-2\lfloor \frac{i}{2}\rfloor -1}
\qquad 
\hbox{for $i=0,1,\ldots,d$}.
\end{align*}
\end{lem}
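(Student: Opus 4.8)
The plan is to twist the already-known actions of $X$ and $Y$ recorded in Lemma~\ref{lem:XYinE}. Write $\e=1\bmod 4$ for the automorphism of $\H_q$ from Table~\ref{Z/4Z-action}, so that $\e$ cyclically sends $t_0,t_1,t_2,t_3$ to $t_1,t_2,t_3,t_0$. By the definition of the twisted module, $A$ and $B$ act on $E(k_0,k_1,k_2,k_3)^{1\bmod 4}$ exactly as $\e(A)$ and $\e(B)$ act on $E(k_0,k_1,k_2,k_3)$. Since $A=t_1t_0+(t_1t_0)^{-1}$ and $B=t_3t_0+(t_3t_0)^{-1}$ by Theorem~\ref{thm:hom}, applying $\e$ gives $\e(A)=t_2t_1+(t_2t_1)^{-1}$ and $\e(B)=t_0t_1+(t_0t_1)^{-1}$, and then parts (iii) and (ii) of Lemma~\ref{lem:X+Xinv} yield $\e(A)=qX+q^{-1}X^{-1}$ and $\e(B)=Y+Y^{-1}$. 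Hence on $E(k_0,k_1,k_2,k_3)^{1\bmod 4}$ we have $A=qX+q^{-1}X^{-1}$ and $B=Y+Y^{-1}$.

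First I would dispose of $B$ with essentially no computation: the operator $Y+Y^{-1}$ is precisely the one representing $A$ on $E(k_0,k_1,k_2,k_3)$ in Lemma~\ref{lem:AB_E}, so the formulas for $Bv_i$ here (and the scalars $\theta_i$) are literally those for $Av_i$ there, obtained by solving $(1-k_0k_1q^{2\lceil i/2\rceil}Y^{(-1)^{i-1}})v_i=v_{i+1}$ (or $=0$ when $i=d$) for $Y^{\pm1}v_i$ and adding. For $A=qX+q^{-1}X^{-1}$ I would use Lemma~\ref{lem:XYinE}(i): for odd $i$ it gives $Xv_i$ directly in terms of $v_i,v_{i-1}$, for even $i$ it gives $X^{-1}v_i$ directly in terms of $v_i,v_{i-1}$, and in each case the complementary action (namely $X^{-1}v_i$ for odd $i$, and $Xv_i$ for even $i$) is obtained by substituting the relation one step further down the chain, which brings in $v_{i-2}$. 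Forming $qXv_i+q^{-1}X^{-1}v_i$ and collecting the coefficients of $v_i,v_{i-1},v_{i-2}$ produces the stated $Av_i$; in particular the diagonal coefficient is $\theta_i^*=k_0k_3q^{2\lfloor i/2\rfloor+1}+k_0^{-1}k_3^{-1}q^{-2\lfloor i/2\rfloor-1}$, the replacement of $\lceil\cdot\rceil$ by $\lfloor\cdot\rfloor$ resulting from absorbing the scalar $q$ (resp. $q^{-1}$) into the odd-$i$ (resp. even-$i$) branch of Lemma~\ref{lem:XYinE}(i).

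The boundary indices $i\in\{0,1,d-1,d\}$ are handled by specializing the same computation, using only that the vectors $v_{-2},v_{-1},v_{d+1},v_{d+2}$ do not occur and that $q$ is not a root of unity, so that the scalars $\varrho_i$ keep the displayed form; one checks that the spurious $v_{i-1}$ and $v_{i+1}$ contributions cancel, exactly as in the proof of Lemma~\ref{lem:AB_E}. The whole verification is a direct, if somewhat lengthy, calculation of the same flavour as that lemma, and I expect the only real hazard to be the bookkeeping — tracking the ceilings and floors and the alternating exponents $(-1)^{i-1}$ correctly through the even/odd split — rather than any conceptual difficulty.
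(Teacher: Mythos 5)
Your proposal is correct and follows the paper's own argument exactly: both identify the actions of $A$ and $B$ on $E(k_0,k_1,k_2,k_3)^{1\bmod 4}$ with those of $qX+q^{-1}X^{-1}$ and $Y+Y^{-1}$ on $E(k_0,k_1,k_2,k_3)$ via Lemma \ref{lem:X+Xinv}, and then evaluate these with Lemma \ref{lem:XYinE}. Your observation that the $B$-formulas coincide with the $A$-formulas of Lemma \ref{lem:AB_E}, and your handling of the even/odd split and boundary indices, just spell out the "routine" computation the paper leaves implicit.
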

\begin{proof}
By Lemma \ref{lem:X+Xinv} the actions of $A$ and $B$ on $E(k_0,k_1,k_2,k_3)^{1 \bmod{4}}$ are identical to the actions of 
$q X+q^{-1} X^{-1}$ and $Y+Y^{-1}$
on $E(k_0,k_1,k_2,k_3)$, respectively. Using Lemma \ref{lem:XYinE} it is routine to evaluate the actions of $q X+q^{-1} X^{-1}$ and $Y+Y^{-1}$ on $E(k_0,k_1,k_2,k_3)$.
\end{proof}

\begin{lem}\label{lem1:t0_E1}
The matrix representing $t_0$ with respect to the $\F$-basis 
\begin{gather*}
v_1,
\quad 
v_{i+1}-k_1^2\varrho_i v_{i-1}
\quad 
\hbox{for $i=2,4,\ldots,d-1$},
\quad 
v_i 
\quad 
\hbox{for $i=0,2,\ldots,d-1$}
\end{gather*}
for $E(k_0,k_1,k_2,k_3)^{1 \bmod{4}}$ is 
\begin{gather*}
\begin{pmatrix}
k_1^{-1} I_{\frac{d+1}{2}} & \rvline & k_1^{-1} I_{\frac{d+1}{2}}
\\
\hline
{\bf 0} & \rvline &k_1 I_{\frac{d+1}{2}} 
\end{pmatrix}.
\end{gather*}
\end{lem}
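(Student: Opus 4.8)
The plan is to reduce the statement to the already-known action of $t_1$ on $E(k_0,k_1,k_2,k_3)$. By Table~\ref{Z/4Z-action} the automorphism $1\bmod 4$ sends $t_0$ to $t_1$, so by the definition of the twist the generator $t_0$ acts on $E(k_0,k_1,k_2,k_3)^{1\bmod 4}$ exactly as $t_1$ acts on $E(k_0,k_1,k_2,k_3)$; this is the same observation used at the start of the proof of Lemma~\ref{lem:AB_E1}. Hence it suffices to compute the matrix of the $t_1$-action from Proposition~\ref{prop:E}(i) on the displayed basis. Rewriting those formulas with $\varrho_i=(1-q^i)(1-k_0^2q^i)$ for even $i$, one has $t_1v_0=k_1v_0+k_1^{-1}v_1$, $t_1v_i=k_1^{-1}v_i$ for odd $i$ with $1\le i\le d$, and $t_1v_i=-k_1\varrho_iv_{i-1}+k_1v_i+k_1^{-1}v_{i+1}$ for even $i$ with $2\le i\le d-1$. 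One also notes that the displayed vectors form a basis: the first block $\{v_1\}\cup\{v_{i+1}-k_1^2\varrho_iv_{i-1}\}_{i=2,4,\ldots,d-1}$ is obtained from $v_1,v_3,\ldots,v_d$ by a unitriangular change of basis and hence spans the same subspace, while the second block consists of the remaining basis vectors $v_0,v_2,\ldots,v_{d-1}$.

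First I would examine the first block. Since $t_1$ acts as the scalar $k_1^{-1}$ on every $v_j$ with $j$ odd, it acts as $k_1^{-1}$ on the whole subspace they span; explicitly $t_1v_1=k_1^{-1}v_1$, and for $i=2,4,\ldots,d-1$ both $i-1$ and $i+1$ lie in $\{1,3,\ldots,d\}$, so $t_1(v_{i+1}-k_1^2\varrho_iv_{i-1})=k_1^{-1}(v_{i+1}-k_1^2\varrho_iv_{i-1})$. This yields the left column of blocks of the matrix: $k_1^{-1}I_{\frac{d+1}{2}}$ on top and ${\bf 0}$ below.

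Next I would examine the second block $v_0,v_2,\ldots,v_{d-1}$. For $i=0$ the formula reads $t_1v_0=k_1v_0+k_1^{-1}v_1$ directly. For even $i$ with $2\le i\le d-1$, apply the identity $k_1^{-1}\bigl(v_{i+1}-k_1^2\varrho_iv_{i-1}\bigr)=k_1^{-1}v_{i+1}-k_1\varrho_iv_{i-1}$ to rewrite $t_1v_i=k_1v_i+k_1^{-1}\bigl(v_{i+1}-k_1^2\varrho_iv_{i-1}\bigr)$. Thus each vector of the second block goes to $k_1$ times itself plus $k_1^{-1}$ times the first-block vector of the same position, and since the two blocks are listed in compatible order this produces the right column of blocks: $k_1^{-1}I_{\frac{d+1}{2}}$ on top and $k_1I_{\frac{d+1}{2}}$ below. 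Assembling the four blocks gives the asserted matrix.

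The whole computation is routine; the only delicate points are the index bookkeeping --- checking that $i\pm1$ always stays among the odd indices on which $t_1$ is scalar, and that the endpoint cases $i=0$ and $i=d-1$ are genuinely covered by the stated $t_1$-formulas --- and keeping straight that the twist replaces $t_0$ by $t_1$ rather than by $t_1^{-1}$ or $t_3$, which is precisely what makes the off-diagonal blocks come out equal to $k_1^{-1}I_{\frac{d+1}{2}}$.
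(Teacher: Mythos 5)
Your proposal is correct and follows exactly the paper's route: identify the $t_0$-action on the twisted module with the $t_1$-action on $E(k_0,k_1,k_2,k_3)$ via Table \ref{Z/4Z-action}, then compute with Proposition \ref{prop:E}(i). The paper leaves the computation as ``routine''; you have simply written it out, and the details (the unitriangular change of basis on the odd-indexed vectors, the scalar action of $t_1$ on their span, and the rewriting $t_1v_i=k_1v_i+k_1^{-1}(v_{i+1}-k_1^2\varrho_i v_{i-1})$) all check out.
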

\begin{proof}
Note that the action of $t_0$ on $E(k_0,k_1,k_2,k_3)^{1 \bmod{4}}$ is identical to the action of $t_1$ on $E(k_0,k_1,k_2,k_3)$. It is routine to verify the lemma using Proposition \ref{prop:E}(i).
\end{proof}

\begin{lem}\label{lem2:t0_E1}
\begin{enumerate}
\item If $k_1^2=1$ then $t_0$ is not diagonalizable on $E(k_0,k_1,k_2,k_3)^{1 \bmod{4}}$ with exactly one eigenvalue $k_1$.

\item If $k_1^2\not=1$ then $t_0$ is diagonalizable on $E(k_0,k_1,k_2,k_3)^{1 \bmod{4}}$ with exactly two eigenvalues $k_1^{\pm 1}$. 
\end{enumerate}
\end{lem}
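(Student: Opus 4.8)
The plan is to read off everything from the matrix for $t_0$ computed in Lemma \ref{lem1:t0_E1}. That matrix is a $(d+1)\times(d+1)$ block upper-triangular matrix of the form $\left(\begin{smallmatrix} k_1^{-1} I & k_1^{-1} I \\ {\bf 0} & k_1 I\end{smallmatrix}\right)$ with each block of size $\frac{d+1}{2}$, so its only eigenvalues are $k_1^{-1}$ and $k_1$, each with algebraic multiplicity $\frac{d+1}{2}$. Since $q$ is not a root of unity, the usual caveat about $k_1^{\pm1}$ being distinct is governed entirely by whether $k_1^2=1$; so the two cases in the statement are exactly $k_1^2=1$ and $k_1^2\neq 1$.

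For part (ii), when $k_1^2\neq 1$ the two eigenvalues $k_1$ and $k_1^{-1}$ are distinct, and the matrix is block upper-triangular with scalar diagonal blocks, hence it has $d+1$ linearly independent eigenvectors (one can exhibit them directly: the last $\frac{d+1}{2}$ basis vectors span the $k_1$-eigenspace, and a straightforward solve of $(t_0-k_1^{-1})v=0$ using the off-diagonal block $k_1^{-1}I$ produces $\frac{d+1}{2}$ eigenvectors for $k_1^{-1}$ since $k_1-k_1^{-1}\neq0$). Thus $t_0$ is diagonalizable with exactly the two eigenvalues $k_1^{\pm1}$. For part (i), when $k_1^2=1$ the matrix collapses to $k_1 I + k_1 N$ where $N=\left(\begin{smallmatrix}{\bf 0} & I\\ {\bf 0} & {\bf 0}\end{smallmatrix}\right)\neq {\bf 0}$ is nilpotent, so $t_0-k_1 I$ is a nonzero nilpotent; hence $t_0$ is not diagonalizable and $k_1$ is its unique eigenvalue. (Here I use $d\geq 1$, which guarantees the blocks are nonempty so that $N\neq{\bf 0}$.)

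Since Lemma \ref{lem1:t0_E1} already does all the nontrivial work of identifying an explicit basis in which $t_0$ has this clean block form, the present lemma is essentially a linear-algebra corollary and there is no real obstacle; the only point requiring a line of care is the case $k_1^2=1$, where one must note that $k_1\in\{1,-1\}$ but that $N$ is still a genuinely nonzero nilpotent (not the zero matrix) precisely because $d\geq1$.

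\begin{proof}
Let $T$ denote the matrix in Lemma \ref{lem1:t0_E1}, which represents $t_0$ on $E(k_0,k_1,k_2,k_3)^{1\bmod 4}$ with respect to the indicated $\F$-basis. Since $T$ is block upper-triangular with diagonal blocks $k_1^{-1} I_{\frac{d+1}{2}}$ and $k_1 I_{\frac{d+1}{2}}$, the characteristic polynomial of $t_0$ is $(x-k_1^{-1})^{\frac{d+1}{2}}(x-k_1)^{\frac{d+1}{2}}$.

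(i): Suppose $k_1^2=1$. Then $k_1^{-1}=k_1$, and
$$
T-k_1 I_{d+1}
=
\begin{pmatrix}
{\bf 0} & \rvline & k_1 I_{\frac{d+1}{2}}
\\
\hline
{\bf 0} & \rvline & {\bf 0}
\end{pmatrix}.
$$
Since $d\geq 1$ the blocks are nonempty, so this matrix is nonzero; it is also nilpotent of square zero. Hence $t_0-k_1$ is a nonzero nilpotent endomorphism, so $t_0$ is not diagonalizable, and its unique eigenvalue is $k_1$.

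(ii): Suppose $k_1^2\neq 1$. Then $k_1$ and $k_1^{-1}$ are distinct scalars. Let $\{e_j\}_{j=1}^{d+1}$ be the $\F$-basis of Lemma \ref{lem1:t0_E1}, indexed so that the first $\frac{d+1}{2}$ vectors correspond to the first block and the last $\frac{d+1}{2}$ to the second. For $j=\frac{d+3}{2},\ldots,d+1$ we have $T e_j=k_1 e_j$, so these $\frac{d+1}{2}$ vectors lie in $E(k_0,k_1,k_2,k_3)^{1\bmod 4}(k_1)$. For $1\leq j\leq \frac{d+1}{2}$, put $f_j=e_j+\frac{k_1^{-1}}{k_1^{-1}-k_1}\,e_{j+\frac{d+1}{2}}$; using the block form of $T$ one checks $T f_j=k_1^{-1} f_j$, so these $\frac{d+1}{2}$ vectors lie in $E(k_0,k_1,k_2,k_3)^{1\bmod 4}(k_1^{-1})$, and the denominator is nonzero because $k_1^2\neq1$. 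The $d+1$ vectors $f_1,\ldots,f_{\frac{d+1}{2}},e_{\frac{d+3}{2}},\ldots,e_{d+1}$ are obtained from $\{e_j\}_{j=1}^{d+1}$ by a unitriangular change of basis, hence form an $\F$-basis of $E(k_0,k_1,k_2,k_3)^{1\bmod 4}$ consisting of eigenvectors of $t_0$. Therefore $t_0$ is diagonalizable on $E(k_0,k_1,k_2,k_3)^{1\bmod 4}$ with exactly the two eigenvalues $k_1^{\pm 1}$.
\end{proof}
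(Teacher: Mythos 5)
Your overall strategy is the same as the paper's: everything is read off from the block form of $t_0$ given in Lemma \ref{lem1:t0_E1} (the paper simply says ``apply the rank--nullity theorem'' there), the case split on $k_1^2=1$ versus $k_1^2\neq1$ is exactly right, and part (i) is correct as written: when $k_1^{-1}=k_1$ the matrix is $k_1I$ plus a nonzero nilpotent (nonzero because $\frac{d+1}{2}\geq1$), so $t_0$ is not diagonalizable and $k_1$ is the unique eigenvalue.

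In part (ii), however, your explicit eigenvector identification is transposed relative to the convention the paper uses. The matrix in Lemma \ref{lem1:t0_E1} is to be read columnwise (the $j$-th column gives the coordinates of $t_0e_j$); this is forced by consistency with Lemma \ref{lem3:t0_E1}, whose proof asserts that the \emph{first} block of basis vectors, namely $v_1$ and $v_{i+1}-k_1^2\varrho_iv_{i-1}$, spans the $k_1^{-1}$-eigenspace, and with Proposition \ref{prop:E}(i), which gives $t_1v_1=k_1^{-1}v_1$. Under that convention the first $\frac{d+1}{2}$ basis vectors satisfy $t_0e_j=k_1^{-1}e_j$ and are already eigenvectors, while for $j$ in the second block one has $t_0e_j=k_1^{-1}e_{j-\frac{d+1}{2}}+k_1e_j$; so your claim that $Te_j=k_1e_j$ for $j=\frac{d+3}{2},\ldots,d+1$ is false, and your vectors $f_j=e_j+\frac{k_1^{-1}}{k_1^{-1}-k_1}e_{j+\frac{d+1}{2}}$ do not satisfy $Tf_j=k_1^{-1}f_j$ (the $e_j$-coefficients fail to match). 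The repair is immediate: the first-block vectors are the $k_1^{-1}$-eigenvectors, and $e_{j+\frac{d+1}{2}}+\frac{k_1^{-1}}{k_1-k_1^{-1}}\,e_j$ for $1\leq j\leq\frac{d+1}{2}$ are the $k_1$-eigenvectors. Since the lemma asserts only diagonalizability and the set of eigenvalues, this slip does not affect the conclusion, but as written the verification in (ii) does not go through.
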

\begin{proof}
Applying the rank-nullity theorem to Lemma \ref{lem1:t0_E1} the lemma follows.
\end{proof}

\begin{lem}\label{lem3:t0_E1}
$E(k_0,k_1,k_2,k_3)^{1 \bmod{4}}(k_1^{-1})$ is of dimension $\frac{d+1}{2}$ with the 
$\F$-basis 
\begin{gather}\label{basis:E1(k1inv)}
v_i
\qquad 
\hbox{for $i=1,3,\ldots,d$}.
\end{gather}
\end{lem}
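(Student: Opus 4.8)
The plan is to mimic the proof of Lemma \ref{lem3:t0_E}, but now working inside the twisted module $E(k_0,k_1,k_2,k_3)^{1\bmod 4}$, where $t_0$ acts as $t_1$ does on $E(k_0,k_1,k_2,k_3)$. The key input is the matrix for $t_0$ computed in Lemma \ref{lem1:t0_E1}: with respect to the basis
\[
v_1,\quad v_{i+1}-k_1^2\varrho_i v_{i-1}\ \ (i=2,4,\ldots,d-1),\quad v_i\ \ (i=0,2,\ldots,d-1),
\]
the matrix is the block upper-triangular form
\[
\begin{pmatrix} k_1^{-1}I_{\frac{d+1}{2}} & \rvline & k_1^{-1}I_{\frac{d+1}{2}}\\ \hline {\bf 0} & \rvline & k_1 I_{\frac{d+1}{2}}\end{pmatrix}.
\]
From this it is immediate that the $k_1^{-1}$-eigenspace of $t_0$ is exactly the span of the first $\frac{d+1}{2}$ basis vectors, namely $v_1$ together with $v_{i+1}-k_1^2\varrho_i v_{i-1}$ for $i=2,4,\ldots,d-1$; this eigenspace therefore has dimension $\frac{d+1}{2}$, matching the claimed dimension. (Here I am tacitly using that $k_1^{\pm 1}$ are the only eigenvalues, by Lemma \ref{lem2:t0_E1}, though only the $k_1^{-1}$-block is needed.)

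Next I would show that this span equals $\operatorname{span}\{v_1,v_3,\ldots,v_d\}$, i.e.\ the span of the odd-index basis vectors of the original module. The inclusion $\supseteq$ is the content: $v_1$ is already in the list, and for each even $i$ in $\{2,4,\ldots,d-1\}$ the vector $v_{i+1}-k_1^2\varrho_i v_{i-1}$ lies in $\operatorname{span}\{v_1,v_3,\ldots,v_d\}$ since $i+1$ and $i-1$ are odd; running $i=2,4,\ldots,d-1$ and using that $v_1\in$ the span, an easy upward induction recovers $v_3,v_5,\ldots,v_d$ one at a time (solving for $v_{i+1}$ in terms of $v_{i-1}$ and the eigenvector). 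The reverse inclusion follows by dimension count, since both spaces have dimension $\frac{d+1}{2}$. Hence $E(k_0,k_1,k_2,k_3)^{1\bmod 4}(k_1^{-1})=\operatorname{span}\{v_i : i=1,3,\ldots,d\}$, which is precisely the assertion in (\ref{basis:E1(k1inv)}).

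There is no genuine obstacle here; the statement is essentially a repackaging of Lemma \ref{lem1:t0_E1}. The one point requiring a little care is the change of basis: the eigenspace is naturally described in terms of the ad hoc basis of Lemma \ref{lem1:t0_E1}, and one must verify that replacing the vectors $v_{i+1}-k_1^2\varrho_i v_{i-1}$ by the plain $v_{i+1}$ (odd index) spans the same subspace — this is the triangular-substitution argument above, and it goes through provided no degeneracy occurs, which is automatic since the $v_i$ are linearly independent. I would therefore write the proof as a one-line appeal to Lemma \ref{lem1:t0_E1} for the dimension, followed by the short induction identifying the span with $\operatorname{span}\{v_1,v_3,\ldots,v_d\}$.
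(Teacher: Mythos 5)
Your proposal is correct and follows essentially the same route as the paper: the paper's proof likewise reads off from Lemma \ref{lem1:t0_E1} that $v_1$ and $v_{i+1}-k_1^2\varrho_i v_{i-1}$ ($i=2,4,\ldots,d-1$) form a basis of the $k_1^{-1}$-eigenspace and then observes that these span the same subspace as $v_1,v_3,\ldots,v_d$. Your triangular-substitution/dimension-count step just makes that last observation explicit.
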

\begin{proof}
By Lemma \ref{lem1:t0_E1} the vectors $v_1$ and
$v_{i+1}-k_1^2\varrho_i v_{i-1}$ for $i=2,4,\ldots,d-1$ form an $\F$-basis for $E(k_0,k_1,k_2,k_3)^{1 \bmod{4}}(k_1^{-1})$ as well as (\ref{basis:E1(k1inv)}).
\end{proof}

\begin{lem}\label{lem:AB_E1(k1inv)}
Let 
$$
\mu_i=(-1)^{\frac{i-1}{2}} k_0^{-\frac{i-1}{2}} k_1^{-{\frac{i-1}{2}}} q^{-\frac{(i-1)(i+1)}{4}}
\qquad 
\hbox{for $i=1,3,\ldots,d$}.
$$
Then the matrices representing $A$ and $B$ with respect to the $\F$-basis 
\begin{gather}\label{basis_E1(k1inv)}
\mu_i v_i
\qquad 
\hbox{for $i=1,3,\ldots,d$}
\end{gather}
for the $\triangle_q$-module $E(k_0,k_1,k_2,k_3)^{1 \bmod{4}}(k_1^{-1})$ are 
$$
\begin{pmatrix}
\theta_0^* &\varphi_1 &  & &{\bf 0}
\\ 
 &\theta_1^* &\varphi_2
\\
 &  &\theta_2^* &\ddots
 \\
 & & &\ddots &\varphi_{\frac{d-1}{2}}
 \\
{\bf 0}  & & & &\theta_\frac{d-1}{2}^*
\end{pmatrix},
\qquad 
\begin{pmatrix}
\theta_0 & & &  &{\bf 0}
\\ 
1 &\theta_1 
\\
&1 &\theta_2
 \\
& &\ddots &\ddots
 \\
{\bf 0} & & &1 &\theta_\frac{d-1}{2}
\end{pmatrix},
$$
respectively, where 
\begin{align*}
\theta_i
&=
k_0 k_1 q^{2i+2}+k_0^{-1} k_1^{-1} q^{-2i-2}
\qquad 
\hbox{for $i=0,1,\ldots,\textstyle\frac{d-1}{2}$},
\\
\theta_i^*
&=
k_0 k_3 q^{2i+1}+ k_0^{-1} k_3^{-1} q^{-2i-1}
\qquad 
\hbox{for $i=0,1,\ldots,\textstyle\frac{d-1}{2}$},
\\
\varphi_i &=
k_1^{-1} k_3^{-1} q^{\frac{d-1}{2}} 
(q^i-q^{-i})
(q^{i-\frac{d+1}{2}}-q^{\frac{d+1}{2}-i})
\\
&\qquad 
\times \;
(q^{-i}-k_0 k_1 k_2 k_3 q^{i+1})
(q^{-i}-k_0 k_1 k_2^{-1} k_3 q^{i+1})
\qquad 
\hbox{for $i=1,2,\ldots, \textstyle\frac{d-1}{2}$}.
\end{align*}
The elements $\alpha,\beta,\gamma$ act on the $\triangle_q$-module $E(k_0,k_1,k_2,k_3)^{1 \bmod{4}}(k_1^{-1})$ as scalar multiplication by 
\begin{gather}
(k_0 +k_0^{-1})
(k_3+k_3^{-1})
+
(k_2+k_2^{-1})
(q k_1+q^{-1} k_1^{-1}),
\label{alpha:E1(k1inv)}
\\
(k_3+k_3^{-1})
(k_2+k_2^{-1})
+
(k_0 +k_0^{-1})
(q k_1+q^{-1} k_1^{-1}),
\label{beta:E1(k1inv)}
\\
(k_2+k_2^{-1})
(k_0 +k_0^{-1})
+
(k_3+k_3^{-1})
(q k_1+q^{-1} k_1^{-1}),
\label{gamma:E1(k1inv)}
\end{gather}
respectively.
\end{lem}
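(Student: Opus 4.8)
The plan is to imitate the proof of Lemma~\ref{lem:AB_E(k0)}, working this time inside the $t_0$-eigenspace $E(k_0,k_1,k_2,k_3)^{1\bmod 4}(k_1^{-1})$. By Lemma~\ref{lem3:t0_E1} the vectors $v_1,v_3,\ldots,v_d$ form an $\F$-basis of this eigenspace, and by Proposition~\ref{prop:t0eigenspace=AWmodule} the eigenspace is a $\triangle_q$-submodule; hence the actions of $A$ and $B$ on it are obtained by restricting the formulas of Lemma~\ref{lem:AB_E1} to the odd indices. Reading those formulas off, $A v_1=\theta_1^* v_1$ and $A v_i=\theta_i^* v_i-k_0^{-1}k_3^{-1}q^{-i}\varrho_i\varrho_{i-1}v_{i-2}$ for odd $i\ge 3$, while $B v_i=\theta_i v_i-k_0^{-1}k_1^{-1}q^{-i-1}v_{i+2}$ for odd $i\le d-2$ and $B v_d=\theta_d v_d$; in particular both operators preserve the span of $v_1,v_3,\ldots,v_d$, consistent with the submodule property.

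Next I would pass to the rescaled basis $\mu_i v_i$. The point of the $\mu_i$ is that they normalize the lower off-diagonal entries of $B$ to $1$: the stated closed form gives $\mu_1=1$ and satisfies the recursion $\mu_{i+2}=-k_0^{-1}k_1^{-1}q^{-i-1}\mu_i$, which is exactly what makes the coefficient of $\mu_{i+2}v_{i+2}$ in $B(\mu_i v_i)$ equal $1$. Re-indexing $i=2j+1$ with $j=0,1,\ldots,\frac{d-1}{2}$, the diagonal entries fall out immediately from $\lceil i/2\rceil=j+1$ and $\lfloor i/2\rfloor=j$, giving the claimed $\theta_j=k_0k_1q^{2j+2}+k_0^{-1}k_1^{-1}q^{-2j-2}$ and $\theta_j^*=k_0k_3q^{2j+1}+k_0^{-1}k_3^{-1}q^{-2j-1}$. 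For the upper off-diagonal entry, rescaling turns the coefficient of $v_{i-2}$ in $Av_i$ into $\varphi_j=-(\mu_i/\mu_{i-2})\,k_0^{-1}k_3^{-1}q^{-i}\varrho_i\varrho_{i-1}$; substituting $\mu_i/\mu_{i-2}=-k_0^{-1}k_1^{-1}q^{1-i}$ from the recursion, invoking $k_0^2=q^{-d-1}$ from~(\ref{k02=-d-1}), and factoring $\varrho_{2j+1}$ and $\varrho_{2j}$ into the products of $q$-linear forms appearing in the stated $\varphi_j$, one checks that all superfluous powers of $q$ collapse to the overall prefactor $q^{(d-1)/2}$.

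Finally, for $\alpha,\beta,\gamma$ I would use Theorem~\ref{thm:hom} together with the $\Z/4\Z$-action of Table~\ref{Z/4Z-action}. On $E(k_0,k_1,k_2,k_3)^{1\bmod 4}$ the elements $c_0,c_1,c_2,c_3$ act as the scalars $k_1+k_1^{-1}$, $k_2+k_2^{-1}$, $k_3+k_3^{-1}$, $k_0+k_0^{-1}$ respectively, by Proposition~\ref{prop:E}(ii); and on the subspace $E(k_0,k_1,k_2,k_3)^{1\bmod 4}(k_1^{-1})$ the element $t_0$ acts as $k_1^{-1}$, so $qt_0^{-1}+q^{-1}t_0$ acts as $qk_1+q^{-1}k_1^{-1}$. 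Plugging these into the images $\zeta(\alpha)=c_3c_2+c_1(qt_0^{-1}+q^{-1}t_0)$, $\zeta(\beta)=c_2c_1+c_3(qt_0^{-1}+q^{-1}t_0)$, $\zeta(\gamma)=c_1c_3+c_2(qt_0^{-1}+q^{-1}t_0)$ yields exactly~(\ref{alpha:E1(k1inv)})--(\ref{gamma:E1(k1inv)}). The only genuinely delicate point is the $\varphi_j$ computation in the second paragraph, where the relation $k_0^2=q^{-d-1}$ must be used to absorb an otherwise leftover power of $q$ into the normalization constant; everything else is direct substitution into Lemmas~\ref{lem:AB_E1} and~\ref{lem3:t0_E1}.
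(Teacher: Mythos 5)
Your proposal is correct and follows essentially the same route as the paper: take the odd-index basis from Lemma \ref{lem3:t0_E1}, restrict the formulas of Lemma \ref{lem:AB_E1}, rescale by $\mu_i$ (the recursion $\mu_{i+2}=-k_0^{-1}k_1^{-1}q^{-i-1}\mu_i$ and the use of $k_0^2=q^{-d-1}$ in the $\varphi$-entries check out), and obtain the $\alpha,\beta,\gamma$ scalars from Theorem \ref{thm:hom} together with the permuted actions of $c_0,c_1,c_2,c_3$ and $t_0$ acting as $k_1^{-1}$. The paper compresses the matrix computation into the phrase ``a routine calculation''; your write-up just makes that calculation explicit.
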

\begin{proof}
By Lemma \ref{lem3:t0_E1} the vectors (\ref{basis_E1(k1inv)}) are a basis for $E(k_0,k_1,k_2,k_3)^{1 \bmod{4}}(k_1^{-1})$. Using Lemma \ref{lem:AB_E1}  a routine calculation yields the matrices representing $A$ and $B$ with respect to (\ref{basis_E1(k1inv)}). The actions of $c_0,c_1,c_2,c_3$ on $E(k_0,k_1,k_2,k_3)^{1 \bmod{4}}$ are identical to the actions of $c_1,c_2,c_3,c_0$ on $E(k_0,k_1,k_2,k_3)$. 
By Theorem \ref{thm:hom} the elements $\alpha,\beta,\gamma$ act on $E(k_0,k_1,k_2,k_3)^{1 \bmod{4}}(k_1^{-1})$ as scalar multiplication by (\ref{alpha:E1(k1inv)})--(\ref{gamma:E1(k1inv)}), respectively.
\end{proof}

Recall the $\Z/2\Z$-action on $\triangle_q$ from Table \ref{Z/2Z-action}.

\begin{prop}\label{prop:E1(k1inv)}
The $\triangle_q$-module $E(k_0,k_1,k_2,k_3)^{1 \bmod{4}}(k_1^{-1})$ is isomorphic to $$
V_{\frac{d-1}{2}}\left(
k_0 k_1 q^{\frac{d+3}{2}},
k_0 k_3 q^{\frac{d+1}{2}},
k_0 k_2 q^{\frac{d+1}{2}}
\right)^{1 \bmod{2}}.
$$
Moreover the $\triangle_q$-module $E(k_0,k_1,k_2,k_3)^{1 \bmod{4}}(k_1^{-1})$ is irreducible and it is isomorphic to $V_{\frac{d-1}{2}}(
k_0 k_3 q^{\frac{d+1}{2}},
k_0 k_1 q^{\frac{d+3}{2}},
k_0 k_2 q^{\frac{d+1}{2}})$ provided that the $\H_q$-module $E(k_0,k_1,k_2,k_3)$ is irreducible.
\end{prop}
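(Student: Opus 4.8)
The plan is to mimic closely the strategy used for the ``base case'' module $E(k_0,k_1,k_2,k_3)$ in \S\ref{s:lattice_E}, transported through the twist $1\bmod 4$. Concretely, the first step is to identify $E(k_0,k_1,k_2,k_3)^{1 \bmod{4}}(k_1^{-1})$ as one of the standard $\triangle_q$-modules $V_{d'}(a,b,c)$ (up to the $\Z/2\Z$-twist of Table \ref{Z/2Z-action}). Lemma \ref{lem:AB_E1(k1inv)} already gives the matrices of $A$ and $B$ on the basis (\ref{basis_E1(k1inv)}): the $A$-matrix is upper bidiagonal and the $B$-matrix is lower bidiagonal, so this is the ``opposite'' shape to Proposition \ref{prop:UAWd}. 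Hence I would set $d'=\tfrac{d-1}{2}$ and choose $(a,b,c)$ so that, after applying the automorphism $1\bmod 2$ (which swaps $A\leftrightarrow B$ and $\alpha\leftrightarrow\beta$), the resulting module matches the normal form of Proposition \ref{prop:UAWd}. Reading off the diagonal entries $\theta_i,\theta_i^*$ in Lemma \ref{lem:AB_E1(k1inv)}, one is led to $(a,b,c)=(k_0k_1q^{\frac{d+3}{2}},\,k_0k_3q^{\frac{d+1}{2}},\,k_0k_2q^{\frac{d+1}{2}})$: indeed $\theta_i^*=aq^{2i-d'}+a^{-1}q^{d'-2i}$ with this $a$, and $\theta_i=bq^{2i-d'}+b^{-1}q^{d'-2i}$ with this $b$, using $d'=\tfrac{d-1}{2}$.

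The second step is the verification that the off-diagonal $\varphi_i$ in Lemma \ref{lem:AB_E1(k1inv)} agrees with the $\varphi_i$ of Proposition \ref{prop:UAWd} for the parameters $V_{d'}(b,a,c)$ (the swap reflecting the $1\bmod 2$ twist). This is a direct but slightly delicate substitution: one expands $a^{-1}b^{-1}q^{d'+1}(q^i-q^{-i})(q^{i-d'-1}-q^{d'-i+1})(q^{-i}-bac q^{i-d'-1})(q^{-i}-bac^{-1}q^{i-d'-1})$ with $d'=\tfrac{d-1}{2}$ and checks it reproduces $k_1^{-1}k_3^{-1}q^{\frac{d-1}{2}}(q^i-q^{-i})(q^{i-\frac{d+1}{2}}-q^{\frac{d+1}{2}-i})(q^{-i}-k_0k_1k_2k_3q^{i+1})(q^{-i}-k_0k_1k_2^{-1}k_3q^{i+1})$. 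Here one uses $k_0^2=q^{-d-1}$ from (\ref{k02=-d-1}) to convert powers of $q^{d'}$ into powers of $k_0$, and the factorization $bac=k_0^2 k_1k_3k_2 q^{d+2}$, etc. As in Proposition \ref{prop:E(k0)}, one also needs to check that the central scalars: comparing (\ref{alpha:E1(k1inv)})--(\ref{gamma:E1(k1inv)}) with Proposition \ref{prop:UAWd}(ii) applied to $V_{d'}(b,a,c)$ and using $q^{d'+1}+q^{-d'-1}=q^{\frac{d+1}{2}}+q^{-\frac{d+1}{2}}=q k_1^{-1}\cdot k_1 q^{\frac{d-1}{2}}+\cdots$; actually the cleanest route is to check the $\beta$-scalar only, since the standard module is determined by traces via Theorem \ref{thm:onto_UAW}. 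Then invoke Lemma \ref{lem:gen_UAW} (uniqueness of $V_{d'}$) together with Lemma \ref{lem:Z/2Z}, which says $V_{d'}(a,b,c)^{1\bmod 2}\cong V_{d'}(b,a,c)$, to conclude both the twisted description and the untwisted description $V_{\frac{d-1}{2}}(k_0k_3q^{\frac{d+1}{2}},k_0k_1q^{\frac{d+3}{2}},k_0k_2q^{\frac{d+1}{2}})$.

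The third step is irreducibility. Assuming the $\H_q$-module $E(k_0,k_1,k_2,k_3)$ is irreducible, Theorem \ref{thm:irr_E} gives $k_0k_1k_2k_3,\ k_0k_1^{-1}k_2k_3,\ k_0k_1k_2^{-1}k_3,\ k_0k_1k_2k_3^{-1}\notin\{q^{-i}\mid i=1,3,\ldots,d\}$. I would translate this into the condition of Theorem \ref{thm:irr_UAW} for the parameters $(b,a,c)=(k_0k_3q^{\frac{d+1}{2}},k_0k_1q^{\frac{d+3}{2}},k_0k_2q^{\frac{d+1}{2}})$ and $d'=\tfrac{d-1}{2}$, namely that $bac,\ b^{-1}ac,\ ba^{-1}c,\ bac^{-1}\notin\{q^{2i-d'-1}\mid i=1,\ldots,d'\}$. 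Using $k_0^2=q^{-d-1}$ one finds $bac=k_0^3k_1k_2k_3 q^{\frac{3(d+1)}{2}+1}=k_0^{-1}\cdot k_0^4 k_1k_2k_3 q^{\cdots}$; more directly $abc=(k_0 k_2 q^{\frac{d+1}{2}})(k_0 k_1 q^{\frac{d+3}{2}})(k_0 k_3 q^{\frac{d+1}{2}})=k_0^3 k_1k_2k_3 q^{\frac{3d+5}{2}}$, and $k_0^2=q^{-d-1}$ reduces this to $k_0^{-1}k_1k_2k_3 q^{\frac{d+3}{2}}$, and one checks $q^{2i-d'-1}$ for $i\le d'=\tfrac{d-1}{2}$ ranges over a set that, multiplied through, is exactly the forbidden set $\{q^{-j}\mid j=1,3,\ldots,d\}$ for the corresponding Theorem \ref{thm:irr_E} quantity; similarly for the three other products, where the inversions $a^{-1},b^{-1},c^{-1}$ match $k_1^{-1},k_3^{-1},k_2^{-1}$ respectively. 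Since $1\bmod 2$ is an algebra automorphism it preserves irreducibility, so $V_{d'}(b,a,c)^{1\bmod 2}$ (hence $E(k_0,k_1,k_2,k_3)^{1\bmod 4}(k_1^{-1})$) is irreducible, and it equals $V_{d'}(b,a,c)$ up to isomorphism by Lemma \ref{lem:Z/2Z}.

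The main obstacle is the bookkeeping in step two: getting the exponents of $q$ and the distribution of the factors $k_0,k_1,k_2,k_3$ exactly right in $\varphi_i$ after substituting $d'=\tfrac{d-1}{2}$ and eliminating $q^{d'}$ via $k_0^2=q^{-d-1}$, while simultaneously tracking the $A\leftrightarrow B$ swap induced by the $1\bmod 2$ twist (which also exchanges the roles of $a$ and $b$ in the $\varphi_i$ formula of Proposition \ref{prop:UAWd}). A practical shortcut that avoids matching $\varphi_i$ by hand is to note that by Theorem \ref{thm:onto_UAW} a $(d'+1)$-dimensional irreducible $\triangle_q$-module is determined up to isomorphism by the traces of $A,B,C$; so once irreducibility is in hand it suffices to compare the three traces (sums of the $\theta_i$, $\theta_i^*$, and the corresponding diagonal for $C$ computed from $\gamma$), which is a much shorter computation. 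I would present the argument in that order: parameter identification via traces and Theorem \ref{thm:onto_UAW}, then Lemma \ref{lem:Z/2Z} for the twisted form, then the irreducibility translation through Theorems \ref{thm:irr_E} and \ref{thm:irr_UAW}.
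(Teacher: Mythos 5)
Your main line of argument is exactly the paper's proof: set $d'=\tfrac{d-1}{2}$ and $(a,b,c)=(k_0k_1q^{\frac{d+3}{2}},k_0k_3q^{\frac{d+1}{2}},k_0k_2q^{\frac{d+1}{2}})$, match the matrices and central scalars of Lemma \ref{lem:AB_E1(k1inv)} against Proposition \ref{prop:UAWd} to identify $E(k_0,k_1,k_2,k_3)^{1\bmod 4}(k_1^{-1})$ with $V_{d'}(a,b,c)^{1\bmod 2}$, translate Theorem \ref{thm:irr_E} into the hypothesis of Theorem \ref{thm:irr_UAW}, and finish with Lemma \ref{lem:Z/2Z}.

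One caveat about your closing suggestion to reorganize the proof around Theorem \ref{thm:onto_UAW}: that theorem applies only to a module already known to be irreducible, whereas the first assertion of the proposition (the isomorphism with $V_{d'}(a,b,c)^{1\bmod 2}$) is made without assuming the $\H_q$-module is irreducible, and the irreducibility of $E(k_0,k_1,k_2,k_3)^{1\bmod 4}(k_1^{-1})$ is itself deduced from that isomorphism via Theorem \ref{thm:irr_UAW}. So the trace shortcut cannot come first; the direct comparison with Proposition \ref{prop:UAWd} (with uniqueness supplied by Lemma \ref{lem:gen_UAW}) is what carries the unconditional claim. There are also two small bookkeeping slips that do not affect the structure: in Lemma \ref{lem:AB_E1(k1inv)} it is $\theta_i$ (the diagonal of the $B$-matrix) that equals $aq^{2i-d'}+a^{-1}q^{d'-2i}$ and $\theta_i^*$ (the diagonal of the $A$-matrix) that equals $bq^{2i-d'}+b^{-1}q^{d'-2i}$, not the other way around; and $k_0^3q^{\frac{3d+5}{2}}=k_0\cdot k_0^2\cdot q^{\frac{3d+5}{2}}=k_0q^{\frac{d+3}{2}}$, so $abc=k_0k_1k_2k_3q^{\frac{d+3}{2}}$ (with $k_0$, not $k_0^{-1}$), which is precisely the form to which Theorem \ref{thm:irr_E} applies.
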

\begin{proof}
Set $(a,b,c)=(k_0 k_1 q^{\frac{d+3}{2}},
k_0 k_3 q^{\frac{d+1}{2}},
k_0 k_2 q^{\frac{d+1}{2}})$
and $d'=\frac{d-1}{2}$. Under the assumption (\ref{k02=-d-1}) the scalar (\ref{beta:E1(k1inv)}) is equal to 
$$
(b+b^{-1})(c+c^{-1})+(a+a^{-1})(q^{d'+1}+q^{-d'-1}).
$$
By Proposition \ref{prop:UAWd} and Lemma \ref{lem:AB_E1(k1inv)} the $\triangle_q$-module $E(k_0,k_1,k_2,k_3)^{1 \bmod{4}}(k_1^{-1})$ is isomorphic to $V_{d'}(a,b,c)^{1 \bmod{2}}$.

Suppose that the $\H_q$-module $E(k_0,k_1,k_2,k_3)$ is irreducible. Using Theorem \ref{thm:irr_E} yields that 
\begin{align*}
&a^{-1}bc\not\in \{q^{2i-d'-1}\,|\,i=0,1,\ldots,d'\};
\\
&abc,ab^{-1}c,abc^{-1}\not\in\{q^{2i-d'-1}\,|\,i=1,2,\ldots,d'+1\}.
\end{align*}
Hence the $\triangle_q$-module $V_{d'}(a,b,c)$ is irreducible by Theorem \ref{thm:irr_UAW}. 
By Lemma \ref{lem:Z/2Z} the $\triangle_q$-module $V_{d'}(a,b,c)^{1 \bmod{2}}$ is isomorphic to $V_{d'}(b,a,c)$.
The proposition follows.
\end{proof}

\begin{lem}\label{lem:AB_E1/E1(k1inv)}
Let 
$$
\mu_i=(-1)^{\frac{i}{2}} k_0^{-\frac{i}{2}} k_1^{-\frac{i}{2}} q^{-\frac{i(i+2)}{4}}
\qquad  
\hbox{for $i=0,2,\ldots,d-1$}.
$$
Then the matrices representing $A$ and $B$ with respect to the $\F$-basis 
\begin{gather}\label{basis_E1/E1(k1inv)}
\mu_i v_i+E(k_0,k_1,k_2,k_3)^{1 \bmod{4}}(k_1^{-1})
\qquad 
\hbox{for $i=0,2,\ldots,d-1$}
\end{gather}
for the $\triangle_q$-module $E(k_0,k_1,k_2,k_3)^{1 \bmod{4}}/E(k_0,k_1,k_2,k_3)^{1 \bmod{4}}(k_1^{-1})$ are 
$$
\begin{pmatrix}
\theta_0^* &\varphi_1 &  & &{\bf 0}
\\ 
 &\theta_1^* &\varphi_2
\\
 &  &\theta_2^* &\ddots
 \\
 & & &\ddots &\varphi_{\frac{d-1}{2}}
 \\
{\bf 0}  & & & &\theta_\frac{d-1}{2}^*
\end{pmatrix},
\qquad 
\begin{pmatrix}
\theta_0 & & &  &{\bf 0}
\\ 
1 &\theta_1 
\\
&1 &\theta_2 
 \\
& &\ddots &\ddots
 \\
{\bf 0} & & &1 &\theta_\frac{d-1}{2}
\end{pmatrix},
$$
respectively, where 
\begin{align*}
\theta_i
&=k_0 k_1 q^{2i}+k_0^{-1} k_1^{-1} q^{-2i}
\qquad 
\hbox{for $i=0,1,\ldots,\textstyle\frac{d-1}{2}$},
\\
\theta_i^*
&=k_0 k_3 q^{2i+1}+k_0^{-1} k_3^{-1} q^{-2i-1}
\qquad 
\hbox{for $i=0,1,\ldots,\textstyle\frac{d-1}{2}$},
\\
\varphi_i &=
k_1^{-1} k_3^{-1} q^{\frac{d+3}{2}}
(q^i-q^{-i})(q^{i-\frac{d+1}{2}}-q^{\frac{d+1}{2}-i})
\\
&\qquad \times \;
(q^{-i}-k_0 k_1 k_2 k_3 q^{i-1})
(q^{-i}-k_0 k_1 k_2^{-1} k_3 q^{i-1})
\qquad 
\hbox{for $i=1,2,\ldots,\textstyle\frac{d-1}{2}$}.
\end{align*}
The elements $\alpha,\beta,\gamma$ act on the $\triangle_q$-module $E(k_0,k_1,k_2,k_3)^{1 \bmod{4}}/E(k_0,k_1,k_2,k_3)^{1 \bmod{4}}(k_1^{-1})$ as scalar multiplication by
\begin{gather}
(k_0 +k_0^{-1})
(k_3+k_3^{-1})
+
(k_2+k_2^{-1})
(q k_1^{-1}+q^{-1} k_1),
\label{alpha:E1/E1(k1inv)}
\\
(k_3+k_3^{-1})
(k_2+k_2^{-1})
+
(k_0 +k_0^{-1})
(q k_1^{-1}+q^{-1} k_1),
\label{beta:E1/E1(k1inv)}
\\
(k_2+k_2^{-1})
(k_0 +k_0^{-1})
+
(k_3+k_3^{-1})
(q k_1^{-1}+q^{-1} k_1),
\label{gamma:E1/E1(k1inv)}
\end{gather}
respectively.
\end{lem}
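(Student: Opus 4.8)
The plan is to follow the template set by the proofs of Lemma~\ref{lem:AB_E/E(k0)} and Lemma~\ref{lem:AB_E1(k1inv)}, now using the even-indexed basis vectors in place of the odd-indexed ones. First I would note that by Lemma~\ref{lem3:t0_E1} the submodule $E(k_0,k_1,k_2,k_3)^{1 \bmod{4}}(k_1^{-1})$ is spanned by $\{v_i \mid i=1,3,\ldots,d\}$, so the cosets of $v_0,v_2,\ldots,v_{d-1}$ span the quotient; since $E(k_0,k_1,k_2,k_3)^{1\bmod 4}$ has dimension $d+1$, these $\frac{d+1}{2}$ cosets form an $\F$-basis. Rescaling each $v_i$ by the nonzero scalar $\mu_i$ changes nothing, so the vectors (\ref{basis_E1/E1(k1inv)}) are an $\F$-basis for $E(k_0,k_1,k_2,k_3)^{1 \bmod{4}}/E(k_0,k_1,k_2,k_3)^{1 \bmod{4}}(k_1^{-1})$.

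Next I would read off the actions of $A$ and $B$ on $E(k_0,k_1,k_2,k_3)^{1 \bmod{4}}$ from Lemma~\ref{lem:AB_E1} and reduce them modulo $E(k_0,k_1,k_2,k_3)^{1 \bmod{4}}(k_1^{-1})$. Since every $v_j$ with $j$ odd lies in this submodule, all odd-indexed contributions in those formulas vanish: for even $i$ the vector $A v_i$ retains only its $v_i$ and $v_{i-2}$ terms, and $B v_i$ retains only its $v_i$ and $v_{i+2}$ terms. Hence on the even-indexed cosets $A$ is upper bidiagonal with diagonal entries the $\theta_i^*$ of Lemma~\ref{lem:AB_E1} (reindexed by $i=2j$) and $B$ is lower bidiagonal with diagonal entries the $\theta_i$. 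The scalars $\mu_i$ are chosen so that the ratio $\mu_{i+2}/\mu_i$ exactly cancels the factor $-k_0^{-1}k_1^{-1}q^{-i-2}$ below the diagonal of $B$, turning its subdiagonal into a string of $1$'s; inserting the same ratios into the superdiagonal entry $-k_0^{-1}k_3^{-1}q^{1-i}\varrho_i\varrho_{i-1}$ of $A$ and using (\ref{k02=-d-1}) to rewrite $\varrho_i$ for even $i$ produces the displayed closed form for $\varphi_i$.

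For the action of $\alpha,\beta,\gamma$ I would invoke Lemma~\ref{lem1:t0_E1}, from which $(t_0-k_1)v_i\in E(k_0,k_1,k_2,k_3)^{1 \bmod{4}}(k_1^{-1})$ for every even $i$; thus $t_0$ acts on the quotient as the scalar $k_1$, so $q t_0^{-1}+q^{-1}t_0$ acts as $q k_1^{-1}+q^{-1}k_1$. Because $1\bmod 4$ sends $t_0,t_1,t_2,t_3$ to $t_1,t_2,t_3,t_0$ (Table~\ref{Z/4Z-action}), the central elements $c_0,c_1,c_2,c_3$ act on $E(k_0,k_1,k_2,k_3)^{1 \bmod{4}}$ as $k_1+k_1^{-1}$, $k_2+k_2^{-1}$, $k_3+k_3^{-1}$, $k_0+k_0^{-1}$, respectively, by Proposition~\ref{prop:E}(ii). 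Substituting these values into the images of $\alpha,\beta,\gamma$ from Theorem~\ref{thm:hom} yields the scalars (\ref{alpha:E1/E1(k1inv)})--(\ref{gamma:E1/E1(k1inv)}).

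The only substantive work is the computational matching in the second paragraph: one must keep track of the parity-dependent definition of $\varrho_i$, expand the products $\varrho_i\varrho_{i-1}$ alongside the explicit ratios $\mu_i/\mu_{i-2}$, and repeatedly use $k_0^2=q^{-d-1}$ to bring the outcome into the symmetric form shown for $\varphi_i$. I expect this to be the main obstacle, not because it is deep but because it demands careful bookkeeping of powers of $q$ and of the index shift $i\leftrightarrow 2j$ between this lemma and Lemma~\ref{lem:AB_E1}.
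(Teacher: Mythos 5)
Your proposal is correct and follows essentially the same route as the paper's (very terse) proof: Lemma \ref{lem3:t0_E1} for the basis of the quotient, Lemma \ref{lem:AB_E1} reduced modulo the odd-indexed vectors for the matrices of $A$ and $B$, Lemma \ref{lem1:t0_E1} to see that $t_0$ acts as $k_1$ on the quotient, and Theorem \ref{thm:hom} with Proposition \ref{prop:E}(ii) (twisted by $1\bmod 4$) for $\alpha,\beta,\gamma$. The bookkeeping you flag — the ratios $\mu_{i}/\mu_{i\pm 2}$ normalizing the subdiagonal of $B$ to $1$ and producing $\varphi_j$ from $\varrho_{2j}\varrho_{2j-1}$ via $k_0^2=q^{-d-1}$ — does check out.
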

\begin{proof}
By Lemma \ref{lem3:t0_E1} the cosets (\ref{basis_E1/E1(k1inv)}) are a basis for the quotient of $E(k_0,k_1,k_2,k_3)^{1 \bmod{4}}$ modulo $E(k_0,k_1,k_2,k_3)^{1 \bmod{4}}(k_1^{-1})$. By Lemmas \ref{lem:AB_E1} and \ref{lem3:t0_E1} we obtain the matrices representing $A$ and $B$ with respect to (\ref{basis_E1/E1(k1inv)}). By Lemma \ref{lem1:t0_E1}
we have 
\begin{gather*}
(t_0-k_1)v_i\in E(k_0,k_1,k_2,k_3)^{1 \bmod{4}}(k_1^{-1})
\qquad 
\hbox{for $i=0,2,\ldots,d-1$}.
\end{gather*}
Combined with Theorem \ref{thm:hom} and Proposition \ref{prop:E}(ii) this yields that $\alpha,\beta,\gamma$ act on the quotient $\triangle_q$-module of $E(k_0,k_1,k_2,k_3)^{1 \bmod{4}}$ modulo $E(k_0,k_1,k_2,k_3)^{1 \bmod{4}}(k_1^{-1})$ as scalar multiplication by (\ref{alpha:E1/E1(k1inv)})--(\ref{gamma:E1/E1(k1inv)}), respectively.
\end{proof}

\begin{prop}\label{prop:E1/E1(k1inv)}
The $\triangle_q$-module $E(k_0,k_1,k_2,k_3)^{1 \bmod{4}}/E(k_0,k_1,k_2,k_3)^{1 \bmod{4}}(k_1^{-1})$ is isomorphic to 
$$
V_{\frac{d-1}{2}}\left(
k_0 k_1 q^{\frac{d-1}{2}},
k_0 k_3 q^{\frac{d+1}{2}},
k_0 k_2 q^{\frac{d+1}{2}}
\right)^{1 \bmod{2}}.
$$
Moreover the $\triangle_q$-module 
$
E(k_0,k_1,k_2,k_3)^{1 \bmod{4}}/E(k_0,k_1,k_2,k_3)^{1 \bmod{4}}(k_1^{-1})
$ 
is irreducible and it is isomorphic to 
$V_{\frac{d-1}{2}}(
k_0 k_3 q^{\frac{d+1}{2}},
k_0 k_1 q^{\frac{d-1}{2}},
k_0 k_2 q^{\frac{d+1}{2}})$ if the $\H_q$-module 
$E(k_0,k_1,k_2,k_3)$ is irreducible.
\end{prop}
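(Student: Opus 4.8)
The plan is to follow the proof of Proposition \ref{prop:E1(k1inv)} almost verbatim, with the dimension dropped by two and the $q$-power in the first parameter dropped by $q^2$. Put
$$
(a,b,c)=\Bigl(k_0 k_1 q^{\frac{d-1}{2}},\ k_0 k_3 q^{\frac{d+1}{2}},\ k_0 k_2 q^{\frac{d+1}{2}}\Bigr),
\qquad
d'=\frac{d-1}{2}.
$$
Since $d$ is odd, $\frac{d\pm1}{2}$ are integers, and by (\ref{k02=-d-1}) the scalar $k_0q^{\frac{d+1}{2}}$ is a square root of $1$, hence equals $\pm1$; call it $s$. Then $b+b^{-1}=s(k_3+k_3^{-1})$, $c+c^{-1}=s(k_2+k_2^{-1})$, $a+a^{-1}=s(q^{-1}k_1+q\,k_1^{-1})$ and $k_0+k_0^{-1}=s(q^{\frac{d+1}{2}}+q^{-\frac{d+1}{2}})$, with the same $s$ in each case. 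Multiplying these out (the factors $s$ cancel in pairs since $s^2=1$) shows that the scalar (\ref{beta:E1/E1(k1inv)}) equals $(b+b^{-1})(c+c^{-1})+(a+a^{-1})(q^{d'+1}+q^{-d'-1})$, which by Table \ref{Z/2Z-action} is exactly the scalar by which $\beta$ acts on $V_{d'}(a,b,c)^{1\bmod 2}$; the same bookkeeping matches (\ref{alpha:E1/E1(k1inv)}) and (\ref{gamma:E1/E1(k1inv)}).

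Next I would compare Lemma \ref{lem:AB_E1/E1(k1inv)} with Proposition \ref{prop:UAWd}. In the lemma $A$ acts by an upper bidiagonal matrix with diagonal $\theta_i^*$ and superdiagonal $\varphi_i$, and $B$ by a lower bidiagonal matrix with diagonal $\theta_i$ and subdiagonal $1$; this is the shape of Proposition \ref{prop:UAWd} with $A$ and $B$ interchanged, i.e. the shape of $V_{d'}(a,b,c)^{1\bmod 2}$. Substituting $(a,b,c,d')$ and using (\ref{k02=-d-1}) once more one verifies $\theta_i^*=bq^{2i-d'}+b^{-1}q^{d'-2i}$, $\theta_i=aq^{2i-d'}+a^{-1}q^{d'-2i}$, and that $\varphi_i$ coincides with the $\varphi_i$ of Proposition \ref{prop:UAWd} for $V_{d'}(a,b,c)$; here $a^{-1}b^{-1}q^{d'+1}$ collapses to $k_1^{-1}k_3^{-1}q^{\frac{d+3}{2}}$, and $abc\,q^{i-d'-1}$, $abc^{-1}q^{i-d'-1}$ collapse to $k_0k_1k_2k_3\,q^{i-1}$, $k_0k_1k_2^{-1}k_3\,q^{i-1}$. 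Together with the scalars computed above, and the uniqueness of $V_{d'}(a,b,c)$ from Lemma \ref{lem:gen_UAW}, this identifies the $\triangle_q$-module $E(k_0,k_1,k_2,k_3)^{1\bmod4}/E(k_0,k_1,k_2,k_3)^{1\bmod4}(k_1^{-1})$ with $V_{d'}(a,b,c)^{1\bmod2}$.

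For the ``moreover'' part, suppose the $\H_q$-module $E(k_0,k_1,k_2,k_3)$ is irreducible, so that by Theorem \ref{thm:irr_E} each of $k_0k_1k_2k_3$, $k_0k_1^{-1}k_2k_3$, $k_0k_1k_2^{-1}k_3$, $k_0k_1k_2k_3^{-1}$ avoids $\{q^{-i}\mid i=1,3,\ldots,d\}$. A short computation gives $abc=k_0k_1k_2k_3\,q^{\frac{d-1}{2}}$, $ab^{-1}c=k_0k_1k_2k_3^{-1}q^{\frac{d-1}{2}}$, $abc^{-1}=k_0k_1k_2^{-1}k_3\,q^{\frac{d-1}{2}}$ and $a^{-1}bc=k_0k_1^{-1}k_2k_3\,q^{\frac{d+3}{2}}$. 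Dividing out these powers of $q$ turns the forbidden set $\{q^{2i-d'-1}\mid i=1,\ldots,d'\}$ of Theorem \ref{thm:irr_UAW} into $\{q^{-i}\mid i=1,3,\ldots,d-2\}$ for the first three products and into $\{q^{-i}\mid i=3,5,\ldots,d\}$ for $a^{-1}bc$, each of which lies inside $\{q^{-i}\mid i=1,3,\ldots,d\}$. Hence $V_{d'}(a,b,c)$ is irreducible by Theorem \ref{thm:irr_UAW}, and Lemma \ref{lem:Z/2Z} gives $V_{d'}(a,b,c)^{1\bmod2}\cong V_{d'}(b,a,c)=V_{\frac{d-1}{2}}\bigl(k_0k_3q^{\frac{d+1}{2}},k_0k_1q^{\frac{d-1}{2}},k_0k_2q^{\frac{d+1}{2}}\bigr)$, as claimed. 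I expect the only delicate point to be the bookkeeping in this last step: one has to chase the index shifts carefully so that, after the substitution and the cancellation of $q$-powers, the four $\H_q$-exclusions all land inside the single $\triangle_q$-exclusion set; keeping the sign $s=\pm1$ straight in the scalar identities is a minor but recurring annoyance.
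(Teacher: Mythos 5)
Your proposal is correct and follows essentially the same route as the paper's proof: same choice of $(a,b,c)$ and $d'=\frac{d-1}{2}$, identification of the quotient with $V_{d'}(a,b,c)^{1\bmod 2}$ by comparing Lemma \ref{lem:AB_E1/E1(k1inv)} with Proposition \ref{prop:UAWd}, then Theorem \ref{thm:irr_E} feeding into Theorem \ref{thm:irr_UAW} for irreducibility and Lemma \ref{lem:Z/2Z} for the final form $V_{d'}(b,a,c)$. The only difference is that you write out explicitly the routine substitutions (the sign $s=k_0q^{\frac{d+1}{2}}=\pm1$ and the $q$-power shifts of the exclusion sets) that the paper leaves implicit.
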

\begin{proof}
Set $(a,b,c)=(k_0 k_1 q^{\frac{d-1}{2}},
k_0 k_3 q^{\frac{d+1}{2}},
k_0 k_2 q^{\frac{d+1}{2}})$
and $d'=\frac{d-1}{2}$. Under the assumption (\ref{k02=-d-1}) the scalar (\ref{beta:E1/E1(k1inv)}) is equal to 
$$
(b+b^{-1})(c+c^{-1})+(a+a^{-1})(q^{d'+1}+q^{-d'-1}).
$$
By Proposition \ref{prop:UAWd} and Lemma \ref{lem:AB_E1/E1(k1inv)} the quotient $\triangle_q$-module $E(k_0,k_1,k_2,k_3)^{1 \bmod{4}}$ modulo $E(k_0,k_1,k_2,k_3)^{1 \bmod{4}}(k_1^{-1})$ is isomorphic to $V_{d'}(a,b,c)^{1 \bmod{2}}$.

Suppose that the $\H_q$-module $E(k_0,k_1,k_2,k_3)$ is irreducible. Using Theorem \ref{thm:irr_E} yields that 
\begin{align*}
&a^{-1}bc\not\in\{q^{2i-d'-1}\,|\,i=1,2,\ldots,d'+1\};
\\
&abc,ab^{-1}c,abc^{-1}\not\in\{q^{2i-d'-1}\,|\,i=0,1,\ldots,d'\}.
\end{align*}
Hence the $\triangle_q$-module $V_{d'}(a,b,c)$ is irreducible by Theorem \ref{thm:irr_UAW}. 
By Lemma \ref{lem:Z/2Z} the $\triangle_q$-module $V_{d'}(a,b,c)^{1 \bmod{2}}$ is isomorphic to $V_{d'}(b,a,c)$. 
The proposition follows.
\end{proof}

\begin{thm}\label{thm:E1}
Assume that the $\H_q$-module $E(k_0,k_1,k_2,k_3)^{1 \bmod{4}}$ is irreducible. Then the following hold:
\begin{enumerate}
\item If $k_1^2=1$ then 
\begin{table}[H]
\begin{tikzpicture}[node distance=1.2cm]
 \node (0)                  {$\{0\}$};
 \node (E)  [above of=0]   {$E(k_0,k_1,k_2,k_3)^{1 \bmod{4}}(k_1)$};
 \node (V)  [above of=E]   {$E(k_0,k_1,k_2,k_3)^{1 \bmod{4}}$};
 \draw (0)   -- (E);
 \draw (E)  -- (V);
\end{tikzpicture}
\end{table}
\noindent is the lattice of $\triangle_q$-submodules of $E(k_0,k_1,k_2,k_3)^{1 \bmod{4}}$.

\item If $k_1^2\not=1$ then 
 \begin{table}[H]
\begin{tikzpicture}[node distance=1.5cm]
 \node (0)                  {$\{0\}$};
 \node (1)  [above of=0]   {};
 \node (2)  [right of=1]   {};
 \node (3)  [left of=1]   {};
 \node (E1)  [right of=2]  {$E(k_0,k_1,k_2,k_3)^{1 \bmod{4}}(k_1)$};
 \node (E2)  [left of=3]   {$E(k_0,k_1,k_2,k_3)^{1 \bmod{4}}(k_1^{-1})$};
 \node (V) [above of=1]  {$E(k_0,k_1,k_2,k_3)^{1 \bmod{4}}$};
 \draw (0)   -- (E1);
 \draw (0)   -- (E2);
 \draw (E1)   -- (V);
 \draw (E2)  -- (V);
\end{tikzpicture}
\end{table}
\noindent is the lattice of $\triangle_q$-submodules of $E(k_0,k_1,k_2,k_3)^{1 \bmod{4}}$.

\end{enumerate}
\end{thm}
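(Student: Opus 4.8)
The plan is to follow the template of the proof of Theorem~\ref{thm:E0}: use Lemma~\ref{lem2:t0_E1} to see which $t_0$-eigenspaces occur on $V\coloneqq E(k_0,k_1,k_2,k_3)^{1\bmod 4}$, use Propositions~\ref{prop:E1(k1inv)} and~\ref{prop:E1/E1(k1inv)} to identify a composition series, and then use Proposition~\ref{prop:irr_AWmodule_in_t0eigenspace} to show that the $t_0$-eigenspaces exhaust the irreducible $\triangle_q$-submodules. First note that twisting by the automorphism $1\bmod 4$ of $\H_q$ is a self-equivalence of the module category, so the hypothesis that $V$ is irreducible as an $\H_q$-module is equivalent to $E(k_0,k_1,k_2,k_3)$ being irreducible; hence Propositions~\ref{prop:E1(k1inv)} and~\ref{prop:E1/E1(k1inv)} are available, and Proposition~\ref{prop:irr_AWmodule_in_t0eigenspace} applies to $V$.

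For part~(i), suppose $k_1^2=1$. Then $k_1=k_1^{-1}$, so by Lemma~\ref{lem2:t0_E1}(i) the only nonzero $t_0$-eigenvalue on $V$ is $k_1$, and by Proposition~\ref{prop:t0eigenspace=AWmodule} and Lemma~\ref{lem3:t0_E1} the set $V(k_1)=V(k_1^{-1})$ is a $\triangle_q$-submodule of dimension $\tfrac{d+1}{2}$, which is nonzero and proper since $1\le\tfrac{d+1}{2}<d+1$. By Proposition~\ref{prop:E1(k1inv)} the $\triangle_q$-module $V(k_1)$ is irreducible, and by Proposition~\ref{prop:E1/E1(k1inv)} the quotient $V/V(k_1)$ is irreducible, so $\{0\}\subset V(k_1)\subset V$ is a composition series of length two. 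By Proposition~\ref{prop:irr_AWmodule_in_t0eigenspace} every irreducible $\triangle_q$-submodule of $V$ lies in some $V(\theta)$ with $\theta\ne 0$, hence in $V(k_1)$, and therefore equals $V(k_1)$. Since $V$ is finite-dimensional, every nonzero $\triangle_q$-submodule contains an irreducible one and so contains $V(k_1)$; and a submodule strictly between $V(k_1)$ and $V$ would yield a nonzero proper submodule of the irreducible quotient $V/V(k_1)$, which is impossible. Hence the submodule lattice is exactly the displayed chain.

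For part~(ii), suppose $k_1^2\ne 1$. By Lemma~\ref{lem2:t0_E1}(ii), $t_0$ is diagonalizable on $V$ with eigenvalues $k_1^{\pm1}$, so by Proposition~\ref{prop:t0eigenspace=AWmodule} we get a direct sum $V=V(k_1)\oplus V(k_1^{-1})$ of nonzero $\triangle_q$-submodules. Proposition~\ref{prop:E1(k1inv)} gives that $V(k_1^{-1})$ is irreducible; the direct-sum decomposition yields $V/V(k_1^{-1})\cong V(k_1)$ as $\triangle_q$-modules, so Proposition~\ref{prop:E1/E1(k1inv)} gives that $V(k_1)$ is irreducible too. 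Since $k_1\ne k_1^{-1}$, the submodules $V(k_1)$ and $V(k_1^{-1})$ are distinct. By Proposition~\ref{prop:irr_AWmodule_in_t0eigenspace}, every irreducible $\triangle_q$-submodule of $V$ lies in $V(k_1)$ or $V(k_1^{-1})$ and hence equals one of them, so these are precisely the irreducible $\triangle_q$-submodules. Now let $M$ be any nonzero $\triangle_q$-submodule; it contains an irreducible one, so $V(k_1)\subseteq M$ or $V(k_1^{-1})\subseteq M$. If both hold then $M\supseteq V(k_1)\oplus V(k_1^{-1})=V$; if, say, $V(k_1^{-1})\subseteq M$ but $V(k_1)\not\subseteq M$, then $M/V(k_1^{-1})$ is a proper submodule of the irreducible quotient $V/V(k_1^{-1})$, forcing $M=V(k_1^{-1})$, and symmetrically in the other case. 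Thus the only $\triangle_q$-submodules are $\{0\}$, $V(k_1)$, $V(k_1^{-1})$, $V$, giving the displayed diamond.

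Essentially all the computational content has already been carried out in Lemmas~\ref{lem:AB_E1}--\ref{lem:AB_E1/E1(k1inv)} and Propositions~\ref{prop:E1(k1inv)}--\ref{prop:E1/E1(k1inv)}; the only genuinely delicate points are the case split according to whether $k_1^2=1$ (equivalently, whether $t_0$ is diagonalizable on $V$), as recorded in Lemma~\ref{lem2:t0_E1}, and the observation in part~(ii) that $V/V(k_1^{-1})$ is $\triangle_q$-isomorphic to the complementary eigenspace $V(k_1)$, which is what lets Proposition~\ref{prop:E1/E1(k1inv)} establish irreducibility of that eigenspace. I do not expect any serious obstacle beyond keeping this bookkeeping straight.
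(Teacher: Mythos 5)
Your proposal is correct and follows essentially the same route as the paper: part (i) is the paper's argument verbatim (composition series from Propositions \ref{prop:E1(k1inv)} and \ref{prop:E1/E1(k1inv)}, then Proposition \ref{prop:irr_AWmodule_in_t0eigenspace} with Lemma \ref{lem2:t0_E1}(i) to pin down the unique irreducible submodule), and part (ii) matches the paper's cited template from Theorem \ref{thm:E0}(ii), with your direct-sum decomposition $V=V(k_1)\oplus V(k_1^{-1})$ and the isomorphism $V/V(k_1^{-1})\cong V(k_1)$ serving as a slightly more explicit substitute for the paper's Jordan--H\"older step. Your preliminary remark that twisting by $1\bmod 4$ preserves irreducibility, so the hypotheses of the cited propositions are met, is a point the paper leaves implicit and is worth having spelled out.
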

\begin{proof}
(i): Suppose that $k_1^2=1$. By Propositions \ref{prop:E1(k1inv)} and \ref{prop:E1/E1(k1inv)} the sequence 
\begin{gather}\label{E1:series1}
\{0\} \subset E(k_0,k_1,k_2,k_3)^{1 \bmod{4}}(k_1) \subset E(k_0,k_1,k_2,k_3)^{1 \bmod{4}}
\end{gather}
is a composition series for the $\triangle_q$-module $E(k_0,k_1,k_2,k_3)^{1 \bmod{4}}$. It follows from Proposition \ref{prop:irr_AWmodule_in_t0eigenspace} and Lemma \ref{lem2:t0_E1}(i) that every irreducible $\triangle_q$-submodule of $E(k_0,k_1,k_2,k_3)^{1 \bmod{4}}$ is contained in $E(k_0,k_1,k_2,k_3)^{1 \bmod{4}}(k_1)$. Therefore (\ref{E1:series1}) is the unique composition series for the $\triangle_q$-module $E(k_0,k_1,k_2,k_3)^{1 \bmod{4}}$. The statement (i) follows.

(ii): Similar to the proof of Theorem \ref{thm:E0}(ii).
\end{proof}

\subsection{The lattice of $\triangle_q$-submodules of $E(k_0,k_1,k_2,k_3)^{2 \bmod{4}}$}\label{s:lattice_E2}

Recall from Table \ref{Z/4Z-action} that $2\bmod{4}$ sends $t_0,t_1,t_2,t_3$ to $t_2,t_3,t_0,t_1$, respectively. 
In this subsection, we study the $\triangle_q$-submodules of the $\H_q$-module $E(k_0,k_1,k_2,k_3)^{2 \bmod{4}}$.

\begin{lem}\label{lem:AB_E2}
The actions of $A$ and $B$ on the $\H_q$-module $E(k_0,k_1,k_2,k_3)^{2 \bmod{4}}$ are as follows:
\begin{align*}
A v_i &=
\left\{
\begin{array}{ll}
\displaystyle
\theta_i v_i 
-
k_0^{-1} k_1^{-1} q^{-i-1} v_{i+2}
\qquad 
&\hbox{for $i=0,2,\ldots,d-3$},
\\
\displaystyle
\theta_i v_i
-
k_0^{-1} k_1^{-1} q^{-i-1}(q-q^{-1}) v_{i+1}
-
k_0^{-1} k_1^{-1} q^{-i-2} v_{i+2}
\qquad 
&\hbox{for $i=1,3,\ldots,d-2$},
\end{array}
\right.
\\
A v_{d-1}&=\theta_{d-1} v_{d-1},
\qquad 
A v_d = \theta_d v_d,
\\
B v_i &=
\left\{
\begin{array}{ll}
\displaystyle
\theta_i^* v_i 
+
k_0^{-1} k_3^{-1} q^{-i} \varrho_i (q-q^{-1}) v_{i-1}
-
k_0^{-1} k_3^{-1} q^{1-i}\varrho_i \varrho_{i-1} v_{i-2}
\qquad 
&\hbox{for $i=2,4,\ldots,d-1$},
\\
\displaystyle
\theta_i^* v_i 
-
k_0^{-1} k_3^{-1} q^{-i}\varrho_i \varrho_{i-1} v_{i-2}
\qquad 
&\hbox{for $i=3,5,\ldots,d$},
\end{array}
\right.
\\
B v_0&=\theta_0^* v_0,
\qquad 
B v_1 = \theta_1^* v_1,
\end{align*}
where 
\begin{align*}
\theta_i
&=
k_0 k_1 q^{2\lfloor \frac{i}{2}\rfloor +1}
+k_0^{-1} k_1^{-1}  q^{-2\lfloor \frac{i}{2}\rfloor -1}
\qquad 
\hbox{for $i=0,1,\ldots,d$},
\\
\theta_i^*
&=
k_0 k_3 q^{2\lfloor \frac{i}{2}\rfloor +1}
+
k_0^{-1} k_3^{-1} q^{-2\lfloor \frac{i}{2}\rfloor -1}
\qquad 
\hbox{for $i=0,1,\ldots,d$}.
\end{align*}
\end{lem}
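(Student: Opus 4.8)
The statement to prove is Lemma~\ref{lem:AB_E2}, which gives the actions of $A$ and $B$ on the twisted module $E(k_0,k_1,k_2,k_3)^{2\bmod 4}$. The strategy mirrors the proofs of Lemmas~\ref{lem:AB_E} and~\ref{lem:AB_E1}: translate the question into the behaviour of $X,Y$ on the untwisted module $E(k_0,k_1,k_2,k_3)$, for which we already have the clean recursions in Lemma~\ref{lem:XYinE}. By Table~\ref{Z/4Z-action} the automorphism $2\bmod 4$ sends $t_0,t_1,t_2,t_3$ to $t_2,t_3,t_0,t_1$ respectively; hence on $E(k_0,k_1,k_2,k_3)^{2\bmod 4}$ the generator $A=t_1t_0+(t_1t_0)^{-1}$ acts as $t_3t_2+(t_3t_2)^{-1}$ does on $E(k_0,k_1,k_2,k_3)$, and $B=t_3t_0+(t_3t_0)^{-1}$ acts as $t_1t_2+(t_1t_2)^{-1}$ does on $E(k_0,k_1,k_2,k_3)$.

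First I would invoke Lemma~\ref{lem:X+Xinv}(iii),(iv): on $E(k_0,k_1,k_2,k_3)$ one has $t_3t_2+(t_3t_2)^{-1}=qY+q^{-1}Y^{-1}$ and $t_1t_2+(t_1t_2)^{-1}=qX+q^{-1}X^{-1}$. So the task reduces to evaluating $qY+q^{-1}Y^{-1}$ and $qX+q^{-1}X^{-1}$ on the basis $\{v_i\}_{i=0}^d$ of $E(k_0,k_1,k_2,k_3)$. Next I would extract from Lemma~\ref{lem:XYinE}(ii) the explicit action of $Y$ (and of $Y^{-1}$, obtained by inverting the two-term relations there), distinguishing the parity of $i$ because of the exponent $(-1)^{i-1}$ and the ceiling $\lceil i/2\rceil$; similarly for $X$ from Lemma~\ref{lem:XYinE}(i). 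Substituting these into $qY+q^{-1}Y^{-1}$ and $qX+q^{-1}X^{-1}$ and collecting terms on $v_i$, $v_{i\pm 1}$, $v_{i\pm 2}$ yields the tridiagonal-in-blocks shape displayed in the lemma; the scalars $\theta_i,\theta_i^*$ with the floor function $\lfloor i/2\rfloor$ (rather than the ceiling appearing in Lemma~\ref{lem:AB_E}) arise precisely because of the extra factor of $q^{\pm 1}$ multiplying $Y^{\pm 1}$, which shifts $q^{2\lceil i/2\rceil}$ to $q^{2\lceil i/2\rceil\pm 1}$ and one checks this equals $q^{2\lfloor i/2\rfloor+1}$ in the relevant parity ranges.

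The main obstacle is purely bookkeeping: one must be careful that the piecewise formulas in Lemma~\ref{lem:XYinE} are stated with the index ranges $i=1,3,\dots,d$ versus $i=2,4,\dots,d-1$ and with the convention $\varrho_i$ depending on parity, so when computing $Y^{-1}v_i$ and $X^{-1}v_i$ one must correctly handle the boundary vectors $v_0$ and $v_d$ (where one of the two terms in the $Y$- or $X$-relation drops out) and verify that the apparent denominators $\varrho_i$ never actually obstruct inversion on the given vectors. I would organise the verification by the four cases ($i$ even/odd, combined with which of $A$ or $B$ is being computed), checking the claimed coefficients of $v_{i+1},v_{i+2}$ for $A$ and of $v_{i-1},v_{i-2}$ for $B$ against the substitution, and separately confirming the two special rows $Av_{d-1},Av_d$ and $Bv_0,Bv_1$. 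No new idea beyond Lemmas~\ref{lem:XYinE} and~\ref{lem:X+Xinv} is needed; the proof concludes with the one-line statement that a direct computation using these two lemmas establishes the displayed formulas.
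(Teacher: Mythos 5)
Your proposal is correct and follows exactly the route of the paper's own proof: identify the actions of $A$ and $B$ on $E(k_0,k_1,k_2,k_3)^{2\bmod 4}$ with those of $qY+q^{-1}Y^{-1}$ and $qX+q^{-1}X^{-1}$ on $E(k_0,k_1,k_2,k_3)$ via Lemma \ref{lem:X+Xinv}(iii),(iv), then compute directly from Lemma \ref{lem:XYinE}. Your parity and boundary-case bookkeeping, including the origin of the floor versus ceiling in $\theta_i,\theta_i^*$, is the right way to carry out the "direct calculation" the paper leaves to the reader.
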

\begin{proof}
By Lemma \ref{lem:X+Xinv} the actions of $A$ and $B$ on $E(k_0,k_1,k_2,k_3)^{2 \bmod{4}}$ are identical to the actions of 
$q Y+q^{-1} Y^{-1}$ and 
$q X+q^{-1} X^{-1}$
on $E(k_0,k_1,k_2,k_3)$, respectively. Using Lemma \ref{lem:XYinE} a direct calculation yields the actions of $q Y+q^{-1} Y^{-1}$ and $q X+q^{-1} X^{-1}$ on $E(k_0,k_1,k_2,k_3)$.
\end{proof}

\begin{lem}\label{lem1:t0_E2}
\begin{enumerate}
\item $E(k_0,k_1,k_2,k_3)^{2 \bmod{4}}(k_2)$ is of dimension $\frac{d+1}{2}$ with the $\F$-basis 
\begin{gather}\label{basis:E2(k2)}
(1-k_0 k_1 k_2^{-1} k_3 q^i ) v_{i-1}-v_i
\quad 
\hbox{for $i=1,3,\ldots,d$}.
\end{gather}

\item $E(k_0,k_1,k_2,k_3)^{2 \bmod{4}}(k_2^{-1})$ is of dimension $\frac{d+1}{2}$ with the $\F$-basis 
\begin{gather}\label{basis:E2(k2inv)}
(1-k_0 k_1 k_2 k_3 q^i ) v_{i-1}-v_i
\quad 
\hbox{for $i=1,3,\ldots,d$}.
\end{gather}
\end{enumerate}
\end{lem}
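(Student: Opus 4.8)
The plan is to compute the $t_0$-action on the module $E(k_0,k_1,k_2,k_3)^{2\bmod 4}$ directly and then identify its eigenspaces. Recall from Table \ref{Z/4Z-action} that $2\bmod 4$ sends $t_0\mapsto t_2$, so the action of $t_0$ on $E(k_0,k_1,k_2,k_3)^{2\bmod 4}$ coincides with the action of $t_2$ on $E(k_0,k_1,k_2,k_3)$. First I would write down, from Proposition \ref{prop:E}(i), the matrix of $t_2$ with respect to the basis $\{v_i\}_{i=0}^d$: it is block-bidiagonal, sending $v_i$ (for $i$ even) into $\mathrm{span}\{v_i,v_{i+1}\}$ via $t_2v_i=k_0^{-1}k_1^{-1}k_3^{-1}q^{-i-1}(v_i-v_{i+1})$, and sending $v_i$ (for $i$ odd) into $\mathrm{span}\{v_{i-1},v_i\}$ via a formula with the coefficient $\tfrac{(k_0k_1k_3q^i-k_2)(k_0k_1k_3q^i-k_2^{-1})}{k_0k_1k_3q^i}$ on $v_{i-1}$ and $\bigl(k_2+k_2^{-1}-k_0^{-1}k_1^{-1}k_3^{-1}q^{-i}\bigr)$ on $v_i$. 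Thus $t_2$ decomposes as a direct sum of $\tfrac{d+1}{2}$ invariant $2$-dimensional blocks on the pairs $\mathrm{span}\{v_{i-1},v_i\}$ for $i=1,3,\ldots,d$.

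Next I would diagonalize each such $2\times 2$ block. On $\mathrm{span}\{v_{i-1},v_i\}$ the matrix of $t_2$ (in the ordered basis $v_{i-1},v_i$) has the form
$$
\begin{pmatrix}
k_0^{-1}k_1^{-1}k_3^{-1}q^{-i} & \dfrac{(k_0k_1k_3q^i-k_2)(k_0k_1k_3q^i-k_2^{-1})}{k_0k_1k_3q^i}
\\[6pt]
-k_0^{-1}k_1^{-1}k_3^{-1}q^{-i} & k_2+k_2^{-1}-k_0^{-1}k_1^{-1}k_3^{-1}q^{-i}
\end{pmatrix},
$$
whose trace is $k_2+k_2^{-1}$ and whose determinant, after simplification using $k_0^2=q^{-d-1}$, works out to $1$; hence its eigenvalues are exactly $k_2$ and $k_2^{-1}$. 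A short computation of the corresponding eigenvectors then gives the $k_2$-eigenvector as the stated $(1-k_0k_1k_2^{-1}k_3q^i)v_{i-1}-v_i$ and the $k_2^{-1}$-eigenvector as $(1-k_0k_1k_2k_3q^i)v_{i-1}-v_i$. Since $q$ is not a root of unity these two eigenvalues are distinct, so each block contributes one basis vector to $E(k_0,k_1,k_2,k_3)^{2\bmod 4}(k_2)$ and one to $E(k_0,k_1,k_2,k_3)^{2\bmod 4}(k_2^{-1})$; collecting over $i=1,3,\ldots,d$ yields the two asserted $\F$-bases, each of size $\tfrac{d+1}{2}$. This proves both (i) and (ii) simultaneously.

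The only real obstacle is the determinant bookkeeping in the $2\times 2$ block: one must verify that the off-diagonal product $-\,k_0^{-1}k_1^{-1}k_3^{-1}q^{-i}\cdot\tfrac{(k_0k_1k_3q^i-k_2)(k_0k_1k_3q^i-k_2^{-1})}{k_0k_1k_3q^i}$ combines with the diagonal product to give exactly $1$, which is where the normalization $k_0^2=q^{-d-1}$ of (\ref{k02=-d-1}) does not actually enter (the determinant is already $1$ identically) — so the computation is purely algebraic and routine. Everything else is a direct evaluation using Proposition \ref{prop:E}(i), exactly as in the proofs of Lemmas \ref{lem1:t0_E} and \ref{lem1:t0_E1}.
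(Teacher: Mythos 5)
Your approach is the same as the paper's: observe that $t_0$ on the twisted module acts as $t_2$ on $E(k_0,k_1,k_2,k_3)$, decompose into the $t_2$-invariant two-dimensional blocks $\mathrm{span}\{v_{i-1},v_i\}$ for $i=1,3,\ldots,d$, and compute eigenvectors blockwise; your $2\times 2$ matrix, its trace $k_2+k_2^{-1}$, determinant $1$, and the two eigenvectors are all correct.

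There is, however, one false step: the claim that the eigenvalues $k_2$ and $k_2^{-1}$ are distinct ``since $q$ is not a root of unity.'' Nothing ties $k_2$ to $q$ here, and the case $k_2^2=1$ genuinely occurs --- the paper treats it explicitly in Lemma \ref{lem2:t0_E2}(i) and Theorem \ref{thm:E2}(i). Your conclusion nonetheless survives in that degenerate case: when $k_2=k_2^{-1}$ the two stated bases coincide, and each block, having the nonzero off-diagonal entry $-k_0^{-1}k_1^{-1}k_3^{-1}q^{-i}$, is non-scalar, so its eigenspace for the repeated eigenvalue is still exactly one-dimensional and is spanned by the same vector $(1-k_0k_1k_2^{-1}k_3q^i)v_{i-1}-v_i$. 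You should replace the distinctness claim by this observation (or split into the two cases $k_2^2=1$ and $k_2^2\neq 1$); as written, the sentence justifying the dimension count is wrong even though the count itself is right.
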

\begin{proof}
Note that the action of $t_0$ on $E(k_0,k_1,k_2,k_3)^{2 \bmod{4}}$ is identical to the action of $t_2$ on $E(k_0,k_1,k_2,k_3)$. For $i=1,3,\ldots,d$ 
let $W_i$ denote the $\F$-subspace $W$ of $E(k_0,k_1,k_2,k_3)$ spanned by $v_{i-1}$ and $v_i$. 
By Proposition \ref{prop:E}(i), $W_i$ is $t_2$-invariant. Since $E(k_0,k_1,k_2,k_3)$ is a direct sum of $W_i$ for all $i=1,3,\ldots,d$ the lemma follows by evaluating the eigenspaces of $t_2$ in $W_i$.
\end{proof}

\begin{lem}\label{lem2:t0_E2}
\begin{enumerate}
\item If $k_2^2=1$ then $t_0$ is not diagonalizable on $E(k_0,k_1,k_2,k_3)^{2 \bmod{4}}$ with exactly one eigenvalue $k_2$.

\item If $k_2^2 \not=1$ then $t_0$ is diagonalizable on $E(k_0,k_1,k_2,k_3)^{2 \bmod{4}}$ with exactly two eigenvalues $k_2^{\pm 1}$. 
\end{enumerate}
\end{lem}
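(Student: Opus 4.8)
The plan is to derive a single quadratic relation satisfied by $t_0$ on $E(k_0,k_1,k_2,k_3)^{2\bmod 4}$ and then combine it with the eigenspace dimensions already computed in Lemma \ref{lem1:t0_E2}. By Table \ref{Z/4Z-action} the action of $t_0$ on $E(k_0,k_1,k_2,k_3)^{2\bmod 4}$ coincides with the action of $t_2$ on $E(k_0,k_1,k_2,k_3)$. By Proposition \ref{prop:E}(ii) the central element $c_2=t_2+t_2^{-1}$ acts on $E(k_0,k_1,k_2,k_3)$ as scalar multiplication by $k_2+k_2^{-1}$; multiplying this by $t_2$ shows that $t_2^2-(k_2+k_2^{-1})t_2+1$ acts as $0$ there, that is,
$$
(t_0-k_2)(t_0-k_2^{-1})=0
$$
on $E(k_0,k_1,k_2,k_3)^{2\bmod 4}$. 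In particular every eigenvalue of $t_0$ lies in $\{k_2,k_2^{-1}\}$.

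For part (ii), assume $k_2^2\neq 1$, so $k_2$ and $k_2^{-1}$ are distinct. Then $t_0$ is annihilated by a polynomial with distinct roots and is therefore diagonalizable on $E(k_0,k_1,k_2,k_3)^{2\bmod 4}$. By Lemma \ref{lem1:t0_E2} each of the eigenspaces $E(k_0,k_1,k_2,k_3)^{2\bmod 4}(k_2)$ and $E(k_0,k_1,k_2,k_3)^{2\bmod 4}(k_2^{-1})$ has dimension $\frac{d+1}{2}\geq 1$, so both $k_2$ and $k_2^{-1}$ actually occur as eigenvalues; their multiplicities add up to $d+1=\dim E(k_0,k_1,k_2,k_3)^{2\bmod 4}$, as they must.

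For part (i), assume $k_2^2=1$, so $k_2=k_2^{-1}$ and the displayed relation reads $(t_0-k_2)^2=0$; hence $k_2$ is the only eigenvalue of $t_0$, and it genuinely occurs since $E(k_0,k_1,k_2,k_3)^{2\bmod 4}(k_2)\neq\{0\}$ by Lemma \ref{lem1:t0_E2}(i). Finally, Lemma \ref{lem1:t0_E2}(i) gives $\dim E(k_0,k_1,k_2,k_3)^{2\bmod 4}(k_2)=\frac{d+1}{2}<d+1$ because $d\geq 1$, so $t_0$ does not act as the scalar $k_2$ and is therefore not diagonalizable. There is no real obstacle here: the only inputs beyond elementary linear algebra are the centrality of $c_2$ (Proposition \ref{prop:E}(ii)) and the dimension count of Lemma \ref{lem1:t0_E2}. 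If one preferred to avoid citing Lemma \ref{lem1:t0_E2}, the only slightly more laborious alternative is to restrict $t_2$ to each two-dimensional $t_2$-invariant block spanned by $v_{i-1},v_i$ and to check directly that this restriction has trace $k_2+k_2^{-1}$, determinant $1$, and a nonzero off-diagonal entry.
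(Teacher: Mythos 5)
Your proof is correct and follows essentially the same route as the paper: the paper likewise combines Proposition \ref{prop:E}(ii) (which confines the eigenvalues of $t_2$, hence of $t_0$ on the twisted module, to $\{k_2^{\pm 1}\}$) with the eigenspace dimensions of Lemma \ref{lem1:t0_E2}. You merely make explicit the quadratic relation $(t_0-k_2)(t_0-k_2^{-1})=0$ and the dimension count $\frac{d+1}{2}<d+1$, which the paper leaves implicit.
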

\begin{proof}
By Proposition \ref{prop:E}(ii) the element $t_2$ has at most two eigenvalues $k_2$ or $k_2^{-1}$ in the $\H_q$-module $E(k_0,k_1,k_2,k_3)$. Combined with  Lemma \ref{lem1:t0_E2} the lemma follows.
\end{proof}

\begin{lem}\label{lem:AB_E2(k2inv)}
Let 
$$
\mu_i=(-1)^{\frac{i-1}{2}} k_0^{-\frac{i-1}{2}} k_1^{-\frac{i-1}{2}} q^{-\frac{(i-1)(i+3)}{4}}
\qquad  
\hbox{for $i=1,3,\ldots,d$}.
$$
Then the matrices representing $A$ and $B$ with respect to the $\F$-basis 
\begin{gather}\label{basis_E2(k2inv)}
\mu_i((1-k_0 k_1 k_2 k_3 q^i ) v_{i-1}-v_i)
\quad 
\hbox{for $i=1,3,\ldots,d$}
\end{gather}
for the $\triangle_q$-module $E(k_0,k_1,k_2,k_3)^{2 \bmod{4}}(k_2^{-1})$ are 
$$
\begin{pmatrix}
\theta_0 & & &  &{\bf 0}
\\ 
1 &\theta_1 
\\
&1 &\theta_2 
 \\
& &\ddots &\ddots
 \\
{\bf 0} & & &1 &\theta_\frac{d-1}{2}
\end{pmatrix},
\qquad 
\begin{pmatrix}
\theta_0^* &\varphi_1 &  & &{\bf 0}
\\ 
 &\theta_1^* &\varphi_2
\\
 &  &\theta_2^* &\ddots
 \\
 & & &\ddots &\varphi_{\frac{d-1}{2}}
 \\
{\bf 0}  & & & &\theta_\frac{d-1}{2}^*
\end{pmatrix},
$$
respectively, where 
\begin{align*}
\theta_i &=
k_0 k_1 q^{2i+1}+k_0^{-1} k_1^{-1} q^{-2i-1}
\qquad 
\hbox{for $i=0,1,\ldots, \textstyle\frac{d-1}{2}$},
\\
\theta_i^* &= 
k_0 k_3 q^{2i+1}+k_0^{-1} k_3^{-1} q^{-2i-1}
\qquad 
\hbox{for $i=0,1,\ldots, \textstyle\frac{d-1}{2}$},
\\
\varphi_i &=
k_1^{-1} k_3^{-1} q^{\frac{d+1}{2}}
(q^i-q^{-i})
(q^{i-\frac{d+1}{2}}-q^{\frac{d+1}{2}-i})
\\
&\qquad \times \;
(q^{-i}-k_0 k_1 k_2 k_3 q^{i+1})
(q^{-i}-k_0 k_1 k_2^{-1} k_3 q^{i-1})
\qquad 
\hbox{for $i=1,2,\ldots, \textstyle\frac{d-1}{2}$}.
\end{align*}
The elements $\alpha,\beta,\gamma$ act on the $\triangle_q$-module $E(k_0,k_1,k_2,k_3)^{2 \bmod{4}}(k_2^{-1})$ as scalar multiplication by 
\begin{gather}
(k_1 +k_1^{-1})
(k_0+k_0^{-1})
+
(k_3+k_3^{-1})
(q k_2+q^{-1} k_2^{-1}),
\label{alpha:E2(k2inv)}
\\
(k_0+k_0^{-1})
(k_3+k_3^{-1})
+
(k_1 +k_1^{-1})
(q k_2+q^{-1} k_2^{-1}),
\label{beta:E2(k2inv)}
\\
(k_3+k_3^{-1})
(k_1 +k_1^{-1})
+
(k_0+k_0^{-1})
(q k_2+q^{-1} k_2^{-1}),
\label{gamma:E2(k2inv)}
\end{gather}
respectively.
\end{lem}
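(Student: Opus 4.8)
The plan follows the template already used for Lemmas~\ref{lem:AB_E(k0)} and \ref{lem:AB_E1(k1inv)}. First, by Lemma~\ref{lem1:t0_E2}(ii) the vectors $(1-k_0k_1k_2k_3q^i)v_{i-1}-v_i$ for $i=1,3,\ldots,d$ form an $\F$-basis for $E(k_0,k_1,k_2,k_3)^{2\bmod 4}(k_2^{-1})$, so after multiplying by the nonzero scalars $\mu_i$ the vectors (\ref{basis_E2(k2inv)}) form an $\F$-basis as well. Write $w_i=\mu_i\bigl((1-k_0k_1k_2k_3q^i)v_{i-1}-v_i\bigr)$ for $i=1,3,\ldots,d$.

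Next I would evaluate $A w_i$ and $B w_i$ using Lemma~\ref{lem:AB_E2}. In that lemma $A$ raises the index by $1$ or $2$ and $B$ lowers it by $2$ (and both fix $v_0,v_1$ up to scalar), so $A w_i\in\mathrm{span}\{v_{i-1},v_i,v_{i+1},v_{i+2}\}$ and $B w_i\in\mathrm{span}\{v_{i-3},v_{i-2},v_{i-1},v_i\}$. The decisive bookkeeping point is that, in the notation of Lemma~\ref{lem:AB_E2}, $\theta_{i-1}=\theta_i$ and $\theta_{i-1}^*=\theta_i^*$ whenever $i$ is odd (the relevant floors agree), so the diagonal parts of $A$ and $B$ send $w_i$ to $\theta_i w_i$ and $\theta_i^* w_i$ respectively; the remaining terms must therefore reassemble into a multiple of $w_{i+2}$ (for $A$) and of $w_{i-2}$ (for $B$). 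Carrying this out, the $v_{i+1}$ and $v_{i+2}$ coefficients of $A w_i$ turn out to be in the ratio $-(1-k_0k_1k_2k_3q^{i+2})$ forced by $w_{i+2}$, and the scalars $\mu_i$ are chosen precisely so that the resulting subdiagonal entry equals $1$; similarly the $v_{i-3},v_{i-2}$ coefficients of $B w_i$ are in the ratio $-(1-k_0k_1k_2k_3q^{i-2})$, and with the chosen normalization the superdiagonal entry from $w_{i-2}$ to $w_i$ becomes the stated $\varphi_{(i-1)/2}$. Reindexing by $j=(i-1)/2$ then produces exactly the displayed $\theta_j,\theta_j^*,\varphi_j$; here one uses (\ref{k02=-d-1}) (equivalently $\varrho_i=(1-k_0k_1k_2k_3q^i)(1-k_0k_1k_2^{-1}k_3q^i)$ for $i$ odd and $\varrho_i=(1-q^i)(1-q^{i-d-1})$ for $i$ even) to see the linear factors of $\varphi_j$ materialize.

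For the central elements, recall from Table~\ref{Z/4Z-action} that $2\bmod 4$ sends $(t_0,t_1,t_2,t_3)$ to $(t_2,t_3,t_0,t_1)$, so the actions of $c_0,c_1,c_2,c_3$ on $E(k_0,k_1,k_2,k_3)^{2\bmod 4}$ coincide with the actions of $c_2,c_3,c_0,c_1$ on $E(k_0,k_1,k_2,k_3)$, namely multiplication by $k_2+k_2^{-1}$, $k_3+k_3^{-1}$, $k_0+k_0^{-1}$, $k_1+k_1^{-1}$. Moreover $t_0$ acts on the submodule $E(k_0,k_1,k_2,k_3)^{2\bmod 4}(k_2^{-1})$ as the scalar $k_2^{-1}$, hence $q t_0^{-1}+q^{-1}t_0$ acts as $q k_2+q^{-1}k_2^{-1}$. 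Substituting these scalars into the images of $\alpha,\beta,\gamma$ under $\zeta$ from Theorem~\ref{thm:hom} yields (\ref{alpha:E2(k2inv)})--(\ref{gamma:E2(k2inv)}).

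The main obstacle is the computation of $B w_i$ — equivalently the action of $q X+q^{-1}X^{-1}$ on $E(k_0,k_1,k_2,k_3)$ — where the even index $i-1$ and the odd index $i$ are governed by different formulas in Lemma~\ref{lem:AB_E2}; one must collect the $v_{i-3}$ and $v_{i-2}$ coefficients and verify their proportionality, with the several $\varrho$-factors and $q$-powers telescoping into the claimed $\varphi_j$. This is routine but the most error-prone step; the $A$-computation is analogous but lighter.
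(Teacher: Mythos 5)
Your proposal is correct and follows essentially the same route as the paper: the basis comes from Lemma \ref{lem1:t0_E2}(ii), the matrices of $A$ and $B$ come from a direct calculation with Lemma \ref{lem:AB_E2} (which is what the paper intends, despite its proof citing Lemma \ref{lem:AB_E1}), and the scalars for $\alpha,\beta,\gamma$ come from the cyclic shift of the $c_i$ together with Theorem \ref{thm:hom} and the fact that $t_0$ acts as $k_2^{-1}$ on this eigenspace. The only quibble is the parenthetical claim that $A$ fixes $v_0,v_1$ up to scalar in Lemma \ref{lem:AB_E2} (it is $v_{d-1},v_d$ for $A$ and $v_0,v_1$ for $B$), which does not affect the argument; your key ratio checks, e.g.\ that the $v_{i+1},v_{i+2}$ coefficients of $Aw_i$ stand in the ratio $-(1-k_0k_1k_2k_3q^{i+2})$, are accurate.
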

\begin{proof}
By Lemma \ref{lem1:t0_E2}(ii) the vectors (\ref{basis_E2(k2inv)}) are a basis for $E(k_0,k_1,k_2,k_3)^{2 \bmod{4}}(k_2^{-1})$. Applying Lemma \ref{lem:AB_E1}  a direct calculation yields the matrices representing $A$ and $B$ with respect to (\ref{basis_E2(k2inv)}). The actions of $c_0,c_1,c_2,c_3$ on $E(k_0,k_1,k_2,k_3)^{2 \bmod{4}}$ are identical to the actions of $c_2,c_3,c_0,c_1$ on $E(k_0,k_1,k_2,k_3)$. Combined with Theorem \ref{thm:hom} and Proposition \ref{prop:E}(ii) we get that $\alpha,\beta,\gamma$ act on $E(k_0,k_1,k_2,k_3)^{2 \bmod{4}}(k_2^{-1})$ as scalar multiplication by (\ref{alpha:E2(k2inv)})--(\ref{gamma:E2(k2inv)}), respectively.
\end{proof}

\begin{prop}\label{prop:E2(k2inv)}
The $\triangle_q$-module $E(k_0,k_1,k_2,k_3)^{2 \bmod{4}}(k_2^{-1})$ is isomorphic to 
$$
V_{\frac{d-1}{2}}\left(
k_0 k_1 q^{\frac{d+1}{2}},
k_0 k_3 q^{\frac{d+1}{2}},
k_0 k_2 q^{\frac{d+3}{2}}
\right).
$$
Moreover the $\triangle_q$-module $E(k_0,k_1,k_2,k_3)^{2 \bmod{4}}(k_2^{-1})$ is irreducible provided that the $\H_q$-module 
$E(k_0,k_1,k_2,k_3)$ is irreducible.
\end{prop}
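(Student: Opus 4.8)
The plan is to follow the template of the proof of Proposition~\ref{prop:E(k0)}: first identify the $\triangle_q$-module $E(k_0,k_1,k_2,k_3)^{2 \bmod{4}}(k_2^{-1})$ with one of the standard modules $V_{d'}(a,b,c)$ by comparing the data of Lemma~\ref{lem:AB_E2(k2inv)} with the normal form of Proposition~\ref{prop:UAWd}, and then read off irreducibility from Theorems~\ref{thm:irr_E} and~\ref{thm:irr_UAW}.

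Concretely, I would set $d'=\frac{d-1}{2}$ and $(a,b,c)=(k_0k_1q^{\frac{d+1}{2}},\,k_0k_3q^{\frac{d+1}{2}},\,k_0k_2q^{\frac{d+3}{2}})$, and write $\varepsilon:=k_0q^{\frac{d+1}{2}}$, so that the hypothesis $k_0^2=q^{-d-1}$ is precisely $\varepsilon^2=1$. One then checks the elementary identities $a+a^{-1}=\varepsilon(k_1+k_1^{-1})$, $b+b^{-1}=\varepsilon(k_3+k_3^{-1})$, $c+c^{-1}=\varepsilon(qk_2+q^{-1}k_2^{-1})$ and $q^{d'+1}+q^{-d'-1}=\varepsilon(k_0+k_0^{-1})$. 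A routine manipulation of the exponents of $q$ then shows that the entries $\theta_i,\theta_i^*,\varphi_i$ appearing in Lemma~\ref{lem:AB_E2(k2inv)} coincide with $aq^{2i-d'}+a^{-1}q^{d'-2i}$, with $bq^{2i-d'}+b^{-1}q^{d'-2i}$, and with the $\varphi_i$ of Proposition~\ref{prop:UAWd}(i) for the triple $(a,b,c)$ and the integer $d'$; and cancelling the factors of $\varepsilon$ shows that the scalars in~(\ref{alpha:E2(k2inv)})--(\ref{gamma:E2(k2inv)}) are exactly the scalars of Proposition~\ref{prop:UAWd}(ii). Since the module $V_{d'}(a,b,c)$ is determined up to isomorphism by these data (Lemma~\ref{lem:gen_UAW}), this yields the first assertion.

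For the irreducibility claim, suppose the $\H_q$-module $E(k_0,k_1,k_2,k_3)$ is irreducible; by Theorem~\ref{thm:irr_E} none of $k_0k_1k_2k_3$, $k_0k_1^{-1}k_2k_3$, $k_0k_1k_2k_3^{-1}$, $k_0k_1k_2^{-1}k_3$ lies in $\{q^{-i}\mid i=1,3,\dots,d\}$. Using $k_0^2=q^{-d-1}$ one computes $abc=k_0k_1k_2k_3\,q^{\frac{d+3}{2}}$, $a^{-1}bc=k_0k_1^{-1}k_2k_3\,q^{\frac{d+3}{2}}$, $ab^{-1}c=k_0k_1k_2k_3^{-1}\,q^{\frac{d+3}{2}}$ and $abc^{-1}=k_0k_1k_2^{-1}k_3\,q^{\frac{d-1}{2}}$; translating the set $\{q^{2i-d'-1}\mid i=1,\dots,d'\}$ through these identities turns the condition of Theorem~\ref{thm:irr_UAW} into the requirement that $k_0k_1k_2k_3$, $k_0k_1^{-1}k_2k_3$, $k_0k_1k_2k_3^{-1}$ avoid $\{q^{-i}\mid i=3,5,\dots,d\}$ and that $k_0k_1k_2^{-1}k_3$ avoid $\{q^{-i}\mid i=1,3,\dots,d-2\}$. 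Each of these sets is contained in $\{q^{-i}\mid i=1,3,\dots,d\}$, so the hypothesis already guarantees them; hence $V_{d'}(a,b,c)$ is irreducible by Theorem~\ref{thm:irr_UAW}, and with it $E(k_0,k_1,k_2,k_3)^{2 \bmod{4}}(k_2^{-1})$.

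Both halves are computationally routine; the only step that calls for a moment's care is the last one, namely confirming that after the substitution each exclusion set demanded by Theorem~\ref{thm:irr_UAW} is a subset of the one supplied by Theorem~\ref{thm:irr_E} (for the three products $abc,a^{-1}bc,ab^{-1}c$ one merely drops the value $i=1$, and for $abc^{-1}$ one drops $i=d$), so that no irreducibility constraint is lost in the passage. I do not anticipate any genuine obstacle, the argument being structurally the same as that of Proposition~\ref{prop:E(k0)}.
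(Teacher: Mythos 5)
Your proposal is correct and follows essentially the same route as the paper: identify $(a,b,c)=(k_0k_1q^{\frac{d+1}{2}},k_0k_3q^{\frac{d+1}{2}},k_0k_2q^{\frac{d+3}{2}})$ and $d'=\frac{d-1}{2}$, match the data of Lemma \ref{lem:AB_E2(k2inv)} against Proposition \ref{prop:UAWd}, and then transfer the irreducibility condition of Theorem \ref{thm:irr_E} into that of Theorem \ref{thm:irr_UAW}. The only cosmetic difference is your bookkeeping scalar $\varepsilon=k_0q^{\frac{d+1}{2}}$ and your phrasing of the final set containment in the opposite direction (needed sets inside the supplied set, rather than supplied conditions implying the needed ones); the substance is identical.
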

\begin{proof}
Set $(a,b,c)=(k_0 k_1 q^{\frac{d+1}{2}},
k_0 k_3 q^{\frac{d+1}{2}},
k_0 k_2 q^{\frac{d+3}{2}})$
and $d'=\frac{d-1}{2}$. Under the assumption (\ref{k02=-d-1}) the scalar (\ref{beta:E2(k2inv)}) is equal to 
$$
(c+c^{-1})(a+a^{-1})+(b+b^{-1})(q^{d'+1}+q^{-d'-1}).
$$
By Proposition \ref{prop:UAWd} and Lemma \ref{lem:AB_E2(k2inv)} the  $\triangle_q$-module $E(k_0,k_1,k_2,k_3)^{2 \bmod{4}}(k_2^{-1})$ is isomorphic to $V_{d'}(a,b,c)$.

Suppose that the $\H_q$-module $E(k_0,k_1,k_2,k_3)$ is irreducible. Using Theorem \ref{thm:irr_E} yields that 
\begin{align*}
&abc^{-1}\not\in\{q^{2i-d'-1}\,|\,i=0,1,\ldots,d'\};
\\
&abc,a^{-1}bc,ab^{-1}c\not\in\{q^{2i-d'-1}\,|\,i=1,2,\ldots,d'+1\}.
\end{align*}
Hence the $\triangle_q$-module $V_{d'}(a,b,c)$ is irreducible by Theorem \ref{thm:irr_UAW}. The proposition follows.
\end{proof}

\begin{lem}\label{lem:AB_E2/E2(k2inv)}
Assume that the $\H_q$-module $E(k_0,k_1,k_2,k_3)$ is irreducible. 
Let 
\begin{gather}\label{mui}
\mu_i= \frac{(-1)^{\frac{i-1}{2}} k_0^{-\frac{i-1}{2}} k_1^{-\frac{i-1}{2}} q^{-\frac{(i-1)^2}{4}}}{1-k_0 k_1 k_2 k_3 q^i} 
\qquad 
\hbox{for $i=1,3,\ldots,d$}.
\end{gather}
Then the matrices representing $A$ and $B$ with respect to the $\F$-basis 
\begin{gather}\label{basis_E2/E2(k2inv)}
\mu_i v_i +E(k_0,k_1,k_2,k_3)^{2 \bmod{4}}(k_2^{-1})
\qquad \hbox{for $i=1,3,\ldots,d$}
\end{gather}
for the $\triangle_q$-module $E(k_0,k_1,k_2,k_3)^{2 \bmod{4}}/E(k_0,k_1,k_2,k_3)^{2 \bmod{4}}(k_2^{-1})$ are 
$$
\begin{pmatrix}
\theta_0 & & &  &{\bf 0}
\\ 
1 &\theta_1 
\\
&1 &\theta_2 
 \\
& &\ddots &\ddots
 \\
{\bf 0} & & &1 &\theta_\frac{d-1}{2}
\end{pmatrix},
\qquad 
\begin{pmatrix}
\theta_0^* &\varphi_1 &  & &{\bf 0}
\\ 
 &\theta_1^* &\varphi_2
\\
 &  &\theta_2^* &\ddots
 \\
 & & &\ddots &\varphi_{\frac{d-1}{2}}
 \\
{\bf 0}  & & & &\theta_\frac{d-1}{2}^*
\end{pmatrix},
$$
respectively, where 
\begin{align*}
\theta_i &=
k_0 k_1 q^{2i+1}+k_0^{-1} k_1^{-1} q^{-2i-1}
\qquad 
\hbox{for $i=0,1,\ldots, \textstyle\frac{d-1}{2}$},
\\
\theta_i^* &= 
k_0 k_3 q^{2i+1}+k_0^{-1} k_3^{-1} q^{-2i-1}
\qquad 
\hbox{for $i=0,1,\ldots, \textstyle\frac{d-1}{2}$},
\\
\varphi_i &=
k_1^{-1} k_3^{-1} q^{\frac{d+1}{2}}
(q^i-q^{-i})
(q^{i-\frac{d+1}{2}}-q^{\frac{d+1}{2}-i})
\\
&\qquad \times \;
(q^{-i}-k_0 k_1 k_2 k_3 q^{i-1})
(q^{-i}-k_0 k_1 k_2^{-1} k_3 q^{i+1})
\qquad 
\hbox{for $i=1,2,\ldots, \textstyle\frac{d-1}{2}$}.
\end{align*}
The elements $\alpha,\beta,\gamma$ act on the $\triangle_q$-module $E(k_0,k_1,k_2,k_3)^{2 \bmod{4}}/E(k_0,k_1,k_2,k_3)^{2 \bmod{4}}(k_2^{-1})$ as scalar multiplication by 
\begin{gather}
(k_1 +k_1^{-1})
(k_0+k_0^{-1})
+
(k_3+k_3^{-1})
(q k_2^{-1}+q^{-1} k_2),
\label{alpha:E2/E2(k2inv)}
\\
(k_0+k_0^{-1})
(k_3+k_3^{-1})
+
(k_1 +k_1^{-1})
(q k_2^{-1}+q^{-1} k_2),
\label{beta:E2/E2(k2inv)}
\\
(k_3+k_3^{-1})
(k_1 +k_1^{-1})
+
(k_0+k_0^{-1})
(q k_2^{-1}+q^{-1} k_2),
\label{gamma:E2/E2(k2inv)}
\end{gather}
respectively.
\end{lem}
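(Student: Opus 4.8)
The plan is to follow the template of Lemma \ref{lem:AB_E2(k2inv)}: the subspace $E(k_0,k_1,k_2,k_3)^{2 \bmod{4}}(k_2^{-1})$ is a $\triangle_q$-submodule by Proposition \ref{prop:t0eigenspace=AWmodule}, so it suffices to read off the actions of $A$ and $B$ on the quotient $\triangle_q$-module in the basis of cosets (\ref{basis_E2/E2(k2inv)}), and then evaluate $\alpha,\beta,\gamma$ via Theorem \ref{thm:hom}.

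First I would verify that (\ref{basis_E2/E2(k2inv)}) is indeed a basis. By Theorem \ref{thm:irr_E} the irreducibility of $E(k_0,k_1,k_2,k_3)$ forces $k_0 k_1 k_2 k_3 q^i\neq 1$ for $i=1,3,\ldots,d$, so the scalars (\ref{mui}) are well defined and nonzero. By Lemma \ref{lem1:t0_E2}(ii) the submodule $E(k_0,k_1,k_2,k_3)^{2 \bmod{4}}(k_2^{-1})$ is spanned by the vectors $(1-k_0 k_1 k_2 k_3 q^i) v_{i-1}-v_i$ with $i$ odd; adjoining $v_1,v_3,\ldots,v_d$ to these yields a spanning set of $E(k_0,k_1,k_2,k_3)^{2 \bmod{4}}$, and since there are exactly $\frac{d+1}{2}$ such $v_i$ while the quotient has dimension $\frac{d+1}{2}$, the cosets (\ref{basis_E2/E2(k2inv)}) form a basis. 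In particular $v_{i-1}\equiv (1-k_0 k_1 k_2 k_3 q^i)^{-1} v_i$ modulo $E(k_0,k_1,k_2,k_3)^{2 \bmod{4}}(k_2^{-1})$ for each odd $i$.

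Next I would compute the matrices. Lemma \ref{lem:AB_E2} gives the actions of $A$ and $B$ on $E(k_0,k_1,k_2,k_3)^{2 \bmod{4}}$ explicitly; substituting the congruence above into the formulas for $A v_i$ and $B v_i$ with $i$ odd eliminates all even-indexed vectors, leaving $A v_i$ supported on $v_i,v_{i+2}$ and $B v_i$ supported on $v_i,v_{i-2}$. The diagonal coefficients thus obtained are exactly the asserted $\theta_i$ and $\theta_i^*$, irrespective of the choice of the $\mu_i$. The role of the precise $\mu_i$ in (\ref{mui}) is to rescale so that the subdiagonal coefficient of $A$ becomes $1$ and the superdiagonal coefficient of $B$ becomes $\varphi_i$; checking this reduces to verifying the two-step recursion for $\mu_i/\mu_{i-2}$ that these demands impose, and then simplifying the resulting expression using $k_0^2=q^{-d-1}$ together with the factorization $(k_0 k_1 k_3 q^i-k_2)(k_0 k_1 k_3 q^i-k_2^{-1})=(1-k_0 k_1 k_2 k_3 q^i)(1-k_0 k_1 k_2^{-1} k_3 q^i)$. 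This last step is a lengthy but routine computation and is the main obstacle.

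Finally I would treat the central elements. The action of $t_0$ on $E(k_0,k_1,k_2,k_3)^{2 \bmod{4}}$ coincides with that of $t_2$ on $E(k_0,k_1,k_2,k_3)$, so from Proposition \ref{prop:E}(i) one checks that $(t_0-k_2)v_i\in E(k_0,k_1,k_2,k_3)^{2 \bmod{4}}(k_2^{-1})$ for every odd $i$; hence $t_0$ acts on the quotient as the scalar $k_2$, and $q t_0^{-1}+q^{-1} t_0$ as $q k_2^{-1}+q^{-1} k_2$. Since $c_0,c_1,c_2,c_3$ act on $E(k_0,k_1,k_2,k_3)^{2 \bmod{4}}$ as $c_2,c_3,c_0,c_1$ act on $E(k_0,k_1,k_2,k_3)$, that is, as $k_2+k_2^{-1},\,k_3+k_3^{-1},\,k_0+k_0^{-1},\,k_1+k_1^{-1}$, substituting into the formulas of Theorem \ref{thm:hom} for $\alpha,\beta,\gamma$ yields exactly (\ref{alpha:E2/E2(k2inv)})--(\ref{gamma:E2/E2(k2inv)}).
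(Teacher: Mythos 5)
Your proposal is correct and follows essentially the same route as the paper's proof: nonvanishing of the denominators in (\ref{mui}) via Theorem \ref{thm:irr_E}, the coset basis via Lemma \ref{lem1:t0_E2}(ii), the matrix entries read off from Lemma \ref{lem:AB_E2} modulo $E(k_0,k_1,k_2,k_3)^{2\bmod 4}(k_2^{-1})$, and the scalars for $\alpha,\beta,\gamma$ from $(t_0-k_2)v_i$ lying in the submodule together with Theorem \ref{thm:hom} and Proposition \ref{prop:E}(ii). The extra details you supply (the congruence $v_{i-1}\equiv(1-k_0k_1k_2k_3q^i)^{-1}v_i$, the recursion for $\mu_i$, and the factorization $\varrho_i=(1-k_0k_1k_2k_3q^i)(1-k_0k_1k_2^{-1}k_3q^i)$) are exactly the "routine" verification the paper leaves implicit, and they check out.
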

\begin{proof}
By Theorem \ref{thm:irr_E} the denominator of (\ref{mui}) is nonzero. 
By Lemma \ref{lem1:t0_E2}(ii) the cosets (\ref{basis_E2/E2(k2inv)}) are a basis for the quotient of $E(k_0,k_1,k_2,k_3)^{2 \bmod{4}}$ modulo $E(k_0,k_1,k_2,k_3)^{2 \bmod{4}}(k_2^{-1})$. Applying Lemmas \ref{lem:AB_E2}  and \ref{lem1:t0_E2}(ii) it is routine to verify the matrices representing $A$ and $B$ with respect to (\ref{basis_E2/E2(k2inv)}). Using Proposition \ref{prop:E}(i) and Lemma \ref{lem1:t0_E2}(ii) it is routine to check that 
$$
(t_0-k_2)v_i\in E(k_0,k_1,k_2,k_3)^{2 \bmod{4}}(k_2^{-1})
\qquad 
\hbox{for $i=1,3,\ldots,d$}.
$$
Combined with Theorem \ref{thm:hom} and Proposition \ref{prop:E}(ii) the elements $\alpha,\beta,\gamma$ act on the quotient $\triangle_q$-module of  $E(k_0,k_1,k_2,k_3)^{2 \bmod{4}}$ modulo $E(k_0,k_1,k_2,k_3)^{2 \bmod{4}}(k_2^{-1})$ as scalar multiplication by (\ref{alpha:E2/E2(k2inv)})--(\ref{gamma:E2/E2(k2inv)}), respectively.
\end{proof}

\begin{prop}\label{prop:E2/E2(k2inv)}
Assume that the $\H_q$-module $E(k_0,k_1,k_2,k_3)$ is irreducible. 
The $\triangle_q$-module $E(k_0,k_1,k_2,k_3)^{2 \bmod{4}}/E(k_0,k_1,k_2,k_3)^{2 \bmod{4}}(k_2^{-1})$ is isomorphic to $$
V_{\frac{d-1}{2}}\left(
k_0 k_1 q^{\frac{d+1}{2}},
k_0 k_3 q^{\frac{d+1}{2}},
k_0 k_2 q^{\frac{d-1}{2}}
\right).
$$
Moreover the $\triangle_q$-module $E(k_0,k_1,k_2,k_3)^{2 \bmod{4}}/E(k_0,k_1,k_2,k_3)^{2 \bmod{4}}(k_2^{-1})$ is irreducible.
\end{prop}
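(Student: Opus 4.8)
The plan is to follow the template already used for the companion Propositions \ref{prop:E(k0)}, \ref{prop:E/E(k0)}, \ref{prop:E1(k1inv)}, \ref{prop:E1/E1(k1inv)}, \ref{prop:E2(k2inv)}: first identify the quotient $\triangle_q$-module with a standard module $V_{d'}(a,b,c)$ from Proposition \ref{prop:UAWd}, and then read off its irreducibility from Theorem \ref{thm:irr_UAW}, using the hypothesis that $E(k_0,k_1,k_2,k_3)$ is an irreducible $\H_q$-module together with Theorem \ref{thm:irr_E}.

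Concretely, I would set $(a,b,c)=(k_0 k_1 q^{(d+1)/2},\,k_0 k_3 q^{(d+1)/2},\,k_0 k_2 q^{(d-1)/2})$ and $d'=(d-1)/2$, and match Lemma \ref{lem:AB_E2/E2(k2inv)} against Proposition \ref{prop:UAWd}. The diagonal entries $\theta_i=k_0k_1q^{2i+1}+k_0^{-1}k_1^{-1}q^{-2i-1}$ and $\theta_i^*$ of Lemma \ref{lem:AB_E2/E2(k2inv)} coincide respectively with $aq^{2i-d'}+a^{-1}q^{d'-2i}$ and $bq^{2i-d'}+b^{-1}q^{d'-2i}$; for the superdiagonal entries one uses the normalization $k_0^2=q^{-d-1}$ from (\ref{k02=-d-1}) in the form $k_0^{-2}=q^{d+1}$ to obtain $a^{-1}b^{-1}q^{d'+1}=k_1^{-1}k_3^{-1}q^{(d+1)/2}$, and then the identities $abc\,q^{i-d'-1}=k_0k_1k_2k_3\,q^{i-1}$ and $abc^{-1}q^{i-d'-1}=k_0k_1k_2^{-1}k_3\,q^{i+1}$ show that the factors of $\varphi_i$ in Lemma \ref{lem:AB_E2/E2(k2inv)} are exactly those of Proposition \ref{prop:UAWd}. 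The same substitution applied to (\ref{beta:E2/E2(k2inv)}) gives $(c+c^{-1})(a+a^{-1})+(b+b^{-1})(q^{d'+1}+q^{-d'-1})$, which is the value in Proposition \ref{prop:UAWd}(ii) for $\beta$. Since $V_{d'}(a,b,c)$ is determined up to isomorphism by Lemma \ref{lem:gen_UAW}, this yields the asserted isomorphism.

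For the second assertion, I would invoke Theorem \ref{thm:irr_E}, which from the irreducibility of $E(k_0,k_1,k_2,k_3)$ gives that none of $k_0k_1k_2k_3,\,k_0k_1^{-1}k_2k_3,\,k_0k_1k_2^{-1}k_3,\,k_0k_1k_2k_3^{-1}$ lies in $\{q^{-i}\mid i=1,3,\dots,d\}$. Computing $abc=k_0k_1k_2k_3\,q^{(d-1)/2}$, $a^{-1}bc=k_0k_1^{-1}k_2k_3\,q^{(d-1)/2}$, $ab^{-1}c=k_0k_1k_2k_3^{-1}\,q^{(d-1)/2}$ and $abc^{-1}=k_0k_1k_2^{-1}k_3\,q^{(d+3)/2}$, these non-membership statements translate into $abc,a^{-1}bc,ab^{-1}c\notin\{q^{2i-d'-1}\mid i=0,1,\dots,d'\}$ and $abc^{-1}\notin\{q^{2i-d'-1}\mid i=1,2,\dots,d'+1\}$; since both index sets contain $\{1,2,\dots,d'\}$, Theorem \ref{thm:irr_UAW} shows that $V_{d'}(a,b,c)$, hence the quotient module, is irreducible.

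The parts requiring care are purely computational: remembering to use $k_0^2=q^{-d-1}$ whenever powers of $k_0$ occur (this is what makes the two expressions for the action of $\beta$ agree and the two forms of $\varphi_i$ match), and --- the only genuinely fiddly point --- tracking the shift in the exponent of $q$ between the sets $\{q^{-i}\}$ in Theorem \ref{thm:irr_E} and the sets $\{q^{2i-d'-1}\}$ in Theorem \ref{thm:irr_UAW}, so that the inclusion of index sets comes out correctly for each of the four products $abc,a^{-1}bc,ab^{-1}c,abc^{-1}$. Neither task presents a conceptual difficulty.
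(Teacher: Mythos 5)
Your proposal is correct and follows essentially the same route as the paper: identify the quotient with $V_{d'}(a,b,c)$ for $(a,b,c)=(k_0k_1q^{\frac{d+1}{2}},k_0k_3q^{\frac{d+1}{2}},k_0k_2q^{\frac{d-1}{2}})$, $d'=\frac{d-1}{2}$, by matching Lemma \ref{lem:AB_E2/E2(k2inv)} and the $\beta$-scalar (\ref{beta:E2/E2(k2inv)}) against Proposition \ref{prop:UAWd}, then deduce irreducibility from Theorem \ref{thm:irr_E} via Theorem \ref{thm:irr_UAW}. Your translated non-membership conditions ($abc,a^{-1}bc,ab^{-1}c\notin\{q^{2i-d'-1}\mid i=0,\ldots,d'\}$ and $abc^{-1}\notin\{q^{2i-d'-1}\mid i=1,\ldots,d'+1\}$) are exactly those stated in the paper's proof.
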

\begin{proof}
Set $(a,b,c)=(
k_0 k_1 q^{\frac{d+1}{2}},
k_0 k_3 q^{\frac{d+1}{2}},
k_0 k_2 q^{\frac{d-1}{2}})$
and $d'=\frac{d-1}{2}$. Under the assumption (\ref{k02=-d-1}) the scalar (\ref{beta:E2/E2(k2inv)}) is equal to 
$$
(c+c^{-1})(a+a^{-1})+(b+b^{-1})(q^{d'+1}+q^{-d'-1}).
$$
By Proposition \ref{prop:UAWd} and Lemma \ref{lem:AB_E2/E2(k2inv)} the $\triangle_q$-module $E(k_0,k_1,k_2,k_3)^{2 \bmod{4}}$ is isomorphic to $V_{d'}(a,b,c)$.

Since the $\H_q$-module $E(k_0,k_1,k_2,k_3)$ is irreducible, it follows from Theorem \ref{thm:irr_E} that 
\begin{align*}
&abc^{-1}\not\in\{q^{2i-d'-1}\,|\,i=1,2,\ldots,d'+1\};
\\
&abc,a^{-1}bc,ab^{-1}c\not\in\{q^{2i-d'-1}\,|\,i=0,1,\ldots,d'\}.
\end{align*}
Hence the $\triangle_q$-module $V_{d'}(a,b,c)$ is irreducible by Theorem \ref{thm:irr_UAW}. The proposition follows.
\end{proof}

\begin{thm}\label{thm:E2}
Assume that the $\H_q$-module $E(k_0,k_1,k_2,k_3)$ is irreducible. Then the following hold:
\begin{enumerate}
\item If $k_2^2 =1$ then 
\begin{table}[H]
\begin{tikzpicture}[node distance=1.2cm]
 \node (0)                  {$\{0\}$};
 \node (E)  [above of=0]   {$E(k_0,k_1,k_2,k_3)^{2 \bmod{4}}(k_2)$};
 \node (V)  [above of=E]   {$E(k_0,k_1,k_2,k_3)^{2 \bmod{4}}$};
 \draw (0)   -- (E);
 \draw (E)  -- (V);
\end{tikzpicture}
\end{table}
\noindent is the lattice of $\triangle_q$-submodules of $E(k_0,k_1,k_2,k_3)^{2 \bmod{4}}$.

\item If $k_2^2 \not=1$ then 
 \begin{table}[H]
\begin{tikzpicture}[node distance=1.5cm]
 \node (0)                  {$\{0\}$};
 \node (1)  [above of=0]   {};
 \node (2)  [right of=1]   {};
 \node (3)  [left of=1]   {};
 \node (E1)  [right of=2]  {$E(k_0,k_1,k_2,k_3)^{2 \bmod{4}}(k_2)$};
 \node (E2)  [left of=3]   {$E(k_0,k_1,k_2,k_3)^{2 \bmod{4}}(k_2^{-1})$};
 \node (V) [above of=1]  {$E(k_0,k_1,k_2,k_3)^{2 \bmod{4}}$};
 \draw (0)   -- (E1);
 \draw (0)   -- (E2);
 \draw (E1)   -- (V);
 \draw (E2)  -- (V);
\end{tikzpicture}
\end{table}
\noindent is the lattice of $\triangle_q$-submodules of $E(k_0,k_1,k_2,k_3)^{2 \bmod{4}}$.
\end{enumerate}
\end{thm}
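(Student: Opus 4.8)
The plan is to follow verbatim the template already used for Theorems \ref{thm:E0} and \ref{thm:E1}, since by now all of the genuine computation has been done: Lemmas \ref{lem:AB_E2}, \ref{lem:AB_E2(k2inv)}, \ref{lem:AB_E2/E2(k2inv)} compute the $A,B$-actions, and Propositions \ref{prop:E2(k2inv)} and \ref{prop:E2/E2(k2inv)} identify both the submodule $E(k_0,k_1,k_2,k_3)^{2\bmod 4}(k_2^{-1})$ and the quotient of $E(k_0,k_1,k_2,k_3)^{2\bmod 4}$ by it with explicit irreducible $\triangle_q$-modules of the form $V_{(d-1)/2}(a,b,c)$. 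So in each case the task is only to assemble these facts into a statement about the submodule lattice.

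For part (i), where $k_2^2=1$, I would first note that then $k_2=k_2^{-1}$, so $E(k_0,k_1,k_2,k_3)^{2\bmod 4}(k_2)=E(k_0,k_1,k_2,k_3)^{2\bmod 4}(k_2^{-1})$. By Propositions \ref{prop:E2(k2inv)} and \ref{prop:E2/E2(k2inv)} this is an irreducible $\triangle_q$-submodule whose quotient is also irreducible, so $\{0\}\subset E(k_0,k_1,k_2,k_3)^{2\bmod 4}(k_2)\subset E(k_0,k_1,k_2,k_3)^{2\bmod 4}$ is a composition series. Next, Lemma \ref{lem2:t0_E2}(i) says $t_0$ has $k_2$ as its only eigenvalue on $E(k_0,k_1,k_2,k_3)^{2\bmod 4}$, so Proposition \ref{prop:irr_AWmodule_in_t0eigenspace} forces every irreducible $\triangle_q$-submodule to lie in $E(k_0,k_1,k_2,k_3)^{2\bmod 4}(k_2)$. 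Since that submodule is itself irreducible, any nonzero $\triangle_q$-submodule contains it, and a proper one must equal it because the quotient is irreducible. This yields the asserted three-element chain.

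For part (ii), where $k_2^2\neq1$, the argument is the same as for Theorem \ref{thm:E0}(ii). By Propositions \ref{prop:E2(k2inv)} and \ref{prop:E2/E2(k2inv)}, $\{0\}\subset E(k_0,k_1,k_2,k_3)^{2\bmod 4}(k_2^{-1})\subset E(k_0,k_1,k_2,k_3)^{2\bmod 4}$ is a composition series; by Proposition \ref{prop:t0eigenspace=AWmodule} and Lemma \ref{lem2:t0_E2}(ii), $E(k_0,k_1,k_2,k_3)^{2\bmod 4}(k_2)$ is a nonzero proper $\triangle_q$-submodule, so by the Jordan--H\"older theorem $\{0\}\subset E(k_0,k_1,k_2,k_3)^{2\bmod 4}(k_2)\subset E(k_0,k_1,k_2,k_3)^{2\bmod 4}$ is also a composition series. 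Proposition \ref{prop:irr_AWmodule_in_t0eigenspace}, together with irreducibility of the two eigenspace-submodules, shows that every proper nonzero $\triangle_q$-submodule contains one of $E(k_0,k_1,k_2,k_3)^{2\bmod 4}(k_2^{\pm1})$ and hence, each being maximal, equals it; so these two series are the only ones and the lattice is the displayed diamond.

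I do not expect a real obstacle here. The only point that needs a line of care is, in part (ii), confirming that the diamond has exactly four elements — i.e. that no $\triangle_q$-submodule strictly between one $t_0$-eigenspace and the whole module exists — but this is immediate from the irreducibility of the corresponding quotients (equivalently, from a dimension count using $\dim E(k_0,k_1,k_2,k_3)^{2\bmod 4}(k_2^{\pm1})=\tfrac{d+1}{2}$ in Lemma \ref{lem1:t0_E2}), exactly as in Theorem \ref{thm:E0}(ii). The treatment of $E(k_0,k_1,k_2,k_3)^{2\bmod 4}(k_2)$ (as opposed to $k_2^{-1}$) can be obtained by the analogues of Lemmas \ref{lem:AB_E2(k2inv)} and \ref{lem:AB_E2/E2(k2inv)} with the basis \eqref{basis:E2(k2)} in place of \eqref{basis:E2(k2inv)}, or simply by symmetry, and need not be spelled out in full.
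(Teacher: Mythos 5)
Your proposal is correct and follows essentially the same route as the paper, which proves Theorem \ref{thm:E2} by the argument of Theorem \ref{thm:E1} (itself reducing to Theorem \ref{thm:E0}(ii) for the diagonalizable case): Propositions \ref{prop:E2(k2inv)} and \ref{prop:E2/E2(k2inv)} give a composition series of length two, Lemma \ref{lem2:t0_E2} and Proposition \ref{prop:irr_AWmodule_in_t0eigenspace} confine every irreducible $\triangle_q$-submodule to the $t_0$-eigenspaces, and the Jordan--H\"older theorem supplies the second composition series in case (ii). Your extra remarks (that $k_2^2=1$ makes the two eigenspaces coincide so the cited propositions cover case (i), and that maximality of each eigenspace rules out intermediate submodules) are exactly the details the paper leaves implicit.
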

\begin{proof}
Using the above lemmas and propositions, the result follows by an  argument similar to the proof of Theorem \ref{thm:E1}.
\end{proof}

\subsection{The lattice of $\triangle_q$-submodules of $E(k_0,k_1,k_2,k_3)^{3 \bmod{4}}$}\label{s:lattice_E3}

Recall from Table \ref{Z/4Z-action} that $3\bmod{4}$ sends $t_0,t_1,t_2,t_3$ to $t_3,t_0,t_1,t_2$, respectively. 
In this subsection, we study the $\triangle_q$-submodules of the $\H_q$-module $E(k_0,k_1,k_2,k_3)^{3 \bmod{4}}$.

\begin{lem}\label{lem:AB_E3}
The actions of $A$ and $B$ on the $\H_q$-module $E(k_0,k_1,k_2,k_3)^{3 \bmod{4}}$ are as follows:
\begin{align*}
A v_i &=
\left\{
\begin{array}{ll}
\displaystyle
\theta_i^* v_i -k_0^{-1} k_3^{-1} q^{-i} \varrho_i \varrho_{i-1} v_{i-2}
\qquad 
&\hbox{for $i=2,4,\ldots,d-1$},
\\
\displaystyle
\theta_i^* v_i
+
k_0^{-1} k_3^{-1} q^{-i} (q-q^{-1}) \varrho_i  v_{i-1}
-
k_0^{-1} k_3^{-1} q^{1-i} \varrho_i \varrho_{i-1} v_{i-2}
\qquad 
&\hbox{for $i=3,5,\ldots,d$},
\end{array}
\right.
\\
A v_0 &=
\theta_0^* v_0,
\qquad 
A v_1 =
\theta_1^* v_1+
k_0^{-1} k_3^{-1} q^{-1} (q-q^{-1}) \varrho_1  v_0,
\\
B v_i &=
\left\{
\begin{array}{ll}
\displaystyle
\theta_i v_i 
-
k_0^{-1} k_1^{-1} q^{-i-1} v_{i+2}
\qquad 
&\hbox{for $i=0,2,\ldots,d-3$},
\\
\displaystyle
\theta_i v_i
-
k_0^{-1} k_1^{-1} q^{-i-1}(q-q^{-1}) v_{i+1}
-
k_0^{-1} k_1^{-1} q^{-i-2} v_{i+2}
\qquad 
&\hbox{for $i=3,5,\ldots,d-2$},
\end{array}
\right.
\\
B v_{d-1}&=\theta_{d-1} v_{d-1},
\qquad 
B v_d = \theta_d v_d,
\end{align*}
where 
\begin{align*}
\theta_i
&=
k_0 k_1 q^{2\lfloor \frac{i}{2}\rfloor +1}
+k_0^{-1} k_1^{-1}  q^{-2\lfloor \frac{i}{2}\rfloor -1}
\qquad 
\hbox{for $i=0,1,\ldots,d$},
\\
\theta_i^*
&=
k_0 k_3 q^{2\lceil \frac{i}{2}\rceil}
+k_0^{-1} k_3^{-1}  q^{-2\lceil \frac{i}{2}\rceil}
\qquad 
\hbox{for $i=0,1,\ldots,d$}.
\end{align*}
\end{lem}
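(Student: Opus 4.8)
The plan is to reuse the device behind Lemmas \ref{lem:AB_E}, \ref{lem:AB_E1} and \ref{lem:AB_E2}: express the images of $A$ and $B$ as twisted versions of $X^{\pm 1}$ and $Y^{\pm 1}$, and then read off their matrices from Lemma \ref{lem:XYinE}. Since $3\bmod 4$ sends $t_0,t_1,t_2,t_3$ to $t_3,t_0,t_1,t_2$, the action of $A=t_1t_0+(t_1t_0)^{-1}$ on $E(k_0,k_1,k_2,k_3)^{3\bmod 4}$ coincides with the action of $t_0t_3+(t_0t_3)^{-1}$ on $E(k_0,k_1,k_2,k_3)$, and the action of $B=t_3t_0+(t_3t_0)^{-1}$ on $E(k_0,k_1,k_2,k_3)^{3\bmod 4}$ coincides with the action of $t_2t_3+(t_2t_3)^{-1}$ on $E(k_0,k_1,k_2,k_3)$. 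By Lemma \ref{lem:X+Xinv}(i),(iv) these equal $X+X^{-1}$ and $qY+q^{-1}Y^{-1}$, so it remains to compute the actions of $X+X^{-1}$ and $qY+q^{-1}Y^{-1}$ on the basis $\{v_i\}_{i=0}^d$ of $E(k_0,k_1,k_2,k_3)$.

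Next I would turn Lemma \ref{lem:XYinE}(i) into explicit formulas for $X^{\pm 1}v_i$. For each $i$ the displayed relation yields one of $Xv_i$, $X^{-1}v_i$ at once -- namely $Xv_i$ when $i$ is odd and $X^{-1}v_i$ when $i$ is even and positive, together with $Xv_0=k_0k_3v_0$ -- and the complementary action is obtained by applying $X$ or $X^{-1}$ to that relation and substituting the action on $v_{i-1}$ that is already known, which is what produces the $\varrho_i\varrho_{i-1}$ terms. Adding $Xv_i$ to $X^{-1}v_i$ and collecting coefficients gives the asserted formulas for $Av_i$: the coefficient of $v_{i-1}$ cancels on the even rows and simplifies to $k_0^{-1}k_3^{-1}q^{-i}(q-q^{-1})\varrho_i$ on the odd rows, while the diagonal entry becomes $k_0k_3q^{2\lceil i/2\rceil}+k_0^{-1}k_3^{-1}q^{-2\lceil i/2\rceil}=\theta_i^*$ once the two parities are separated. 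The identical bookkeeping applied to Lemma \ref{lem:XYinE}(ii) -- where one reads off $Yv_i$ for odd $i$, $Y^{-1}v_i$ for even $i$, and $Yv_d=k_0^{-1}k_1^{-1}q^{-d-1}v_d$ -- followed by rescaling by $q$ and $q^{-1}$ gives the formulas for $Bv_i$, with diagonal entry $k_0k_1q^{2\lfloor i/2\rfloor+1}+k_0^{-1}k_1^{-1}q^{-2\lfloor i/2\rfloor-1}=\theta_i$.

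The endpoint indices $i\in\{0,1\}$ for $A$ and $i\in\{d-1,d\}$ for $B$ would be handled separately using the special rows of Lemma \ref{lem:XYinE}, and collapse to the listed diagonal or two-term expressions. I expect the only real obstacle to be clerical: keeping the even/odd case split, the $\lceil\cdot\rceil$ versus $\lfloor\cdot\rfloor$ in the exponents of $\theta_i$ and $\theta_i^*$, and the boundary terms all correct. There is no conceptual ingredient beyond what already appears in the proofs of Lemmas \ref{lem:AB_E}--\ref{lem:AB_E2}, so once the reduction to $X+X^{-1}$ and $qY+q^{-1}Y^{-1}$ is in hand the remaining verification is a direct computation.
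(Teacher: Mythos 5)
Your proposal is correct and follows exactly the paper's own route: the paper likewise observes via Lemma \ref{lem:X+Xinv} that $A$ and $B$ act on $E(k_0,k_1,k_2,k_3)^{3\bmod 4}$ as $X+X^{-1}$ and $qY+q^{-1}Y^{-1}$ act on $E(k_0,k_1,k_2,k_3)$, and then reads off the matrices from Lemma \ref{lem:XYinE} by the same direct calculation you outline. Your extra detail on extracting $X^{\pm1}v_i$ and $Y^{\pm1}v_i$ from the parity-dependent relations is accurate and merely makes explicit what the paper leaves as routine.
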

\begin{proof}
By Lemma \ref{lem:X+Xinv} the actions of $A$ and $B$ on $E(k_0,k_1,k_2,k_3)^{3 \bmod{4}}$ are identical to the actions of $X+X^{-1}$ and 
$q Y+q^{-1} Y^{-1}$ 
on $E(k_0,k_1,k_2,k_3)$, respectively. Using Lemma \ref{lem:XYinE} a direct calculation yields the actions of $X+X^{-1}$ and 
$q Y+q^{-1} Y^{-1}$  on $E(k_0,k_1,k_2,k_3)$.
\end{proof}

\begin{lem}\label{lem1:t0_E3}
Assume that the $\H_q$-module $E(k_0,k_1,k_2,k_3)$ is irreducible. Then the matrix representing $t_0$ with respect to the $\F$-basis 
\begin{gather}\label{basis:E3}
\varrho_i v_{i-1}-k_3^2 v_{i+1}
\quad 
\hbox{for $i=1,3,\ldots,d-2$},
\quad 
\varrho_d v_{d-1},
\quad 
v_i 
\quad 
\hbox{for $i=1,3,\ldots,d$}
\end{gather}
for $E(k_0,k_1,k_2,k_3)^{3 \bmod{4}}$ is 
\begin{gather*}
\begin{pmatrix}
k_3 I_{\frac{d+1}{2}} & \rvline & -k_3^{-1} I_{\frac{d+1}{2}}
\\
\hline
{\bf 0} & \rvline & k_3^{-1}  I_{\frac{d+1}{2}} 
\end{pmatrix}.
\end{gather*}
\end{lem}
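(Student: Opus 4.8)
The plan is to transfer everything to the known action of $t_3$ on $E(k_0,k_1,k_2,k_3)$. By Table~\ref{Z/4Z-action} the automorphism $3\bmod 4$ carries $t_0$ to $t_3$, so the operator $t_0$ on $E(k_0,k_1,k_2,k_3)^{3\bmod 4}$ is exactly the operator $t_3$ on $E(k_0,k_1,k_2,k_3)$. From Proposition~\ref{prop:E}(i), after substituting the definition of $\varrho_i$ for odd $i$, one records $t_3 v_i = k_3 v_i$ for even $i\in\{0,2,\ldots,d-1\}$, $t_3 v_i = -k_3^{-1}\varrho_i v_{i-1}+k_3^{-1}v_i+k_3 v_{i+1}$ for odd $i\in\{1,3,\ldots,d-2\}$, and $t_3 v_d = -k_3^{-1}\varrho_d v_{d-1}+k_3^{-1}v_d$. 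Write $w_i=\varrho_i v_{i-1}-k_3^2 v_{i+1}$ for $i=1,3,\ldots,d-2$ and $w_d=\varrho_d v_{d-1}$, so that the vectors in~(\ref{basis:E3}) are $w_1,w_3,\ldots,w_{d-2},w_d$ followed by $v_1,v_3,\ldots,v_d$.

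First I would check that these $d+1$ vectors form an $\F$-basis. Each $w_i$ lies in the span $U$ of $\{v_0,v_2,\ldots,v_{d-1}\}$, while the odd-index vectors $v_1,v_3,\ldots,v_d$ span the complementary coordinate subspace. Expressing $w_1,w_3,\ldots,w_d$ in the basis $v_0,v_2,\ldots,v_{d-1}$ of $U$ yields a bidiagonal matrix with diagonal entries $\varrho_1,\varrho_3,\ldots,\varrho_d$, hence determinant $\prod_{i\text{ odd}}\varrho_i$. By Theorem~\ref{thm:irr_E}, irreducibility of $E(k_0,k_1,k_2,k_3)$ says that $k_0k_1k_2k_3$ and $k_0k_1k_2^{-1}k_3$ avoid $\{q^{-i}\mid i=1,3,\ldots,d\}$, which is precisely the assertion that $\varrho_i\neq 0$ for every odd $i$. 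Thus the $w_i$ form a basis of $U$, and together with the odd $v_i$ they span $E(k_0,k_1,k_2,k_3)$.

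Finally I would compute $t_0=t_3$ on this basis. The indices $i-1$ and $i+1$ occurring in $w_i$ are even and lie in $\{0,2,\ldots,d-1\}$, so $v_{i-1}$ and $v_{i+1}$ are $k_3$-eigenvectors of $t_3$; hence $t_3 w_i = k_3(\varrho_i v_{i-1}-k_3^2 v_{i+1}) = k_3 w_i$ for $i=1,3,\ldots,d-2$, and likewise $t_3 w_d=k_3 w_d$. For odd $i$ the three-term relation rearranges as $t_3 v_i = -k_3^{-1}(\varrho_i v_{i-1}-k_3^2 v_{i+1})+k_3^{-1}v_i = -k_3^{-1}w_i+k_3^{-1}v_i$, where for $i=d$ one reads $v_{d+1}=0$, so $w_d=\varrho_d v_{d-1}$ and the boundary case of Proposition~\ref{prop:E}(i) is subsumed. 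Reading these relations off against the ordered basis $w_1,\ldots,w_d,v_1,\ldots,v_d$ gives exactly the asserted block matrix.

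The only point that is not pure bookkeeping is the basis verification, and in particular noticing that it is precisely here that the irreducibility hypothesis is needed, via $\varrho_i\neq 0$; everything else is a direct substitution into Proposition~\ref{prop:E}(i) with a little care at the indices $0$ and $d$.
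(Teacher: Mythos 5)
Your proof is correct and follows the same route as the paper, which simply notes that $t_0$ on $E(k_0,k_1,k_2,k_3)^{3\bmod 4}$ acts as $t_3$ on $E(k_0,k_1,k_2,k_3)$ and calls the rest a routine verification from Proposition \ref{prop:E}(i). You have filled in that routine verification accurately, including the one non-trivial point the paper leaves implicit: the irreducibility hypothesis enters exactly to guarantee $\varrho_i\neq 0$ for odd $i$ (via Theorem \ref{thm:irr_E}), so that the listed vectors do form a basis.
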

\begin{proof}
Note that the action of $t_0$ on $E(k_0,k_1,k_2,k_3)^{3 \bmod{4}}$ is identical to the action of $t_3$ on $E(k_0,k_1,k_2,k_3)$. It is routine to verify the lemma using Proposition \ref{prop:E}(i).
\end{proof}

\begin{lem}\label{lem2:t0_E3}
Assume that the $\H_q$-module $E(k_0,k_1,k_2,k_3)$ is irreducible.  Then the following hold:
\begin{enumerate}
\item If $k_3^2 =1$ then $t_0$ is not diagonalizable on $E(k_0,k_1,k_2,k_3)^{3 \bmod{4}}$ with exactly one eigenvalue $k_3$.

\item If $k_3^2 \not=1$ then $t_0$ is diagonalizable on $E(k_0,k_1,k_2,k_3)^{3 \bmod{4}}$ with exactly two eigenvalues $k_3^{\pm 1}$. 
\end{enumerate}
\end{lem}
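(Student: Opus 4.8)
The plan is to read everything off from the explicit $(d+1)\times(d+1)$ matrix $M$ for $t_0$ supplied by Lemma \ref{lem1:t0_E3}: under the irreducibility hypothesis $M$ is block upper triangular, with diagonal blocks $k_3 I_{\frac{d+1}{2}}$ and $k_3^{-1} I_{\frac{d+1}{2}}$ and upper-right block $-k_3^{-1} I_{\frac{d+1}{2}}$. Since $M$ is block triangular, its eigenvalues are exactly $k_3$ and $k_3^{-1}$, each of algebraic multiplicity $\frac{d+1}{2}$; and because $k_3$ is a nonzero scalar, $k_3^{-1}$ is well defined, with $k_3=k_3^{-1}$ if and only if $k_3^2=1$. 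Thus the dichotomy in the lemma is governed by whether the two diagonal blocks carry the same scalar, and both parts reduce to a rank-nullity computation, exactly as in the proofs of Lemmas \ref{lem2:t0_E} and \ref{lem2:t0_E1}.

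For part (i), I would assume $k_3^2=1$, so $k_3=k_3^{-1}$ and $k_3$ is the unique eigenvalue of $M$, with algebraic multiplicity $d+1$. Then $M-k_3 I$ is strictly block upper triangular with upper-right block the invertible matrix $-k_3^{-1} I_{\frac{d+1}{2}}$, so $\operatorname{rank}(M-k_3 I)=\frac{d+1}{2}$ and, by rank-nullity, the $k_3$-eigenspace has dimension $\frac{d+1}{2}<d+1$. Hence $t_0$ is not diagonalizable on $E(k_0,k_1,k_2,k_3)^{3\bmod 4}$ and has $k_3$ as its only eigenvalue.

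For part (ii), I would assume $k_3^2\neq 1$, so $k_3\neq k_3^{-1}$. Now the lower-right diagonal block of $M-k_3 I$ is the invertible matrix $(k_3^{-1}-k_3) I_{\frac{d+1}{2}}$, so $\operatorname{rank}(M-k_3 I)=\frac{d+1}{2}$ and the $k_3$-eigenspace has dimension $\frac{d+1}{2}$; interchanging the roles of $k_3$ and $k_3^{-1}$ gives the same for the $k_3^{-1}$-eigenspace. Since these dimensions sum to $d+1=\dim E(k_0,k_1,k_2,k_3)^{3\bmod 4}$ and the two eigenvalues are distinct, $E(k_0,k_1,k_2,k_3)^{3\bmod 4}$ is the direct sum of the two eigenspaces, so $t_0$ is diagonalizable with exactly the eigenvalues $k_3^{\pm 1}$.

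I do not expect any real obstacle: all the substance is already in Lemma \ref{lem1:t0_E3}, where one uses Theorem \ref{thm:irr_E} to see that the odd-indexed $\varrho_i$ are nonzero, so that the vectors in (\ref{basis:E3}) genuinely form a basis and $M$ has the stated shape. Given that, the present lemma is just the elementary linear algebra above, the only point worth a remark being the equivalence $k_3=k_3^{-1}\iff k_3^2=1$, which is exactly where the hypothesis of each case enters.
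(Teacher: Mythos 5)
Your proof is correct and follows exactly the paper's approach: the paper's entire proof is ``Applying the rank-nullity theorem to Lemma \ref{lem1:t0_E3} the lemma follows,'' and you have simply carried out that rank computation explicitly for the block-triangular matrix in both cases. Your remark that irreducibility is only needed to guarantee the vectors in (\ref{basis:E3}) form a basis is also accurate.
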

\begin{proof}
Applying the rank-nullity theorem to Lemma \ref{lem1:t0_E3} the lemma follows.
\end{proof}

\begin{lem}\label{lem3:t0_E3}
If the $\H_q$-module $E(k_0,k_1,k_2,k_3)$ is irreducible then 
$E(k_0,k_1,k_2,k_3)^{3 \bmod{4}}(k_3)$ is of dimension $\frac{d+1}{2}$ with the $\F$-basis 
\begin{gather}\label{basis:E3(k3)}
v_i
\qquad 
\hbox{for $i=0,2,\ldots,d-1$}.
\end{gather}
\end{lem}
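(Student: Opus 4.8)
The plan is to extract the $k_3$-eigenspace of $t_0$ directly from the matrix displayed in Lemma \ref{lem1:t0_E3}. Writing a vector as $(x,y)$ in block form relative to the two halves of the basis (\ref{basis:E3}), that matrix shows $t_0(x,y)=(k_3x-k_3^{-1}y,\,k_3^{-1}y)$, so $(x,y)$ lies in the $k_3$-eigenspace precisely when $k_3^{-1}y=0$ and $(k_3^{-1}-k_3)y=0$. Since $k_3\neq 0$, the first relation already forces $y=0$, and then the second holds automatically; this happens uniformly whether or not $k_3^2=1$. Hence the $k_3$-eigenspace of $t_0$ on $E(k_0,k_1,k_2,k_3)^{3\bmod 4}$ equals the span $W$ of the first $\frac{d+1}{2}$ vectors of (\ref{basis:E3}), namely $\varrho_iv_{i-1}-k_3^2v_{i+1}$ for $i=1,3,\ldots,d-2$ together with $\varrho_dv_{d-1}$, and in particular $\dim W=\frac{d+1}{2}$.

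Next I would convert this spanning set to the one asserted in the lemma. For $i=1,3,\ldots,d-2$ the vector $\varrho_iv_{i-1}-k_3^2v_{i+1}$ is an $\F$-linear combination of $v_{i-1}$ and $v_{i+1}$, and as $i$ ranges over these odd values the indices $i-1$ and $i+1$ run over $0,2,\ldots,d-1$; moreover $\varrho_dv_{d-1}$ is a scalar multiple of $v_{d-1}$. Because $E(k_0,k_1,k_2,k_3)$ is irreducible, Theorem \ref{thm:irr_E} gives $k_0k_1k_2k_3\neq q^{-d}$ and $k_0k_1k_2^{-1}k_3\neq q^{-d}$, so $\varrho_d=(k_0k_1k_3q^d-k_2)(k_0k_1k_3q^d-k_2^{-1})\neq 0$. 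Thus $W$ is contained in the span of $\{v_i\mid i=0,2,\ldots,d-1\}$, a space of dimension $\frac{d+1}{2}$; comparing dimensions with $\dim W=\frac{d+1}{2}$ forces equality, and $\{v_i\mid i=0,2,\ldots,d-1\}$ is therefore an $\F$-basis for $E(k_0,k_1,k_2,k_3)^{3\bmod 4}(k_3)$.

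Since this is the $3\bmod 4$ counterpart of Lemmas \ref{lem3:t0_E} and \ref{lem3:t0_E1}, I anticipate no genuine difficulty. The only points deserving attention are the uniform extraction of the eigenspace in the two cases $k_3^2=1$ and $k_3^2\neq 1$, which the block-triangular shape of $t_0-k_3$ settles at once, and the observation that the irreducibility hypothesis is used exactly to guarantee $\varrho_d\neq 0$, which is what makes $\varrho_dv_{d-1}$ a nonzero multiple of $v_{d-1}$ and hence lets the dimension count close.
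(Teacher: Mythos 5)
Your argument is correct and is essentially the paper's own proof: both read off the $k_3$-eigenspace of $t_0$ as the span of the first $\tfrac{d+1}{2}$ vectors of the basis in Lemma \ref{lem1:t0_E3} (the block-triangular form forces the second block of coordinates to vanish, uniformly in the cases $k_3^2=1$ and $k_3^2\neq 1$) and then note that this span coincides with that of $v_0,v_2,\ldots,v_{d-1}$ by a dimension count. The only quibble is your closing remark that irreducibility enters ``exactly'' to force $\varrho_d\neq 0$: once Lemma \ref{lem1:t0_E3} is invoked, your containment and dimension count already close without any separate appeal to $\varrho_d\neq 0$, and the real role of irreducibility is hidden inside that lemma, where linear independence of the vectors $\varrho_i v_{i-1}-k_3^2 v_{i+1}$ together with $\varrho_d v_{d-1}$ requires \emph{all} $\varrho_i\neq 0$ for odd $i$, not just $\varrho_d$.
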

\begin{proof}
By Lemma \ref{lem1:t0_E3} the vectors 
$\varrho_i v_{i-1}-k_3^2 v_{i+1}$ for $i=1,3,\ldots,d-2$ and $\varrho_d v_{d-1}$ form an $\F$-basis for $E(k_0,k_1,k_2,k_3)^{1 \bmod{4}}(k_1^{-1})$ as well as (\ref{basis:E3(k3)}).
\end{proof}

\begin{lem}\label{lem:AB_E3(k3)}
Assume that the $\H_q$-module $E(k_0,k_1,k_2,k_3)$ is irreducible. Let 
$$
\mu_i=(-1)^{\frac{i}{2}} k_0^{-\frac{i}{2}} k_1^{-\frac{i}{2}} q^{-\frac{i^2}{4}}
\qquad  
\hbox{for $i=0,2,\ldots,d-1$}.
$$
Then the matrices representing $A$ and $B$ with respect to the $\F$-basis 
\begin{gather}\label{basis_E3(k3)}
\mu_i v_i
\qquad \hbox{for $i=0,2,\ldots,d-1$}
\end{gather}
for the $\triangle_q$-module $E(k_0,k_1,k_2,k_3)^{3 \bmod{4}}(k_3)$ are 
$$
\begin{pmatrix}
\theta_0^* &\varphi_1 &  & &{\bf 0}
\\ 
 &\theta_1^* &\varphi_2
\\
 &  &\theta_2^* &\ddots
 \\
 & & &\ddots &\varphi_{\frac{d-1}{2}}
 \\
{\bf 0}  & & & &\theta_\frac{d-1}{2}^*
\end{pmatrix},
\qquad 
\begin{pmatrix}
\theta_0 & & &  &{\bf 0}
\\ 
1 &\theta_1 
\\
&1 &\theta_2 
 \\
& &\ddots &\ddots
 \\
{\bf 0} & & &1 &\theta_\frac{d-1}{2}
\end{pmatrix},
$$
respectively, where   
\begin{align*}
\theta_i &=
k_0 k_1 q^{2i+1}+k_0^{-1} k_1^{-1} q^{-2i-1}
\qquad 
\hbox{for $i=0,1,\ldots, \textstyle\frac{d-1}{2}$},
\\
\theta_i^* &= 
k_0 k_3 q^{2i}+k_0^{-1} k_3^{-1} q^{-2i}
\qquad 
\hbox{for $i=0,1,\ldots, \textstyle\frac{d-1}{2}$},
\\
\varphi_i &=
k_1^{-1} k_3^{-1} q^{\frac{d+3}{2}}
(q^i-q^{-i})
(q^{i-\frac{d+1}{2}}-q^{\frac{d+1}{2}-i})
\\
&\qquad \times \;
(q^{-i}-k_0 k_1 k_2 k_3 q^{i-1})
(q^{-i}-k_0 k_1 k_2^{-1} k_3 q^{i-1})
\qquad 
\hbox{for $i=1,2,\ldots, \textstyle\frac{d-1}{2}$}.
\end{align*}
The elements $\alpha,\beta,\gamma$ act on the $\triangle_q$-module $E(k_0,k_1,k_2,k_3)^{3 \bmod{4}}(k_3)$ as scalar multiplication by 
\begin{gather}
(k_2+k_2^{-1})
(k_1+k_1^{-1})
+
(k_0+k_0^{-1})
(q k_3^{-1}+q^{-1} k_3),
\label{alpha:E3(k3)}
\\
(k_1+k_1^{-1})
(k_0+k_0^{-1})
+
(k_2+k_2^{-1})
(q k_3^{-1}+q^{-1} k_3),
\label{beta:E3(k3)}
\\
(k_0+k_0^{-1})
(k_2+k_2^{-1})
+
(k_1+k_1^{-1})
(q k_3^{-1}+q^{-1} k_3),
\label{gamma:E3(k3)}
\end{gather}
respectively.
\end{lem}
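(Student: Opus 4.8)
The plan is to follow verbatim the template already used for Lemmas \ref{lem:AB_E(k0)}, \ref{lem:AB_E1(k1inv)} and \ref{lem:AB_E2(k2inv)}. First I would invoke Lemma \ref{lem3:t0_E3}: under the standing hypothesis that $E(k_0,k_1,k_2,k_3)$ is irreducible, the vectors $v_i$ with $i=0,2,\ldots,d-1$ form an $\F$-basis for $E(k_0,k_1,k_2,k_3)^{3\bmod 4}(k_3)$, and since each $\mu_i$ is a nonzero scalar the vectors $\mu_i v_i$ with $i=0,2,\ldots,d-1$ form an $\F$-basis as well. By Proposition \ref{prop:t0eigenspace=AWmodule} this subspace is a $\triangle_q$-submodule, so it is $A$- and $B$-invariant; the point of the computation is to exhibit the matrices explicitly.

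Next I would restrict the formulas of Lemma \ref{lem:AB_E3} to this basis. Reading off that lemma, for even $i$ the vector $A v_i$ is an $\F$-linear combination of $v_i$ (coefficient $\theta_i^*$) and $v_{i-2}$ only, with $A v_0=\theta_0^* v_0$; likewise for even $i$ the vector $B v_i$ is an $\F$-linear combination of $v_i$ (coefficient $\theta_i$) and $v_{i+2}$ only, with $B v_{d-1}=\theta_{d-1} v_{d-1}$. Thus in the ordered basis $\mu_0 v_0,\mu_2 v_2,\ldots,\mu_{d-1}v_{d-1}$ the matrix of $A$ is upper bidiagonal and that of $B$ is lower bidiagonal. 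After the reindexing $j=i/2$ the diagonal entries are already $\theta_j^*$ and $\theta_j$ as claimed, since $\lceil i/2\rceil=\lfloor i/2\rfloor=i/2$ for even $i$. The superdiagonal entry of $A$ produced by $\mu_i v_i$ is $-k_0^{-1}k_3^{-1}q^{-i}\varrho_i\varrho_{i-1}(\mu_i/\mu_{i-2})$ and the subdiagonal entry of $B$ produced by $\mu_i v_i$ is $-k_0^{-1}k_1^{-1}q^{-i-1}(\mu_i/\mu_{i+2})$. From the definition of $\mu_i$ one finds $\mu_i/\mu_{i-2}=-k_0^{-1}k_1^{-1}q^{1-i}$ and $\mu_i/\mu_{i+2}=-k_0 k_1 q^{i+1}$; the latter immediately yields subdiagonal entry $1$ for $B$, while for $A$ one substitutes $\varrho_i=(1-q^i)(1-k_0^2 q^i)$ and $\varrho_{i-1}=(k_0k_1k_3q^{i-1}-k_2)(k_0k_1k_3q^{i-1}-k_2^{-1})$, rewrites the powers of $q$ symmetrically by means of (\ref{k02=-d-1}), and regroups the four linear factors using the identities $k_0k_1k_3q^{i-1}-k_2^{\pm1}=-k_2^{\pm1}q^{i/2}(q^{-i/2}-k_0k_1k_2^{\mp1}k_3q^{i/2-1})$ together with $1-q^i=-q^{i/2}(q^{i/2}-q^{-i/2})$ and $1-q^{i-d-1}=-q^{i/2-(d+1)/2}(q^{i/2-(d+1)/2}-q^{(d+1)/2-i/2})$; collecting the leftover powers of $q$ against $q^{(d+3)/2}$ recovers $\varphi_{i/2}$.

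Finally, for the central elements, recall that $3\bmod 4$ carries $t_0,t_1,t_2,t_3$ to $t_3,t_0,t_1,t_2$, so $c_0,c_1,c_2,c_3$ act on $E(k_0,k_1,k_2,k_3)^{3\bmod 4}$ as the scalars by which $c_3,c_0,c_1,c_2$ act on $E(k_0,k_1,k_2,k_3)$, namely $k_3+k_3^{-1},k_0+k_0^{-1},k_1+k_1^{-1},k_2+k_2^{-1}$, by Proposition \ref{prop:E}(ii); and on $E(k_0,k_1,k_2,k_3)^{3\bmod 4}(k_3)$ the element $t_0$ acts as $k_3$. Substituting these values into the images of $\alpha,\beta,\gamma$ under $\zeta$ displayed in Theorem \ref{thm:hom} gives (\ref{alpha:E3(k3)})--(\ref{gamma:E3(k3)}).

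The main obstacle is the bookkeeping in the $A$-entry, i.e.\ matching $k_0^{-2}k_1^{-1}k_3^{-1}q^{1-2i}\varrho_i\varrho_{i-1}$ with the product formula for $\varphi_{i/2}$: one must peel off the factors $q^{i/2}-q^{-i/2}$, $q^{i/2-(d+1)/2}-q^{(d+1)/2-i/2}$, $q^{-i/2}-k_0k_1k_2k_3q^{i/2-1}$ and $q^{-i/2}-k_0k_1k_2^{-1}k_3q^{i/2-1}$ one at a time and balance the residual powers of $q$ using $k_0^2=q^{-d-1}$. Every other step is routine and of exactly the same flavor as in the earlier lemmas.
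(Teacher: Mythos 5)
Your proposal is correct and follows essentially the same route as the paper's proof: cite Lemma \ref{lem3:t0_E3} for the basis, restrict the formulas of Lemma \ref{lem:AB_E3} to the even-indexed vectors and rescale by the $\mu_i$ to get the stated bidiagonal matrices, then obtain the scalars (\ref{alpha:E3(k3)})--(\ref{gamma:E3(k3)}) from Theorem \ref{thm:hom} using the permuted actions of $c_0,c_1,c_2,c_3$ and $t_0=k_3$ on the eigenspace. The ratio computations $\mu_i/\mu_{i-2}=-k_0^{-1}k_1^{-1}q^{1-i}$, $\mu_i/\mu_{i+2}=-k_0k_1q^{i+1}$ and the factorization matching $k_0^{-2}k_1^{-1}k_3^{-1}q^{1-2i}\varrho_i\varrho_{i-1}$ with $\varphi_{i/2}$ via $k_0^2=q^{-d-1}$ all check out.
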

\begin{proof}
By Lemma \ref{lem3:t0_E3} the vectors (\ref{basis_E3(k3)}) are an $\F$-basis for $E(k_0,k_1,k_2,k_3)^{3 \bmod{4}}(k_3)$. By Lemma \ref{lem:AB_E3}  we obtain the matrices representing $A$ and $B$ with respect to (\ref{basis_E3(k3)}). The actions of $c_0,c_1,c_2,c_3$ on $E(k_0,k_1,k_2,k_3)^{3 \bmod{4}}$ are identical to the actions of $c_3,c_0,c_1,c_2$ on $E(k_0,k_1,k_2,k_3)$. Combined with Theorem \ref{thm:hom} and Proposition \ref{prop:E}(ii) the elements $\alpha,\beta,\gamma$ act on $E(k_0,k_1,k_2,k_3)^{3 \bmod{4}}(k_3)$ as scalar multiplication by (\ref{alpha:E3(k3)})--(\ref{gamma:E3(k3)}), respectively.
\end{proof}

\begin{prop}\label{prop:E3(k3)}
Assume that the $\H_q$-module $E(k_0,k_1,k_2,k_3)$ is irreducible. 
Then the $\triangle_q$-module $E(k_0,k_1,k_2,k_3)^{3 \bmod{4}}(k_3)$ is isomorphic to $$
V_{\frac{d-1}{2}}\left(
k_0 k_3 q^{\frac{d-1}{2}},
k_0 k_1 q^{\frac{d+1}{2}},
k_0 k_2 q^{\frac{d+1}{2}}
\right).
$$
Moreover the $\triangle_q$-module $E(k_0,k_1,k_2,k_3)^{3 \bmod{4}}(k_3)$ is irreducible.
\end{prop}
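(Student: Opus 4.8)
The plan is to follow the same two-step template used in Propositions \ref{prop:E(k0)}, \ref{prop:E1(k1inv)} and \ref{prop:E2(k2inv)}: first identify the $\triangle_q$-module $E(k_0,k_1,k_2,k_3)^{3 \bmod{4}}(k_3)$ with an explicit $V_{d'}(a,b,c)$ via Proposition \ref{prop:UAWd}, then invoke the irreducibility criterion of Theorem \ref{thm:irr_UAW} using the hypothesis that $E(k_0,k_1,k_2,k_3)$ is irreducible together with Theorem \ref{thm:irr_E}.

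\begin{proof}
Set $(a,b,c)=\left(k_0 k_3 q^{\frac{d-1}{2}},\,k_0 k_1 q^{\frac{d+1}{2}},\,k_0 k_2 q^{\frac{d+1}{2}}\right)$ and $d'=\frac{d-1}{2}$. Under the assumption (\ref{k02=-d-1}) the scalar (\ref{beta:E3(k3)}) is equal to
$$
(c+c^{-1})(a+a^{-1})+(b+b^{-1})(q^{d'+1}+q^{-d'-1}).
$$
Comparing Proposition \ref{prop:UAWd} with Lemma \ref{lem:AB_E3(k3)}, the matrices representing $A$ and $B$ on the $\triangle_q$-module $E(k_0,k_1,k_2,k_3)^{3 \bmod{4}}(k_3)$ with respect to (\ref{basis_E3(k3)}) are exactly those describing $V_{d'}(a,b,c)$, and the elements $\alpha,\beta,\gamma$ act by the scalars prescribed in Proposition \ref{prop:UAWd}(ii). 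Hence the $\triangle_q$-module $E(k_0,k_1,k_2,k_3)^{3 \bmod{4}}(k_3)$ is isomorphic to $V_{d'}(a,b,c)$.

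Suppose that the $\H_q$-module $E(k_0,k_1,k_2,k_3)$ is irreducible. By Theorem \ref{thm:irr_E} we have
$$
k_0k_1k_2k_3,\;k_0k_1^{-1}k_2k_3,\;k_0k_1k_2^{-1}k_3,\;k_0k_1k_2k_3^{-1}\not\in\{q^{-i}\,|\,i=1,3,\ldots,d\}.
$$
Rewriting these conditions in terms of $(a,b,c)$ using (\ref{k02=-d-1}) yields that
$$
abc,\;a^{-1}bc,\;ab^{-1}c,\;abc^{-1}\not\in\{q^{2i-d'-1}\,|\,i=1,2,\ldots,d'\}.
$$
Hence the $\triangle_q$-module $V_{d'}(a,b,c)$ is irreducible by Theorem \ref{thm:irr_UAW}. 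The proposition follows.
\end{proof}

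The routine part is the arithmetic translating the exclusion sets of Theorem \ref{thm:irr_E}, indexed by odd integers $1,3,\ldots,d$, into the exclusion set $\{q^{2i-d'-1}\,|\,i=1,2,\ldots,d'\}$ of Theorem \ref{thm:irr_UAW}; one multiplies each of the four quantities by an appropriate power of $q$ coming from the shifts $a=k_0k_3q^{(d-1)/2}$, $b=k_0k_1q^{(d+1)/2}$, $c=k_0k_2q^{(d+1)/2}$ and uses $k_0^2=q^{-d-1}$ to clear the $k_0$'s. The only mild subtlety — the ``main obstacle'' if there is one — is bookkeeping the half-integer exponents and the ceiling/floor discrepancies between Lemma \ref{lem:AB_E3(k3)} and Proposition \ref{prop:UAWd} so that the identification of the tridiagonal pair is exact rather than merely up to a permutation of $(a,b,c)$; here, unlike the $1\bmod 4$ and $2\bmod 4$ cases where a $\Z/2\Z$-twist intervenes, the matrices of $A$ and $B$ already appear in the correct (lower-triangular, upper-triangular) order, so no appeal to Lemma \ref{lem:Z/2Z} is needed and the isomorphism is direct.
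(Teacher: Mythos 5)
Your identification step is where the argument breaks, and it is exactly the point your closing remark gets backwards. In Lemma \ref{lem:AB_E3(k3)} the matrix representing $A$ with respect to the basis (\ref{basis_E3(k3)}) is the \emph{upper} bidiagonal one (diagonal $\theta_i^*=k_0k_3q^{2i}+k_0^{-1}k_3^{-1}q^{-2i}$, superdiagonal $\varphi_i$), while $B$ is the \emph{lower} bidiagonal one (diagonal $\theta_i=k_0k_1q^{2i+1}+k_0^{-1}k_1^{-1}q^{-2i-1}$, subdiagonal $1$). In Proposition \ref{prop:UAWd} the roles are the other way around: $A$ is lower bidiagonal with subdiagonal $1$ and $B$ is upper bidiagonal with superdiagonal $\varphi_i$. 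So with your choice $(a,b,c)=(k_0k_3q^{\frac{d-1}{2}},k_0k_1q^{\frac{d+1}{2}},k_0k_2q^{\frac{d+1}{2}})$ the matrices in the given basis are \emph{not} ``exactly those describing $V_{d'}(a,b,c)$''; they are those describing the $\Z/2\Z$-twist, i.e.\ the module you actually read off from Lemma \ref{lem:AB_E3(k3)} is $V_{d'}\bigl(k_0k_1q^{\frac{d+1}{2}},k_0k_3q^{\frac{d-1}{2}},k_0k_2q^{\frac{d+1}{2}}\bigr)^{1\bmod 2}$. (Your bookkeeping of the earlier cases is also inverted: it is the $2\bmod 4$ case, Proposition \ref{prop:E2(k2inv)}, that needs no twist, while the $1\bmod 4$ and the present $3\bmod 4$ cases do.) The fact that the central scalars (\ref{alpha:E3(k3)})--(\ref{gamma:E3(k3)}) agree with the scalars of Proposition \ref{prop:UAWd}(ii) for your $(a,b,c)$ does not rescue the step: Proposition \ref{prop:UAWd} pins down $V_{d'}(a,b,c)$ through the existence of a basis with the prescribed matrix shapes (together with the $\gamma$-scalar, via Lemma \ref{lem:gen_UAW}), and no such basis has been exhibited.

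The missing ingredient is precisely the untwisting: after identifying the module with $V_{d'}\bigl(k_0k_1q^{\frac{d+1}{2}},k_0k_3q^{\frac{d-1}{2}},k_0k_2q^{\frac{d+1}{2}}\bigr)^{1\bmod 2}$, one first uses Theorem \ref{thm:irr_E} together with Theorem \ref{thm:irr_UAW} to see that this $V_{d'}$ is irreducible (your translation of the exclusion sets is fine and suffices for this), and only then invokes Lemma \ref{lem:Z/2Z} to conclude $V_{d'}(a',b',c')^{1\bmod 2}\cong V_{d'}(b',a',c')$, which is the stated module. Note that Lemma \ref{lem:Z/2Z} is proved via the trace criterion of Theorem \ref{thm:onto_UAW} and genuinely requires irreducibility — which is also why the isomorphism in this proposition, unlike in Proposition \ref{prop:E(k0)}, is stated only under the hypothesis that the $\H_q$-module $E(k_0,k_1,k_2,k_3)$ is irreducible. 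So your computations of the scalar (\ref{beta:E3(k3)}) and of the exclusion conditions are correct, but the proof as written asserts a direct matrix match that does not hold and skips the twist-plus-Lemma \ref{lem:Z/2Z} argument that the conclusion actually rests on.
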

\begin{proof}
Set $(a,b,c)=(
k_0 k_1 q^{\frac{d+1}{2}},
k_0 k_3 q^{\frac{d-1}{2}},
k_0 k_2 q^{\frac{d+1}{2}})$
and $d'=\frac{d-1}{2}$. Under the assumption (\ref{k02=-d-1}) the scalar (\ref{beta:E3(k3)}) is equal to 
$$
(b+b^{-1})(c+c^{-1})+(a+a^{-1})(q^{d'+1}+q^{-d'-1}).
$$
By Proposition \ref{prop:UAWd} and Lemma \ref{lem:AB_E3(k3)} the $\triangle_q$-module $E(k_0,k_1,k_2,k_3)^{3 \bmod{4}}(k_3)$ is isomorphic to $V_{d'}(a,b,c)^{1 \bmod{2}}$.

Since the $\H_q$-module $E(k_0,k_1,k_2,k_3)$ is irreducible it follows from Theorem \ref{thm:irr_E} that 
\begin{align*}
&ab^{-1}c\not\in\{q^{2i-d'-1}\,|\,i=1,2,\ldots,d'+1\};
\\
&abc,a^{-1}bc,abc^{-1}\not\in\{q^{2i-d'-1}\,|\,i=0,1,\ldots,d'\}.
\end{align*}
Hence the $\triangle_q$-module $V_{d'}(a,b,c)$ is irreducible by Theorem \ref{thm:irr_UAW}. 
By Lemma \ref{lem:Z/2Z} the $\triangle_q$-module $V_{d'}(a,b,c)^{1 \bmod{2}}$ is isomorphic to $V_{d'}(b,a,c)$. 
The proposition follows.
\end{proof}

\begin{lem}\label{lem:AB_E3/E3(k3)}
Assume that the $\H_q$-module $E(k_0,k_1,k_2,k_3)$ is irreducible.  
Let 
$$
\mu_i=(-1)^{\frac{i-1}{2}} k_0^{-\frac{i-1}{2}} k_1^{-\frac{i-1}{2}} q^{-\frac{(i-1)(i+3)}{4}} 
\qquad 
\hbox{for $i=1,3,\ldots,d$}.
$$
Then the matrices representing $A$ and $B$ with respect to the $\F$-basis 
\begin{gather}\label{basis_E3/E3(k3)}
\mu_i v_i+E(k_0,k_1,k_2,k_3)^{3 \bmod{4}}(k_3)
\qquad \hbox{for $i=1,3,\ldots,d$}
\end{gather}
for the $\triangle_q$-module $E(k_0,k_1,k_2,k_3)^{3 \bmod{4}}/E(k_0,k_1,k_2,k_3)^{3 \bmod{4}}(k_3)$ are 
$$
\begin{pmatrix}
\theta_0^* &\varphi_1 &  & &{\bf 0}
\\ 
 &\theta_1^* &\varphi_2
\\
 &  &\theta_2^* &\ddots
 \\
 & & &\ddots &\varphi_{\frac{d-1}{2}}
 \\
{\bf 0}  & & & &\theta_\frac{d-1}{2}^*
\end{pmatrix},
\qquad 
\begin{pmatrix}
\theta_0 & & &  &{\bf 0}
\\ 
1 &\theta_1 
\\
&1 &\theta_2
 \\
& &\ddots &\ddots
 \\
{\bf 0} & & &1 &\theta_\frac{d-1}{2}
\end{pmatrix},
$$
respectively, where  
\begin{align*}
\theta_i &=
k_0 k_1 q^{2i+1}+k_0^{-1} k_1^{-1} q^{-2i-1}
\qquad 
\hbox{for $i=0,1,\ldots, \textstyle\frac{d-1}{2}$},
\\
\theta_i^* &= 
k_0 k_3 q^{2i+2}+k_0^{-1} k_3^{-1} q^{-2i-2}
\qquad 
\hbox{for $i=0,1,\ldots, \textstyle\frac{d-1}{2}$},
\\
\varphi_i &=
k_0^{-1} k_3^{-1} q^{\frac{d-1}{2}}
(q^i-q^{-i})
(q^{i-\frac{d+1}{2}}-q^{\frac{d+1}{2}-i})
\\
&\qquad \times \;
(q^{-i}-k_0 k_1 k_2 k_3 q^{i+1})
(q^{-i}-k_0 k_1 k_2^{-1} k_3 q^{i+1})
\qquad 
\hbox{for $i=1,2,\ldots, \textstyle\frac{d-1}{2}$}.
\end{align*}
The elements $\alpha,\beta,\gamma$ act on the $\triangle_q$-module $E(k_0,k_1,k_2,k_3)^{3 \bmod{4}}/E(k_0,k_1,k_2,k_3)^{3 \bmod{4}}(k_3)$ as scalar multiplication by 
\begin{gather}
(k_2+k_2^{-1})
(k_1+k_1^{-1})
+
(k_0+k_0^{-1})
(q k_3+q^{-1} k_3^{-1}),
\label{alpha:E3/E3(k3)}
\\
(k_1+k_1^{-1})
(k_0+k_0^{-1})
+
(k_2+k_2^{-1})
(q k_3+q^{-1} k_3^{-1}),
\label{beta:E3/E3(k3)}
\\
(k_0+k_0^{-1})
(k_2+k_2^{-1})
+
(k_1+k_1^{-1})
(q k_3+q^{-1} k_3^{-1}),
\label{gamma:E3/E3(k3)}
\end{gather}
respectively.
\end{lem}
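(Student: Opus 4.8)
The plan is to mirror the proof of Lemma~\ref{lem:AB_E3(k3)}, working this time with the quotient module in place of the submodule $E(k_0,k_1,k_2,k_3)^{3 \bmod{4}}(k_3)$. By Lemma~\ref{lem3:t0_E3} the submodule $E(k_0,k_1,k_2,k_3)^{3 \bmod{4}}(k_3)$ is spanned by $v_0,v_2,\dots,v_{d-1}$, so the cosets $\mu_i v_i+E(k_0,k_1,k_2,k_3)^{3 \bmod{4}}(k_3)$ with $i$ odd form an $\F$-basis of the quotient $\triangle_q$-module, accounting for all $\frac{d+1}{2}$ of its dimensions; the $\mu_i$ are nonzero by construction, and, since the $\H_q$-module is irreducible, the $\varrho_i$ occurring below are nonzero by Theorem~\ref{thm:irr_E}.

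Next I would take the formulas of Lemma~\ref{lem:AB_E3} for $A v_i$ and $B v_i$ with $i$ odd and discard every term that involves an even-indexed basis vector, since such terms lie in the submodule. This leaves $A v_i\equiv\theta_i^* v_i-k_0^{-1}k_3^{-1}q^{1-i}\varrho_i\varrho_{i-1}v_{i-2}$ and $B v_i\equiv\theta_i v_i-k_0^{-1}k_1^{-1}q^{-i-2}v_{i+2}$ modulo the submodule, with the obvious truncations at $i=1$ and $i=d$ and with $\theta_i,\theta_i^*$ as in Lemma~\ref{lem:AB_E3}. Re-indexing the odd integers $1,3,\dots,d$ by $m=0,1,\dots,\frac{d-1}{2}$ and passing to the rescaled basis, the ratio $\mu_i/\mu_{i-2}=-k_0^{-1}k_1^{-1}q^{-i}$ is chosen precisely so that the subdiagonal entries of the matrix of $B$ all become $1$; substituting this ratio into $-\frac{\mu_i}{\mu_{i-2}}\,k_0^{-1}k_3^{-1}q^{1-i}\varrho_i\varrho_{i-1}$, using $k_0^2=q^{-d-1}$ from~(\ref{k02=-d-1}) to rewrite $1-k_0^2 q^{i-1}=1-q^{i-d-2}$, and factoring $\varrho_i$ and $\varrho_{i-1}$ into $q$-shifted linear factors, collapses the superdiagonal entries of the matrix of $A$ into the stated closed form for $\varphi_i$. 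The diagonal entries $\theta_m$ and $\theta_m^*$ are read off directly by evaluating the definitions in Lemma~\ref{lem:AB_E3} at the odd index $2m+1$.

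For the central elements, I would use the matrix of $t_0$ from Lemma~\ref{lem1:t0_E3}, which gives $(t_0-k_3^{-1})v_i\in E(k_0,k_1,k_2,k_3)^{3 \bmod{4}}(k_3)$ for every odd $i$; hence $t_0$ acts on the quotient as the scalar $k_3^{-1}$, so $q t_0^{-1}+q^{-1} t_0$ acts there as $q k_3+q^{-1} k_3^{-1}$. Combining this with the scalar actions of $c_0,c_1,c_2,c_3$ on the $\H_q$-module $E(k_0,k_1,k_2,k_3)^{3 \bmod{4}}$, which by Table~\ref{Z/4Z-action} and Proposition~\ref{prop:E}(ii) equal $k_3+k_3^{-1}$, $k_0+k_0^{-1}$, $k_1+k_1^{-1}$, $k_2+k_2^{-1}$ respectively, and feeding everything into the formulas of Theorem~\ref{thm:hom} for $\alpha,\beta,\gamma$, yields the scalar multiplications~(\ref{alpha:E3/E3(k3)})--(\ref{gamma:E3/E3(k3)}).

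The main obstacle is the bookkeeping in the second step: one must check carefully that, after reduction modulo the submodule, the surviving coefficient of $v_{i-2}$ in $A v_i$ (and of $v_{i+2}$ in $B v_i$) is exactly the asserted one, and that the product $\varrho_i\varrho_{i-1}$ together with the scaling ratio and a power of $q$ really does telescope into the advertised product of $q$-shifted factors. This is routine but unforgiving algebra, entirely parallel to the computations already carried out for Lemmas~\ref{lem:AB_E3} and~\ref{lem:AB_E3(k3)}.
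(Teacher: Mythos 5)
Your proposal is correct and follows essentially the same route as the paper's own proof: the basis of the quotient from Lemma~\ref{lem3:t0_E3}, reduction of the formulas of Lemma~\ref{lem:AB_E3} modulo the even-indexed vectors together with the rescaling by $\mu_i$ (whose ratio $\mu_i/\mu_{i-2}=-k_0^{-1}k_1^{-1}q^{-i}$ you correctly identify as normalizing the subdiagonal of $B$), and the observation that $t_0$ acts as $k_3^{-1}$ on the quotient, fed into Theorem~\ref{thm:hom}. One remark: carrying out the telescoping you describe actually produces the prefactor $k_1^{-1}k_3^{-1}q^{\frac{d-1}{2}}$ in $\varphi_i$ (which is also what the parameters $(a,b,c)$ used in Proposition~\ref{prop:E3/E3(k3)} require), so the $k_0^{-1}k_3^{-1}$ printed in the statement appears to be a typo in the paper rather than a defect of your argument.
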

\begin{proof}
By Lemma \ref{lem3:t0_E3} the cosets (\ref{basis_E3/E3(k3)}) are a basis for the quotient of $E(k_0,k_1,k_2,k_3)^{3 \bmod{4}}$ modulo $E(k_0,k_1,k_2,k_3)^{3 \bmod{4}}(k_3)$. By Lemmas \ref{lem:AB_E3} and \ref{lem3:t0_E3} we obtain the matrices representing $A$ and $B$ with respect to (\ref{basis_E3/E3(k3)}). By Lemma \ref{lem1:t0_E3} we have 
\begin{gather*}
(t_0-k_3^{-1}) v_i\in  E(k_0,k_1,k_2,k_3)^{3 \bmod{4}}(k_3)
\qquad 
\hbox{for $i=1,3,\ldots,d$}.
\end{gather*}
Combined with Theorem \ref{thm:hom} and Proposition \ref{prop:E}(ii) we see that $\alpha,\beta,\gamma$ act on the quotient $\triangle_q$-module of $E(k_0,k_1,k_2,k_3)^{3 \bmod{4}}$ modulo $E(k_0,k_1,k_2,k_3)^{3 \bmod{4}}(k_3)$ as scalar multiplication by (\ref{alpha:E3/E3(k3)})--(\ref{gamma:E3/E3(k3)}), respectively.
\end{proof}

\begin{prop}\label{prop:E3/E3(k3)}
Assume that the $\H_q$-module $E(k_0,k_1,k_2,k_3)$ is irreducible. 
Then the $\triangle_q$-module $E(k_0,k_1,k_2,k_3)^{3 \bmod{4}}/E(k_0,k_1,k_2,k_3)^{3 \bmod{4}}(k_3)$ is isomorphic to 
$$
V_{\frac{d-1}{2}}\left(
k_0 k_3 q^{\frac{d+3}{2}},
k_0 k_1 q^{\frac{d+1}{2}},
k_0 k_2 q^{\frac{d+1}{2}}
\right).
$$
Moreover the $\triangle_q$-module 
$
E(k_0,k_1,k_2,k_3)^{3 \bmod{4}}/E(k_0,k_1,k_2,k_3)^{3 \bmod{4}}(k_3)
$ 
is irreducible.
\end{prop}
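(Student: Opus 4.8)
The plan is to mirror the proof of Proposition~\ref{prop:E3(k3)}, now feeding in the matrix data of Lemma~\ref{lem:AB_E3/E3(k3)} in place of that of Lemma~\ref{lem:AB_E3(k3)}. First I would set
$$
(a,b,c)=\left(k_0 k_1 q^{\frac{d+1}{2}},\; k_0 k_3 q^{\frac{d+3}{2}},\; k_0 k_2 q^{\frac{d+1}{2}}\right),
\qquad
d'=\frac{d-1}{2}.
$$
Using the hypothesis $k_0^2=q^{-d-1}$ one checks the two identities $(a+a^{-1})(q^{d'+1}+q^{-d'-1})=(k_0+k_0^{-1})(k_1+k_1^{-1})$ and $(b+b^{-1})(c+c^{-1})=(k_2+k_2^{-1})(qk_3+q^{-1}k_3^{-1})$, so that the scalar in~(\ref{beta:E3/E3(k3)}) equals $(b+b^{-1})(c+c^{-1})+(a+a^{-1})(q^{d'+1}+q^{-d'-1})$; by Proposition~\ref{prop:UAWd}(ii) and Table~\ref{Z/2Z-action} this is the scalar by which $\beta$ acts on $V_{d'}(a,b,c)^{1\bmod 2}$.

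Next I would match Lemma~\ref{lem:AB_E3/E3(k3)} against Proposition~\ref{prop:UAWd}: the matrix representing $A$ in Lemma~\ref{lem:AB_E3/E3(k3)} is the upper bidiagonal matrix (diagonal entries $\theta_i^*$, superdiagonal entries $\varphi_i$) that Proposition~\ref{prop:UAWd} attaches to $B$ on $V_{d'}(a,b,c)$, and the matrix representing $B$ is the lower bidiagonal matrix (diagonal entries $\theta_i$, subdiagonal entries $1$) that Proposition~\ref{prop:UAWd} attaches to $A$. Together with the $\alpha,\beta,\gamma$ scalars~(\ref{alpha:E3/E3(k3)})--(\ref{gamma:E3/E3(k3)}), this shows that $A,B,\gamma$ act on the quotient exactly as on $V_{d'}(a,b,c)^{1\bmod 2}$, so by the uniqueness of $V_{d'}(a,b,c)$ up to isomorphism (Lemma~\ref{lem:gen_UAW}) the $\triangle_q$-module $E(k_0,k_1,k_2,k_3)^{3\bmod 4}/E(k_0,k_1,k_2,k_3)^{3\bmod 4}(k_3)$ is isomorphic to $V_{d'}(a,b,c)^{1\bmod 2}$. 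By Lemma~\ref{lem:Z/2Z} this is in turn isomorphic to $V_{d'}(b,a,c)=V_{\frac{d-1}{2}}(k_0 k_3 q^{\frac{d+3}{2}},\; k_0 k_1 q^{\frac{d+1}{2}},\; k_0 k_2 q^{\frac{d+1}{2}})$, the claimed module.

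Finally, assuming the $\H_q$-module $E(k_0,k_1,k_2,k_3)$ is irreducible, Theorem~\ref{thm:irr_E} gives $k_0k_1k_2k_3,\,k_0k_1^{-1}k_2k_3,\,k_0k_1k_2^{-1}k_3,\,k_0k_1k_2k_3^{-1}\notin\{q^{-i}\,|\,i=1,3,\ldots,d\}$. Rewriting each of $abc,\,a^{-1}bc,\,ab^{-1}c,\,abc^{-1}$ in terms of these four $k$-products and a power of $q$ (using $k_0^2=q^{-d-1}$ once more) and comparing index sets, these four hypotheses imply $abc,\,a^{-1}bc,\,ab^{-1}c,\,abc^{-1}\notin\{q^{2i-d'-1}\,|\,i=1,2,\ldots,d'\}$ (in fact over the slightly larger ranges that appear in the proofs of Propositions~\ref{prop:E3(k3)} and~\ref{prop:E2/E2(k2inv)}); hence $V_{d'}(a,b,c)$, and therefore its twist $V_{d'}(a,b,c)^{1\bmod 2}$, is irreducible by Theorem~\ref{thm:irr_UAW}. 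The only delicate point is this last comparison of index sets: one must keep careful track of how the reparametrization and the relation $k_0^2=q^{-d-1}$ carry each exclusion set $\{q^{-i}\,|\,i=1,3,\ldots,d\}$ onto the (not-quite-symmetric) ranges demanded by Theorem~\ref{thm:irr_UAW}. Everything else is a direct reading-off from Lemma~\ref{lem:AB_E3/E3(k3)}, exactly as in the earlier cases of this section.
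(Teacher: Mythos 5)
Your proof is correct and follows the paper's argument essentially verbatim: the same choice of $(a,b,c)=\bigl(k_0k_1q^{\frac{d+1}{2}},\,k_0k_3q^{\frac{d+3}{2}},\,k_0k_2q^{\frac{d+1}{2}}\bigr)$ and $d'=\frac{d-1}{2}$, the identification with $V_{d'}(a,b,c)^{1\bmod 2}$ via Lemma \ref{lem:AB_E3/E3(k3)} and Proposition \ref{prop:UAWd}, irreducibility via Theorems \ref{thm:irr_E} and \ref{thm:irr_UAW}, and the final untwisting to $V_{d'}(b,a,c)$ via Lemma \ref{lem:Z/2Z}. The additional bookkeeping you supply is consistent with what the paper records (there the exclusions come out as $ab^{-1}c\notin\{q^{2i-d'-1}\mid i=0,\ldots,d'\}$ and $abc,a^{-1}bc,abc^{-1}\notin\{q^{2i-d'-1}\mid i=1,\ldots,d'+1\}$, both of which contain the range $i=1,\ldots,d'$ required by Theorem \ref{thm:irr_UAW}).
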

\begin{proof}
Set $(a,b,c)=(
k_0 k_1 q^{\frac{d+1}{2}},
k_0 k_3 q^{\frac{d+3}{2}},
k_0 k_2 q^{\frac{d+1}{2}}
)$
and $d'=\frac{d-1}{2}$. Under the assumption (\ref{k02=-d-1}) the scalar (\ref{beta:E3/E3(k3)}) is equal to 
$$
(b+b^{-1})(c+c^{-1})+(a+a^{-1})(q^{d'+1}+q^{-d'-1}).
$$
By Proposition \ref{prop:UAWd} and Lemma \ref{lem:AB_E3/E3(k3)} the quotient $\triangle_q$-module $E(k_0,k_1,k_2,k_3)^{3 \bmod{4}}$ modulo $E(k_0,k_1,k_2,k_3)^{3 \bmod{4}}(k_3)$ is isomorphic to $V_{d'}(a,b,c)^{1\bmod{2}}$.

Since the $\H_q$-module $E(k_0,k_1,k_2,k_3)$ is irreducible, it follows from Theorem \ref{thm:irr_E} that 
\begin{align*}
& ab^{-1}c\not\in\{q^{2i-d'-1}\,|\,i=0,1,\ldots,d'\};
\\
& abc,a^{-1}bc,abc^{-1}\not\in\{q^{2i-d'-1}\,|\,i=1,2,\ldots,d'+1\}.
\end{align*}
Hence the $\triangle_q$-module $V_{d'}(a,b,c)$ is irreducible by Theorem \ref{thm:irr_UAW}. 
By Lemma \ref{lem:Z/2Z} the $\triangle_q$-module $V_{d'}(a,b,c)^{1\bmod{2}}$ is isomorphic to $V_{d'}(b,a,c)$.
The proposition follows.
\end{proof}

\begin{thm}\label{thm:E3}
Assume that the $\H_q$-module $E(k_0,k_1,k_2,k_3)$ is irreducible. Then the following hold:
\begin{enumerate}
\item If $k_3^2=1$ then 
\begin{table}[H]
\begin{tikzpicture}[node distance=1.2cm]
 \node (0)                  {$\{0\}$};
 \node (E)  [above of=0]   {$E(k_0,k_1,k_2,k_3)^{3 \bmod{4}}(k_3)$};
 \node (V)  [above of=E]   {$E(k_0,k_1,k_2,k_3)^{3 \bmod{4}}$};
 \draw (0)   -- (E);
 \draw (E)  -- (V);
\end{tikzpicture}
\end{table}
\noindent is the lattice of $\triangle_q$-submodules of $E(k_0,k_1,k_2,k_3)^{3 \bmod{4}}$.

\item If $k_3^2\not=1$ then 
 \begin{table}[H]
\begin{tikzpicture}[node distance=1.5cm]
 \node (0)                  {$\{0\}$};
 \node (1)  [above of=0]   {};
 \node (2)  [right of=1]   {};
 \node (3)  [left of=1]   {};
 \node (E1)  [right of=2]  {$E(k_0,k_1,k_2,k_3)^{3 \bmod{4}}(k_3)$};
 \node (E2)  [left of=3]   {$E(k_0,k_1,k_2,k_3)^{3 \bmod{4}}(k_3^{-1})$};
 \node (V) [above of=1]  {$E(k_0,k_1,k_2,k_3)^{3 \bmod{4}}$};
 \draw (0)   -- (E1);
 \draw (0)   -- (E2);
 \draw (E1)   -- (V);
 \draw (E2)  -- (V);
\end{tikzpicture}
\end{table}
\noindent is the lattice of $\triangle_q$-submodules of $E(k_0,k_1,k_2,k_3)^{3 \bmod{4}}$.
\end{enumerate}
\end{thm}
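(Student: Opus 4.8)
The plan is to reuse the Jordan--H\"older bookkeeping of Theorems~\ref{thm:E0}(ii) and~\ref{thm:E1}, now fed by Propositions~\ref{prop:E3(k3)} and~\ref{prop:E3/E3(k3)} together with the $t_0$-data in Lemmas~\ref{lem2:t0_E3} and~\ref{lem3:t0_E3}. Abbreviate $E=E(k_0,k_1,k_2,k_3)$. First I would record that twisting an irreducible module by an $\F$-algebra automorphism yields an irreducible module, so the hypothesis makes $E^{3\bmod 4}$ an irreducible $\H_q$-module; hence Propositions~\ref{prop:t0eigenspace=AWmodule} and~\ref{prop:irr_AWmodule_in_t0eigenspace} apply with $V=E^{3\bmod 4}$. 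Next, by Proposition~\ref{prop:E3(k3)} the $\triangle_q$-submodule $E^{3\bmod 4}(k_3)$ is irreducible, and by Proposition~\ref{prop:E3/E3(k3)} the quotient $E^{3\bmod 4}/E^{3\bmod 4}(k_3)$ is irreducible, so
\[
\{0\}\subset E^{3\bmod 4}(k_3)\subset E^{3\bmod 4}
\]
is a composition series of length $2$ for the $\triangle_q$-module $E^{3\bmod 4}$; in particular every composition factor of $E^{3\bmod 4}$ has dimension $\tfrac{d+1}{2}$, and (by the length-$2$ property) every proper nonzero $\triangle_q$-submodule of $E^{3\bmod 4}$ is irreducible.

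For part (i), assume $k_3^2=1$. By Lemma~\ref{lem2:t0_E3}(i), $k_3$ is the only eigenvalue of $t_0$ on $E^{3\bmod 4}$, and $k_3\ne 0$ since $k_3$ is a nonzero scalar, so Proposition~\ref{prop:irr_AWmodule_in_t0eigenspace} forces every irreducible $\triangle_q$-submodule of $E^{3\bmod 4}$ into $E^{3\bmod 4}(k_3)$; as $E^{3\bmod 4}(k_3)$ is itself irreducible, it is the unique irreducible $\triangle_q$-submodule. Combining this with the fact that every proper nonzero submodule is irreducible, the only $\triangle_q$-submodules are $\{0\}$, $E^{3\bmod 4}(k_3)$ and $E^{3\bmod 4}$, which is precisely the asserted chain; equivalently, the displayed series is the unique composition series.

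For part (ii), assume $k_3^2\ne 1$. Then Lemma~\ref{lem2:t0_E3}(ii) gives that $t_0$ is diagonalizable on $E^{3\bmod 4}$ with the two eigenvalues $k_3^{\pm1}$, so $E^{3\bmod 4}(k_3^{-1})$ is a $\triangle_q$-submodule by Proposition~\ref{prop:t0eigenspace=AWmodule}; since $\dim E^{3\bmod 4}=d+1$ and $\dim E^{3\bmod 4}(k_3)=\tfrac{d+1}{2}$ by Lemma~\ref{lem3:t0_E3}, it has dimension $\tfrac{d+1}{2}$, hence is proper and nonzero. Any irreducible $\triangle_q$-submodule contained in it has dimension $\tfrac{d+1}{2}$ — the common dimension of the composition factors of $E^{3\bmod 4}$ — and so equals $E^{3\bmod 4}(k_3^{-1})$; thus $E^{3\bmod 4}(k_3^{-1})$ is irreducible, and
\[
\{0\}\subset E^{3\bmod 4}(k_3^{-1})\subset E^{3\bmod 4}
\]
is a second composition series. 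By Proposition~\ref{prop:irr_AWmodule_in_t0eigenspace} the only irreducible $\triangle_q$-submodules of $E^{3\bmod 4}$ are $E^{3\bmod 4}(k_3)$ and $E^{3\bmod 4}(k_3^{-1})$, so these two series are the only composition series; since every proper nonzero submodule is irreducible, the submodule lattice is exactly the diamond with bottom $\{0\}$, top $E^{3\bmod 4}$ and incomparable middle terms $E^{3\bmod 4}(k_3)$ and $E^{3\bmod 4}(k_3^{-1})$. The only step that is not pure Jordan--H\"older bookkeeping is the irreducibility of $E^{3\bmod 4}(k_3^{-1})$ in part (ii); as in the proof of Theorem~\ref{thm:E0}(ii) I would obtain it by dimension counting against the already-identified composition factors rather than by a fresh matrix computation, so I do not anticipate a genuine obstacle.
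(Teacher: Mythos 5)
Your proposal is correct and follows essentially the same route as the paper, which disposes of this theorem by pointing back to the arguments for Theorems \ref{thm:E0}(ii) and \ref{thm:E1}: build the composition series $\{0\}\subset E(k_0,k_1,k_2,k_3)^{3\bmod 4}(k_3)\subset E(k_0,k_1,k_2,k_3)^{3\bmod 4}$ from Propositions \ref{prop:E3(k3)} and \ref{prop:E3/E3(k3)}, then use Proposition \ref{prop:irr_AWmodule_in_t0eigenspace} together with the eigenvalue data of Lemmas \ref{lem2:t0_E3} and \ref{lem3:t0_E3} to locate all irreducible submodules inside the $t_0$-eigenspaces and conclude by Jordan--H\"older. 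Your explicit observations that twisting preserves irreducibility and that every proper nonzero submodule of a length-two module is irreducible are exactly the bookkeeping the paper leaves implicit.
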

\begin{proof}
Using the above lemmas and propositions, the result follows by an  argument similar to the proof of Theorem \ref{thm:E1}.
\end{proof}

\subsection{The lattice of $\triangle_q$-submodules of $O(k_0,k_1,k_2,k_3)$}\label{s:lattice_O}

Throughout this subsection  we use the following conventions: Let $d\geq 0$ denote an even integer and assume that $k_0,k_1,k_2,k_3$ are nonzero scalars in $\F$ with 
\begin{gather}\label{k0123=-d-1}
k_0 k_1 k_2 k_3=q^{-d-1}.
\end{gather} 
Let $\{v_i\}_{i=0}^d$ denote the basis for $O(k_0,k_1,k_2,k_3)$ from Proposition \ref{prop:O}(i). Set 
$$
\varrho_i
=\left\{
\begin{array}{ll}
(q^{i-d-1}-1)(k_2^{-2} q^{i-d-1}-1)
\qquad 
&\hbox{for $i=1,3,\ldots,d-1$},
\\
(1-q^i) (1-k_0^2 q^i)
\qquad 
&\hbox{for $i=2,4,\ldots,d$}.
\end{array}
\right.
$$

In this subsection, we investigate the $\triangle_q$-submodules of the $\H_q$-module $O(k_0,k_1,k_2,k_3)$.

\begin{lem}\label{lem:AB_O}
The actions of $A$ and $B$ on the $\H_q$-module $O(k_0,k_1,k_2,k_3)$ are as follows:
\begin{align*}
A v_i &=
\left\{
\begin{array}{ll}
\displaystyle
\theta_i v_i 
-
k_0^{-1} k_1^{-1} q^{-i-1}(q-q^{-1}) v_{i+1}
-
k_0^{-1} k_1^{-1} q^{-i-2} v_{i+2}
\qquad 
&\hbox{for $i=0,2,\ldots,d-2$},
\\
\displaystyle
\theta_i v_i -k_0^{-1} k_1^{-1} q^{-i-1} v_{i+2}
\qquad 
&\hbox{for $i=1,3,\ldots,d-3$},
\end{array}
\right.
\\
A v_{d-1}&=\theta_{d-1} v_{d-1},
\qquad 
A v_d = \theta_d v_d,
\\
B v_i &=
\left\{
\begin{array}{ll}
\displaystyle
\theta_i^* v_i -k_0^{-1} k_3^{-1} q^{-i} \varrho_i \varrho_{i-1} v_{i-2}
\qquad 
&\hbox{for $i=2,4,\ldots,d$},
\\
\displaystyle
\theta_i^* v_i
+
k_0^{-1} k_3^{-1} q^{-i} (q-q^{-1}) \varrho_i  v_{i-1}
-
k_0^{-1} k_3^{-1} q^{1-i} \varrho_i \varrho_{i-1} v_{i-2}
\qquad 
&\hbox{for $i=3,5,\ldots,d-1$},
\end{array}
\right.
\\
B v_0 &=
\theta^*_0 v_0,
\qquad 
B v_1 =
\theta^*_1 v_1+
k_0^{-1} k_3^{-1} q^{-1} (q-q^{-1}) \varrho_1  v_0,
\end{align*}
where 
\begin{align*}
\theta_i
&=
k_0 k_1 q^{2\lceil \frac{i}{2}\rceil}
+k_0^{-1} k_1^{-1}  q^{-2\lceil \frac{i}{2}\rceil}
\qquad 
\hbox{for $i=0,1,\ldots,d$},
\\
\theta_i^*
&=
k_0 k_3 q^{2\lceil \frac{i}{2}\rceil}
+k_0^{-1} k_3^{-1}  q^{-2\lceil \frac{i}{2}\rceil}
\qquad 
\hbox{for $i=0,1,\ldots,d$}.
\end{align*}
\end{lem}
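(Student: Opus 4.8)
The plan is to reduce everything to the explicit actions of $X^{\pm1}$ and $Y^{\pm1}$ on the basis $\{v_i\}_{i=0}^d$, exactly as in the proof of Lemma~\ref{lem:AB_E}. By Theorem~\ref{thm:hom} we have $A=t_1t_0+(t_1t_0)^{-1}$ and $B=t_3t_0+(t_3t_0)^{-1}$, and Lemma~\ref{lem:X+Xinv}(i),(ii) identifies these with $Y+Y^{-1}$ and $X+X^{-1}$ respectively. So first I would rewrite $A=Y+Y^{-1}$ and $B=X+X^{-1}$, reducing the lemma to evaluating $(Y+Y^{-1})v_i$ and $(X+X^{-1})v_i$ for all $i$, for which Lemma~\ref{lem:XYinO} supplies the raw data.

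Next I would unpack Lemma~\ref{lem:XYinO}. Part (ii) directly gives $Y^{-1}v_i$ for even $i$ (lying in the span of $v_i,v_{i+1}$, with $Yv_d=k_0k_1q^dv_d$ at the top) and $Yv_i$ for odd $i$ (in the span of $v_i,v_{i+1}$); similarly part (i) gives $X^{-1}v_i$ for even $i$ and $Xv_i$ for odd $i$ (in the span of $v_{i-1},v_i$, with $Xv_0=k_0k_3v_0$ at the bottom). The remaining actions are recovered by applying $Y$ (resp. $Y^{-1}$, $X$, $X^{-1}$) to these two-term relations and solving: each inversion introduces only one further basis vector, so $Yv_i$ for even $i\le d-2$ becomes a combination of $v_i,v_{i+1},v_{i+2}$, while $Y^{-1}v_i$ for odd $i\le d-3$ becomes a combination of $v_i,v_{i+1},v_{i+2}$. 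Adding the two actions termwise gives $Av_i$; the middle ($v_{i+1}$) coefficients partially cancel, and after using $\lceil i/2\rceil=i/2$ or $(i+1)/2$ according to the parity of $i$ to collect the diagonal entries into $\theta_i$, the stated formulas drop out. The computation of $Bv_i$ from $X+X^{-1}$ runs in exactly the same way, with the $\varrho$-factors of Lemma~\ref{lem:XYinO}(i) appearing in the lower-triangular entries.

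I do not expect a conceptual obstacle here; the work is pure bookkeeping, and the hard part will be keeping track of parities and the ends of the basis. Specifically, the four boundary indices $i\in\{0,1,d-1,d\}$ need separate treatment, because the generic inversion step either terminates or picks up the special top/bottom relations of Lemma~\ref{lem:XYinO}; this is what forces $Av_{d-1}=\theta_{d-1}v_{d-1}$, $Av_d=\theta_dv_d$, $Bv_0=\theta_0^*v_0$, and the solitary $v_0$-term in $Bv_1$. Once the parity bookkeeping in the exponents of $q$ and in the $\varrho$-products is carried out carefully, the lemma follows by direct simplification.
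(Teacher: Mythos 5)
Your proposal is correct and follows exactly the paper's route: identify $A=Y+Y^{-1}$ and $B=X+X^{-1}$ via Lemma \ref{lem:X+Xinv}, then evaluate on the basis $\{v_i\}_{i=0}^d$ using the two-term relations of Lemma \ref{lem:XYinO}, which the paper dismisses as a routine computation. Your extra detail on inverting the two-term relations, the cancellation of the $v_{i+1}$ terms, and the separate treatment of the boundary indices $i\in\{0,1,d-1,d\}$ is precisely the bookkeeping the paper leaves implicit.
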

\begin{proof}
By Lemma \ref{lem:X+Xinv} we have $A=Y+Y^{-1}$ and $B=X+X^{-1}$. Using Lemma \ref{lem:XYinO} it is routine to evaluate the actions of $Y+Y^{-1}$ and $X+X^{-1}$ on $O(k_0,k_1,k_2,k_3)$.
\end{proof}

\begin{lem}\label{lem1:t0_O}
The matrix representing $t_0$ with respect to the $\F$-basis 
\begin{gather*}
v_0,
\quad 
v_{i-1}+q^{-i} (v_i-v_{i-1})
\quad \hbox{for $i=2,4,\ldots,d$},
\quad 
v_i 
\quad \hbox{for $i=1,3,\ldots,d-1$}
\end{gather*}
for $O(k_0,k_1,k_2,k_3)$ is 
\begin{gather*}
\begin{pmatrix}
k_0   &\rvline &{\bf 0}  &\rvline &{\bf 0} 
\\
\hline
{\bf 0}  &\rvline
&k_0 I_{\frac{d}{2}}
 &\rvline & -k_0^{-1} I_{\frac{d}{2}}
\\
\hline
{\bf 0} &  \rvline 
&{\bf 0} &  \rvline
&k_0^{-1} I_{\frac{d}{2}} 
\end{pmatrix}.
\end{gather*}
\end{lem}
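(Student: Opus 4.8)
The plan is to verify the asserted block matrix by a direct computation from the explicit action of $t_0$ recorded in Proposition \ref{prop:O}(i). First I would abbreviate $w_i=v_{i-1}+q^{-i}(v_i-v_{i-1})=(1-q^{-i})v_{i-1}+q^{-i}v_i$ for each even $i$ with $2\le i\le d$, so that the proposed basis is $v_0$, followed by $w_2,w_4,\dots,w_d$, followed by $v_1,v_3,\dots,v_{d-1}$. It suffices to establish three identities: (a) $t_0v_0=k_0v_0$; (b) $t_0w_i=k_0w_i$ for every even $i$ with $2\le i\le d$; and (c) $t_0v_i=k_0^{-1}v_i-k_0^{-1}w_{i+1}$ for every odd $i$ with $1\le i\le d-1$. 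Given these, the matrix of $t_0$ in the stated basis is block upper triangular: the top-left block is $(k_0)$, the middle diagonal block is $k_0I_{d/2}$ by (b), and the bottom diagonal block is $k_0^{-1}I_{d/2}$ by (c). Moreover the $j$-th odd vector $v_{2j-1}$ is sent to $k_0^{-1}v_{2j-1}-k_0^{-1}w_{2j}$, and $w_{2j}$ is the $j$-th vector of the middle block, so the off-diagonal $(2,3)$ block is exactly $-k_0^{-1}I_{d/2}$, which is the claimed form.

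Identity (a) is immediate from Proposition \ref{prop:O}(i). For (c), I would use that $i+1$ is even, hence $w_{i+1}=(1-q^{-i-1})v_i+q^{-i-1}v_{i+1}$; substituting this into the right-hand side of (c) and simplifying collapses it to $k_0^{-1}q^{-i-1}(v_i-v_{i+1})$, which is exactly the value of $t_0v_i$ given for odd $i$. For (b), I would expand $w_i=(1-q^{-i})v_{i-1}+q^{-i}v_i$ and apply $t_0$ termwise, noting that $i-1$ is odd so $t_0v_{i-1}=k_0^{-1}q^{-i}(v_{i-1}-v_i)$, while $i$ is even so $t_0v_i$ has the two-term form from Proposition \ref{prop:O}(i). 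Collecting the coefficient of $v_i$ gives $k_0q^{-i}$, and collecting the coefficient of $v_{i-1}$ gives, after the expansion $q^{-i}(1-q^i)(1-k_0^2q^i)+(1-q^{-i})=k_0^2(q^i-1)$, the value $k_0(1-q^{-i})$; thus $t_0w_i=k_0(1-q^{-i})v_{i-1}+k_0q^{-i}v_i=k_0w_i$.

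The only step with any real content is the cancellation inside (b); everything else is bookkeeping with the parities of the indices. I therefore expect the main obstacle, a mild one, to be keeping the exponents and signs straight in that single expansion, after which the block structure of the matrix falls out mechanically. I note also that the defining constraint \eqref{k0123=-d-1} is not needed for this lemma, since the action of $t_0$ on $O(k_0,k_1,k_2,k_3)$ involves only the parameter $k_0$.
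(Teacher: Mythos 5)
Your proposal is correct and is exactly the computation the paper intends: its proof is the one-line instruction ``Apply Proposition \ref{prop:O}(i) to verify the lemma,'' and your identities (a)--(c), including the cancellation $q^{-i}(1-q^i)(1-k_0^2q^i)+(1-q^{-i})=k_0^2(q^i-1)$, carry that verification out correctly. Your side remark that the constraint \eqref{k0123=-d-1} is not used here is also accurate.
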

\begin{proof}
Apply Proposition \ref{prop:O}(i) to verify the lemma.
\end{proof}

\begin{lem}\label{lem2:t0_O}
\begin{enumerate}
\item If $d=0$ then $t_0$ is diagonalizable on $O(k_0,k_1,k_2,k_3)$ with exactly one eigenvalue $k_0$.  

\item If $d\geq 2$ and $k_0^2 =1$ then $t_0$ is not diagonalizable on $O(k_0,k_1,k_2,k_3)$ with exactly one eigenvalue $k_0$.

\item If $d\geq 2$ and $k_0^2\not= 1$ then $t_0$ is diagonalizable on $O(k_0,k_1,k_2,k_3)$ with exactly two eigenvalues 
$k_0^{\pm1}$. 
\end{enumerate}
\end{lem}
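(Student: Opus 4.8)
The plan is to read everything off the matrix for $t_0$ furnished by Lemma~\ref{lem1:t0_O}, exactly as was done for the even-dimensional case in Lemma~\ref{lem2:t0_E}. Denote by $M$ the block upper triangular matrix of $t_0$ with respect to the basis displayed in Lemma~\ref{lem1:t0_O}, with the $3\times3$ block decomposition into blocks of sizes $1,\frac{d}{2},\frac{d}{2}$. Since $M$ is block upper triangular, its eigenvalues are the diagonal entries: $k_0$, occurring $1+\frac{d}{2}$ times, and $k_0^{-1}$, occurring $\frac{d}{2}$ times.

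For (i), when $d=0$ the matrix $M$ is the $1\times1$ matrix $(k_0)$, so $t_0$ is diagonalizable with the single eigenvalue $k_0$; this is immediate.

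For (ii) and (iii) assume $d\geq2$. If $k_0^2=1$, then $k_0=k_0^{-1}$ and $M=k_0 I_{d+1}+N$, where $N$ has the single nonzero block $-k_0^{-1}I_{d/2}$ in the $(2,3)$ block position of Lemma~\ref{lem1:t0_O}; since $d\geq2$ this $N$ is nonzero and nilpotent, so $M$ is not diagonalizable and $k_0$ is its only eigenvalue, giving (ii). If $k_0^2\neq1$, then $k_0\neq k_0^{-1}$ and I would compute the ranks of $M-k_0 I_{d+1}$ and $M-k_0^{-1}I_{d+1}$ directly from Lemma~\ref{lem1:t0_O}: the first has nonzero entries only in the last $\frac{d}{2}$ columns, where the bottom block $(k_0^{-1}-k_0)I_{d/2}$ is invertible, so its rank is $\frac{d}{2}$; the second has rank $1+\frac{d}{2}$, coming from the invertible scalar $k_0-k_0^{-1}$ in the top-left position together with the invertible block $(k_0-k_0^{-1})I_{d/2}$ in the middle block row. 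By the rank--nullity theorem the geometric multiplicities of $k_0$ and $k_0^{-1}$ are $\frac{d}{2}+1$ and $\frac{d}{2}$, matching their algebraic multiplicities; hence $M$ is diagonalizable with exactly the two eigenvalues $k_0^{\pm1}$, which proves (iii).

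The computation is entirely routine once Lemma~\ref{lem1:t0_O} is in hand, so there is no serious obstacle. The only points requiring a moment's attention are keeping track of which block columns of $M-k_0 I$ and $M-k_0^{-1}I$ carry the invertible blocks so that the rank count comes out correctly, and observing that the hypothesis $d\geq2$ is precisely what forces the nilpotent part $N$ in case (ii) to be nonzero (so that case (i) is genuinely separate).
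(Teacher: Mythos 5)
Your proof is correct and follows the same route as the paper, which simply applies the rank--nullity theorem to the block upper triangular matrix of Lemma \ref{lem1:t0_O}; your rank computations for $M-k_0I$ and $M-k_0^{-1}I$ and the nilpotent-part argument in case (ii) are exactly the details the paper leaves implicit.
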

\begin{proof}
To see the lemma, apply the rank-nullity theorem to Lemma \ref{lem1:t0_O}. 
\end{proof}

\begin{lem}\label{lem3:t0_O}
$O(k_0,k_1,k_2,k_3)(
k_0)$ is of dimension $\frac{d}{2}+1$ with the $\F$-basis
$$
v_0,
\quad
v_{i-1}+q^{-i} (v_i-v_{i-1})
\quad \hbox{for $i=2,4,\ldots,d$}.
$$
\end{lem}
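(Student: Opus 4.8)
The plan is to read everything off the matrix produced in Lemma \ref{lem1:t0_O}. That matrix is block upper triangular with respect to an $\F$-basis of $O(k_0,k_1,k_2,k_3)$ that splits into three parts: the singleton $\{v_0\}$, the family $v_{i-1}+q^{-i}(v_i-v_{i-1})$ with $i=2,4,\ldots,d$, and the family $v_i$ with $i=1,3,\ldots,d-1$. The columns corresponding to the first two families are $k_0$ times the respective basis vector, so each of those $\frac{d}{2}+1$ basis vectors is an eigenvector of $t_0$ with eigenvalue $k_0$. Being part of a basis they are linearly independent, so their span is a $\left(\frac{d}{2}+1\right)$-dimensional subspace of $O(k_0,k_1,k_2,k_3)(k_0)$, and it is precisely the span of the vectors listed in the statement.

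For the reverse inclusion I would write an arbitrary $w\in O(k_0,k_1,k_2,k_3)(k_0)$ in this basis as $w=w_0+w_1+w_2$ according to the three parts. Since $w_0$ and $w_1$ lie in the eigenspace, $(t_0-k_0)w=(t_0-k_0)w_2$; but the last block of columns of the matrix shows that $(t_0-k_0)w_2$ has a component along the second family equal to $-k_0^{-1}$ times the image of $w_2$ under the obvious bijection between the third and second families. As $k_0\neq 0$, this forces $w_2=0$, hence $w$ lies in the span of $\{v_0\}$ together with the second family, which proves the claimed equality and thus the dimension. Alternatively one may simply invoke the rank--nullity theorem applied to $t_0-k_0$, exactly as in the proof of Lemma \ref{lem2:t0_O}.

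There is no real obstacle here: this is the same bookkeeping that yields Lemma \ref{lem3:t0_E} from Lemma \ref{lem1:t0_E}, and the argument could honestly be abbreviated to ``Immediate from Lemma \ref{lem1:t0_O}.'' The only point worth a second's thought is that the computation must not secretly use $k_0^2\neq 1$; when $k_0=k_0^{-1}$ the relevant block of columns still contributes a nonzero component along the second family unless $w_2=0$, so the dimension count and the stated basis are unaffected.
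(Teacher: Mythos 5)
Your proof is correct and follows the same route as the paper, whose proof is simply ``Immediate from Lemma \ref{lem1:t0_O}''; you have merely spelled out the bookkeeping, including the worthwhile observation that the argument does not depend on whether $k_0^2=1$.
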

\begin{proof}
Immediate from Lemma \ref{lem1:t0_O}.
\end{proof}

\begin{lem}\label{lem:AB_O(k0)}
Let 
$$
\mu_i=(-1)^{\frac{i}{2}}k_0^{-\frac{i}{2}}  
k_1^{-\frac{i}{2}} 
q^{-\frac{i(i-2)}{4}}
\qquad \hbox{for $i=2,4,\ldots,d$}.
$$
Then the matrices representing $A$ and $B$ with respect to the $\F$-basis 
\begin{gather}\label{basis:O(k0)}
v_0,
\quad 
\mu_i
(v_{i-1}+q^{-i} (v_i-v_{i-1}))
\quad \hbox{for $i=2,4,\ldots,d$}
\end{gather}
for the $\triangle_q$-module $O(k_0,k_1,k_2,k_3)(k_0)$ are 
$$
\begin{pmatrix}
\theta_0 & & &  &{\bf 0}
\\ 
1 &\theta_1 
\\
&1 &\theta_2
 \\
& &\ddots &\ddots
 \\
{\bf 0} & & &1 &\theta_\frac{d}{2}
\end{pmatrix},
\qquad 
\begin{pmatrix}
\theta_0^* &\varphi_1 &  & &{\bf 0}
\\ 
 &\theta_1^* &\varphi_2
\\
 &  &\theta_2^* &\ddots
 \\
 & & &\ddots &\varphi_{\frac{d}{2}}
 \\
{\bf 0}  & & & &\theta_\frac{d}{2}^*
\end{pmatrix},
$$
respectively, where 
\begin{align*}
\theta_i &=
k_0 k_1 q^{2i}
+k_0^{-1} k_1^{-1} q^{-2i}
\qquad 
\hbox{for $i=0,1,\ldots, \textstyle\frac{d}{2}$},
\\
\theta_i^* &=
k_0 k_3 q^{2i}
+k_0^{-1} k_3^{-1} q^{-2i}
\qquad 
\hbox{for $i=0,1,\ldots, \textstyle\frac{d}{2}$},
\\
\varphi_i &=
k_0^{-1} k_2 q^{\frac{d}{2}+2}
(q^i-q^{-i}) (q^{i-\frac{d}{2}-1}-q^{\frac{d}{2}-i+1})
\\
&\qquad \times \;
(q^{-i}-k_0^2 q^{i-2}) (q^{-i}-k_2^{-2} q^{i-d-2})
\qquad 
\hbox{for $i=1,2,\ldots, \textstyle\frac{d}{2}$}.
\end{align*}
The elements $\alpha,\beta,\gamma$ act on the $\triangle_q$-module $O(k_0,k_1,k_2,k_3)(k_0)$ as scalar multiplication by 
\begin{gather}
(k_3+k_3^{-1})(k_2+k_2^{-1})+(k_1+k_1^{-1})(q^{-1} k_0+q k_0^{-1}),
\label{alpha:O(k0)}
\\
(k_2+k_2^{-1})(k_1+k_1^{-1})+(k_3+k_3^{-1})(q^{-1} k_0+q k_0^{-1}),
\label{beta:O(k0)}
\\
(k_1+k_1^{-1})(k_3+k_3^{-1})+(k_2+k_2^{-1})(q^{-1} k_0+q k_0^{-1}),
\label{gam:O(k0)}
\end{gather}
respectively.
\end{lem}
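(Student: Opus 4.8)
The plan is to follow the proof of Lemma~\ref{lem:AB_E(k0)} almost verbatim. By Lemma~\ref{lem3:t0_O} the vectors $v_0$ together with $v_{i-1}+q^{-i}(v_i-v_{i-1})$ for $i=2,4,\ldots,d$ form an $\F$-basis for $O(k_0,k_1,k_2,k_3)(k_0)$, and rescaling the latter by the nonzero scalars $\mu_i$ leaves this a basis; thus (\ref{basis:O(k0)}) is an $\F$-basis. To lighten notation write $w_0=v_0$ and $w_j=\mu_{2j}\bigl(v_{2j-1}+q^{-2j}(v_{2j}-v_{2j-1})\bigr)$ for $j=1,2,\ldots,\tfrac d2$, so that the basis (\ref{basis:O(k0)}) is $w_0,w_1,\ldots,w_{d/2}$.

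First I would compute $Aw_j$ using Lemma~\ref{lem:AB_O}. Expanding $w_j$ in the $v_i$ and applying the formulas for $Av_{2j-1}$ and $Av_{2j}$, the vector $Aw_j$ becomes an $\F$-linear combination of $v_{2j-1},v_{2j},v_{2j+1},v_{2j+2}$; the content of the calculation is that this combination regroups as $\theta_j w_j+w_{j+1}$ for $j<\tfrac d2$ and as $\theta_{d/2}w_{d/2}$ for $j=\tfrac d2$, with $\theta_j$ as displayed. The particular powers of $q$ (and the sign) built into $\mu_i$ are exactly what is needed for the coefficient of $w_{j+1}$ to come out equal to $1$. Likewise $Bw_j$ expands as a combination of $v_{2j-3},v_{2j-2},v_{2j-1},v_{2j}$, which regroups as $\varphi_j w_{j-1}+\theta_j^* w_j$; here the assumption (\ref{k0123=-d-1}) is used to trade $k_1k_3$ for $q^{-d-1}k_0^{-1}k_2^{-1}$ at the right moment so that $\varphi_j$ emerges in the stated factored form. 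That no further basis vectors appear --- i.e. that the matrices are genuinely bidiagonal --- is already guaranteed by Proposition~\ref{prop:t0eigenspace=AWmodule}, since $O(k_0,k_1,k_2,k_3)(k_0)$ is an $A$- and $B$-invariant subspace, but it also falls out of the regrouping directly.

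For the central elements, recall from Theorem~\ref{thm:hom} that, inside $\H_q$, each of $\alpha,\beta,\gamma$ is a fixed polynomial in $c_1,c_2,c_3$ and $q^{-1}t_0+qt_0^{-1}$. On the subspace $O(k_0,k_1,k_2,k_3)(k_0)$ the element $t_0$ acts as the scalar $k_0$, hence $q^{-1}t_0+qt_0^{-1}$ acts as $q^{-1}k_0+qk_0^{-1}$, while by Proposition~\ref{prop:O}(ii) each $c_i$ acts as $k_i+k_i^{-1}$. Substituting these scalars into the images of $\alpha,\beta,\gamma$ from Theorem~\ref{thm:hom} gives exactly (\ref{alpha:O(k0)})--(\ref{gam:O(k0)}).

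I expect the only real work to be the bookkeeping in the middle paragraph: carrying the factors $\mu_i$ through the four-term expansions of $Aw_j$ and $Bw_j$ and checking that everything collapses to the claimed bidiagonal shape, with $\varphi_i$ in the indicated product form. This is routine, and the sole non-mechanical ingredient is the well-timed use of $k_0k_1k_2k_3=q^{-d-1}$.
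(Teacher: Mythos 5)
Your proposal is correct and follows exactly the paper's route: Lemma \ref{lem3:t0_O} for the basis, Lemma \ref{lem:AB_O} for the bidiagonal matrices of $A$ and $B$ (the paper leaves this as a ``direct calculation,'' which you have merely described in more detail), and Theorem \ref{thm:hom} together with Proposition \ref{prop:O}(ii) for the scalars by which $\alpha,\beta,\gamma$ act. No gaps.
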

\begin{proof}
By Lemma \ref{lem3:t0_O} the vectors (\ref{basis:O(k0)}) are an $\F$-basis for $E(k_0,k_1,k_2,k_3)(k_0)$. Applying Lemma \ref{lem:AB_O} a direct calculation yields the matrices representing 
$A$ and $B$ with respect to (\ref{basis:O(k0)}). By Theorem \ref{thm:hom} and Proposition \ref{prop:O}(ii) the elements $\alpha,\beta,\gamma$ act on $O(k_0,k_1,k_2,k_3)(k_0)$ as scalar multiplication by (\ref{alpha:O(k0)})--(\ref{gam:O(k0)}), respectively. 
\end{proof}

\begin{prop}\label{prop:O(k0)}
The $\triangle_q$-module $O(k_0,k_1,k_2,k_3)(k_0)$ is isomorphic to 
$$
V_{\frac{d}{2}}\left(
k_0 k_1 q^{\frac{d}{2}},
k_0 k_3 q^{\frac{d}{2}},
k_0 k_2 q^{\frac{d}{2}}
\right).
$$
Moreover the $\triangle_q$-module $O(k_0,k_1,k_2,k_3)(k_0)$ is irreducible if $k_0^2\not= 1$ and the $\H_q$-module $O(k_0,k_1,k_2,k_3)$ is irreducible.
\end{prop}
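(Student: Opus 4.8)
The plan is to imitate the structure of the proof of Proposition~\ref{prop:E(k0)}. First I would set $(a,b,c)=(k_0 k_1 q^{d/2},\,k_0 k_3 q^{d/2},\,k_0 k_2 q^{d/2})$ and $d'=\frac{d}{2}$. The first task is to identify the $\triangle_q$-module $O(k_0,k_1,k_2,k_3)(k_0)$ with $V_{d'}(a,b,c)$. Lemma~\ref{lem:AB_O(k0)} already gives the matrices of $A,B$ on $O(k_0,k_1,k_2,k_3)(k_0)$ in the tridiagonal/bidiagonal shape of Proposition~\ref{prop:UAWd}; the work is just to check that the scalars $\theta_i,\theta_i^*,\varphi_i$ of Lemma~\ref{lem:AB_O(k0)} match those of Proposition~\ref{prop:UAWd} under the substitution above, using the constraint $k_0 k_1 k_2 k_3=q^{-d-1}$ from (\ref{k0123=-d-1}) to rewrite $k_2$-dependent factors. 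In particular I would check that the scalar (\ref{beta:O(k0)}) equals $(c+c^{-1})(a+a^{-1})+(b+b^{-1})(q^{d'+1}+q^{-d'-1})$, which pins down the isomorphism via the traces of $B$ (Theorem~\ref{thm:onto_UAW}); together with the matching of the $A$- and $B$-matrix shapes this forces $O(k_0,k_1,k_2,k_3)(k_0)\cong V_{d'}(a,b,c)$, since $V_{d'}(a,b,c)$ is determined up to isomorphism by Lemma~\ref{lem:gen_UAW}.

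For the ``moreover'' part I would assume $k_0^2\neq 1$ and that the $\H_q$-module $O(k_0,k_1,k_2,k_3)$ is irreducible, and then invoke Theorem~\ref{thm:irr_O}: the hypothesis gives $k_0^2,k_1^2,k_2^2,k_3^2\notin\{q^{-i}\mid i=2,4,\ldots,d\}$. The goal is to translate these conditions into the irreducibility criterion of Theorem~\ref{thm:irr_UAW} for $V_{d'}(a,b,c)$, namely $abc,a^{-1}bc,ab^{-1}c,abc^{-1}\notin\{q^{2i-d'-1}\mid i=1,2,\ldots,d'\}$. Computing each product: $abc=k_0^3 k_1 k_2 k_3 q^{3d/2}=k_0^2 q^{3d/2-d-1}=k_0^2 q^{d/2-1}$, and similarly $a^{-1}bc=k_2^2 q^{d/2-1}$ (using the constraint again), $ab^{-1}c=k_1^{-1}k_3^{-1}\cdots$ — I would carry out these four small computations and check that each lands outside the forbidden set precisely when the corresponding $k_i^2$ avoids $\{q^{-i}\mid i=2,4,\ldots,d\}$, with the extra input $k_0^2\neq 1$ handling one boundary case. (This is the analogue of the computation in the proof of Proposition~\ref{prop:E(k0)}, and I expect the role of $k_0^2\neq 1$ here to be exactly parallel to the role of (\ref{k02=-d-1}) forcing $q^{d+1}\neq 1$ there.)

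The main obstacle I anticipate is bookkeeping rather than conceptual: the substitution $(a,b,c)=(k_0 k_1 q^{d/2},k_0 k_3 q^{d/2},k_0 k_2 q^{d/2})$ must be carried through the $\varphi_i$ of Lemma~\ref{lem:AB_O(k0)}, which involves the factors $(q^{-i}-k_0^2 q^{i-2})$ and $(q^{-i}-k_2^{-2}q^{i-d-2})$; matching these with the $(q^{-i}-abc\,q^{i-d'-1})(q^{-i}-ab c^{-1}q^{i-d'-1})$ pattern of Proposition~\ref{prop:UAWd} requires correctly identifying which of $abc,abc^{-1}$ corresponds to which factor and absorbing the overall prefactor $k_0^{-1}k_2 q^{d/2+2}$ into the normalization $\mu_i$ (which is why Lemma~\ref{lem:AB_O(k0)} was stated with that specific $\mu_i$). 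Once the shapes agree, irreducibility is a mechanical translation between the two forbidden-set conditions. I would close by stating that the proposition follows.
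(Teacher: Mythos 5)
Your proposal is correct and follows essentially the same route as the paper: set $(a,b,c)=(k_0k_1q^{d/2},k_0k_3q^{d/2},k_0k_2q^{d/2})$ and $d'=\tfrac{d}{2}$, match Lemma \ref{lem:AB_O(k0)} against Proposition \ref{prop:UAWd} (checking the scalar (\ref{beta:O(k0)})), and then translate Theorem \ref{thm:irr_O} plus $k_0^2\neq 1$ into the criterion of Theorem \ref{thm:irr_UAW} — with $k_0^2\neq 1$ indeed covering exactly the one value $q^{2i-d'-1}$ with $abc=k_0^2q^{d/2-1}$ that Theorem \ref{thm:irr_O} misses. (One small slip in your illustrative computation: $a^{-1}bc=k_1^{-2}q^{-d/2-1}$, not $k_2^2q^{d/2-1}$, but this does not affect the argument since all four $k_i^2$ are constrained by Theorem \ref{thm:irr_O}.)
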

\begin{proof}
Set $(a,b,c)=(
k_0 k_1 q^{\frac{d}{2}},
k_0 k_3 q^{\frac{d}{2}},
k_0 k_2 q^{\frac{d}{2}})$ and $d'=\frac{d}{2}$. Under the assumption (\ref{k0123=-d-1}) the scalar (\ref{beta:O(k0)}) is equal to 
$$
(c+c^{-1})(a+a^{-1})+(b+b^{-1})(q^{d'+1}+q^{-d'-1}).
$$
Comparing Proposition \ref{prop:UAWd} with Lemma \ref{lem:AB_O(k0)} we see that the $\triangle_q$-module  $O(k_0,k_1,k_2,k_3)(k_0)$ is isomorphic to $V_{d'}(a,b,c)$. 

Suppose that $k_0^2\not=1$ and the $\H_q$-module $O(k_0,k_1,k_2,k_3)$ is irreducible. Combined with Theorem \ref{thm:irr_O} this yields that 
\begin{align*}
&abc\not\in\{q^{2i-d'-1}\,|\,i=0,1,\ldots,d'\};
\\
&a^{-1}bc,ab^{-1}c,abc^{-1}\not\in\{q^{2i-d'-1}\,|\,i=1,2,\ldots,d'\}.
\end{align*}
Hence the $\triangle_q$-module $V_{d'}(a,b,c)$ is irreducible by Theorem \ref{thm:irr_UAW}. The proposition follows.
\end{proof}

\begin{lem}\label{lem:AB_O/O(k0)}
Suppose that $d\geq 2$. Let
$$
\mu_i=(-1)^{\frac{i-1}{2}} k_0^{-\frac{i-1}{2}} k_1^{-\frac{i-1}{2}} q^{-\frac{(i-1)(i+1)}{4}}
\qquad 
\hbox{for $i=1,3,\ldots,d-1$}.
$$
Then the matrices representing $A$ and $B$ with respect to the $\F$-basis 
\begin{gather}\label{basis:O/O(k0)}
\mu_i v_i+O(k_0,k_1,k_2,k_3)(k_0)
\qquad 
\hbox{for $i=1,3,\ldots,d-1$}
\end{gather}
for the $\triangle_q$-module $O(k_0,k_1,k_2,k_3)/O(k_0,k_1,k_2,k_3)(k_0)$ are 
$$
\begin{pmatrix}
\theta_0 & & &  &{\bf 0}
\\ 
1 &\theta_1 
\\
&1 &\theta_2
 \\
& &\ddots &\ddots
 \\
{\bf 0} & & &1 &\theta_{\frac{d}{2}-1}
\end{pmatrix},
\qquad 
\begin{pmatrix}
\theta^*_0 &\varphi_1 &  & &{\bf 0}
\\ 
 &\theta^*_1 &\varphi_2
\\
 &  &\theta_2^* &\ddots
 \\
 & & &\ddots &\varphi_{\frac{d}{2}-1}
 \\
{\bf 0}  & & & &\theta^*_{\frac{d}{2}-1}
\end{pmatrix},
$$
respectively, where 
\begin{align*}
\theta_i &= k_0 k_1 q^{2i+2}+k_0^{-1} k_1^{-1} q^{-2i-2}
\qquad 
\hbox{for $i=0,1,\ldots, \textstyle\frac{d}{2}-1$},
\\
\theta^*_i &=
 k_0 k_3 q^{2i+2}+k_0^{-1} k_3^{-1} q^{-2i-2}
\qquad 
\hbox{for $i=0,1,\ldots, \textstyle\frac{d}{2}-1$},
\\
\varphi_i &=
 k_0^{-1} k_2  q^{\frac{d}{2}-1}
 (q^i-q^{-i})
 (q^{i-\frac{d}{2}}-q^{\frac{d}{2}-i})
 \\
& \qquad \times \;
 (q^{-i}-k_0^2 q^{i+2})
 (q^{-i}-k_2^{-2} q^{i-d})
\qquad 
\hbox{for $i=1,2,\ldots, \textstyle\frac{d}{2}-1$}.
\end{align*}
The elements $\alpha,\beta,\gamma$ act on $O(k_0,k_1,k_2,k_3)/O(k_0,k_1,k_2,k_3)(k_0)$ as scalar multiplication by 
\begin{gather}
(k_3+k_3^{-1})(k_2+k_2^{-1})+(k_1+k_1^{-1})(q k_0+q^{-1} k_0^{-1}),
\label{alpha:O/O(k0)}
\\
(k_2+k_2^{-1})(k_1+k_1^{-1})+(k_3+k_3^{-1})(q k_0+q^{-1} k_0^{-1}),
\label{beta:O/O(k0)}
\\
(k_1+k_1^{-1})(k_3+k_3^{-1})+(k_2+k_2^{-1})(q k_0+q^{-1} k_0^{-1}),
\label{gam:O/O(k0)}
\end{gather}
respectively.
\end{lem}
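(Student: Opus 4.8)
The plan is to follow the pattern of Lemma~\ref{lem:AB_E/E(k0)}, working in the quotient directly on the cosets of the odd-indexed basis vectors; throughout, $d\geq 2$ (for $d=0$ the quotient is zero). First I would observe, combining Lemma~\ref{lem1:t0_O} with Lemma~\ref{lem3:t0_O}, that the vectors $v_i$ with $i$ odd span an $\F$-complement of $O(k_0,k_1,k_2,k_3)(k_0)$ inside $O(k_0,k_1,k_2,k_3)$, so that the cosets (\ref{basis:O/O(k0)}) form an $\F$-basis for the quotient $\triangle_q$-module. The single reduction rule needed is that, modulo $O(k_0,k_1,k_2,k_3)(k_0)$, one has $v_0\equiv 0$ and $v_i\equiv(1-q^i)v_{i-1}$ for each even $i\in\{2,4,\ldots,d\}$, which is read straight off the explicit basis in Lemma~\ref{lem3:t0_O}.

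Next, the matrices of $A$ and $B$ on the quotient are obtained by applying Lemma~\ref{lem:AB_O} to $Av_i$ and $Bv_i$ for odd $i$ and substituting this reduction rule wherever an even-indexed vector appears. In $Av_i$ (for $i$ odd) only $v_i$ and $v_{i+2}$ occur, both odd, so the action of $A$ is lower bidiagonal at once, and the scalars $\mu_i$ are chosen precisely so that the subdiagonal entries equal $1$; this reduces to checking $\mu_{i+2}/\mu_i=-k_0^{-1}k_1^{-1}q^{-i-1}$, which is immediate from the closed form of $\mu_i$. For $B$ one substitutes $v_{i-1}\equiv(1-q^{i-1})v_{i-2}$ into $Bv_i$ and collects the coefficient of $v_{i-2}$; after rescaling by $\mu_i/\mu_{i-2}$ this coefficient must be matched with the product written as $\varphi_i$. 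I expect this factorization -- rewriting $k_0^{-1}k_3^{-1}q^{-i}\varrho_i\bigl((q-q^{-1})(1-q^{i-1})-q\varrho_{i-1}\bigr)$, suitably scaled, as the displayed product -- to be the only nontrivial step; it is carried out by expanding $\varrho_i$ and $\varrho_{i-1}$ from the definitions at the start of \S\ref{s:lattice_O} and invoking (\ref{k0123=-d-1}). The index ranges $0\le i\le \tfrac d2-1$ for $\theta_i,\theta_i^*$ and $1\le i\le \tfrac d2-1$ for $\varphi_i$ then fall out from $d$ being even with the odd labels running over $1,3,\ldots,d-1$.

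Finally, for $\alpha,\beta,\gamma$ I would use Lemma~\ref{lem1:t0_O} once more: its matrix shows that $(t_0-k_0^{-1})v_i\in O(k_0,k_1,k_2,k_3)(k_0)$ for every odd $i$, so $t_0$ acts on the quotient as the scalar $k_0^{-1}$, and hence $qt_0^{-1}+q^{-1}t_0$ acts as $qk_0+q^{-1}k_0^{-1}$. Substituting this, together with the scalars $k_i+k_i^{-1}$ by which the central elements $c_i$ act (Proposition~\ref{prop:O}(ii)), into the images of $\alpha,\beta,\gamma$ under $\zeta$ from Theorem~\ref{thm:hom} yields exactly (\ref{alpha:O/O(k0)})--(\ref{gam:O/O(k0)}).
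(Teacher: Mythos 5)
Your proposal is correct and follows the same route as the paper's (quite terse) proof: the cosets of the odd-indexed $v_i$ form a basis by Lemma \ref{lem3:t0_O}, the matrices come from Lemma \ref{lem:AB_O} together with the reduction $v_0\equiv 0$, $v_i\equiv(1-q^i)v_{i-1}$ for even $i$, and the scalars for $\alpha,\beta,\gamma$ come from $(t_0-k_0^{-1})v_i\in O(k_0,k_1,k_2,k_3)(k_0)$ plus Theorem \ref{thm:hom} and Proposition \ref{prop:O}(ii). The one step you flag as nontrivial does check out: the bracket simplifies to $(1-q^{i-1})(k_0^2q^i-q^{-1})$, and after rescaling by $\mu_i/\mu_{i-2}=-k_0^{-1}k_1^{-1}q^{1-i}$ the relation $k_0k_1k_2k_3=q^{-d-1}$ turns the resulting expression into the displayed $\varphi_i$.
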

\begin{proof}
By Lemma \ref{lem3:t0_O} the cosets (\ref{basis:O/O(k0)}) are an $\F$-basis for $O(k_0,k_1,k_2,k_3)/O(k_0,k_1,k_2,k_3)(k_0)$. Applying Lemmas \ref{lem:AB_O} and  \ref{lem3:t0_O} a direct calculation yields the matrices representing 
$A$ and $B$ with respect to (\ref{basis:O/O(k0)}). 
By Lemma \ref{lem1:t0_O} we have 
\begin{gather*}
(t_0-k_0^{-1})v_i \in O(k_0,k_1,k_2,k_3)(k_0)
\qquad 
\hbox{for $i=1,3,\ldots,d-1$}.
\end{gather*}
Combined with Theorem \ref{thm:hom} and Proposition \ref{prop:O}(ii) the elements $\alpha,\beta,\gamma$ act on the quotient $\triangle_q$-module of $O(k_0,k_1,k_2,k_3)$ modulo $O(k_0,k_1,k_2,k_3)(k_0)$ as scalar multiplication by (\ref{alpha:O/O(k0)})--(\ref{gam:O/O(k0)}), respectively. 
\end{proof}

\begin{prop}\label{prop:O/O(k0)}
Suppose that $d\geq 2$. Then the $\triangle_q$-module $O(k_0,k_1,k_2,k_3)/O(k_0,k_1,k_2,k_3)(k_0)$ is isomorphic to 
$$
V_{\frac{d}{2}-1}\left(
k_0 k_1 q^{\frac{d}{2}+1},
k_0 k_3 q^{\frac{d}{2}+1},
k_0 k_2 q^{\frac{d}{2}+1}
\right).
$$
Moreover the $\triangle_q$-module 
$
O(k_0,k_1,k_2,k_3)/O(k_0,k_1,k_2,k_3)(k_0)
$ 
is irreducible if the $\H_q$-module $O(k_0,k_1,k_2,k_3)$ is irreducible.
\end{prop}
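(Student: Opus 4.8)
The plan is to follow the template of the proof of Proposition~\ref{prop:O(k0)}. First I would put $(a,b,c)=(k_0k_1q^{\frac{d}{2}+1},\,k_0k_3q^{\frac{d}{2}+1},\,k_0k_2q^{\frac{d}{2}+1})$ and $d'=\frac{d}{2}-1$. Then, using the constraint (\ref{k0123=-d-1}) to simplify the monomials in the $k_i$ that appear (notably $k_0^2k_1k_2$ and $k_0k_3q^{d+1}$, together with their inverses), I would verify that the scalar (\ref{beta:O/O(k0)}) by which $\beta$ acts on the quotient module coincides with $(c+c^{-1})(a+a^{-1})+(b+b^{-1})(q^{d'+1}+q^{-d'-1})$, the value of $\beta$ on $V_{d'}(a,b,c)$ recorded in Proposition~\ref{prop:UAWd}(ii).

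Next I would compare the matrices of $A$ and $B$ from Lemma~\ref{lem:AB_O/O(k0)} with the standard matrices in Proposition~\ref{prop:UAWd}(i): substituting $d'=\frac{d}{2}-1$ turns $\theta_i$ and $\theta_i^*$ into $aq^{2i-d'}+a^{-1}q^{d'-2i}$ and $bq^{2i-d'}+b^{-1}q^{d'-2i}$, and the superdiagonal entries $\varphi_i$ of $B$ agree with those of $V_{d'}(a,b,c)$ (alternatively, since $\triangle_q$ is generated by $A,B,\gamma$ by Lemma~\ref{lem:gen_UAW} and the value of $\beta$ already pins down $c+c^{-1}$, hence $c$ up to inversion, the $A,B$ matrices alone determine the module). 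By the uniqueness noted after Proposition~\ref{prop:UAWd}, this identifies $O(k_0,k_1,k_2,k_3)/O(k_0,k_1,k_2,k_3)(k_0)$ with $V_{d'}(a,b,c)$, the asserted module.

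For the second assertion I would assume the $\H_q$-module $O(k_0,k_1,k_2,k_3)$ is irreducible and apply Theorem~\ref{thm:irr_O}, obtaining $k_0^2,k_1^2,k_2^2,k_3^2\notin\{q^{-i}\mid i=2,4,\ldots,d\}$. Using (\ref{k0123=-d-1}) one gets $abc=k_0^2q^{\frac{d}{2}+2}$, $a^{-1}bc=k_1^{-2}q^{-\frac{d}{2}}$, $ab^{-1}c=k_3^{-2}q^{-\frac{d}{2}}$ and $abc^{-1}=k_2^{-2}q^{-\frac{d}{2}}$; comparing exponents, membership of these in $\{q^{2i-d'-1}\mid i=1,2,\ldots,d'\}$ would force $k_0^2\in\{q^{-i}\mid i=4,6,\ldots,d\}$ in the first case and $k_1^2$, $k_3^2$, $k_2^2\in\{q^{-i}\mid i=2,4,\ldots,d-2\}$ in the remaining three, contradicting Theorem~\ref{thm:irr_O}. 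Thus the hypotheses of Theorem~\ref{thm:irr_UAW} hold and $V_{d'}(a,b,c)$ is irreducible.

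The step I expect to be the main obstacle is this last one: carefully tracking the powers of $q$ so that each of $abc$, $a^{-1}bc$, $ab^{-1}c$, $abc^{-1}$ is seen to lie outside the forbidden set $\{q^{2i-d'-1}\mid i=1,\ldots,d'\}$. The constraint (\ref{k0123=-d-1}) is essential here, collapsing each triple product of the $k_i$ into a single $k_i^{\pm2}$ times a power of $q$ and so linking it to the conditions of Theorem~\ref{thm:irr_O}; the analogous computation already carried out for $O(k_0,k_1,k_2,k_3)(k_0)$ in Proposition~\ref{prop:O(k0)} provides a good model.
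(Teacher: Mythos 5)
Your proposal is correct and follows essentially the same route as the paper: identify the quotient with $V_{d'}(a,b,c)$ for $(a,b,c)=(k_0k_1q^{\frac{d}{2}+1},k_0k_3q^{\frac{d}{2}+1},k_0k_2q^{\frac{d}{2}+1})$, $d'=\frac{d}{2}-1$, by matching the data of Lemma \ref{lem:AB_O/O(k0)} against Proposition \ref{prop:UAWd} using (\ref{k0123=-d-1}), and then deduce irreducibility from Theorem \ref{thm:irr_O} via Theorem \ref{thm:irr_UAW}; your explicit reductions $abc=k_0^2q^{\frac{d}{2}+2}$, $a^{-1}bc=k_1^{-2}q^{-\frac{d}{2}}$, $ab^{-1}c=k_3^{-2}q^{-\frac{d}{2}}$, $abc^{-1}=k_2^{-2}q^{-\frac{d}{2}}$ are exactly the computation the paper leaves implicit.
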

\begin{proof}
Set $(a,b,c)=(
k_0 k_1 q^{\frac{d}{2}+1},
k_0 k_3 q^{\frac{d}{2}+1},
k_0 k_2 q^{\frac{d}{2}+1})$ and $d'=\frac{d}{2}-1$. Under the assumption (\ref{k0123=-d-1}) the scalar (\ref{beta:O/O(k0)}) is equal to 
$$
(c+c^{-1})(a+a^{-1})+(b+b^{-1})(q^{d'+1}+q^{-d'-1}).
$$
By Proposition \ref{prop:UAWd} along with Lemma \ref{lem:AB_O/O(k0)} the quotient $\triangle_q$-module of $O(k_0,k_1,k_2,k_3)$ modulo $O(k_0,k_1,k_2,k_3)(k_0)$ is isomorphic to $V_{d'}(a,b,c)$. 

Suppose that the $\H_q$-module $O(k_0,k_1,k_2,k_3)$ is irreducible. Using Theorem \ref{thm:irr_O} yields that 
\begin{align*}
&abc,a^{-1}bc,ab^{-1}c,abc^{-1}\not\in\{q^{2i-d'-1}\,|\,i=1,2,\ldots,d'+1\}.
\end{align*}
Hence the $\triangle_q$-module $V_{d'}(a,b,c)$ is irreducible by Theorem \ref{thm:irr_UAW}. The proposition follows.
\end{proof}

For the rest of this subsection we let 
$$
O(k_0,k_1,k_2,k_3)(k_0)'
$$ 
denote the $\F$-subspace of 
$O(k_0,k_1,k_2,k_3)(k_0)$ spanned by
$v_{i-1}+q^{-i}(v_i-v_{i-1})$ for all $i=2,4,\ldots,d$.

\begin{lem}\label{lem:AB_O(k0)'}
Suppose that $d\geq 2$ and $k_0^2=1$. Let
$$
\mu_i=(-1)^{\frac{i}{2}-1} k_0^{1-\frac{i}{2}} k_1^{1-\frac{i}{2}} q^{-\frac{i(i-2)}{4}}
\qquad 
\hbox{for $i=2,4,\ldots,d$}.
$$
Then $O(k_0,k_1,k_2,k_3)(k_0)'$ 
is a $\triangle_q$-module and the matrices representing $A$ and $B$ with respect to the $\F$-basis 
\begin{gather}\label{basis:AB_O(k0)'}
\mu_i(v_{i-1}+q^{-i}(v_i-v_{i-1}))
\qquad 
\hbox{for $i=2,4,\ldots,d$}
\end{gather}  
for $O(k_0,k_1,k_2,k_3)(k_0)'$ are 
$$
\begin{pmatrix}
\theta_0 & & &  &{\bf 0}
\\ 
1 &\theta_1 
\\
&1 &\theta_2
 \\
& &\ddots &\ddots
 \\
{\bf 0} & & &1 &\theta_{\frac{d}{2}-1}
\end{pmatrix},
\qquad 
\begin{pmatrix}
\theta_0^* &\varphi_1 &  & &{\bf 0}
\\ 
 &\theta_1^* &\varphi_2
\\
 &  &\theta_2^* &\ddots
 \\
 & & &\ddots &\varphi_{\frac{d}{2}-1}
 \\
{\bf 0}  & & & &\theta^*_{\frac{d}{2}-1}
\end{pmatrix},
$$
respectively, where 
\begin{align*}
\theta_i
&=
k_0 k_1 q^{2i+2}+k_0^{-1} k_1^{-1} q^{-2i-2}
\qquad 
\hbox{for $i=0,1,\ldots,\textstyle\frac{d}{2}-1$},
\\
\theta_i^*
&=
k_0 k_3 q^{2i+2}+k_0^{-1} k_3^{-1} q^{-2i-2}
\qquad 
\hbox{for $i=0,1,\ldots,\textstyle\frac{d}{2}-1$},
\\
\varphi_i &=
 k_0^{-1} k_2  q^{\frac{d}{2}-1}
 (q^i-q^{-i})
 (q^{i-\frac{d}{2}}-q^{\frac{d}{2}-i})
 \\
& \qquad \times \;
 (q^{-i}-k_0^2 q^{i+2})
 (q^{-i}-k_2^{-2} q^{i-d})
\qquad 
\hbox{for $i=1,2,\ldots, \textstyle\frac{d}{2}-1$}.
\end{align*}
\end{lem}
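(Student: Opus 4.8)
The plan is to identify $O(k_0,k_1,k_2,k_3)(k_0)'$ with the span of all but the first vector in the $\F$-basis of $O(k_0,k_1,k_2,k_3)(k_0)$ produced in Lemma~\ref{lem:AB_O(k0)}, and then to read off both the submodule property and the matrices from the bidiagonal shapes already recorded there. First I would observe that the vectors in~(\ref{basis:AB_O(k0)'}) differ from the vectors $\mu_i(v_{i-1}+q^{-i}(v_i-v_{i-1}))$ with $i=2,4,\ldots,d$ appearing in~(\ref{basis:O(k0)}) only by the single nonzero scalar $-k_0 k_1$; hence, together with $v_0$, they form the $\F$-basis of $O(k_0,k_1,k_2,k_3)(k_0)$ from Lemma~\ref{lem3:t0_O}, and their span is exactly $O(k_0,k_1,k_2,k_3)(k_0)'$. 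Write this ordered basis as $e_0=v_0,e_1,\ldots,e_{d/2}$ and set $W=\mathrm{span}\{e_1,\ldots,e_{d/2}\}$, so that $W=O(k_0,k_1,k_2,k_3)(k_0)'$.

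Next I would invoke Lemma~\ref{lem:AB_O(k0)}: on $O(k_0,k_1,k_2,k_3)(k_0)$ the element $A$ acts lower bidiagonally with unit subdiagonal, so $A e_j\in\mathrm{span}\{e_j,e_{j+1}\}$ for all $j\ge 1$ and $W$ is $A$-invariant; the element $B$ acts upper bidiagonally, so $B e_j\in\mathrm{span}\{e_{j-1},e_j\}$, and the only obstruction to $B$-invariance of $W$ is the coefficient $\varphi_1$ of $e_0$ in $B e_1$. This is precisely where the hypothesis $k_0^2=1$ enters: the formula for $\varphi_1$ in Lemma~\ref{lem:AB_O(k0)} contains the factor $q^{-1}-k_0^2 q^{-1}=q^{-1}(1-k_0^2)$, which vanishes, so $\varphi_1=0$ and $W$ is $B$-invariant. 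Since $\gamma$ acts on $O(k_0,k_1,k_2,k_3)(k_0)$ as the scalar~(\ref{gam:O(k0)}) it certainly preserves $W$, and because $\triangle_q$ is generated by $A,B,\gamma$ (Lemma~\ref{lem:gen_UAW}), this shows that $O(k_0,k_1,k_2,k_3)(k_0)'$ is a $\triangle_q$-submodule of $O(k_0,k_1,k_2,k_3)(k_0)$, which is itself a $\triangle_q$-module by Proposition~\ref{prop:t0eigenspace=AWmodule}.

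For the matrix forms I would use that a uniform rescaling of a basis by a fixed nonzero scalar does not change any matrix representation, so the matrices of $A$ and $B$ on $W$ with respect to~(\ref{basis:AB_O(k0)'}) are the trailing $(d/2)\times(d/2)$ blocks of the matrices in Lemma~\ref{lem:AB_O(k0)}, namely the rows and columns indexed $1,2,\ldots,d/2$. Under the shift $i\mapsto i+1$ the scalars $\theta_{i+1}$ and $\theta^*_{i+1}$ of Lemma~\ref{lem:AB_O(k0)} visibly become the $\theta_i$ and $\theta^*_i$ claimed here; for the off-diagonal entries one checks that the expression for $\varphi_{i+1}$ in Lemma~\ref{lem:AB_O(k0)}, after substituting $k_0^2=1$, simplifies to the expression claimed for $\varphi_i$ here, which is a routine manipulation of $q$-powers (the $q^{2i}-1$ and $q^{2i+2}-1$ contributions of the $(q^{\bullet}-q^{-\bullet})$ factors cancelling against the corresponding shifted factors, and the remaining prefactors matching).

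I do not expect a real obstacle: after Lemma~\ref{lem:AB_O(k0)} the argument is essentially bookkeeping around an already-triangular picture. The one genuinely substantive point is recognizing that $\varphi_1=0$ exactly when $k_0^2=1$, which is precisely the mechanism producing a nontrivial proper $\triangle_q$-submodule in this degenerate case (and why no such submodule exists when $k_0^2\ne 1$, consistently with Proposition~\ref{prop:O(k0)}).
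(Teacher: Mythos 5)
Your proposal is correct and follows essentially the same route as the paper's (much terser) proof: read off from Lemma \ref{lem:AB_O(k0)} that the span of the non-$v_0$ basis vectors is $A$-invariant, becomes $B$-invariant because the factor $q^{-1}-k_0^2q^{-1}$ in $\varphi_1$ vanishes when $k_0^2=1$, and is preserved by the central element $\gamma$ acting as a scalar, then invoke Lemma \ref{lem:gen_UAW} and obtain the stated matrices as the trailing block (the basis rescaling by the constant $-k_0k_1$ being immaterial). You merely make explicit the index shift and the identity $\varphi_i^{\mathrm{new}}=\varphi_{i+1}^{\mathrm{old}}$ under $k_0^2=1$, which the paper leaves as an observation.
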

\begin{proof}
Observe from Lemma \ref{lem:AB_O(k0)} that  $O(k_0,k_1,k_2,k_3)(k_0)'$ is $A$- and $\beta$-invariant and it is also $B$-invariant under the assumption $k_0^2=1$. Therefore $O(k_0,k_1,k_2,k_3)(k_0)'$ is a $\triangle_q$-module by Lemma \ref{lem:gen_UAW}. Moreover the matrices representing $A$ and $B$ with respect to (\ref{basis:AB_O(k0)'})
are as stated. The lemma follows.
\end{proof}

\begin{prop}\label{prop:O(k0)'}
Suppose that $d\geq 2$ and $k_0^2=1$. Then the following hold:
\begin{enumerate}
\item The $\triangle_q$-module $O(k_0,k_1,k_2,k_3)(k_0)'$ is isomorphic to 
$$
V_{\frac{d}{2}-1}\left(
k_0 k_1 q^{\frac{d}{2}+1},
k_0 k_3 q^{\frac{d}{2}+1},
k_0 k_2 q^{\frac{d}{2}+1}
\right).
$$
\item If the $\H_q$-module $O(k_0,k_1,k_2,k_3)$ is irreducible then the $\triangle_q$-module $O(k_0,k_1,k_2,k_3)(k_0)'$ is irreducible.

\item The $\triangle_q$-module $O(k_0,k_1,k_2,k_3)(k_0)/O(k_0,k_1,k_2,k_3)(k_0)'$ is isomorphic to 
$$
V_0\left(
k_0 k_1, k_0 k_3,k_0 k_2
\right).
$$
\end{enumerate}
\end{prop}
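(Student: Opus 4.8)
The plan is to dispatch the three parts using Lemma~\ref{lem:AB_O(k0)'} together with the analysis of $O(k_0,k_1,k_2,k_3)(k_0)$ and its quotient already carried out in Lemmas~\ref{lem:AB_O(k0)} and \ref{lem:AB_O/O(k0)} and Proposition~\ref{prop:O/O(k0)}. For part~(i) I would first observe that the tridiagonal matrices representing $A$ and $B$ in Lemma~\ref{lem:AB_O(k0)'} are literally identical to those appearing in Lemma~\ref{lem:AB_O/O(k0)} for the quotient $\triangle_q$-module $O(k_0,k_1,k_2,k_3)/O(k_0,k_1,k_2,k_3)(k_0)$. Since $O(k_0,k_1,k_2,k_3)(k_0)'$ is a $\triangle_q$-submodule of $O(k_0,k_1,k_2,k_3)(k_0)$, the central elements $\alpha,\beta,\gamma$ act on it by the scalars (\ref{alpha:O(k0)})--(\ref{gam:O(k0)}); under the hypothesis $k_0^2=1$ one has $q^{-1}k_0+q k_0^{-1}=q k_0+q^{-1}k_0^{-1}$, so these three scalars coincide with (\ref{alpha:O/O(k0)})--(\ref{gam:O/O(k0)}). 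As $\triangle_q$ is generated by $A,B,\gamma$ (Lemma~\ref{lem:gen_UAW}), a $\triangle_q$-module is determined up to isomorphism by the action of these three elements; hence $O(k_0,k_1,k_2,k_3)(k_0)'$ is isomorphic to $O(k_0,k_1,k_2,k_3)/O(k_0,k_1,k_2,k_3)(k_0)$, and Proposition~\ref{prop:O/O(k0)} identifies the latter with $V_{\frac{d}{2}-1}(k_0k_1q^{\frac{d}{2}+1},k_0k_3q^{\frac{d}{2}+1},k_0k_2q^{\frac{d}{2}+1})$. Equivalently, one may compare Lemma~\ref{lem:AB_O(k0)'} directly with Proposition~\ref{prop:UAWd}, checking that under (\ref{k0123=-d-1}) and $k_0^2=1$ the scalar (\ref{beta:O(k0)}) equals $(c+c^{-1})(a+a^{-1})+(b+b^{-1})(q^{d'+1}+q^{-d'-1})$ for $d'=\frac{d}{2}-1$ and $(a,b,c)=(k_0k_1q^{\frac{d}{2}+1},k_0k_3q^{\frac{d}{2}+1},k_0k_2q^{\frac{d}{2}+1})$, which is the same computation as in the proof of Proposition~\ref{prop:O/O(k0)}.

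For part~(ii), by~(i) the module $O(k_0,k_1,k_2,k_3)(k_0)'$ is isomorphic to $V_{d'}(a,b,c)$ for the same $d'$ and $(a,b,c)$ as above. When the $\H_q$-module $O(k_0,k_1,k_2,k_3)$ is irreducible, Theorem~\ref{thm:irr_O} gives $k_0^2,k_1^2,k_2^2,k_3^2\notin\{q^{-i}\mid i=2,4,\ldots,d\}$; these translate into $abc,a^{-1}bc,ab^{-1}c,abc^{-1}\notin\{q^{2i-d'-1}\mid i=1,2,\ldots,d'+1\}$ exactly as in the proof of Proposition~\ref{prop:O/O(k0)}, and then Theorem~\ref{thm:irr_UAW} shows $V_{d'}(a,b,c)$ is irreducible.

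For part~(iii), I would note that $O(k_0,k_1,k_2,k_3)(k_0)$ has dimension $\frac{d}{2}+1$ by Lemma~\ref{lem3:t0_O}, while $O(k_0,k_1,k_2,k_3)(k_0)'$ is spanned by the $\frac{d}{2}$ vectors $v_{i-1}+q^{-i}(v_i-v_{i-1})$ with $i=2,4,\ldots,d$, so the quotient is one-dimensional. Reading off Lemma~\ref{lem:AB_O(k0)} with respect to the basis (\ref{basis:O(k0)}), the cosets of the basis vectors of index $\geq 1$ span $O(k_0,k_1,k_2,k_3)(k_0)'$, and modulo it $A$ and $B$ act on the coset of $v_0$ as multiplication by $\theta_0=k_0k_1+k_0^{-1}k_1^{-1}$ and $\theta_0^*=k_0k_3+k_0^{-1}k_3^{-1}$ respectively; moreover $\gamma$ acts by (\ref{gam:O(k0)}), which for $k_0^2=1$ equals $(k_0k_1+k_0^{-1}k_1^{-1})(k_0k_3+k_0^{-1}k_3^{-1})+(k_0k_2+k_0^{-1}k_2^{-1})(q+q^{-1})$. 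Comparing with Proposition~\ref{prop:UAWd} at $d=0$ and $(a,b,c)=(k_0k_1,k_0k_3,k_0k_2)$, these are precisely the actions of $A,B,\gamma$ on $V_0(k_0k_1,k_0k_3,k_0k_2)$, so by Lemma~\ref{lem:gen_UAW} the quotient is isomorphic to $V_0(k_0k_1,k_0k_3,k_0k_2)$.

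The one step requiring genuine care, as opposed to transcription from the matrices already computed in Lemmas~\ref{lem:AB_O(k0)} and \ref{lem:AB_O(k0)'}, is the arithmetic under the hypothesis $k_0^2=1$: verifying that the central scalars (\ref{alpha:O(k0)})--(\ref{gam:O(k0)}) collapse onto (\ref{alpha:O/O(k0)})--(\ref{gam:O/O(k0)}), and that the resulting $\beta$- and $\gamma$-scalars agree with the values prescribed by Proposition~\ref{prop:UAWd} for $V_{d'}(a,b,c)$ and for $V_0(k_0k_1,k_0k_3,k_0k_2)$. This is a short but slightly fiddly manipulation using $k_0^{-1}=k_0$ together with $k_0k_1k_2k_3=q^{-d-1}$ to rewrite terms such as $k_0k_3q^{d+1}=k_1^{-1}k_2^{-1}$; once it is in hand, all three parts follow.
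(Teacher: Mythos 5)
Your proposal is correct and follows essentially the same route as the paper: part (i) by matching the matrices of $A,B$ from Lemma \ref{lem:AB_O(k0)'} and the central character (which under $k_0^2=1$ collapses onto (\ref{alpha:O/O(k0)})--(\ref{gam:O/O(k0)})) against Proposition \ref{prop:UAWd}, part (ii) as an immediate consequence of (i) together with Proposition \ref{prop:O/O(k0)} (equivalently Theorems \ref{thm:irr_O} and \ref{thm:irr_UAW}), and part (iii) by reading off the one-dimensional quotient action of $A,B,\gamma$ and invoking Lemma \ref{lem:gen_UAW}. The arithmetic you flag as the delicate step is exactly the verification the paper performs, so no gap remains.
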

\begin{proof}
(i): Set $(a,b,c)=(
k_0 k_1 q^{\frac{d}{2}+1},
k_0 k_3 q^{\frac{d}{2}+1},
k_0 k_2 q^{\frac{d}{2}+1})$ and $d'=\frac{d}{2}-1$. Under the assumptions (\ref{k0123=-d-1}) and $k_0^2=1$ the scalar (\ref{beta:O(k0)}) is equal to 
$$
(c+c^{-1})(a+a^{-1})+(b+b^{-1})(q^{d'+1}+q^{-d'-1}).
$$
By Proposition \ref{prop:UAWd} along with Lemma \ref{lem:AB_O/O(k0)} the quotient $\triangle_q$-module of $O(k_0,k_1,k_2,k_3)$ modulo $O(k_0,k_1,k_2,k_3)(k_0)$ is isomorphic to $V_{d'}(a,b,c)$. 

(ii): By Proposition \ref{prop:O(k0)'}(i) the statement (ii) is immediate from Proposition \ref{prop:O/O(k0)}.

(iii): Set $(a,b,c)=(k_0 k_1, k_0 k_3,k_0 k_2)$.
By Lemma \ref{lem:AB_O(k0)} the elements $A,B,\gamma$ act on the $\triangle_q$-module $O(k_0,k_1,k_2,k_3)(k_0)/O(k_0,k_1,k_2,k_3)(k_0)'$ as scalar multiplication by $a+a^{-1}, b+b^{-1},$ and 
\begin{gather}\label{gamma:O(k0)/O(k0)'}
(k_2+k_2^{-1})(k_1+k_1^{-1})+(k_3+k_3^{-1})(q^{-1} k_0+q k_0^{-1}),
\end{gather}
respectively. Under the assumption $k_0^2=1$ the scalar (\ref{gamma:O(k0)/O(k0)'}) is equal to 
$$
(c+c^{-1})(a+a^{-1})+(b+b^{-1})(q+q^{-1}).
$$
Hence the $\triangle_q$-module $O(k_0,k_1,k_2,k_3)(k_0)/O(k_0,k_1,k_2,k_3)(k_0)'$ is isomorphic to $V_0(a,b,c)$ by Proposition \ref{prop:O}. 
\end{proof}

\begin{thm}\label{thm:O}
Assume that the $\H_q$-module $O(k_0,k_1,k_2,k_3)$ is irreducible. Then the following hold:
\begin{enumerate}
\item If $d=0$ then the $\triangle_q$-module $O(k_0,k_1,k_2,k_3)$ is irreducible.
\item If $d\geq 2$ and $k_0^2=1$ then 
\begin{table}[H]
\begin{tikzpicture}[node distance=1.5cm]
 \node (0)                  {$\{0\}$};
 \node (1)  [above of=0]   {$O(k_0,k_1,k_2,k_3)(k_0)'$};
 \node (E)  [above of=1]   {$O(k_0,k_1,k_2,k_3)(k_0)$};
 \node (V)  [above of=E]   {$O(k_0,k_1,k_2,k_3)$};
 \draw (0)   -- (1);
 \draw (1)   -- (E);
 \draw (E)  -- (V);
\end{tikzpicture}
\end{table}
\noindent is the lattice of $\triangle_q$-submodules of $O(k_0,k_1,k_2,k_3)$.

\item  If $d\geq 2$ and $k_0^2\not=1$ then 
 \begin{table}[H]
\begin{tikzpicture}[node distance=1.5cm]
 \node (0)                  {$\{0\}$};
 \node (1)  [above of=0]   {};
 \node (2)  [right of=1]   {};
 \node (3)  [left of=1]   {};
 \node (E1)  [right of=2]  {$O(k_0,k_1,k_2,k_3)(k_0)$};
 \node (E2)  [left of=3]   {$O(k_0,k_1,k_2,k_3)(k_0^{-1})$};
 \node (V) [above of=1]  {$O(k_0,k_1,k_2,k_3)$};
 \draw (0)   -- (E1);
 \draw (0)   -- (E2);
 \draw (E1)   -- (V);
 \draw (E2)  -- (V);
\end{tikzpicture}
\end{table}
\noindent is the lattice of $\triangle_q$-submodules of $O(k_0,k_1,k_2,k_3)$.

\end{enumerate}
\end{thm}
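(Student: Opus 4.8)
The plan is to follow the pattern of the proofs of Theorems~\ref{thm:E0} and~\ref{thm:E1}: I would treat the three cases separately, read off the candidate sublattices from the propositions of \S\ref{s:lattice_O}, and then exclude any further $\triangle_q$-submodule by combining Propositions~\ref{prop:t0eigenspace=AWmodule} and~\ref{prop:irr_AWmodule_in_t0eigenspace} with the eigenvalue description of $t_0$ in Lemma~\ref{lem2:t0_O}. Write $V=O(k_0,k_1,k_2,k_3)$ throughout. Part (i) is immediate: when $d=0$ the module $V$ is $1$-dimensional, hence irreducible as a $\triangle_q$-module.

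For part (iii) I would argue exactly as in Theorem~\ref{thm:E0}(ii). Assume $d\ge2$ and $k_0^2\ne1$. By Lemma~\ref{lem2:t0_O}(iii), $t_0$ is diagonalizable on $V$ with two distinct eigenvalues $k_0^{\pm1}$, so by Proposition~\ref{prop:t0eigenspace=AWmodule} we have $V=V(k_0)\oplus V(k_0^{-1})$ as a direct sum of $\triangle_q$-submodules. By Proposition~\ref{prop:O(k0)} the $\triangle_q$-module $V(k_0)$ is irreducible, and by Proposition~\ref{prop:O/O(k0)} so is $V/V(k_0)\cong V(k_0^{-1})$; hence $\{0\}\subset V(k_0)\subset V$ and $\{0\}\subset V(k_0^{-1})\subset V$ are composition series and $V(k_0),V(k_0^{-1})$ are irreducible $\triangle_q$-submodules of $V$. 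By Proposition~\ref{prop:irr_AWmodule_in_t0eigenspace} every irreducible $\triangle_q$-submodule of $V$ lies in one of the eigenspaces $V(k_0^{\pm1})$, so these are the only irreducible $\triangle_q$-submodules. Finally, if $W$ is a $\triangle_q$-submodule with $V(k_0)\subsetneq W$, then $\{0\}\ne W/V(k_0)\subseteq V/V(k_0)$, which is irreducible, so $W=V$; likewise with $V(k_0^{-1})$ in place of $V(k_0)$. Therefore $\{0\},V(k_0),V(k_0^{-1}),V$ are exactly the $\triangle_q$-submodules of $V$, which is the asserted lattice.

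Part (ii), where $d\ge2$ and $k_0^2=1$, is the main obstacle. By Proposition~\ref{prop:O(k0)'}(ii),(iii) the $\triangle_q$-module $V(k_0)'$ is irreducible and $V(k_0)/V(k_0)'$ is $1$-dimensional, and by Proposition~\ref{prop:O/O(k0)} the module $V/V(k_0)$ is irreducible; thus $\{0\}\subset V(k_0)'\subset V(k_0)\subset V$ is a composition series of length $3$. Since $t_0$ has the single eigenvalue $k_0$ on $V$ (Lemma~\ref{lem2:t0_O}(ii)), Proposition~\ref{prop:irr_AWmodule_in_t0eigenspace} puts every irreducible $\triangle_q$-submodule of $V$ inside $V(k_0)$, so it remains to show that $V(k_0)'$ is the only proper nonzero $\triangle_q$-submodule of $V(k_0)$. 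As $V(k_0)$ has composition length $2$, any such submodule $W$ is irreducible, and $W\ne V(k_0)'$ forces $W\cap V(k_0)'=\{0\}$, hence $V(k_0)=W\oplus V(k_0)'$ with $\dim W=1$. I would rule this out using the triangular matrices of Lemma~\ref{lem:AB_O(k0)}: a generator $w$ of $W$ is a common eigenvector of $A$ and $B$, and, writing $w$ in the basis of that lemma, its coordinate $c_0$ along the first basis vector is nonzero because $V(k_0)'$ is the span of the remaining basis vectors. Reading the first row of the matrix of $A$ forces the $A$-eigenvalue to be $\theta_0$, and since $\theta_0\ne\theta_i$ for $1\le i\le\frac d2$ — here the hypothesis that $V$ is irreducible enters, through Theorem~\ref{thm:irr_O} giving $k_1^2\notin\{q^{-2},q^{-4},\dots,q^{-d}\}$ together with $k_0^2=1$ — the recursion $c_{i-1}=(\theta_0-\theta_i)c_i$ coming from the subdiagonal of $A$ shows every coordinate of $w$ is nonzero. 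Finally, $k_0^2=1$ makes $\varphi_1=0$ in Lemma~\ref{lem:AB_O(k0)}, so the last row of the matrix of $B$ gives the $B$-eigenvalue $\theta_{d/2}^*$ and the first row then gives $\theta_0^*=\theta_{d/2}^*$, i.e.\ $k_3^2=q^{-d}$, contradicting Theorem~\ref{thm:irr_O}. Hence no such $W$ exists, $V(k_0)$ is uniserial, and the three-step lattice follows. With parts (i)--(iii) in hand the theorem is proved; the only delicate point is the indecomposability of $V(k_0)$ in part (ii), everything else being a direct adaptation of Theorem~\ref{thm:E0}.
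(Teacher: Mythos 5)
Your proposal is correct and follows essentially the same route as the paper: parts (i) and (iii) are handled exactly as in Theorem \ref{thm:E0}, and for part (ii) you reduce, just as the paper does, to excluding a one-dimensional $\triangle_q$-complement $W$ of $O(k_0,k_1,k_2,k_3)(k_0)'$ inside $O(k_0,k_1,k_2,k_3)(k_0)$. The only difference is in execution: where the paper splits into two cases via Jordan--H\"{o}lder (according to which composition factor $W$ would be isomorphic to, with a separate $d=2$ subcase), you run a single eigenvector recursion through the bidiagonal matrices of Lemma \ref{lem:AB_O(k0)} to show every coordinate of a generator of $W$ is nonzero and then derive the same forbidden identities ($\theta_0=\theta_i$, resp. $\theta_0^*=\theta_{d/2}^*$, i.e. $k_3^2=q^{-d}$) ruled out by Theorem \ref{thm:irr_O}.
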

\begin{proof}
(i): 
If $d=0$ then  $O(k_0,k_1,k_2,k_3)$ is one-dimensional irreducible $\triangle_q$-module.

(ii): Suppose that $d\geq 2$ and $k_0^2=1$. Since the $\triangle_q$-submodule $O(k_0,k_1,k_2,k_3)(k_0)'$ of $O(k_0,k_1,k_2,k_3)(k_0)$ is of codimension $1$, the quotient $\triangle_q$-module $O(k_0,k_1,k_2,k_3)(k_0)$ modulo $O(k_0,k_1,k_2,k_3)(k_0)'$ is irreducible. Combined with Propositions \ref{prop:O/O(k0)} and \ref{prop:O(k0)'} the sequence 
\begin{gather}\label{series_O:k02=1}
\{0\}\subset O(k_0,k_1,k_2,k_3)(k_0)'
\subset O(k_0,k_1,k_2,k_3)(k_0)\subset O(k_0,k_1,k_2,k_3)
\end{gather}
is a composition series for the $\triangle_q$-module $O(k_0,k_1,k_2,k_3)$.

By Proposition \ref{prop:irr_AWmodule_in_t0eigenspace} and Lemma \ref{lem2:t0_O}(ii), every irreducible $\triangle_q$-submodule of $O(k_0,k_1,k_2,k_3)$ is contained in $O(k_0,k_1,k_2,k_3)(k_0)$. 
To prove (ii), it remains to show that $O(k_0,k_1,k_2,k_3)(k_0)'$ is the unique irreducible $\triangle_q$-submodule of $O(k_0,k_1,k_2,k_3)(k_0)$. Suppose on the contrary that $W$ is an irreducible $\triangle_q$-submodule of $O(k_0,k_1,k_2,k_3)(k_0)$ different from $O(k_0,k_1,k_2,k_3)(k_0)'$. By irreducibility, we have  $W\cap O(k_0,k_1,k_2,k_3)(k_0)'=\{0\}$. Hence $W$ is of dimension $1$ and 
\begin{gather}\label{direcsum}
O(k_0,k_1,k_2,k_3)(k_0)=W\oplus O(k_0,k_1,k_2,k_3)(k_0)'.
\end{gather}
Applying Jordan--H\"{o}lder theorem to (\ref{series_O:k02=1}) the $\triangle_q$-module $W$ is isomorphic to $O(k_0,k_1,k_2,k_3)(k_0)'$ if $d=2$, or the $\triangle_q$-module $W$ is isomorphic to  $O(k_0,k_1,k_2,k_3)(k_0)/O(k_0,k_1,k_2,k_3)(k_0)'$.

Suppose that $d=2$ and the $\triangle_q$-module $W$ is isomorphic to $O(k_0,k_1,k_2,k_3)(k_0)'$. By Lemma \ref{lem:AB_O(k0)} 
the eigenvalues of $A$ in $O(k_0,k_1,k_2,k_3)(k_0)$ are 
\begin{align*}
\theta_0 &= k_0 k_1+k_0^{-1} k_1^{-1},
\\
\theta_1 &= k_0 k_1 q^2 +k_0^{-1} k_1^{-1} q^{-2}.
\end{align*}
By Lemma \ref{lem:AB_O(k0)'} 
the eigenvalue of $A$ in $O(k_0,k_1,k_2,k_3)(k_0)'$ is 
$\theta_1$.
Combind with (\ref{direcsum}) this implies that $\theta_0=\theta_1$. Since $q^2\not=1$ and $k_0^2=1$ it follows that 
$k_1^2=q^{-2}$, 
a contradiction to Theorem \ref{thm:irr_O}.

Suppose that the $\triangle_q$-module $W$ is isomorphic to $O(k_0,k_1,k_2,k_3)(k_0)/O(k_0,k_1,k_2,k_3)(k_0)'$. By Proposition \ref{prop:O(k0)'}(iii) the elements $A$ and $B$ act on $W$ as the scalars 
\begin{align*}
\theta_0 &= k_0 k_1+k_0^{-1} k_1^{-1},
\\
\theta_0^* &= k_0 k_3 +k_0^{-1} k_3^{-1},
\end{align*} 
respectively. For $v\in O(k_0,k_1,k_2,k_3)(k_0)$ let $[v]$ denote the coordinate vector of $v$ relative to (\ref{basis:O(k0)}). 
Observe that the $\theta_0$-eigenspace of $A$ in $O(k_0,k_1,k_2,k_3)(k_0)$ is equal to $W$.
Hence $W$ contains a vector $w$ such that the last entry of $[w]$ is $1$. By Lemma \ref{lem:AB_O(k0)} the last entry of $[Bw]$ is 
$$
\theta_{\frac{d}{2}}^*=k_0 k_3 q^d+k_0^{-1} k_3^{-1} q^{-d}.
$$ 
Since $w$ is a $\theta_0^*$-eigenvector of $B$ this implies that $\theta_0^*=\theta_{\frac{d}{2}}^*$. Since $q^d\not=1$ and $k_0^2=1$ it follows that $k_3^2=q^{-d}$, a contradiction to Theorem \ref{thm:irr_O}.
Therefore $O(k_0,k_1,k_2,k_3)(k_0)'$ is the unique irreducible $\triangle_q$-submodule of $O(k_0,k_1,k_2,k_3)(k_0)$. The statement (ii) follows.

(iii): Using the given lemmas and propositions in this subsections, the statement (iii) follows by an argument similar to the proof of Theorem \ref{thm:E0}(ii).
\end{proof}

\section{The summary}\label{s:summary}

We summarize \S\ref{s:lattice_E}--\S\ref{s:lattice_O} as follows:

\begin{thm}\label{thm:BIasRmodule}
Let $V$ denote a finite-dimensional irreducible $\H_q$-module. Given any $\theta\in \F$ let $V(\theta)$ denote the null space of $t_0-\theta$ in $V$. Then the following hold:
\begin{enumerate}
\item Suppose that $t_0$ is not diagonalizable on $V$. Then $t_0$ has a unique eigenvalue $\theta\in \{\pm 1\}$ in $V$. Moreover the following hold:
\begin{enumerate}
\item If the dimension of $V$ is even then the lattice of $\triangle_q$-submodules of $V$ is as follows:
\begin{table}[H]
\begin{tikzpicture}[node distance=1.2cm]
 \node (0)                  {$\{0\}$};
 \node (E)  [above of=0]   {$V(\theta)$};
 \node (V) [above of=E]  {$V$};
 \draw (0)   -- (E);
 \draw (E)  -- (V);
\end{tikzpicture}
\end{table}

\item If the dimension of $V$ is odd then the lattice of $\triangle_q$-submodules of $V$ is as follows:
\begin{table}[H]
\begin{tikzpicture}[node distance=1.2cm]
 \node (0)                  {$\{0\}$};
 \node (W)  [above of=0]   {$V(\theta)'$};
 \node (E)  [above of=W]   {$V(\theta)$};
 \node (V) [above of=E]  {$V$};
 \draw (0)   -- (W);
 \draw (W)   -- (E);
 \draw (E)  -- (V);
\end{tikzpicture}
\end{table}
\noindent Here $V(\theta)'$ is the irreducible $\triangle_q$-submodule of $V(\theta)$ that has codimension $1$.
\end{enumerate}

\item Suppose that $t_0$ is diagonalizable on $V$. Then there are at most two eigenvalues of $t_0$ in $V$. Moreover the following hold:
\begin{enumerate}
\item If $t_0$ has exactly one eigenvalue in $V$ then the $\triangle_q$-module $V$ is irreducible of dimension less than or equal to $2$. 

\item If $t_0$ has exactly two eigenvalues in $V$ then there exists a nonzero scalar $\theta\in \F$ with $\theta\not\in\{\pm 1\}$ such that the lattice of $\triangle_q$-submodules of $V$ is as follows:
\begin{table}[H]
\begin{tikzpicture}[node distance=1.2cm]
 \node (0)                  {$\{0\}$};
 \node (W)  [above of=0]   {};
 \node (E1)  [left of=W]   {$V(\theta^{-1})$};
 \node (E2) [right of=W]  {$V(\theta)$};
 \node (V)  [above of=W]   {$V$};
 \draw (0)   -- (E1);
 \draw (0)   -- (E2);
 \draw (E1)  -- (V);
 \draw (E2)  -- (V);
\end{tikzpicture}
\end{table}
\end{enumerate}
\end{enumerate}
\end{thm}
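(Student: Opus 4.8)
The plan is to derive the statement directly from the case-by-case analysis of \S\ref{s:lattice_E}--\S\ref{s:lattice_O}. By Theorems \ref{thm:onto_E} and \ref{thm:onto_O}, a finite-dimensional irreducible $\H_q$-module $V$ is isomorphic as an $\H_q$-module to $E(k_0,k_1,k_2,k_3)^{\e}$ for some $\e\in\Z/4\Z$ and nonzero $k_0,k_1,k_2,k_3\in\F$ with $k_0^2=q^{-\dim V}$ when $\dim V$ is even, and to $O(k_0,k_1,k_2,k_3)$ for nonzero $k_0,k_1,k_2,k_3\in\F$ with $k_0k_1k_2k_3=q^{-\dim V}$ when $\dim V$ is odd. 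Any such $\H_q$-isomorphism intertwines the action of $\zeta(\triangle_q)$, so it carries the lattice of $\triangle_q$-submodules and the $t_0$-eigenspaces of $V$ onto those of the model module; hence it suffices to run through the five families $E(\ldots)^{\e}$ with $\e=0,1,2,3$ and $O(\ldots)$.

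For each family I would first read off the action of $t_0$ from Lemma \ref{lem2:t0_E} ($\e=0$), Lemma \ref{lem2:t0_E1} ($\e=1$), Lemma \ref{lem2:t0_E2} ($\e=2$), Lemma \ref{lem2:t0_E3} ($\e=3$) and Lemma \ref{lem2:t0_O} (the odd case), and then the lattice of $\triangle_q$-submodules from Theorems \ref{thm:E0}, \ref{thm:E1}, \ref{thm:E2}, \ref{thm:E3}, \ref{thm:O}. Comparing these outputs with the three scenarios in the statement, $t_0$ fails to be diagonalizable exactly when $V\cong E(\ldots)^{\e}$ with $\e\in\{1,2,3\}$ and $k_\e^2=1$, or when $V\cong O(\ldots)$ with $\dim V\geq 3$ and $k_0^2=1$. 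In the first case the unique eigenvalue is $k_\e$, in the second it is $k_0$; since its square equals $1$ it lies in $\{\pm1\}$, and the lattices given by Theorems \ref{thm:E1}(i), \ref{thm:E2}(i), \ref{thm:E3}(i) (a two-step chain, $\dim V$ even) and by Theorem \ref{thm:O}(ii) (a three-step chain, $\dim V$ odd) are exactly those of parts (i)(a) and (i)(b) of the statement. For (i)(b) one also invokes Lemma \ref{lem3:t0_O} together with the definition of $O(\ldots)(k_0)'$ and Proposition \ref{prop:O(k0)'} to see that $O(\ldots)(k_0)'$ is the codimension-one irreducible $\triangle_q$-submodule of $O(\ldots)(k_0)$.

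When $t_0$ is diagonalizable, the same lemmas show it has one or two eigenvalues, and it has exactly one only in the two cases $\dim V=1$ (namely $O(\ldots)$ with $d=0$) and $\dim V=2$ (namely $E(\ldots)^{0}$ with $d=1$); in both $V$ is $\triangle_q$-irreducible by Theorems \ref{thm:O}(i) and \ref{thm:E0}(i), which is part (ii)(a). In every remaining case $t_0$ has exactly two eigenvalues, which in each family form a reciprocal pair $\theta^{\pm1}$ with $\theta$ one of $k_0,k_1,k_2,k_3$; since the two eigenvalues are distinct one has $\theta^2\neq1$, hence $\theta\notin\{\pm1\}$ and $\theta\neq0$, and the diamond lattices of Theorems \ref{thm:E0}(ii), \ref{thm:E1}(ii), \ref{thm:E2}(ii), \ref{thm:E3}(ii), \ref{thm:O}(iii) are exactly the one in part (ii)(b). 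Throughout, Propositions \ref{prop:t0eigenspace=AWmodule} and \ref{prop:irr_AWmodule_in_t0eigenspace} guarantee that these $t_0$-eigenspaces are $\triangle_q$-submodules and that no irreducible $\triangle_q$-submodule of $V$ lies outside them, which is what forces the lattices to have the stated shape.

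The part requiring care---and which I expect to be the main obstacle, there being no new computation---is the bookkeeping across the isomorphisms of Theorems \ref{thm:onto_E}, \ref{thm:onto_O} and the $\Z/4\Z$-twists: one must check that the distinguished eigenvalue of $t_0$ on $V$ matches the correct parameter ($k_\e$, respectively $k_0$) of the model module, so that the eigenspace $V(\theta)$ of the present statement is identified with $E(\ldots)^{\e}(k_\e)$, $O(\ldots)(k_0)$, and so on, and likewise that the submodule $V(\theta)'$ named in (i)(b) is the one produced in \S\ref{s:lattice_O}. Once these identifications are in place, the theorem is just the assembly of the five cases.
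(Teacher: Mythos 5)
Your proposal is correct and is essentially the paper's own argument: the paper offers no separate proof of this theorem but presents it explicitly as the summary of \S\ref{s:lattice_E}--\S\ref{s:lattice_O}, i.e.\ exactly the assembly, via Theorems \ref{thm:onto_E} and \ref{thm:onto_O}, of Lemmas \ref{lem2:t0_E}, \ref{lem2:t0_E1}, \ref{lem2:t0_E2}, \ref{lem2:t0_E3}, \ref{lem2:t0_O} and Theorems \ref{thm:E0}--\ref{thm:O} that you describe. Your case bookkeeping (non-diagonalizable exactly when $k_\e^2=1$ for $\e\in\{1,2,3\}$ or $k_0^2=1$ with $\dim V\geq 3$ odd, one eigenvalue only in dimensions $1$ and $2$, reciprocal pair $\theta^{\pm1}$ otherwise) matches the source results.
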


As byproducts of Theorem \ref{thm:BIasRmodule} we have the following corollaries:

\begin{cor}
Let $V$ denote a finite-dimensional irreducible $\H_q$-module. If $\theta$ is an eigenvalue of $t_0$ in $V$ then either $V=V(\theta)$ or the $\triangle_q$-module $V/V(\theta)$ is irreducible.
\end{cor}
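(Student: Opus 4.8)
The plan is to read the corollary off from Theorem \ref{thm:BIasRmodule} by a short case analysis. First I would invoke the standard correspondence theorem for modules: for any $\triangle_q$-submodule $W$ of $V$, the $\triangle_q$-submodules of $V/W$ correspond bijectively, preserving inclusion, to the $\triangle_q$-submodules of $V$ containing $W$. Hence, provided $W\neq V$, the quotient $V/W$ is irreducible exactly when $W$ is a maximal proper $\triangle_q$-submodule of $V$. Since $\theta$ is an eigenvalue of $t_0$, Proposition \ref{prop:t0eigenspace=AWmodule} shows that $V(\theta)$ is a nonzero $\triangle_q$-submodule of $V$, so it suffices to prove: if $V\neq V(\theta)$ then $V(\theta)$ is a maximal proper $\triangle_q$-submodule of $V$.

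Next I would run through the three situations of Theorem \ref{thm:BIasRmodule}. If $t_0$ is not diagonalizable on $V$, then by part (i) its unique eigenvalue is $\theta$, and in both subcases (i)(a) and (i)(b) the displayed lattice has $V(\theta)$ as the unique term strictly below $V$; thus $V(\theta)$ is maximal proper. If $t_0$ is diagonalizable with a single eigenvalue, then part (ii)(a) forces $V=V(\theta)$, so this situation does not occur under our hypothesis. If $t_0$ is diagonalizable with two eigenvalues, then by part (ii)(b) they are $\mu$ and $\mu^{-1}$ for some nonzero $\mu\notin\{\pm1\}$, so $\theta\in\{\mu,\mu^{-1}\}$, and in the diamond lattice displayed there both $V(\mu)$ and $V(\mu^{-1})$ are coatoms; in particular $V(\theta)$ is maximal proper. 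Thus in every case where $V\neq V(\theta)$ we conclude that $V(\theta)$ is maximal, hence $V/V(\theta)$ is irreducible, which is the assertion of the corollary.

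There is no real obstacle here: the content is entirely carried by Theorem \ref{thm:BIasRmodule}, and the only point worth a moment's attention is the observation that, across all cases of that theorem, each eigenspace $V(\cdot)$ that appears is either all of $V$ or a coatom of the submodule lattice --- it never sits as a proper intermediate term --- which is precisely what makes the dichotomy ``either $V=V(\theta)$ or $V/V(\theta)$ is irreducible'' hold.
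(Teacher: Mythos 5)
Your argument is correct and is exactly the route the paper intends: the corollary is stated as a byproduct of Theorem \ref{thm:BIasRmodule} with no separate proof, and your case-by-case check that $V(\theta)$ is either all of $V$ or a coatom of each displayed lattice (via the correspondence theorem) is precisely the omitted verification.
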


\begin{cor}
For any finite-dimensional irreducible $\H_q$-module $V$, the $\triangle_q$-module $V$ is completely reducible if and only if $t_0$ is diagonalizable on $V$.
\end{cor}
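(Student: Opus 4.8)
The plan is to read this corollary off directly from the lattice classification of Theorem~\ref{thm:BIasRmodule}, handling the two implications separately.

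For the ``if'' direction I would assume $t_0$ is diagonalizable on $V$ and invoke Theorem~\ref{thm:BIasRmodule}(ii). If $t_0$ has only one eigenvalue on $V$, then by part~(ii)(a) the $\triangle_q$-module $V$ is irreducible, hence completely reducible. Otherwise $t_0$ has exactly two eigenvalues, which by part~(ii)(b) may be written $\theta,\theta^{-1}$ with $\theta\notin\{\pm1\}$, and the only $\triangle_q$-submodules of $V$ are $\{0\}$, $V(\theta^{-1})$, $V(\theta)$, $V$. I would then argue that $V(\theta)$ and $V(\theta^{-1})$ are each irreducible as $\triangle_q$-modules: since they are incomparable in the displayed lattice, a nonzero proper $\triangle_q$-submodule of $V(\theta)$ would be a $\triangle_q$-submodule of $V$ not appearing on the list. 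Finally, since $t_0$ is diagonalizable with eigenvalue set $\{\theta,\theta^{-1}\}$, one has $V=V(\theta)\oplus V(\theta^{-1})$ as $\F$-spaces, and this is a decomposition into $\triangle_q$-submodules by Proposition~\ref{prop:t0eigenspace=AWmodule}. Hence $V$ is a direct sum of two irreducible $\triangle_q$-modules and is completely reducible.

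For the ``only if'' direction I would assume $t_0$ is not diagonalizable on $V$ and apply Theorem~\ref{thm:BIasRmodule}(i). Then the lattice of $\triangle_q$-submodules of $V$ is a chain, either $\{0\}\subset V(\theta)\subset V$ or $\{0\}\subset V(\theta)'\subset V(\theta)\subset V$ depending on the parity of $\dim V$, where $\theta\in\{\pm1\}$ is the unique eigenvalue of $t_0$. In particular $V(\theta)$ is a proper nonzero $\triangle_q$-submodule. Since the submodule lattice is totally ordered, any $\triangle_q$-submodule $W$ with $V=V(\theta)\oplus W$ is comparable to $V(\theta)$, which forces $W=\{0\}$ or $W=V$; hence $V(\theta)$ has no $\triangle_q$-module complement in $V$. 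Recalling that every submodule of a completely reducible module is a direct summand, we conclude that the $\triangle_q$-module $V$ is not completely reducible.

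I do not expect any genuine obstacle: all of the real work is already contained in Theorem~\ref{thm:BIasRmodule}, and the only point in the above that needs more than a sentence is the claim, in the diagonalizable two-eigenvalue case, that the $t_0$-eigenspaces $V(\theta)$ and $V(\theta^{-1})$ are themselves irreducible $\triangle_q$-modules whose direct sum is all of $V$ --- and this is immediate from the shape of the lattice together with the diagonalizability of $t_0$.
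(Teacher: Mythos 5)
Your proof is correct and is essentially the paper's own argument: the corollary is stated there as a direct byproduct of Theorem \ref{thm:BIasRmodule}, read off from the shape of the submodule lattices exactly as you do (diamond lattice plus diagonalizability gives $V=V(\theta)\oplus V(\theta^{-1})$, chain lattice rules out a complement for $V(\theta)$). No gaps; the one- and two-eigenvalue diagonalizable cases and the non-diagonalizable case are all handled as intended.
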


\subsection*{Acknowledgements}

The research is supported by the Ministry of Science and Technology of Taiwan under the project MOST 106-2628-M-008-001-MY4.

\subsection*{Conflict of interest statement}

On behalf of all authors, the corresponding author states that there is no conflict of interest.

\bibliographystyle{amsplain}
\bibliography{MP}

\end{document}